\newtheorem{theorem}{Theorem}
\theoremstyle{plain}
\newtheorem{axiom}{Axiom}
\newtheorem{conjecture}{Conjecture}
\newtheorem{corollary}{Corollary}
\newtheorem{definition}{Definition}
\newtheorem{example}{Example}
\newtheorem{exercise}{Exercise}
\newtheorem{lemma}{Lemma}
\newtheorem{proposition}{Proposition}
\newtheorem{remark}{Remark}
\numberwithin{equation}{section}
\chardef\@x10\chardef\@xv60
\def\tcitime{
\def\@time{%
  \@minute\time\@hour\@minute\divide\@hour\@xv
  \ifnum\@hour<\@x 0\fi\the\@hour:%
  \multiply\@hour\@xv\advance\@minute-\@hour
  \ifnum\@minute<\@x 0\fi\the\@minute
  }}%
\def\x@hyperref#1#2#3{%
   % Turn off various catcodes before reading parameter 4
   \catcode`\~ = 12
   \catcode`\$ = 12
   \catcode`\_ = 12
   \catcode`\# = 12
   \catcode`\& = 12
   \y@hyperref{#1}{#2}{#3}%
}
\def\y@hyperref#1#2#3#4{%
   #2\ref{#4}#3
   \catcode`\~ = 13
   \catcode`\$ = 3
   \catcode`\_ = 8
   \catcode`\# = 6
   \catcode`\& = 4
}
\def\QCTOpt[#1]#2{%
  \def\QCTOptB{#1}
  \def\QCTOptA{#2}
}
\def\QCTNOpt#1{%
  \def\QCTOptA{#1}
  \let\QCTOptB\empty
}
\def\Qct{%
  \@ifnextchar[{%
    \QCTOpt}{\QCTNOpt}
}
\def\QCBOpt[#1]#2{%
  \def\QCBOptB{#1}%
  \def\QCBOptA{#2}%
}
\def\QCBNOpt#1{%
  \def\QCBOptA{#1}%
  \let\QCBOptB\empty
}
\def\Qcb{%
  \@ifnextchar[{%
    \QCBOpt}{\QCBNOpt}%
}
\def\PrepCapArgs{%
  \ifx\QCBOptA\empty
    \ifx\QCTOptA\empty
      {}%
    \else
      \ifx\QCTOptB\empty
        {\QCTOptA}%
      \else
        [\QCTOptB]{\QCTOptA}%
      \fi
    \fi
  \else
    \ifx\QCBOptA\empty
      {}%
    \else
      \ifx\QCBOptB\empty
        {\QCBOptA}%
      \else
        [\QCBOptB]{\QCBOptA}%
      \fi
    \fi
  \fi
}
\def\GRAPHICSPS#1{%
 \ifcase\GRAPHICSTYPE%\GRAPHICSTYPE=0
   \special{ps: #1}%
 \or%\GRAPHICSTYPE=1
   \special{language "PS", include "#1"}%
%%%\or%\GRAPHICSTYPE=2
%%%  #1%
 \fi
}%
\def\graffile#1#2#3#4{%
%%% \ifnum\GRAPHICSTYPE=\tw@
%%%  %Following if using psfig
%%%  \@ifundefined{psfig}{\input psfig.tex}{}%
%%%  \psfig{file=#1, height=#3, width=#2}%
%%% \else
  %Following for all others
  % JCS - added BOXTHEFRAME, see below
    \bgroup
       \@inlabelfalse
       \leavevmode
       \@ifundefined{bbl@deactivate}{\def~{\string~}}{\activesoff}%
        \raise -#4 \BOXTHEFRAME{%
           \hbox to #2{\raise #3\hbox to #2{\null #1\hfil}}}%
    \egroup
}%
\def\draftbox#1#2#3#4{%
 \leavevmode\raise -#4 \hbox{%
  \frame{\rlap{\protect\tiny #1}\hbox to #2%
   {\vrule height#3 width\z@ depth\z@\hfil}%
  }%
 }%
}%
\let\nographics=\@msidraft
\newif\ifwasdraft
\def\GRAPHIC#1#2#3#4#5{%
   \ifnum\@msidraft=\@ne\draftbox{#2}{#3}{#4}{#5}%
   \else\graffile{#1}{#3}{#4}{#5}%
   \fi
}
\def\addtoLaTeXparams#1{%
    \edef\LaTeXparams{\LaTeXparams #1}}%
\newif\ifBoxFrame \BoxFramefalse
\newif\ifOverFrame \OverFramefalse
\newif\ifUnderFrame \UnderFramefalse
\def\BOXTHEFRAME#1{%
   \hbox{%
      \ifBoxFrame
         \frame{#1}%
      \else
         {#1}%
      \fi
   }%
}
\def\doFRAMEparams#1{\BoxFramefalse\OverFramefalse\UnderFramefalse\readFRAMEparams#1\end}%
\def\readFRAMEparams#1{%
 \ifx#1\end%
  \let\next=\relax
  \else
  \ifx#1i\dispkind=\z@\fi
  \ifx#1d\dispkind=\@ne\fi
  \ifx#1f\dispkind=\tw@\fi
  \ifx#1t\addtoLaTeXparams{t}\fi
  \ifx#1b\addtoLaTeXparams{b}\fi
  \ifx#1p\addtoLaTeXparams{p}\fi
  \ifx#1h\addtoLaTeXparams{h}\fi
  \ifx#1X\BoxFrametrue\fi
  \ifx#1O\OverFrametrue\fi
  \ifx#1U\UnderFrametrue\fi
  \ifx#1w
    \ifnum\@msidraft=1\wasdrafttrue\else\wasdraftfalse\fi
    \@msidraft=\@ne
  \fi
  \let\next=\readFRAMEparams
  \fi
 \next
 }%
\def\IFRAME#1#2#3#4#5#6{%
      \bgroup
      \let\QCTOptA\empty
      \let\QCTOptB\empty
      \let\QCBOptA\empty
      \let\QCBOptB\empty
      #6%
      \parindent=0pt
      \leftskip=0pt
      \rightskip=0pt
      \setbox0=\hbox{\QCBOptA}%
      \@tempdima=#1\relax
      \ifOverFrame
          % Do this later
          \typeout{This is not implemented yet}%
          \show\HELP
      \else
         \ifdim\wd0>\@tempdima
            \advance\@tempdima by \@tempdima
            \ifdim\wd0 >\@tempdima
               \setbox1 =\vbox{%
                  \unskip\hbox to \@tempdima{\hfill\GRAPHIC{#5}{#4}{#1}{#2}{#3}\hfill}%
                  \unskip\hbox to \@tempdima{\parbox[b]{\@tempdima}{\QCBOptA}}%
               }%
               \wd1=\@tempdima
            \else
               \textwidth=\wd0
               \setbox1 =\vbox{%
                 \noindent\hbox to \wd0{\hfill\GRAPHIC{#5}{#4}{#1}{#2}{#3}\hfill}\\%
                 \noindent\hbox{\QCBOptA}%
               }%
               \wd1=\wd0
            \fi
         \else
            \ifdim\wd0>0pt
              \hsize=\@tempdima
              \setbox1=\vbox{%
                \unskip\GRAPHIC{#5}{#4}{#1}{#2}{0pt}%
                \break
                \unskip\hbox to \@tempdima{\hfill \QCBOptA\hfill}%
              }%
              \wd1=\@tempdima
           \else
              \hsize=\@tempdima
              \setbox1=\vbox{%
                \unskip\GRAPHIC{#5}{#4}{#1}{#2}{0pt}%
              }%
              \wd1=\@tempdima
           \fi
         \fi
         \@tempdimb=\ht1
         %\advance\@tempdimb by \dp1
         \advance\@tempdimb by -#2
         \advance\@tempdimb by #3
         \leavevmode
         \raise -\@tempdimb \hbox{\box1}%
      \fi
      \egroup%
}%
\def\DFRAME#1#2#3#4#5{%
  \hfil\break
  \bgroup
     \leftskip\@flushglue
     \rightskip\@flushglue
     \parindent\z@
     \parfillskip\z@skip
     \let\QCTOptA\empty
     \let\QCTOptB\empty
     \let\QCBOptA\empty
     \let\QCBOptB\empty
     \vbox\bgroup
        \ifOverFrame
           #5\QCTOptA\par
        \fi
        \GRAPHIC{#4}{#3}{#1}{#2}{\z@}%
        \ifUnderFrame
           \break#5\QCBOptA
        \fi
     \egroup
   \egroup
   \break
}%
\def\FFRAME#1#2#3#4#5#6#7{%
 %If float.sty loaded and float option is 'h', change to 'H'  (gp) 1998/09/05
  \@ifundefined{floatstyle}
    {%floatstyle undefined (and float.sty not present), no change
     \begin{figure}[#1]%
    }
    {%floatstyle DEFINED
     \ifx#1h%Only the h parameter, change to H
      \begin{figure}[H]%
     \else
      \begin{figure}[#1]%
     \fi
    }
  \let\QCTOptA\empty
  \let\QCTOptB\empty
  \let\QCBOptA\empty
  \let\QCBOptB\empty
  \ifOverFrame
    #4
    \ifx\QCTOptA\empty
    \else
      \ifx\QCTOptB\empty
        \caption{\QCTOptA}%
      \else
        \caption[\QCTOptB]{\QCTOptA}%
      \fi
    \fi
    \ifUnderFrame\else
      \label{#5}%
    \fi
  \else
    \UnderFrametrue%
  \fi
  \begin{center}\GRAPHIC{#7}{#6}{#2}{#3}{\z@}\end{center}%
  \ifUnderFrame
    #4
    \ifx\QCBOptA\empty
      \caption{}%
    \else
      \ifx\QCBOptB\empty
        \caption{\QCBOptA}%
      \else
        \caption[\QCBOptB]{\QCBOptA}%
      \fi
    \fi
    \label{#5}%
  \fi
  \end{figure}%
 }%
\def\makeactives{
  \catcode`\"=\active
  \catcode`\;=\active
  \catcode`\:=\active
  \catcode`\'=\active
  \catcode`\~=\active
}
   \gdef\activesoff{%
      \def"{\string"}%
      \def;{\string;}%
      \def:{\string:}%
      \def'{\string'}%
      \def~{\string~}%
      %\bbl@deactivate{"}%
      %\bbl@deactivate{;}%
      %\bbl@deactivate{:}%
      %\bbl@deactivate{'}%
    }
\def\FRAME#1#2#3#4#5#6#7#8{%
 \bgroup
 \ifnum\@msidraft=\@ne
   \wasdrafttrue
 \else
   \wasdraftfalse%
 \fi
 \def\LaTeXparams{}%
 \dispkind=\z@
 \def\LaTeXparams{}%
 \doFRAMEparams{#1}%
 \ifnum\dispkind=\z@\IFRAME{#2}{#3}{#4}{#7}{#8}{#5}\else
  \ifnum\dispkind=\@ne\DFRAME{#2}{#3}{#7}{#8}{#5}\else
   \ifnum\dispkind=\tw@
    \edef\@tempa{\noexpand\FFRAME{\LaTeXparams}}%
    \@tempa{#2}{#3}{#5}{#6}{#7}{#8}%
    \fi
   \fi
  \fi
  \ifwasdraft\@msidraft=1\else\@msidraft=0\fi{}%
  \egroup
 }%
\def\TEXUX#1{"texux"}
\long\def\QQQ#1#2{%
     \long\expandafter\def\csname#1\endcsname{#2}}%
\long\def\QQA#1#2{}%
\def\QTR#1#2{{\csname#1\endcsname {#2}}}%
\def\EXPAND#1[#2]#3{}%
\def\NOEXPAND#1[#2]#3{}%
\def\LaTeXparent#1{}%
\def\ChildStyles#1{}%
\def\ChildDefaults#1{}%
\def\QTagDef#1#2#3{}%
  \providecommand{\UNICODE}[2][]{\protect\rule{.1in}{.1in}}
  \providecommand{\U}[1]{\protect\rule{.1in}{.1in}}
\def\QQfnmark#1{\footnotemark}
 \def\abstract{%
  \if@twocolumn
   \section*{Abstract (Not appropriate in this style!)}%
   \else \small
   \begin{center}{\bf Abstract\vspace{-.5em}\vspace{\z@}}\end{center}%
   \quotation
   \fi
  }%
   \def\registered{\relax\ifmmode{}\r@gistered
                    \else$\m@th\r@gistered$\fi}%
 \def\r@gistered{^{\ooalign
  {\hfil\raise.07ex\hbox{$\scriptstyle\rm\text{R}$}\hfil\crcr
  \mathhexbox20D}}}}{}%
\newdimen\theight
\def\newfmtname{LaTeX2e}
  \DeclareOldFontCommand{\rm}{\normalfont\rmfamily}{\mathrm}
  \DeclareOldFontCommand{\sf}{\normalfont\sffamily}{\mathsf}
  \DeclareOldFontCommand{\tt}{\normalfont\ttfamily}{\mathtt}
  \DeclareOldFontCommand{\bf}{\normalfont\bfseries}{\mathbf}
  \DeclareOldFontCommand{\it}{\normalfont\itshape}{\mathit}
  \DeclareOldFontCommand{\sl}{\normalfont\slshape}{\@nomath\sl}
  \DeclareOldFontCommand{\sc}{\normalfont\scshape}{\@nomath\sc}
\def\alpha{{\Greekmath 010B}}%
\def\beta{{\Greekmath 010C}}%
\def\gamma{{\Greekmath 010D}}%
\def\delta{{\Greekmath 010E}}%
\def\epsilon{{\Greekmath 010F}}%
\def\zeta{{\Greekmath 0110}}%
\def\eta{{\Greekmath 0111}}%
\def\theta{{\Greekmath 0112}}%
\def\iota{{\Greekmath 0113}}%
\def\kappa{{\Greekmath 0114}}%
\def\lambda{{\Greekmath 0115}}%
\def\mu{{\Greekmath 0116}}%
\def\nu{{\Greekmath 0117}}%
\def\xi{{\Greekmath 0118}}%
\def\pi{{\Greekmath 0119}}%
\def\rho{{\Greekmath 011A}}%
\def\sigma{{\Greekmath 011B}}%
\def\tau{{\Greekmath 011C}}%
\def\upsilon{{\Greekmath 011D}}%
\def\phi{{\Greekmath 011E}}%
\def\chi{{\Greekmath 011F}}%
\def\psi{{\Greekmath 0120}}%
\def\omega{{\Greekmath 0121}}%
\def\varepsilon{{\Greekmath 0122}}%
\def\vartheta{{\Greekmath 0123}}%
\def\varpi{{\Greekmath 0124}}%
\def\varrho{{\Greekmath 0125}}%
\def\varsigma{{\Greekmath 0126}}%
\def\varphi{{\Greekmath 0127}}%
\def\nabla{{\Greekmath 0272}}
\def\FindBoldGroup{%
   {\setbox0=\hbox{$\mathbf{x\global\edef\theboldgroup{\the\mathgroup}}$}}%
}
\def\Greekmath#1#2#3#4{%
    \if@compatibility
        \ifnum\mathgroup=\symbold
           \mathchoice{\mbox{\boldmath$\displaystyle\mathchar"#1#2#3#4$}}%
                      {\mbox{\boldmath$\textstyle\mathchar"#1#2#3#4$}}%
                      {\mbox{\boldmath$\scriptstyle\mathchar"#1#2#3#4$}}%
                      {\mbox{\boldmath$\scriptscriptstyle\mathchar"#1#2#3#4$}}%
        \else
           \mathchar"#1#2#3#4%
        \fi
    \else
        \FindBoldGroup
        \ifnum\mathgroup=\theboldgroup % For 2e
           \mathchoice{\mbox{\boldmath$\displaystyle\mathchar"#1#2#3#4$}}%
                      {\mbox{\boldmath$\textstyle\mathchar"#1#2#3#4$}}%
                      {\mbox{\boldmath$\scriptstyle\mathchar"#1#2#3#4$}}%
                      {\mbox{\boldmath$\scriptscriptstyle\mathchar"#1#2#3#4$}}%
        \else
           \mathchar"#1#2#3#4%
        \fi
      \fi}
\newif\ifGreekBold  \GreekBoldfalse
\let\SAVEPBF=\pbf
\def\pbf{\GreekBoldtrue\SAVEPBF}%
  \newcounter{equationnumber}
  \def\mathletters{%
     \addtocounter{equation}{1}
     \edef\@currentlabel{\theequation}%
     \setcounter{equationnumber}{\c@equation}
     \setcounter{equation}{0}%
     \edef\theequation{\@currentlabel\noexpand\alph{equation}}%
  }
    \def\BibTeX{{\rm B\kern-.05em{\sc i\kern-.025em b}\kern-.08em
                 T\kern-.1667em\lower.7ex\hbox{E}\kern-.125emX}}}{}%
\def\AmS{{\protect\usefont{OMS}{cmsy}{m}{n}%
                A\kern-.1667em\lower.5ex\hbox{M}\kern-.125emS}}}{}%
\def\@@eqncr{\let\@tempa\relax
    \ifcase\@eqcnt \def\@tempa{& & &}\or \def\@tempa{& &}%
      \else \def\@tempa{&}\fi
     \@tempa
     \if@eqnsw
        \iftag@
           \@taggnum
        \else
           \@eqnnum\stepcounter{equation}%
        \fi
     \fi
     \global\tag@false
     \global\@eqnswtrue
     \global\@eqcnt\z@\cr}
\def\TCItag{\@ifnextchar*{\@TCItagstar}{\@TCItag}}
\def\@TCItag#1{%
    \global\tag@true
    \global\def\@taggnum{(#1)}}
\def\@TCItagstar*#1{%
    \global\tag@true
    \global\def\@taggnum{#1}}
\def\dbigcup{\mathop{\displaystyle \bigcup }}%
\def\ExitTCILatex{\makeatother }
\let\DOTSI\relax
\def\RIfM@{\relax\ifmmode}%
\def\FN@{\futurelet\next}%
\def\iint{\DOTSI\intno@\tw@\FN@\ints@}%
\def\iiint{\DOTSI\intno@\thr@@\FN@\ints@}%
\def\iiiint{\DOTSI\intno@4 \FN@\ints@}%
\def\idotsint{\DOTSI\intno@\z@\FN@\ints@}%
\def\ints@{\findlimits@\ints@@}%
\newif\iflimtoken@
\newif\iflimits@
\def\findlimits@{\limtoken@true\ifx\next\limits\limits@true
 \else\ifx\next\nolimits\limits@false\else
 \limtoken@false\ifx\ilimits@\nolimits\limits@false\else
 \ifinner\limits@false\else\limits@true\fi\fi\fi\fi}%
\def\multint@{\int\ifnum\intno@=\z@\intdots@                          %1
 \else\intkern@\fi                                                    %2
 \ifnum\intno@>\tw@\int\intkern@\fi                                   %3
 \ifnum\intno@>\thr@@\int\intkern@\fi                                 %4
 \int}%                                                               %5
\def\multintlimits@{\intop\ifnum\intno@=\z@\intdots@\else\intkern@\fi
 \ifnum\intno@>\tw@\intop\intkern@\fi
 \ifnum\intno@>\thr@@\intop\intkern@\fi\intop}%
\def\intic@{%
    \mathchoice{\hskip.5em}{\hskip.4em}{\hskip.4em}{\hskip.4em}}%
\def\negintic@{\mathchoice
 {\hskip-.5em}{\hskip-.4em}{\hskip-.4em}{\hskip-.4em}}%
\def\ints@@{\iflimtoken@                                              %1
 \def\ints@@@{\iflimits@\negintic@
   \mathop{\intic@\multintlimits@}\limits                             %2
  \else\multint@\nolimits\fi                                          %3
  \eat@}%                                                             %4
 \else                                                                %5
 \def\ints@@@{\iflimits@\negintic@
  \mathop{\intic@\multintlimits@}\limits\else
  \multint@\nolimits\fi}\fi\ints@@@}%
\def\intkern@{\mathchoice{\!\!\!}{\!\!}{\!\!}{\!\!}}%
\def\plaincdots@{\mathinner{\cdotp\cdotp\cdotp}}%
\def\intdots@{\mathchoice{\plaincdots@}%
 {{\cdotp}\mkern1.5mu{\cdotp}\mkern1.5mu{\cdotp}}%
 {{\cdotp}\mkern1mu{\cdotp}\mkern1mu{\cdotp}}%
 {{\cdotp}\mkern1mu{\cdotp}\mkern1mu{\cdotp}}}%
\def\RIfM@{\relax\protect\ifmmode}
\def\text{\RIfM@\expandafter\text@\else\expandafter\mbox\fi}
\let\nfss@text\text
\def\text@#1{\mathchoice
   {\textdef@\displaystyle\f@size{#1}}%
   {\textdef@\textstyle\tf@size{\firstchoice@false #1}}%
   {\textdef@\textstyle\sf@size{\firstchoice@false #1}}%
   {\textdef@\textstyle \ssf@size{\firstchoice@false #1}}%
   \glb@settings}
\def\textdef@#1#2#3{\hbox{{%
                    \everymath{#1}%
                    \let\f@size#2\selectfont
                    #3}}}
\newif\iffirstchoice@
\def\Let@{\relax\iffalse{\fi\let\\=\cr\iffalse}\fi}%
\def\vspace@{\def\vspace##1{\crcr\noalign{\vskip##1\relax}}}%
\def\multilimits@{\bgroup\vspace@\Let@
 \baselineskip\fontdimen10 \scriptfont\tw@
 \advance\baselineskip\fontdimen12 \scriptfont\tw@
 \lineskip\thr@@\fontdimen8 \scriptfont\thr@@
 \lineskiplimit\lineskip
 \vbox\bgroup\ialign\bgroup\hfil$\m@th\scriptstyle{##}$\hfil\crcr}%
\def\Sb{_\multilimits@}%
\def\endSb{\crcr\egroup\egroup\egroup}%
\def\Sp{^\multilimits@}%
\newdimen\ex@
\def\rightarrowfill@#1{$#1\m@th\mathord-\mkern-6mu\cleaders
 \hbox{$#1\mkern-2mu\mathord-\mkern-2mu$}\hfill
 \mkern-6mu\mathord\rightarrow$}%
\def\leftarrowfill@#1{$#1\m@th\mathord\leftarrow\mkern-6mu\cleaders
 \hbox{$#1\mkern-2mu\mathord-\mkern-2mu$}\hfill\mkern-6mu\mathord-$}%
\def\leftrightarrowfill@#1{$#1\m@th\mathord\leftarrow
\mkern-6mu\cleaders
 \hbox{$#1\mkern-2mu\mathord-\mkern-2mu$}\hfill
 \mkern-6mu\mathord\rightarrow$}%
\def\overrightarrow{\mathpalette\overrightarrow@}%
\def\overrightarrow@#1#2{\vbox{\ialign{##\crcr\rightarrowfill@#1\crcr
 \noalign{\kern-\ex@\nointerlineskip}$\m@th\hfil#1#2\hfil$\crcr}}}%
\def\overleftarrow{\mathpalette\overleftarrow@}%
\def\overleftarrow@#1#2{\vbox{\ialign{##\crcr\leftarrowfill@#1\crcr
 \noalign{\kern-\ex@\nointerlineskip}$\m@th\hfil#1#2\hfil$\crcr}}}%
\def\overleftrightarrow{\mathpalette\overleftrightarrow@}%
\def\overleftrightarrow@#1#2{\vbox{\ialign{##\crcr
   \leftrightarrowfill@#1\crcr
 \noalign{\kern-\ex@\nointerlineskip}$\m@th\hfil#1#2\hfil$\crcr}}}%
\def\underrightarrow{\mathpalette\underrightarrow@}%
\def\underrightarrow@#1#2{\vtop{\ialign{##\crcr$\m@th\hfil#1#2\hfil
  $\crcr\noalign{\nointerlineskip}\rightarrowfill@#1\crcr}}}%
\def\underleftarrow{\mathpalette\underleftarrow@}%
\def\underleftarrow@#1#2{\vtop{\ialign{##\crcr$\m@th\hfil#1#2\hfil
  $\crcr\noalign{\nointerlineskip}\leftarrowfill@#1\crcr}}}%
\def\underleftrightarrow{\mathpalette\underleftrightarrow@}%
\def\underleftrightarrow@#1#2{\vtop{\ialign{##\crcr$\m@th
  \hfil#1#2\hfil$\crcr
 \noalign{\nointerlineskip}\leftrightarrowfill@#1\crcr}}}%
\def\qopnamewl@#1{\mathop{\operator@font#1}\nlimits@}
\let\nlimits@\displaylimits
\def\setboxz@h{\setbox\z@\hbox}
\def\varlim@#1#2{\mathop{\vtop{\ialign{##\crcr
 \hfil$#1\m@th\operator@font lim$\hfil\crcr
 \noalign{\nointerlineskip}#2#1\crcr
 \noalign{\nointerlineskip\kern-\ex@}\crcr}}}}
 \def\rightarrowfill@#1{\m@th\setboxz@h{$#1-$}\ht\z@\z@
  $#1\copy\z@\mkern-6mu\cleaders
  \hbox{$#1\mkern-2mu\box\z@\mkern-2mu$}\hfill
  \mkern-6mu\mathord\rightarrow$}
\def\leftarrowfill@#1{\m@th\setboxz@h{$#1-$}\ht\z@\z@
  $#1\mathord\leftarrow\mkern-6mu\cleaders
  \hbox{$#1\mkern-2mu\copy\z@\mkern-2mu$}\hfill
  \mkern-6mu\box\z@$}
\def\projlim{\qopnamewl@{proj\,lim}}
\def\injlim{\qopnamewl@{inj\,lim}}
\def\varinjlim{\mathpalette\varlim@\rightarrowfill@}
\def\varprojlim{\mathpalette\varlim@\leftarrowfill@}
\def\varliminf{\mathpalette\varliminf@{}}
\def\varliminf@#1{\mathop{\underline{\vrule\@depth.2\ex@\@width\z@
   \hbox{$#1\m@th\operator@font lim$}}}}
\def\varlimsup{\mathpalette\varlimsup@{}}
\def\varlimsup@#1{\mathop{\overline
  {\hbox{$#1\m@th\operator@font lim$}}}}
\def\align{\@verbatim \frenchspacing\@vobeyspaces \@alignverbatim
You are using the "align" environment in a style in which it is not defined.}
\let\csname endalign*\endcsname =\endtrivlist
\def\alignat{\@verbatim \frenchspacing\@vobeyspaces \@alignatverbatim
You are using the "alignat" environment in a style in which it is not defined.}
\let\csname endalignat*\endcsname =\endtrivlist
\def\xalignat{\@verbatim \frenchspacing\@vobeyspaces \@xalignatverbatim
You are using the "xalignat" environment in a style in which it is not defined.}
\let\csname endxalignat*\endcsname =\endtrivlist
\def\gather{\@verbatim \frenchspacing\@vobeyspaces \@gatherverbatim
You are using the "gather" environment in a style in which it is not defined.}
\let\csname endgather*\endcsname =\endtrivlist
\def\multiline{\@verbatim \frenchspacing\@vobeyspaces \@multilineverbatim
You are using the "multiline" environment in a style in which it is not defined.}
\let\csname endmultiline*\endcsname =\endtrivlist
\def\arrax{\@verbatim \frenchspacing\@vobeyspaces \@arraxverbatim
You are using a type of "array" construct that is only allowed in AmS-LaTeX.}
\def\tabulax{\@verbatim \frenchspacing\@vobeyspaces \@tabulaxverbatim
You are using a type of "tabular" construct that is only allowed in AmS-LaTeX.}
\let\csname endarrax*\endcsname =\endtrivlist
\let\csname endtabulax*\endcsname =\endtrivlist
 \def\endequation{%
     \ifmmode\ifinner % FLEQN hack
      \iftag@
        \addtocounter{equation}{-1} % undo the increment made in the begin part
        $\hfil
           \displaywidth\linewidth\@taggnum\egroup \endtrivlist
        \global\tag@false
        \global\@ignoretrue
      \else
        $\hfil
           \displaywidth\linewidth\@eqnnum\egroup \endtrivlist
        \global\tag@false
        \global\@ignoretrue
      \fi
     \else
      \iftag@
        \addtocounter{equation}{-1} % undo the increment made in the begin part
        \eqno \hbox{\@taggnum}
        \global\tag@false%
        $$\global\@ignoretrue
      \else
        \eqno \hbox{\@eqnnum}% $$ BRACE MATCHING HACK
        $$\global\@ignoretrue
      \fi
     \fi\fi
 }
 \newif\iftag@ \tag@false
 \def\TCItag{\@ifnextchar*{\@TCItagstar}{\@TCItag}}
 \def\@TCItag#1{%
     \global\tag@true
     \global\def\@taggnum{(#1)}}
 \def\@TCItagstar*#1{%
     \global\tag@true
     \global\def\@taggnum{#1}}
     \def\tag{\@ifnextchar*{\@tagstar}{\@tag}}
     \def\@tag#1{%
         \global\tag@true
         \global\def\@taggnum{(#1)}}
     \def\@tagstar*#1{%
         \global\tag@true
         \global\def\@taggnum{#1}}
\begin{document}
\title[Rates]{Convergence rates for the full Gaussian rough paths}
\author{Peter Friz}
\thanks{P.K. Friz has received funding from the European Research Council
under the European Union's Seventh Framework Programme (FP7/2007-2013) / ERC
grant agreement nr. 258237.}
\address{TU Berlin, Fakult\"{a}t II\\
Institut f\"{u}r Mathematik, MA 7-2\\
Strasse des 17. Juni 136\\
10623 Berlin\\
Germany}
\address{Weierstrass Institut f\"{u}r Angewandte Analysis und Stochastik\\
Mohrenstrasse 39\\
10117 Berlin\\
Germany}
\email{friz@math.tu-berlin.de}
\author{Sebastian Riedel}
\thanks{S. Riedel is supported by an IRTG (Berlin-Zurich) PhD-scholarship
and a scholarship from the Berlin Mathematical School (BMS)}
\address{TU Berlin, Fakult\"{a}t II\\
Institut f\"{u}r Mathematik, MA 7-4\\
Stra\ss e des 17. Juni 136\\
10623 Berlin\\
Germany}
\email{riedel@math.tu-berlin.de}

\begin{abstract}
Under the key assumption of finite $\rho $-variation, $\rho \in \lbrack 1,2)$%
, of the covariance of the underlying Gaussian process, sharp a.s.
convergence rates for approximations of Gaussian rough paths are
established. When applied to Brownian resp. fractional Brownian motion
(fBM), $\rho =1$ resp. $\rho =1/\left( 2H\right) $, we recover and extend
the respective results of [Hu--Nualart; Rough path analysis via fractional
calculus; TAMS 361 (2009) 2689-2718] and [Deya--Neuenkirch--Tindel; A
Milstein-type scheme without L\'{e}vy area terms for SDEs driven by
fractional Brownian motion; AIHP (2011)]. In particular, we establish an
a.s. rate $k^{-\left( 1/\rho -1/2-\varepsilon \right) }$, any $\varepsilon
>0 $, for Wong-Zakai and Milstein-type approximations with mesh-size $1/k$.
When applied to fBM this answers a conjecture in the afore-mentioned
references.
\end{abstract}

\maketitle

% \tableofcontents

\section{Introduction}

Recall that \textit{rough path theory} \cite{L98, LQ02, FV10} is a general
framework that allows to establish existence, uniqueness and stability of
differential equations driven by multi-dimensional continuous signals $%
x\colon \left[ 0,T\right] \rightarrow \mathbb{R}^{d}$ of low regularity.
Formally, a \textit{rough differential equation (RDE)} is of the form%
\begin{equation}
dy_{t}=\sum_{i=1}^{d}V_{i}\left( y_{t}\right) \,dx_{t}^{i}\equiv V\left(
y_{t}\right) \,dx_{t};\quad y_{0}\in \mathbb{R}^{e}  \label{eqn_determ_rde}
\end{equation}%
where $\left( V_{i}\right) _{i=1,\ldots ,d}$ is a family of vector fields in 
$\mathbb{R}^{e}$. When $x$ has finite $p$-variation, $p<2$, such
differential equations can be handled by Young integration theory. Of
course, this point of view does not allow to handle differential equations
driven by Brownian motion, indeed%
\begin{equation*}
\sup_{D\subset \left[ 0,T\right] }\sum_{t_{i}\in D}\left\vert
B_{t_{i+1}}-B_{t_{i}}\right\vert ^{2}=+\infty \text{ a.s.,}
\end{equation*}%
leave alone differential equations driven by stochastic processes with less
sample path regularity than Brownian motion (such as fractional Brownian
motion (fBM) with Hurst parameter $H<1/2$). Lyons' key insight was that low
regularity of $x$, say $p$-variation or $1/p$-H\"{o}lder for some $p\in
\lbrack 1,\infty )$, can be compensated by including "enough" higher order
information of $x$ such as all increments 
\begin{eqnarray}
\mathbf{x}_{s,t}^{n} &\equiv &\int_{s<t_{1}<\dots <t_{n}<t}dx_{t_{1}}\otimes
\ldots \otimes dx_{t_{n}}  \label{IIIntro} \\
&\equiv &\sum_{1\leq i_{1},\ldots ,i_{n}\leq d}\left( \int_{s<t_{1}<\dots
<t_{n}<t}dx_{t_{1}}^{i_{1}}\ldots dx_{t_{n}}^{i_{n}}\right)
\,e_{i_{1}}\otimes \ldots \otimes e_{i_{n}}\in \left( \mathbb{R}^{d}\right)
^{\otimes n}
\end{eqnarray}%
where "enough" means $n\leq \left[ p\right] $ ($\left\{ e_{1},\ldots
,e_{d}\right\} $ denotes just the usual Euclidean basis in $\mathbb{R}^{d}$
here). Subject to some generalized $p$-variation (or $1/p$-H\"{o}lder)
regularity, the ensemble $\left( \mathbf{x}^{1},\dots ,\mathbf{x}^{\left[ p%
\right] }\right) $ then constitutes what is known as a rough path.\footnote{%
A basic theorem of rough path theory asserts that further iterated integrals
up to any level $N\geq \left[ p\right] $, i.e.%
\begin{equation*}
S_{N}\left( \mathbf{x}\right) :=(\mathbf{x}^{n}:n\in \left\{ 1,\dots
,N\right\} )
\end{equation*}%
are then deterministically determined and the map $\mathbf{x}\mapsto
S_{N}\left( \mathbf{x}\right) $, known as Lyons lift, is continuous in rough
path metrics.
\par
{}} In particular, no higher order information is necessary in the Young
case; whereas the regime relevant for Brownian motion requires second order
- or level $2$ - information ("L\'{e}vy's area"), and so on. Note that the
iterated integral on the r.h.s. of (\ref{IIIntro}) is not - in general - a
well-defined Riemann-Stieltjes integral. Instead one typically proceeds by
mollification - given a multi-dimensional sample path $x=X\left( \omega
\right) $, consider piecewise linear approximations or convolution with a
smooth kernel, compute the iterated integrals and then pass, if possible, to
a limit in probability. Following this strategy one can often construct a
"canonical" enhancement of some stochastic process to a (random) rough path.
Stochastic integration and differential equations are then discussed in a
(rough) pathwise fashion; even in the complete absence of a semi-martingale
structure.

It should be emphasized that rough path theory was - from the very beginning
- closely related to higher order Euler schemes. Let $D=\left\{
0=t_{0}<\ldots <t_{\#D-1}=1\right\} $ be a partition of the unit interval.%
\footnote{%
A general time horizon $\left[ 0,T\right] $ is handled by trivial
reparametrization of time.} Considering the solution $y$ of $\left( \ref%
{eqn_determ_rde}\right) $, the step-$N$ Euler approximation $y^{\text{Euler}%
^{N};D}$ is given by%
\begin{eqnarray*}
y_{0}^{\text{Euler}^{N};D} &=&y_{0} \\
y_{t_{j+1}}^{\text{Euler}^{N};D} &=&y_{t_{j}}^{\text{Euler}%
^{N};D}+V_{i}\left( y_{t_{j}}^{\text{Euler}^{N};D}\right) \mathbf{x}%
_{t_{j},t_{j+1}}^{i}+\mathcal{V}_{i_{1}}V_{i_{2}}\left( y_{t_{j}}^{\text{%
Euler}^{N};D}\right) \mathbf{x}_{t_{j},t_{j+1}}^{i_{1},i_{2}} \\
&&+\ldots +\mathcal{V}_{i_{1}}\mathcal{\ldots V}_{i_{N-1}}V_{i_{N}}\left(
y_{t_{j}}^{\text{Euler}^{N};D}\right) \mathbf{x}_{t_{j},t_{j+1}}^{i_{1},%
\ldots ,i_{N}}
\end{eqnarray*}%
at the points $t_{j}\in D$ where we use the Einstein summation convention, $%
\mathcal{V}_{i}$ stands for the differential operator $%
\sum_{k=1}^{e}V_{i}^{k}\partial _{x_{k}}$ and $\mathbf{x}_{s,t}^{i_{1},%
\ldots ,i_{n}}=\int_{s<t_{1}<\dots <t_{n}<t}dx_{t_{1}}^{i_{1}}\ldots
dx_{t_{n}}^{i_{n}}$. An extension of the work of A.M. Davie (cf. \cite{D07}, 
\cite{FV10}) shows that the step-$N$ Euler scheme\footnote{%
... which one would call Milstein scheme when $N=2$ ...} for an RDE driven
by a $1/p$-H\"{o}lder rough path with step size $1/k$ (i.e. $D=D_{k}=\left\{ 
\frac{j}{k}:j=0,\ldots ,k\right\} $) and $N\geq \left[ p\right] $ will
converge with rate $O\left( \frac{1}{k}\right) ^{\left( N+1\right) /p-1}$.
Of course, in a probabilistic context, simulation of the iterated
(stochastic) integrals $\mathbf{x}_{t_{j},t_{j+1}}^{n}$ is not an easy
matter. A natural simplification of the step-$N$ Euler scheme thus amounts
to replace in each step%
\begin{equation*}
\left\{ \mathbf{x}_{t_{j},t_{j+1}}^{n}:n\in \left\{ 1,\dots ,N\right\}
\right\} \text{ }\leftrightarrow \text{ }\left\{ \frac{1}{n!}\left( \mathbf{x%
}_{t_{j},t_{j+1}}^{1}\right) ^{\otimes n}:n\in \left\{ 1,\dots ,N\right\}
\right\}
\end{equation*}%
which leads to the \emph{simplified }step-$N$ Euler scheme%
\begin{eqnarray*}
y_{0}^{\text{sEuler}^{N};D} &=&y_{0} \\
y_{t_{j+1}}^{\text{sEuler}^{N};D} &=&y_{t_{j}}^{\text{sEuler}%
^{N};D}+V_{i}\left( y_{t_{j}}^{\text{sEuler}^{N};D}\right) \mathbf{x}%
_{t_{j},t_{j+1}}^{i}+\frac{1}{2}\mathcal{V}_{i_{1}}V_{i_{2}}\left(
y_{t_{j}}^{\text{sEuler}^{N};D}\right) \mathbf{x}_{t_{j},t_{j+1}}^{i_{1}}%
\mathbf{x}_{t_{j},t_{j+1}}^{i_{2}} \\
&&+\ldots +\frac{1}{N!}\mathcal{V}_{i_{1}}\mathcal{\ldots V}%
_{i_{N-1}}V_{i_{N}}\left( y_{t_{j}}^{\text{sEuler}^{N};D}\right) \mathbf{x}%
_{t_{j},t_{j+1}}^{i_{1}}\ldots \mathbf{x}_{t_{j},t_{j+1}}^{i_{N}}.
\end{eqnarray*}%
Since $\mathbf{x}_{t_{j},t_{j+1}}^{1}=X_{t_{j},t_{j+1}}\left( \omega \right)
=X_{t_{j+1}}\left( \omega \right) -X_{t_{j}}\left( \omega \right) $ this is
precisely the effect in replacing the underlying sample path segment of $X$
by its piecewise linear approximation, i.e.%
\begin{equation*}
\left\{ X_{t}\left( \omega \right) :t\in \left[ t_{j},t_{j+1}\right]
\right\} \text{ }\leftrightarrow \left\{ X_{t_{j}}\left( \omega \right) +%
\frac{t-t_{j}}{t_{j+1}-t_{j}}X_{t_{j},t_{j+1}}\left( \omega \right) :t\in %
\left[ t_{j},t_{j+1}\right] \right\} .
\end{equation*}%
Therefore, as pointed out in \cite{DT} in the level $N=2$ H\"{o}lder rough
path context, it is immediate that a Wong-Zakai type result, i.e. a.s.
convergence of $y^{\left( k\right) }\rightarrow y$ for $k\rightarrow \infty $
where $y^{\left( k\right) }$ solves%
\begin{equation*}
dy_{t}^{\left( k\right) }=V\left( y_{t}^{\left( k\right) }\right)
\,dx_{t}^{\left( k\right) };\quad y_{0}^{\left( k\right) }=y_{0}\in \mathbb{R%
}^{e}
\end{equation*}%
and $x^{\left( k\right) }$ is the piecewise linear approximation of $x$ at
the points $\left( t_{j}\right) _{j=0}^{k}=D_{k}$, i.e.%
\begin{equation*}
x_{t}^{\left( k\right) }=x_{t_{j}}+\frac{t-t_{j}}{t_{j+1}-t_{j}}%
x_{t_{j},t_{j+1}}\text{\quad if }t\in \left[ t_{j},t_{j+1}\right] \text{, }%
t_{j}\in D_{k},
\end{equation*}%
leads to the convergence of the simplified (and implementable!)\ step-$N$
Euler scheme.

While Wong-Zakai type results in rough path metrics are available for large
classes of stochastic processes \cite[Chapter 13, 14, 15, 16]{FV10} our
focus here is on \textit{Gaussian} processes which can be enhanced to rough
paths. This problem was first discussed in \cite{CQ02} where it was shown in
particular that piecewise linear approximation to fBM are convergent in $p$%
-variation rough path metric if and only if $H>1/4$. A practical (and
essentially sharp) structural condition for the covariance, namely finite $%
\rho $-variation based on rectangular increments for some $\rho <2$ of the
underlying Gaussian process was given in \cite{FV10AIHP} and allowed for a
unified and detailed analysis of the resulting class of Gaussian rough
paths. This framework has since proven useful in a variety of different
applications ranging from non-Markovian H\"{o}rmander theory \cite{CF10} to
non-linear PDEs perturbed by space-time white-noise \cite{Hai10}. Of course,
fractional Brownian motion can also be handled in this framework (for $H>1/4$%
) and we shall make no attempt to survey its numerous applications in
engineering, finance and other fields.

Before describing our main result, let us recall in more detail some aspects
of Gaussian rough path theory (e.g. \cite{FV10AIHP}, \cite[Chapter 15]{FV10}%
, \cite{FV11}). The basic object is a centred, continuous Gaussian process
with sample paths $X\left( \omega \right) =\left( X^{1}\left( \omega \right)
,\ldots ,X^{d}\left( \omega \right) \right) \colon \left[ 0,1\right]
\rightarrow $ $\mathbb{R}^{d}$ where $X^{i}$ and $X^{j}$ are independent for 
$i\neq j$. The law of this process is determined by $R_{X}\colon \left[ 0,1%
\right] ^{2}\rightarrow \mathbb{R}^{d\times d}$, the covariance function,
given by%
\begin{equation*}
R_{X}\left( s,t\right) =\text{diag}\left( E\left( X_{s}^{1}X_{t}^{1}\right)
,\ldots ,E\left( X_{s}^{d}X_{t}^{d}\right) \right) .
\end{equation*}%
We need

\begin{definition}
\label{def_2D_grid_variation}Let $f=f\left( s,t\right) $ be a function from $%
\left[ 0,1\right] ^{2}$ into a normed space; for $s\leq t,u\leq v$ we define
rectangular increments as%
\begin{equation*}
f\left( 
\begin{array}{c}
s,t \\ 
u,v%
\end{array}%
\right) =f\left( t,v\right) -f\left( t,u\right) -f\left( s,v\right) +f\left(
s,u\right) .
\end{equation*}%
For $\rho \geq 1$ we then set%
\begin{equation*}
V_{\rho }\left( f,\left[ s,t\right] \times \left[ u,v\right] \right) =\left(
\sup_{\substack{ D\subset \left[ s,t\right]  \\ \tilde{D}\subset \left[ u,v%
\right] }}\sum_{\substack{ t_{i}\in D  \\ \tilde{t}_{j}\in \tilde{D}}}%
\left\vert f\left( 
\begin{array}{c}
t_{i},t_{i+1} \\ 
\tilde{t}_{j},\tilde{t}_{j+1}%
\end{array}%
\right) \right\vert ^{\rho }\right) ^{1/\rho }
\end{equation*}%
where the supremum is taken over all partitions $D$ and $\tilde{D}$ of the
intervals $\left[ s,t\right] $ resp. $\left[ u,v\right] $. If $V_{\rho }(f,%
\left[ 0,1\right] ^{2})<\infty $ we say that $f$ has finite ($2D$) $\rho $%
-variation.
\end{definition}

The main result in this context (see e.g. \cite[Theorem 15.33]{FV10}, \cite%
{FV11}) now asserts that if there exists $\rho <2$ such that $V_{\rho
}\left( R_{X},\left[ 0,1\right] ^{2}\right) <\infty $ then $X$ lifts to an 
\emph{enhanced Gaussian process} $\mathbf{X}$ with sample paths in the $p$%
-variation rough path space $C^{0,p-var}\left( \left[ 0,1\right] ,G^{\left[ p%
\right] }\left( \mathbb{R}^{d}\right) \right) $, any $p\in \left( 2\rho
,4\right) $. (This and other notations are introduced in section 2.) This
lift is "natural" in the sense that for a large class of smooth
approximations $X^{\left( k\right) }$ of $X$ (say piecewise linear,
mollifier, Karhunen-Loeve) the corresponding iterated integrals of $%
X^{\left( k\right) }$ converge (in probability) to $\mathbf{X}$ with respect
to the $p$-variation rough path metric. (We recall from \cite{FV10} that $%
\rho _{p\text{-var}}$, the so-called inhomogeneous $p$-variation metric for $%
G^{N}\left( \mathbb{R}^{d}\right) $-valued paths, is called $p$-variation
rough path metric when $\left[ p\right] =N$; the It\={o}-Lyons map enjoys
local Lipschitz regularity in this $p$-variation rough path metric.)
Moreover, this condition is sharp; indeed fBM falls into this framework with 
$\rho =1/\left( 2H\right) $ and we known that piecewise-linear
approximations to L\'{e}vy's area diverge when $H=1/4$.

Our main result (cf. Theorem \ref{theorem_main01}), when applied to
(mesh-size $1/k$) piecewise linear approximations $X^{\left( k\right) }$ of $%
X$, reads as follows.

\begin{theorem}
\label{theorem_main01_intro}Let $X=\left( X^{1},\ldots ,X^{d}\right) \colon %
\left[ 0,1\right] \rightarrow \mathbb{R}^{d}$ be a centred Gaussian process
on a probability space $\left( \Omega ,\mathcal{F},P\right) $ with
continuous sample paths where $X^{i}$ and $X^{j}$ are independent for $i\neq
j$. Assume that the covariance $R_{X}$ has finite $\rho $-variation for $%
\rho \in \lbrack 1,2)$ and $K\geq V_{\rho }\left( R_{X},\left[ 0,1\right]
^{2}\right) $. Then there is an enhanced Gaussian process $\mathbf{X}$ with
sample paths a.s. in $C^{0,p-var}\left( \left[ 0,1\right] ,G^{\left[ p\right]
}\left( \mathbb{R}^{d}\right) \right) $ for any $p\in \left( 2\rho ,4\right) 
$ and$\in $%
\begin{equation*}
\left\vert \rho _{p-var}\left( S_{\left[ p\right] }\left( X^{\left( k\right)
}\right) ,\mathbf{X}\right) \right\vert _{L^{r}}\rightarrow 0
\end{equation*}%
for $k\rightarrow \infty $ and every $r\geq 1$ ($\left\vert \cdot
\right\vert _{L^{r}}$ denotes just the usual $L^{r}\left( P\right) $-norm
for real valued random variables here). Moreover, for any $\gamma >\rho $
such that $\frac{1}{\gamma }+\frac{1}{\rho }>1$ and any $q>2\gamma $ and $%
N\in \mathbb{N}$ there is a constant $C=C\left( q,\rho ,\gamma ,K,N\right) $
such that%
\begin{equation*}
\left\vert \rho _{q-var}\left( S_{N}\left( X^{\left( k\right) }\right)
,S_{N}\left( \mathbf{X}\right) \right) \right\vert _{L^{r}}\leq
Cr^{N/2}\sup_{0\leq t\leq 1}\left\vert X_{t}^{\left( k\right)
}-X_{t}\right\vert _{L^{2}}^{1-\frac{\rho }{\gamma }}
\end{equation*}%
holds for every $k\in \mathbb{N}$.
\end{theorem}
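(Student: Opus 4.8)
The plan is to reduce the whole statement to two ingredients: (i) a uniform $L^2$-type estimate on the iterated integrals of the difference $X - X^{(k)}$ at levels $1$ and $2$, phrased in terms of the $2D$ $\rho$-variation of the covariance, and (ii) a Gaussian hypercontractivity/Kolmogorov-type argument that upgrades these second-moment bounds to the $p$-variation rough path metric with $L^r$ control and the advertised interpolation exponent $1 - \rho/\gamma$. The first part of the theorem (the $L^r$ convergence $\rho_{p\text{-var}}(S_{[p]}(X^{(k)}),\mathbf X)\to 0$) is really a corollary of the quantitative second part together with the already-cited existence result for the enhanced Gaussian process $\mathbf X$ (Theorem 15.33 in \cite{FV10}): once one has a rate in terms of $\sup_t|X^{(k)}_t - X_t|_{L^2}$, which tends to $0$ for piecewise-linear approximations, the convergence follows, and the extension from level $[p]$ to arbitrary level $N$ is handled by the continuity of the Lyons lift $S_N$ in rough path metrics.

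First I would set up the key deterministic/Gaussian estimate. Working on a fixed interval $[s,t]$, I would control the homogeneous quantities $\big|X^{(k)}_{s,t} - X_{s,t}\big|_{L^2}$ and $\big|\mathbf X^{(k),2}_{s,t} - \mathbf X^{2}_{s,t}\big|_{L^2}$ by some power of $(t-s)$ times a power of a "global smallness parameter" $\varepsilon_k$, using the finite $\rho$-variation of $R_X$. The natural approach is: (a) prove a bound of the form $|\,\cdot\,|_{L^2} \le C\, V_\rho(R_X;[s,t]^2)^{1/2}$ for the exact lift — this is essentially the Gaussian rough path a priori estimate — and a companion bound $\le C\,\varepsilon_k^{\theta}\, (\text{something})^{1-\theta}$ for the difference, where $\varepsilon_k = \sup_t |X^{(k)}_t - X_t|_{L^2}$; (b) interpolate. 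For the difference at level $2$ one writes $\mathbf X^{(k),2} - \mathbf X^{2}$ as a sum of integrals of the form $\int (X^{(k)} - X)\,dX^{(k)}$, $\int X\,d(X^{(k)}-X)$, etc., and estimates each $L^2$ norm by a $2D$ Young integral against $dR$-type measures, exploiting that the increments of $X^{(k)}$ over the grid $D_k$ coincide with those of $X$. The interpolation exponent $1-\rho/\gamma$ arises by trading an $L^2$ bound of order $\varepsilon_k$ (no spatial decay) against an $L^2$ bound of order $(t-s)^{1/\rho}$-type (full regularity, no smallness): writing $1 = \frac{\rho}{\gamma} + (1-\frac{\rho}{\gamma})$ and using $\frac1\gamma + \frac1\rho > 1$ to keep the resulting "mixed" $2D$ $p$-variation finite for $p$ slightly below $2$, one lands on the exponent $1-\rho/\gamma$ on $\varepsilon_k$. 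The condition $q > 2\gamma$ is exactly what is needed so that the residual regularity $1/p$ with $p = \gamma$-ish is $>1/q$ after the interpolation, i.e. the limiting object still lives in $C^{0,q\text{-var}}$.

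Next I would globalize. Having homogeneous estimates $\big|\,d(S_N(X^{(k)}),S_N(\mathbf X))_{s,t}\big|_{L^2} \lesssim \varepsilon_k^{1-\rho/\gamma}\,\omega(s,t)^{1/\gamma}$ for a suitable control $\omega$ (built from $V_\rho(R_X;\cdot)$), I would invoke Gaussian hypercontractivity: all homogeneous components live in a fixed finite sum of Wiener chaoses (up to order $N$), so the $L^r$ norm is dominated by $C r^{N/2}$ times the $L^2$ norm — this produces the $r^{N/2}$ factor. Then a Kolmogorov-type / Besov-variation embedding criterion for rough paths (as in \cite{FV10}, e.g. the $p$-variation embedding theorem) converts the pointwise-in-$(s,t)$ moment bounds into a moment bound on $\rho_{q\text{-var}}$ itself, for any $q > 2\gamma$, preserving the $\varepsilon_k^{1-\rho/\gamma}$ rate and the $r^{N/2}$ dependence; adjusting constants gives $C = C(q,\rho,\gamma,K,N)$ with $K \ge V_\rho(R_X;[0,1]^2)$ entering through $\omega$.

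The main obstacle is step (i)–(a)/(b): getting the difference of the level-$2$ iterated integrals controlled in $L^2$ by the mixed $2D$ $\rho$-variation with the correct power of $\varepsilon_k$. This requires a careful $2D$ Young-integration argument — identifying the right "interpolated" control function $\omega$, checking it has finite $p$-variation for $p<2$ precisely under $\frac1\gamma+\frac1\rho>1$, and handling the cross terms $\int X\,d(X^{(k)}-X)$ where the integrator is the low-regularity difference — together with verifying that the piecewise-linear structure of $X^{(k)}$ (grid-matching increments) is exactly what makes $\varepsilon_k = \sup_t|X^{(k)}_t-X_t|_{L^2}$ the correct smallness scale rather than something weaker. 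Everything downstream (hypercontractivity, the $p$-variation embedding, the reduction of part one to part two) is comparatively routine given the machinery already cited in the excerpt.
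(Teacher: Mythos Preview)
Your proposal has a genuine gap: restricting the $L^2$ difference estimates to levels $1$ and $2$ is not enough, and the appeal to ``continuity of the Lyons lift'' does not repair this.

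The point is that after interpolation your pointwise estimate takes the form
\[
\left\vert \pi_n\!\left(S_N(X^{(k)})_{s,t}-S_N(\mathbf X)_{s,t}\right)\right\vert_{L^2}
\;\lesssim\; \varepsilon_k^{1-\rho/\gamma}\,\omega([s,t]^2)^{\,n/(2\gamma)},
\]
i.e.\ the residual regularity is of order $1/(2\gamma)$, not $1/(2\rho)$. To pass from such moment bounds to control of $\rho_{q\text{-var}}$ via the Kolmogorov\,/\,Besov--variation embedding one needs $q>2\gamma$; but then $\rho_{q\text{-var}}$ is a genuine rough path metric only once $N\ge[q]\ge[2\gamma]$, which for large $\gamma$ (and $\gamma$ \emph{must} be taken large to approach the optimal rate when $\rho=1$) forces arbitrarily many levels. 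So the $L^2$ difference estimate has to be established for \emph{every} level $n\le N$, not just $n=1,2$. Using pathwise local Lipschitz continuity of $S_N$ instead would insert a random Lipschitz constant depending on $\|\mathbf X\|_{p\text{-var}}$ into the bound; taking $L^r$ of the product does not yield the clean deterministic constant $C\,r^{N/2}$ claimed.

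The paper's route is: prove the $L^2$ difference estimate directly for $n=1,2,3,4$ (Propositions~\ref{prop_main_estimates_n1_n2}, \ref{prop_main_estimates_n3}, \ref{prop_main_estimates_n4}), then run an $L^2$ induction over $n$ modelled on Lyons' extension theorem (Proposition~\ref{prop_key_estimate_higher_levels}) to reach all higher levels, and only then apply hypercontractivity and the variation embedding. The induction needs the base case up to level $[2\rho]+1$, which equals $4$ when $\rho\in[3/2,2)$; levels $3$ and $4$ are the hard part and require the shuffle-algebra reduction to generating words (Proposition~\ref{prop_generating_sets}) together with $3D$ and $4D$ Young integration against grid-controls (Section~\ref{section_n=4}). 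None of this machinery appears in your sketch, and it is precisely where the main work lies. Even for $\rho\in[3/2,2)$ the rough path itself lives at level $3$, so level-$2$ estimates alone would not even give convergence in $\rho_{p\text{-var}}$.
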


As an immediate consequence we obtain (essentially) sharp a.s. convergence
rates for Wong-Zakai approximations and the simplified step-$3$ Euler scheme.

\begin{corollary}
Consider a RDE with $C^{\infty }$-bounded vector fields driven by a Gaussian
H\"{o}lder rough path $\mathbf{X}$. Then mesh-size $1/k$ Wong-Zakai
approximations (i.e. solutions of ODEs driven by $X^{\left( k\right) }$)
converge uniformly with a.s. rate $k^{-\left( 1/\rho -1/2-\varepsilon
\right) }$, any $\varepsilon >0$, to the RDE solution. The same rate is
valid for the simplified (and implementable) step-$3$ Euler scheme.
\end{corollary}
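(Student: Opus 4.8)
The analytic heart of the matter is Theorem \ref{theorem_main01_intro}; granting it, the corollary follows from four further ingredients: an a priori $L^{2}$-rate for the piecewise linear approximation, an optimisation over the parameter $\gamma $, a Borel--Cantelli passage from $L^{r}$- to almost sure rates, and the (classical) stability theory of RDEs. For the first, finiteness of the (H\"{o}lder-type) $\rho $-variation of $R_{X}$ gives $|X_{s,t}|_{L^{2}}^{2}\lesssim |t-s|^{1/\rho }$; since for $t\in [t_{j},t_{j+1}]$ one has $X_{t}^{(k)}-X_{t}=\tfrac{t-t_{j}}{t_{j+1}-t_{j}}X_{t_{j},t_{j+1}}-X_{t_{j},t}$, it follows that $\sup_{0\le t\le 1}|X_{t}^{(k)}-X_{t}|_{L^{2}}\lesssim k^{-1/(2\rho )}$. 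Inserting this into the quantitative bound of Theorem \ref{theorem_main01_intro}, for every $\gamma >\rho $ with $\tfrac{1}{\gamma }+\tfrac{1}{\rho }>1$, every $q>2\gamma $ and $N=[q]$,
\begin{equation*}
\bigl| \rho _{q-var}\bigl( S_{N}(X^{(k)}),S_{N}(\mathbf{X})\bigr) \bigr| _{L^{r}}\le C\,r^{N/2}\,k^{-\left( \frac{1}{2\rho }-\frac{1}{2\gamma }\right) }.
\end{equation*}

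The exponent $\tfrac{1}{2\rho }-\tfrac{1}{2\gamma }$ is increasing in $\gamma $, and as $\gamma \uparrow \rho /(\rho -1)$ -- the boundary of the admissibility constraint, with $\gamma \to \infty $ playing the same role when $\rho =1$ -- it tends to $\tfrac{1}{\rho }-\tfrac{1}{2}$. Hence, given $\varepsilon >0$, we fix $\gamma $, then $q>2\gamma $, then $N=[q]$ so that $\tfrac{1}{2\rho }-\tfrac{1}{2\gamma }>\tfrac{1}{\rho }-\tfrac{1}{2}-\varepsilon $; note $q>2\gamma \ge 2\rho $, which is consistent with $\mathbf{X}\in C^{0,p-var}$, $p\in (2\rho ,4)$, so that $S_{N}(\mathbf{X})$ is the (well-defined) Lyons lift, and $N=[q]$ may be forced to be large, which is why we assume the vector fields $C^{\infty }$-bounded. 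Markov's inequality applied with $r$ so large that $r(\tfrac{1}{\rho }-\tfrac{1}{2}-\varepsilon )>1$, together with Borel--Cantelli, then yields $\rho _{q-var}(S_{N}(X^{(k)}),S_{N}(\mathbf{X}))=o\bigl( k^{-(\frac{1}{\rho }-\frac{1}{2}-\varepsilon )}\bigr) $ almost surely -- and likewise in the $1/p$-H\"{o}lder rough path metric upon invoking the H\"{o}lder analogue of the first assertion of Theorem \ref{theorem_main01_intro}.

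For the Wong--Zakai statement: since $x^{(k)}$ is smooth, $y^{(k)}$ is exactly the RDE solution driven by $S_{N}(X^{(k)})$, so local Lipschitzness of the It\^{o}--Lyons map in the $q$-variation rough path metric gives $\|y^{(k)}-y\|_{\infty }\le \|y^{(k)}-y\|_{q-var}\le L\,\rho _{q-var}\bigl( S_{N}(X^{(k)}),S_{N}(\mathbf{X})\bigr) $, where $L$ depends only on $\|S_{N}(\mathbf{X})\|$ and $\sup_{k}\|S_{N}(X^{(k)})\|$ -- the latter finite on the full-measure event above, a convergent sequence being bounded. Thus $\|y^{(k)}-y\|_{\infty }=o\bigl( k^{-(\frac{1}{\rho }-\frac{1}{2}-\varepsilon )}\bigr) $ a.s., and since $\varepsilon >0$ is arbitrary the asserted rate follows. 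For the simplified step-$3$ Euler scheme, recall from the Introduction that it coincides, over the partition $D_{k}$, with the genuine step-$3$ Euler scheme for the RDE driven by the level-$3$ lift $S_{3}(X^{(k)})$ of the piecewise linear path. The Davie-type estimate bounds the step-$3$ Euler error for a $1/p$-H\"{o}lder rough path with $p<4$ and mesh $1/k$ by $O\bigl( (1/k)^{4/p-1}\bigr) $; as the $S_{3}(X^{(k)})$ are uniformly bounded in $1/p$-H\"{o}lder norm a.s. and $p\in (2\rho ,4)$ may be taken arbitrarily close to $2\rho $, this error is $o\bigl( k^{-(\frac{2}{\rho }-1-\varepsilon )}\bigr) $. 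Since $\tfrac{2}{\rho }-1>\tfrac{1}{\rho }-\tfrac{1}{2}$ throughout $\rho \in [1,2)$, the discretisation error is negligible against the Wong--Zakai error, and the triangle inequality gives the stated a.s. rate $k^{-(1/\rho -1/2-\varepsilon )}$.

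On difficulty: all the substance sits in Theorem \ref{theorem_main01_intro}. Within the present argument the only genuinely delicate points are the optimisation in the degenerate limit $\gamma \to \rho /(\rho -1)$ -- this, and not a crude reading of the theorem, is what produces the optimal exponent $1/\rho -1/2$, and it is responsible for forcing $q$, hence the required smoothness, to grow as $\rho \uparrow 2$, explaining the hypotheses "step-$3$" and "$C^{\infty }$-bounded" -- and the verification that the extra third-order Euler increment is precisely what compensates the error incurred by dropping the L\'{e}vy-area terms, so that the simplified scheme retains the full Wong--Zakai rate rather than the slower rate a step-$2$ scheme would give when $\rho >1$.
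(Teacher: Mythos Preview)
Your proof is correct and follows essentially the same route as the paper: the $L^{2}$-rate for piecewise linear approximations, the application of Theorem~\ref{theorem_main01_intro}, the Borel--Cantelli passage, and the local Lipschitz continuity of the It\^{o}--Lyons map are exactly the ingredients the paper assembles in Corollary~\ref{cor_wong_zakai_piecew_lin} (via Theorem~\ref{theorem_as_wong_zakai_rate}), and your treatment of the simplified Euler scheme via the triangle inequality and the Davie-type estimate matches Corollary~\ref{Cor_rate_simple_euler}. One small slip in your closing commentary: the required smoothness grows as $\rho \downarrow 1$, not as $\rho \uparrow 2$ --- when $\rho$ is near $2$ the admissible $\gamma$ is squeezed near $2$, so $q$ stays just above $4$ and $\mathrm{Lip}^{4+\varepsilon}$ suffices, whereas for $\rho=1$ one must send $\gamma\to\infty$ to approach the optimal exponent, forcing $q$ and hence $N=[q]$ to be arbitrarily large; this is the content of the paper's second bullet-point remark following the corollary.
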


\begin{proof}
See Corollary \ref{cor_wong_zakai_piecew_lin} and Corollary \ref%
{Cor_rate_simple_euler}.
\end{proof}

\bigskip Several remarks are in order.

\begin{itemize}
\item Rough path analysis usually dictates that $N=2$ (resp. $N=3$) levels
need to be considered when $\rho \in \lbrack 1,3/2)$ resp. $\rho \in \lbrack
3/2,2)$. Interestingly, the situation for the Wong-Zakai error is quite
different here - referring to Theorem \ref{theorem_main01_intro}, when $\rho
=1$ we can and will take $\gamma $ arbitrarily large in order to obtain the
optimal convergence rate. Since $\rho _{q-\text{var}}$ is a rough path
metric only in the case $N=\left[ q\right] \geq \left[ 2\gamma \right] $, we
see that we need to consider all levels $N$ which is what Theorem \ref%
{theorem_main01_intro} allows us to do. On the other hand, as $\rho $
approaches $2$, there is not so much room left for taking $\gamma >\rho $.
Even so, we can always find $\gamma $ with $\left[ \gamma \right] =2$ such
that $1/\gamma +1/\rho >1$. Picking $q>2\gamma $ small enough shows that we
need $N=\left[ q\right] =4$.

\item The assumption of $C^{\infty }$-bounded vector fields in the corollary
was for simplicity only. In the proof we employ local Lipschitz continuity
of the It\={o}-Lyons map for $q$-variation rough paths (involving $N=\left[ q%
\right] $ levels). As is well-known, this requires $\mathrm{Lip}%
^{q+\varepsilon }$-regularity of the vector fields\footnote{%
...in the sense of E. Stein; cf. \cite{LQ02, FV10} for instance.}. Curiously
again, we need $C^{\infty }$-bounded vector fields when $\rho =1$ but only $%
\mathrm{Lip}^{4+\varepsilon }$ as $\rho $ approaches the critical value $2$.

\item Brownian motion falls in this framework with $\rho =1$. While the a.s.
(Wong-Zakai) rate $k^{-\left( 1/2-\varepsilon \right) }$ is part of the
folklore of the subject (e.g. \cite{GS06}) the $C^{\infty }$-boundedness
assumption appears unnecessarily strong. Our explanation here is that our
rates are \textit{universal} (i.e. valid away from one universal null-set,
not dependent on starting points, coefficients etc). In particular, the
(Wong-Zakai) rates are valid on the level of stochastic flows of
diffeomorphisms; we previously discussed these issues in the Brownian
context in \cite{FR11}.

\item A surprising aspect appears in the proof of theorem \ref%
{theorem_main01_intro}. The strategy is to give sharp estimates for the
levels $n=1,\ldots ,4$ first, then performing an induction similar to the
one used in Lyon's Extension Theorem (\cite{L98}) for the higher levels.
This is in contrast to the usual considerations of level $1$ to $3$ only
(without level $4$!) which is typical for Gaussian rough paths. (Recall that
we deal with Gaussian processes which have sample paths of finite $p$%
-variation, $p\in (2\rho ,4)$, hence $\left[ p\right] \leq 3$ which
indicates that we would need to control the first $3$ levels only before
using the Extension Theorem.)

\item Although theorem \ref{theorem_main01_intro} was stated here for
(step-size $1/k$) piecewise linear approximations $\left\{ X^{\left(
k\right) }\right\} $, the estimate holds in great generality for (Gaussian)
approximations whose covariance satisfies a uniform $\rho $-variation bound.
The statements of Theorem \ref{theorem_main01} and Theorem \ref%
{theorem_as_wong_zakai_rate} reflect this generality.

\item Wong-Zakai rates for the Brownian rough path (level $2$) were first
discussed in \cite{HN08}. They prove that Wong-Zakai approximations converge
(in $\gamma $-H\"{o}lder metric) with rate $k^{-\left( 1/2-\gamma
-\varepsilon \right) }$ (in fact, a logarithmic sharpening thereof without $%
\varepsilon $) provided $\gamma \in \left( 1/3,1/2\right) $. This
restriction on $\gamma $ is serious\ (for they fully rely on "level $2$"
rough path theory); in particular, the best "uniform" Wong-Zakai convergence
rate implied is $k^{-\left( 1/2-1/3-\varepsilon \right) }=k^{-\left(
1/6-\varepsilon \right) }$ leaving a significant gap to the well-known
Brownian a.s. Wong-Zakai rate.

\item Wong-Zakai (and Milstein) rates for the fractional Brownian rough path
(level $2$ only, Hurst parameter $H>1/3$) were first discussed in \cite{DT}.
They prove that Wong-Zakai approximations converge (in $\gamma $-H\"{o}lder
metric) with rate $k^{-\left( H-\gamma -\varepsilon \right) }$ (again, in
fact, a logarithmic sharpening thereof without $\varepsilon $) provided $%
\gamma \in \left( 1/3,H\right) $. Again, the restriction on $\gamma $ is
serious\ and the best "uniform" Wong-Zakai convergence rate - and the
resulting rate for the Milstein scheme - is $k^{-\left( H-1/3-\varepsilon
\right) }$. This should be compared to the rate $k^{-\left(
2H-1/2-\varepsilon \right) }$ obtained from our corollary. In fact, this
rate was conjectured in \cite{DT} and is sharp as may be seen from a precise
result concerning Levy's stochastic area for fBM, see \cite{NTU10}.
\end{itemize}

%\begin{acknowledgement}
%P.K. Friz has received funding from the European Research Council under the
%European Union's Seventh Framework Programme (FP7/2007-2013) / ERC grant
%agreement nr. 258237. S. Riedel is supported by an IRTG (Berlin-Zurich)
%PhD-scholarship.\bigskip
%\end{acknowledgement}

The remainder of the article is structured as follows: In Section \ref%
{section_notations}, we repeat the basic notions of (Gaussian) rough paths
theory. Section \ref{section_it_int_and_shuffle} recalls the connection
between the shuffle algebra and iterated integrals. In particular, we will
use the shuffle structure to see that in order to show the desired
estimates, we can concentrate on some iterated integrals which somehow
generate all the others. Our main tool for showing $L^{2}$ estimates on the
lower levels is multidimensional Young integration which we present in
Section \ref{section_multidim_young}. The main work, namely showing the
desired $L^{2}$-estimates for the difference of high-order iterated
integrals, is done in Section \ref{section_main_estimates}. After some
preliminary Lemmas in Subsection \ref{subsection_special_cases}, we show the
estimates for the lower levels, namely for $n=1,2,3,4$ in Subsection \ref%
{subsection_lower_levels} , then give an induction argument in Subsection %
\ref{subsection_higher_levels} for the higher levels $n>4$. Section \ref%
{section_main_result} contains our main result, namely sharp a.s.
convergence rates for a class of Wong-Zakai approximations, including
piecewise-linear and mollifier approximations. We further show in Subsection %
\ref{subsection_simple_euler} how to use these results in order to obtain
sharp convergence rates for the simplified Euler scheme.

\section{Notations and basic definitions\label{section_notations}}

For $N\in \mathbb{N}$ we define%
\begin{equation*}
T^{N}\left( \mathbb{R}^{d}\right) =\mathbb{R}\oplus \mathbb{R}^{d}\oplus
\left( \mathbb{R}^{d}\otimes \mathbb{R}^{d}\right) \oplus \ldots \oplus
\left( \mathbb{R}^{d}\right) ^{\otimes N}=\oplus _{n=0}^{N}\left( \mathbb{R}%
^{d}\right) ^{\otimes n}
\end{equation*}%
and write $\pi _{n}:T^{N}\left( \mathbb{R}^{d}\right) \rightarrow \left( 
\mathbb{R}^{d}\right) ^{\otimes n}$ for the projection on the $n$-th Tensor
level. It is clear that $T^{N}\left( \mathbb{R}^{d}\right) $ is a
(finite-dimensional) vector space. For elements $g,h\in T^{N}\left( \mathbb{R%
}^{d}\right) $, we define $g\otimes h\in T^{N}\left( \mathbb{R}^{d}\right) $
by%
\begin{equation*}
\pi _{n}\left( g\otimes h\right) =\sum_{i=0}^{n}\pi _{n-i}\left( g\right)
\otimes \pi _{i}\left( h\right) .
\end{equation*}%
One can easily check that $\left( T^{N}\left( \mathbb{R}^{d}\right)
,+,\otimes \right) $ is an associative algebra with unit element $\mathbf{1}%
=\exp \left( 0\right) =1+0+0+\ldots +0$ . We call it the \emph{truncated
tensor algebra of level }$N$. A norm is defined by%
\begin{equation*}
\left\vert g\right\vert _{T^{N}\left( \mathbb{R}^{d}\right)
}=\max_{n=0,\ldots ,N}\left\vert \pi _{n}\left( g\right) \right\vert
\end{equation*}%
which turns $T^{N}\left( \mathbb{R}^{d}\right) $ into a Banach space.

For $s<t$, we define%
\begin{equation*}
\Delta _{s,t}^{n}=\left\{ \left( u_{1},\ldots ,u_{n}\right) \in \left[ s,t%
\right] ^{n}~;~u_{1}<\ldots <u_{n}\right\}
\end{equation*}%
which is the $n$-simplex on the square $\left[ s,t\right] ^{n}$. We will use 
$\Delta =\Delta _{0,1}^{2}$ for the $2$-simplex over $\left[ 0,1\right] ^{2}$%
. A continuous map $\mathbf{x\colon }\Delta \rightarrow T^{N}\left( \mathbb{R%
}^{d}\right) $ is called \emph{multiplicative functional} if for all $s<u<t$
one has $\mathbf{x}_{s,t}=\mathbf{x}_{s,u}\mathbf{\otimes x}_{u,t}.$For a
path $x=\left( x^{1},\ldots ,x^{d}\right) \colon \left[ 0,1\right]
\rightarrow \mathbb{R}^{d}$ and $s<t$, we will use the notation $%
x_{s,t}=x_{t}-x_{s}$. If $x$ has finite variation, we define its $n$-th
iterated integral by%
\begin{eqnarray*}
\mathbf{x}_{s,t}^{n} &=&\int_{\Delta _{s,t}^{n}}dx\otimes \ldots \otimes dx
\\
&=&\sum_{1\leq i_{1},\ldots ,i_{n}\leq d}\int_{\Delta
_{s,t}^{n}}dx^{i_{1}}\ldots dx^{i_{n}}e_{i_{1}}\otimes \ldots \otimes
e_{i_{n}}\in \left( \mathbb{R}^{d}\right) ^{\otimes n}
\end{eqnarray*}%
where $\left\{ e_{1},\ldots ,e_{d}\right\} $ denotes the Euclidean basis in $%
\mathbb{R}^{d}$ and $\left( s,t\right) \in \Delta $. The canonical lift $%
S_{N}\left( x\right) \colon \Delta \rightarrow T^{N}\left( \mathbb{R}%
^{d}\right) $ is defined by%
\begin{equation*}
\pi _{n}\left( S_{N}\left( x\right) _{s,t}\right) =\left\{ 
\begin{array}{ccc}
\mathbf{x}_{s,t}^{n} & \text{if} & n\in \left\{ 1,\ldots ,N\right\} \\ 
1 & \text{if} & n=0.%
\end{array}%
\right.
\end{equation*}%
It is well know (as a consequence of Chen's theorem) that $S_{N}\left(
x\right) $ is a multiplicative functional. Actually, one can show that $%
S_{N}\left( x\right) $ takes values in the smaller set $G^{N}\left( \mathbb{R%
}^{d}\right) \subset T^{N}\left( \mathbb{R}^{d}\right) $ defined by%
\begin{equation*}
G^{N}\left( \mathbb{R}^{d}\right) =\left\{ S_{N}\left( x\right) _{0,1}:x\in
C^{1-var}\left( \left[ 0,1\right] ,\mathbb{R}^{d}\right) \right\}
\end{equation*}%
which is still a group with $\otimes $. If $\mathbf{x},\mathbf{y\colon }%
\Delta \rightarrow T^{N}\left( \mathbb{R}^{d}\right) $ are multiplicative
functionals and $p\geq 1$ we set 
\begin{equation*}
\rho _{p-var}\left( \mathbf{x},\mathbf{y}\right) :=\max_{n=1,\ldots
,N}\sup_{\left( t_{i}\right) \in \left[ 0,1\right] }\left(
\sum_{i}\left\vert \mathbf{x}_{t_{i},t_{i+1}}^{n}-\mathbf{y}%
_{t_{i},t_{i+1}}^{n}\right\vert ^{p/n}\right) ^{n/p}.
\end{equation*}%
This generalizes the $p$-variation distance induced by the usual $p$%
-variation semi-norm%
\begin{equation*}
\left\vert x\right\vert _{p-var;\left[ s,t\right] }=\left( \sup_{\left(
t_{i}\right) \subset \left[ s,t\right] }\sum_{i}\left\vert
x_{t_{i+1}}-x_{t_{i}}\right\vert ^{p}\right) ^{1/p}
\end{equation*}%
for paths $x\colon \left[ 0,1\right] \rightarrow \mathbb{R}^{d}$. The Lie
group $G^{N}\left( \mathbb{R}^{d}\right) $ admits a natural norm $\left\Vert
\cdot \right\Vert $, called the \emph{Carnot-Caratheodory norm} (cf. \cite[%
Chapter 7]{FV10}). If $\mathbf{x\colon }\Delta \rightarrow G^{N}\left( 
\mathbb{R}^{d}\right) $, we set%
\begin{equation*}
\left\Vert \mathbf{x}\right\Vert _{p-var;\left[ s,t\right] }=\left(
\sup_{\left( t_{i}\right) \subset \left[ s,t\right] }\sum_{i}\left\Vert 
\mathbf{x}_{t_{i},t_{i+1}}\right\Vert ^{p}\right) ^{1/p}.
\end{equation*}

\begin{definition}
The space $C_{o}^{0,p-var}\left( \left[ 0,1\right] ,G^{N}\left( \mathbb{R}%
^{d}\right) \right) $\textbf{\ }is defined as the set of continuous paths $%
\mathbf{x\colon }\Delta \rightarrow G^{N}\left( \mathbb{R}^{d}\right) $ for
which there exists a sequence of smooth paths $x_{k}\colon \left[ 0,1\right]
\rightarrow $ $\mathbb{R}^{d}$ such that $\rho _{p-var}\left( \mathbf{x}%
,S_{N}\left( x_{k}\right) \right) \rightarrow 0~$for $k\rightarrow \infty $.
If $N=\left[ p\right] =\max \left\{ n\in \mathbb{N}:n<p\right\} $ we call
this the \emph{space of (geometric) }$p$\emph{-rough paths}.
\end{definition}

It is clear by definition that every $p$-rough path is also a multiplicative
functional. By Lyon's First Theorem (or Extension Theorem, see \cite[Theorem
2.2.1]{L98} or \cite[Theorem 9.5]{FV10}) every $p$-rough path $\mathbf{x}$
has a unique lift to a path in $G^{N}\left( \mathbb{R}^{d}\right) $ for $%
N\geq \left[ p\right] $. We denote this lift by $S_{N}(\mathbf{x)}$ and call
it the \emph{Lyons lift}. For a $p$-rough path $\mathbf{x}$, we will also
use the notation%
\begin{equation*}
\mathbf{x}_{s,t}^{n}=\pi _{n}\left( S_{N}\left( \mathbf{x}\right)
_{s,t}\right)
\end{equation*}%
for $N\geq n$. Note that this is consistent with our former definition in
the case where $x$ had finite variation. We will always use small letters
for paths $x$ and capital letters for stochastic processes $X$. The same
notation introduced here will also be used for stochastic processes.

\begin{definition}
A function $\omega \colon \Delta \rightarrow \mathbb{R}^{+}$ is called a $%
\left( 1D\right) $ control if it is continuous and superadditive, i.e. if
for all $s<u<t$ one has%
\begin{equation*}
\omega \left( s,u\right) +\omega \left( u,t\right) \leq \omega \left(
s,t\right) .
\end{equation*}
\end{definition}

If $x\colon \left[ 0,1\right] \rightarrow \mathbb{R}^{d}$ is a continuous
path with finite $p$-variation, one can show that%
\begin{equation*}
\left( s,t\right) \mapsto V_{p}\left( x,\left[ s,t\right] \right)
^{p}:=\left\vert x\right\vert _{p-var;\left[ s,t\right] }^{p}
\end{equation*}%
is continuous and superadditive, hence defines a $1D$-control function.
Unfortunately, this is not the case for higher dimensions. Recall Definition %
\ref{def_2D_grid_variation}. If $f\colon \left[ 0,1\right] ^{2}\rightarrow 
\mathbb{R}$ has finite $p$-variation,%
\begin{equation*}
\left( s,t\right) ,\left( u,v\right) \mapsto V_{p}\left( f,\left[ s,t\right]
\times \left[ u,v\right] \right) ^{p}
\end{equation*}%
in general fails to be superadditive (cf. \cite{FV11}). Therefore, we will
need a second definition. If $A=\left[ s,t\right] \times \left[ u,v\right] $
is a rectangle in $\left[ 0,1\right] ^{2}$, we will use the notation $%
f\left( A\right) :=f\left( 
\begin{array}{c}
s,t \\ 
u,v%
\end{array}%
\right) $. We call two rectangles \emph{essentially disjoint} if their
intersection is empty or degenerate. A partition $\Pi $ of a rectangle $%
R\subset \left[ 0,1\right] ^{2}$ is a finite set of essentially disjoint
rectangles whose union is $R$. The family of all such partitions is denoted
by $\mathcal{P}\left( R\right) $.

\begin{definition}
\label{def_2D_controls}A function $\omega \colon \Delta \times \Delta
\rightarrow \mathbb{R}^{+}$ is called a $\left( 2D\right) $ control if it is
continuous, zero on degenerate rectangles and super-additive in the sense
that for all rectangles $R\subset \left[ 0,1\right] ^{2}$,%
\begin{equation*}
\sum_{i=1}^{n}\omega \left( R_{i}\right) \leq \omega \left( R\right)
\end{equation*}%
whenever $\{R_{i}:i=1,\ldots ,n\}\in \mathcal{P}\left( R\right) $. $\omega $
is called \emph{symmetric} if $\omega \left( \left[ s,t\right] \times \left[
u,v\right] \right) =\omega \left( \left[ u,v\right] \times \left[ s,t\right]
\right) $ holds for all $s<t$ and $u<v$. If $f\colon \left[ 0,1\right]
^{2}\rightarrow B$ is a continuous function, we say that its $p$-variation
is controlled by $\omega $ if $\left\vert f\left( R\right) \right\vert
^{p}\leq \omega \left( R\right) $ holds for all rectangles $R\subset \lbrack
0,1]^{2}$.
\end{definition}

It is easy to see that if $\omega $ is a $2D$ control, $\left( s,t\right)
\mapsto \omega \left( \left[ s,t\right] ^{2}\right) $ defines a $1D$-control.

\begin{definition}
For $f\colon \left[ 0,1\right] ^{2}\rightarrow \mathbb{R}$, $R\subset
\lbrack 0,1]^{2}$ a rectangle and $p\geq 1$ we define%
\begin{equation*}
\left\vert f\right\vert _{p-var;R}:=\sup_{\Pi \in \mathcal{P}\left( R\right)
}\left( \sum_{A\in \Pi }\left\vert f\left( A\right) \right\vert ^{p}\right)
^{1/p}.
\end{equation*}%
If $\left\vert f\right\vert _{p-var;\left[ 0,1\right] ^{2}}<\infty $ we say
that $f$ has finite controlled $p$-variation.
\end{definition}

The difference of $2D$ $p$-variation introduced in Definition \ref%
{def_2D_grid_variation} and \emph{controlled} $p$-variation is that in the
former, one only takes the supremum over grid-like partitions whereas in the
latter, one takes the supremum over all partitions of the rectangle. By
superadditivity, the existence of a control $\omega $ which controls the $p$%
-variation of $f$ implies that $f$ has finite controlled $p$-variation and $%
\left\vert f\right\vert _{p-var;R}\leq \omega \left( R\right) ^{1/p}$. In
this case, we can always assume w.l.o.g. that $\omega $ is symmetric,
otherwise we just substitute $\omega $ by its symmetrization $\omega _{\text{%
sym}}$ given by%
\begin{equation*}
\omega _{\text{sym}}\left( \left[ s,t\right] \times \left[ u,v\right]
\right) =\omega \left( \left[ s,t\right] \times \left[ u,v\right] \right)
+\omega \left( \left[ u,v\right] \times \left[ s,t\right] \right) .
\end{equation*}%
The connection between finite variation and finite controlled $p$-variation
is summarized in the following theorem.

\begin{theorem}
\label{theorem_comp_contr_p_var}Let $f\colon \left[ 0,1\right]
^{2}\rightarrow \mathbb{R}$ be continuous and $R\subset \left[ 0,1\right]
^{2}$ be a rectangle.

\begin{enumerate}
\item We have 
\begin{equation*}
V_{1}\left( f,R\right) =\left\vert f\right\vert _{1-var;R}.
\end{equation*}

\item For any $p\geq 1$ and $\epsilon >0$ there is a constant $C=C\left(
p,\epsilon \right) $ such that%
\begin{equation*}
\frac{1}{C}\left\vert f\right\vert _{\left( p+\epsilon \right) -var;R}\leq
V_{p-var}\left( f,R\right) \leq \left\vert f\right\vert _{p-var;R}.
\end{equation*}

\item If $f$ has finite controlled $p$-variation, then%
\begin{equation*}
R\mapsto \left\vert f\right\vert _{p-var;R}^{p}
\end{equation*}%
is a $2D$-control. In particular, there exists a $2D$-control $\omega $ such
that for all rectangles $R\subset \left[ 0,1\right] ^{2}$ we have $%
\left\vert f\left( R\right) \right\vert ^{p}\leq \omega \left( R\right) $,
i.e. $\omega $ controls the $p$-variation of $f$.
\end{enumerate}
\end{theorem}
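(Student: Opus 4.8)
The plan is to handle the three parts in increasing order of difficulty. The inequalities $V_{p\text{-var}}(f,R)\le |f|_{p\text{-var};R}$ and $V_{1}(f,R)\le|f|_{1\text{-var};R}$ are trivial, since grid-like partitions form a subclass of all partitions of $R$; likewise, in part~(3), superadditivity of $R\mapsto|f|_{p\text{-var};R}^{p}$ is immediate because the union of partitions of the $R_i$ is a partition of $R$ (so the defining suprema add up), and the function vanishes on degenerate rectangles since $f(A)=0$ there. For the reverse inequality in~(1), I would take an arbitrary partition $\Pi$ of $R$ and let $D\times\tilde D$ be the grid generated by all coordinates of the corners of the rectangles of $\Pi$; each $A\in\Pi$ is then a disjoint union of cells of this grid, and since the rectangular-increment operator $f(\cdot)$ is finitely additive over a grid subdivision of a rectangle, the triangle inequality gives $|f(A)|\le\sum_{c\subset A}|f(c)|$. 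Summing over $A\in\Pi$, and noting that distinct rectangles of $\Pi$ consume disjoint families of cells, yields $\sum_{A\in\Pi}|f(A)|\le\sum_{c}|f(c)|\le V_{1}(f,R)$, and taking the supremum over $\Pi$ finishes~(1). As a byproduct this also disposes of the case $p=1$ of~(2), since $\|\cdot\|_{\ell^{1+\epsilon}}\le\|\cdot\|_{\ell^{1}}$.

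The substance of the theorem is the lower bound in~(2), and this is where the real work lies. Write $V:=V_{p\text{-var}}(f,R)$. The plan is a greedy, Loève--Young--type argument: given an arbitrary partition $\Pi=\{A_{1},\dots,A_{n}\}$ of $R$ ordered so that $|f(A_{1})|\ge\cdots\ge|f(A_{n})|$, the key quantitative input is the combinatorial estimate
\[
\sum_{i=1}^{k}|f(A_{i})|^{p}\ \le\ C_{0}(k)\,V^{p},\qquad C_{0}(k)=O\big((\log k)^{p}\big),
\]
valid for every $k$. Granting this, $k\,|f(A_{k})|^{p}\le C_{0}(k)V^{p}$, hence $|f(A_{k})|\le(C_{0}(k)/k)^{1/p}V$, so that
\[
\sum_{k}|f(A_{k})|^{p+\epsilon}\ \le\ V^{p+\epsilon}\sum_{k\ge1}\Big(\tfrac{C_{0}(k)}{k}\Big)^{(p+\epsilon)/p}\ =\ C(p,\epsilon)^{p+\epsilon}V^{p+\epsilon},
\]
the series converging because $(p+\epsilon)/p>1$ while $C_{0}$ is only polylogarithmic; taking the supremum over $\Pi$ gives $|f|_{(p+\epsilon)\text{-var};R}\le C(p,\epsilon)\,V$, which is the claim.

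To establish the combinatorial estimate I would first enlarge $\{A_{1},\dots,A_{k}\}$ to a partition $\Pi_{k}$ of $R$ using only the grid those $k$ rectangles generate, so that $\Pi_{k}$ has at most $2k$ distinct vertical coordinates; then decompose the $x$-interval of each rectangle occurring in $\Pi_{k}$ into its $O(\log k)$ maximal dyadic subintervals, taken in the rank order of those $\le 2k$ coordinates. Writing $f(A)=\sum_{I}f(I\times[c_{A},d_{A}])$ over this decomposition and applying $\big(\sum_{I}|\cdot|\big)^{p}\le(\#I)^{p-1}\sum_{I}|\cdot|^{p}$, one is reduced to controlling, for a fixed dyadic strip $I$, the sum $\sum_{A}|f(I\times[c_{A},d_{A}])|^{p}$ over the rectangles $A$ whose $x$-interval contains $I$: for such $A$ the $y$-intervals $[c_{A},d_{A}]$ are essentially disjoint, so this inner sum is $\le V_{p\text{-var}}(f,I\times[u,v])^{p}$; summing over the dyadic strips $I$ at a fixed level (which tile $[s,t]$) then gives $\le V_{p\text{-var}}(f,R)^{p}=V^{p}$ by the additivity of $V_{p\text{-var}}(f,\cdot)^{p}$ across a common edge, and there are only $O(\log k)$ levels. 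Collecting the $O(\log k)$ levels with the $(\log k)^{p-1}$ Jensen factor produces $C_{0}(k)=O((\log k)^{p})$. I expect this to be the main obstacle: the point of the dyadic bookkeeping, combined with grouping by level and the edge-union additivity of the grid-variation, is precisely to keep the number of pieces entering as a multiplier polylogarithmic rather than polynomial in $k$, and this polylog bound is exactly what is needed to absorb the loss into an arbitrarily small exponent $\epsilon$.

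For part~(3) it remains to check that $\omega_{*}(R):=|f|_{p\text{-var};R}^{p}$ is finite, which is the hypothesis, and continuous. Continuity follows from the standard argument for variation functions of continuous maps: $\omega_{*}$ is monotone under inclusion of rectangles and superadditive, so it suffices to show $\omega_{*}(R)\to0$ as $R$ degenerates to a segment, and this is obtained by combining uniform continuity of $f$ with superadditivity to dominate the contribution of a thin rectangle (exactly as in the one-dimensional case). Finally the "in particular" is immediate: the trivial partition $\{R\}$ gives $|f(R)|^{p}\le\omega_{*}(R)$, so $\omega=\omega_{*}$ is the required $2D$ control.
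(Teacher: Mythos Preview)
The paper itself does not give a proof of this theorem; it simply cites \cite[Theorem 1]{FV11}. So there is no ``paper's own proof'' to compare against, and your write-up is an attempt at a self-contained argument. Parts (1) and (3) are fine as sketched. The substantive issue is in part (2).

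Your combinatorial estimate hinges on the claim that, after bounding the inner sum by $V_{p}\bigl(f,I\times[u,v]\bigr)^{p}$, one may sum over the dyadic strips $I$ at a fixed level and conclude
\[
\sum_{I}V_{p}\bigl(f,I\times[u,v]\bigr)^{p}\ \le\ V_{p}\bigl(f,R\bigr)^{p}
\]
``by the additivity of $V_{p\text{-var}}(f,\cdot)^{p}$ across a common edge''. This one-directional superadditivity is \emph{false} in general, and its failure is exactly the phenomenon the paper warns about just before Definition~\ref{def_2D_controls}. A concrete counterexample for $p=2$: take $f$ with rectangular increments $f\bigl([0,\tfrac12]\times[0,\tfrac12]\bigr)=1$, $f\bigl([0,\tfrac12]\times[\tfrac12,1]\bigr)=-1$, $f\bigl([\tfrac12,1]\times[0,\tfrac12]\bigr)=f\bigl([\tfrac12,1]\times[\tfrac12,1]\bigr)=1$. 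Then the optimal $y$-partition on the left strip is $\{0,\tfrac12,1\}$ (giving $1+1=2$), while on the right strip it is the trivial one $\{0,1\}$ (giving $2^{2}=4$), so the two strip variations sum to $6$; but every grid partition of $[0,1]^{2}$ yields at most $4$. The point is that the near-optimal $y$-partitions for different strips $I$ are incompatible, and merging them into a common grid can strictly decrease the $\ell^{p}$ sum. Consequently, at a fixed level your collection $\{I\times[c_{A},d_{A}]\}$ of disjoint rectangles is \emph{not} a grid, and bounding its $\ell^{p}$ sum by $V_{p}(f,R)^{p}$ is precisely the statement you are trying to prove.

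The fix is to run the dyadic decomposition in \emph{both} coordinates. Each $A_{i}$ then splits into at most $O\bigl((\log k)^{2}\bigr)$ dyadic rectangles $I\times J$; at a fixed pair of levels $(\ell_{x},\ell_{y})$ the pieces that occur are cells of a genuine product grid, so their $\ell^{p}$ sum is honestly bounded by $V_{p}(f,R)^{p}$. Summing over the $O\bigl((\log k)^{2}\bigr)$ level pairs and absorbing the Jensen factor gives $C_{0}(k)=O\bigl((\log k)^{2p}\bigr)$, still polylogarithmic, and your summability argument for $|f|_{(p+\epsilon)\text{-var}}$ then goes through unchanged.
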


\begin{proof}
\cite[ Theorem 1]{FV11}.
\end{proof}

In the following, unless mentioned otherwise, $X$ will always be a Gaussian
process as in Theorem \ref{theorem_main01_intro} and $\mathbf{X}$ denotes
the natural Gaussian rough path. We will need the following Proposition:

\begin{proposition}
\label{prop_moments_lp}Let $X$ be as in Theorem \ref{theorem_main01_intro}
and assume that $\omega $ controls the $\rho $-variation of the covariance
of $X$, $\rho \in \lbrack 1,2)$. Then for every $n\in \mathbb{N}$ there is a
constant $C\left( n\right) =C\left( n,\rho \right) $ such that%
\begin{equation*}
\left\vert \mathbf{X}_{s,t}^{n}\right\vert _{L^{2}}\leq C\left( n\right)
\omega \left( \left[ s,t\right] ^{2}\right) ^{\frac{n}{2\rho }}
\end{equation*}%
for any $s<t$.
\end{proposition}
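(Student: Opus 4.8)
The plan is to reduce the estimate to a statement about the Wiener chaos decomposition of the iterated integral $\mathbf{X}^{n}_{s,t}$, together with the $L^{2}$-isometry-type bounds that multidimensional Young integration provides for Gaussian processes of finite $\rho$-variation covariance. First I would recall that, since $X^{1},\dots,X^{d}$ are independent, each component $\mathbf{X}^{n,(i_{1},\dots,i_{n})}_{s,t}$ lives in a finite sum of Wiener chaoses of order at most $n$; in fact, when all indices are distinct it lies in the $n$-th homogeneous chaos, and the lower-chaos contributions occur only through repeated indices and can be controlled by the same mechanism applied at a lower level. So it suffices to bound the $L^{2}$-norm of the projection onto each fixed chaos. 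The second moment of such an object is, by the standard formula for products of Wiener integrals, a sum of integrals of the covariance $R_{X}$ (and its mixed derivatives, understood in the $2D$ Young / Stieltjes sense) over products of simplices $\Delta^{n}_{s,t}\times\Delta^{n}_{s,t}$.

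The key step is then to recognize each of these as a $2n$-dimensional Young integral against the $2D$ measure induced by $R_{X}$ on each pair of "time" variables, and to invoke the multidimensional Young estimate (developed in Section \ref{section_multidim_young}) in the form: an iterated $2D$-Young integral over $\Delta^{n}_{s,t}\times\Delta^{n}_{s,t}$ of a bounded integrand, against a $2D$ signal whose $\rho$-variation on $[s,t]^{2}$ is controlled by $\omega$ with $\rho<2$, is bounded by $C(n)\,\omega([s,t]^{2})^{n/\rho}$. Indeed there are $n$ "diagonal-type" pairings each contributing a factor controlled by $\omega([s,t]^{2})^{1/\rho}$ (using superadditivity of $\omega$ and the fact that $1/\rho>1/2$, so the relevant $\rho$-variation is genuinely finite and the simplex integral converges); taking the square root of the second moment gives the exponent $n/(2\rho)$. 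The constant $C(n)$ depends on $n$ and $\rho$ only — it absorbs the number of pairings $\sim n!$ and the combinatorial constants from the chaos expansion — which is exactly the form claimed.

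The main obstacle I expect is making the "iterated $2D$-Young integral over a simplex" rigorous and getting the correct power of $\omega$: one must check that restricting the $2n$ time variables to the simplex $\Delta^{n}_{s,t}$ (rather than the full cube) does not destroy the Young-integrability, and that the superadditivity in Definition \ref{def_2D_controls} can be applied repeatedly to peel off one factor $\omega([s,t]^{2})^{1/\rho}$ per integration variable without losing track of the $\rho<2$ threshold (this is where $\rho<2$, equivalently $2/\rho>1$, is essential, since the $2D$ Young integral of $R_X$ against itself over a single square requires $1/\rho+1/\rho>1$). The treatment of the lower-order chaos terms coming from coinciding indices is then a routine induction on $n$, bounding the order-$m$ piece ($m<n$) by $C\,\omega([s,t]^{2})^{m/(2\rho)}$ and using $\omega([s,t]^{2})\le\omega([0,1]^{2})$ to absorb the gap between $m/(2\rho)$ and $n/(2\rho)$ into the constant. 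A clean way to organize all of this is to first prove the estimate for the "building block" integrals singled out via the shuffle-algebra reduction of Section \ref{section_it_int_and_shuffle}, and then deduce the general case.
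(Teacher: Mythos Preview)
Your approach is correct in spirit for the top-chaos contributions but differs substantially from the paper's proof, and there is a genuine gap in your handling of the lower-chaos terms.

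The paper's argument is short and does not use the chaos decomposition or iterated $2D$-Young machinery at all. For $n=1,2,3$ it simply quotes \cite[Proposition 15.28]{FV10}. For $n\ge 4$ it rescales: setting $\tilde X_\tau = \omega([s,t]^2)^{-1/(2\rho)} X_{s+\tau(t-s)}$ gives $|R_{\tilde X}|_{\rho\text{-var};[0,1]^2}\le 1$, and the deterministic estimate for the Lyons lift yields
\[
\frac{|\mathbf X^n_{s,t}|^{1/n}}{\omega([s,t]^2)^{1/(2\rho)}}\le c_1\|S_n(\tilde{\mathbf X})\|_{p\text{-var};[0,1]}\le c_2(n,p)\,\|\tilde{\mathbf X}\|_{p\text{-var};[0,1]},\qquad p\in(2\rho,4);
\]
taking $L^2$ and invoking the bound on $\bigl|\|\tilde{\mathbf X}\|_{p\text{-var}}\bigr|_{L^2}$ from \cite[Theorem 15.33]{FV10} finishes. (An alternative, also noted in the paper, is an induction over levels in the style of the Extension Theorem, as in Proposition~\ref{prop_key_estimate_higher_levels}.) The iterated $2D$-Young and shuffle-algebra machinery you invoke is developed in the paper for the harder \emph{difference} estimates $|\mathbf X^n-\mathbf Y^n|_{L^2}$, where no such scaling shortcut is available.

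The concrete gap: you claim the order-$m$ chaos piece ($m<n$) is bounded by $C\,\omega([s,t]^2)^{m/(2\rho)}$ and that the exponent shortfall can be ``absorbed into the constant'' via $\omega([s,t]^2)\le\omega([0,1]^2)$. This goes the wrong way: for $\omega([s,t]^2)$ small one has $\omega([s,t]^2)^{m/(2\rho)}\gg\omega([s,t]^2)^{n/(2\rho)}$, and no universal constant can repair this. The correct accounting is that the projection of $\mathbf X^n_{s,t}$ onto the $m$-th chaos already carries deterministic coefficients from the $(n-m)/2$ Wick contractions of the covariance, each of size at most $C\,\omega([s,t]^2)^{1/\rho}$ after the simplex integration; combined with the $L^2$-norm of the remaining chaotic part this gives $\omega([s,t]^2)^{(n-m)/(2\rho)}\cdot\omega([s,t]^2)^{m/(2\rho)}=\omega([s,t]^2)^{n/(2\rho)}$ directly. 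With this correction your direct argument can be carried through, but it is considerably more laborious than the paper's scaling trick.
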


\begin{proof}
For $n=1,2,3$ this is proven in \cite[Proposition 15.28]{FV10}. For $n\geq 4$
and fixed $s<t$, we set $\tilde{X}_{\tau }:=\frac{1}{\omega \left( \left[ s,t%
\right] ^{2}\right) ^{\frac{1}{2\rho }}}X_{s+\tau \left( t-s\right) }$. Then 
$\left\vert R_{\tilde{X}}\right\vert _{\rho -var;\left[ 0,1\right] }^{\rho
}\leq 1=:K$ and by the standard (deterministic) estimates for the Lyons lift,%
\begin{equation*}
\frac{\left\vert \mathbf{X}_{s,t}^{n}\right\vert ^{1/n}}{\omega \left( \left[
s,t\right] ^{2}\right) ^{\frac{1}{2\rho }}}\leq c_{1}\left\Vert S_{n}\left( 
\mathbf{\tilde{X}}\right) \right\Vert _{p-var;\left[ 0,1\right] }\leq
c_{2}\left( n,p\right) \left\Vert \mathbf{\tilde{X}}\right\Vert _{p-var;%
\left[ 0,1\right] }
\end{equation*}%
for any $p\in \left( 2\rho ,4\right) $. Now we take the $L^{2}$-norm on both
sides. From \cite[Theorem 15.33]{FV10} we know that $\left\vert \left\Vert 
\mathbf{\tilde{X}}\right\Vert _{p-var;\left[ 0,1\right] }\right\vert
_{L^{2}} $ is bounded by a constant only depending on $p,\rho $ and $K$
which shows the claim.

Alternatively (and more in the spirit of the forthcoming arguments), one
performs an induction similar (but easier) as in the proof of Proposition %
\ref{prop_key_estimate_higher_levels}.
\end{proof}

\section{Iterated integrals and the shuffle algebra\label%
{section_it_int_and_shuffle}}

Let $x=\left( x^{1},\ldots ,x^{d}\right) \colon \left[ 0,1\right]
\rightarrow \mathbb{R}^{d}$ be a path of finite variation. Forming finite
linear combinations of iterated integrals of the form%
\begin{equation*}
\int_{\Delta _{0,1}^{n}}dx^{i_{1}}\ldots dx^{i_{n}},\quad i_{1},\ldots
,i_{n}\in \left\{ 1,\ldots ,d\right\} ,n\in \mathbb{N}
\end{equation*}%
defines a vector space over $\mathbb{R}$. In this section, we will see that
this vector space is also an algebra where the product is given simply by
taking the usual multiplication. Moreover, we will describe precisely how
the product of two iterated integrals looks like.

\subsection{The shuffle algebra}

Let $A$ be a set which we will call from now on the alphabet. In the
following, we will only consider the finite alphabet $A=\left\{ a,b,\ldots
\right\} =\left\{ a_{1},a_{2},\ldots ,a_{d}\right\} =\left\{ 1,\ldots
,d\right\} $. We denote by $A^{\ast }$ the set of words composed by the
letters of $A$, hence $w=a_{i_{1}}a_{i_{2}}\ldots a_{i_{n}},~a_{i_{j}}\in A$%
. The empty word is denoted by $e$. $A^{+}$ is the set of non-empty words.
The length of the word is denoted by $\left\vert w\right\vert $ and $%
\left\vert w\right\vert _{a}$ denotes the number of occurrences of the
letter $a$. We denote by $\mathbb{R}\left\langle A\right\rangle $ the vector
space of noncommutative polynomials on $A$ over $\mathbb{R}$, hence every $%
P\in \mathbb{R}\left\langle A\right\rangle $ is a linear combination of
words in $A^{\ast }$ with coefficients in $\mathbb{R}$. $(P,w)$ denotes the
coefficient in $P$ of the word $w$. Hence every polynomial $P$ can be
written as%
\begin{equation*}
P=\sum_{w\in A^{\ast }}(P,w)w
\end{equation*}%
and the sum is finite since the $(P,w)$ are non-zero only for a finite set
of words $w$. We define the degree of $P$ as%
\begin{equation*}
\deg \left( P\right) =\max \left\{ \left\vert w\right\vert ~;~\left(
P,w\right) \neq 0\right\} .
\end{equation*}%
A polynomial is called \emph{homogeneous} if all monomials have the same
degree. We want to define a product on $\mathbb{R}\left\langle
A\right\rangle $. Since a polynomial is determined by its coefficients on
each word, we can define the product $PQ$ of $P$ and $Q$ by%
\begin{equation*}
(PQ,w)=\sum_{w=uv}(P,u)(Q,v).
\end{equation*}%
Note that this definition coincides with the usual multiplication in a
(noncommutative) polynomial ring. We call this product the \emph{%
concatenation product}\textit{\ }and the algebra $\mathbb{R}\left\langle
A\right\rangle $ endowed with this product the \emph{concatenation algebra}.

There is another product on $\mathbb{R}\left\langle A\right\rangle $ which
will be of special interest for us. We need some notation first. Given a
word $w=a_{i_{1}}a_{i_{2}}\ldots a_{i_{n}}$ and a subsequence $%
U=(j_{1},j_{2},\ldots ,j_{k})$ of $\left( i_{1},\ldots ,i_{n}\right) $, we
denote by $w(U)$ the word $a_{j_{1}}a_{j_{2}}\ldots a_{j_{k}}$ and we call $%
w(U)$ a \textit{subword} of $w$. If $w,u,v$ are words and if $w$ has length $%
n$, we denote by $\left( 
\begin{array}{c}
w \\ 
u\quad v%
\end{array}%
\right) $ the number of subsequences $U$ of $(1,\ldots ,n)$ such that $%
w(U)=u $ and $w(U^{c})=v$.

\begin{definition}
The (homogeneous) polynomial%
\begin{equation*}
u\ast v=\sum_{w\in A^{\ast }}\left( 
\begin{array}{c}
w \\ 
u\quad v%
\end{array}%
\right) w
\end{equation*}%
\newline
is called the \emph{shuffle product} of $u$ and $v$. By linearity we extend
it to a product on $\mathbb{R}\left\langle A\right\rangle $.
\end{definition}

In order to proof our main result, we want to use some sort of induction
over the length of the words. Therefore, the following definition will be
useful.

\begin{definition}
If $U$ is a set of words of the same length, we call a subset $\left\{
w_{1},\ldots ,w_{k}\right\} $ of $U$ a \emph{generating set for }$U$ if for
every word $w\in U$ there is a polynomial $R$ and real numbers $\lambda
_{1},\ldots ,\lambda _{k}$ such that%
\begin{equation*}
w=\sum_{j=1}^{k}\lambda _{j}w_{j}+R
\end{equation*}%
where $R$ is of the form $R=\sum_{u,v\in A^{+}}\mu _{u,v}u\ast v$ for real
numbers $\mu _{u,v}$.
\end{definition}

\begin{definition}
We say that a word $w$ is\emph{\ composed by} $a_{1}^{n_{1}},\ldots
,a_{d}^{n_{d}}$ if $w\in \left\{ a_{1},\ldots ,a_{d}\right\} ^{\ast }$ and $%
\left\vert w\right\vert _{a_{i}}=n_{i}$ for $i=1,\ldots ,d$, hence every
letter appears in the word with the given multiplicity.
\end{definition}

The aim now is to find a (possibly small) generating set for the set of all
words composed by some given letters. The next definition introduces a
special class of words which will be important for us.

\begin{definition}
Let $A$ be totally ordered and put on $A^{\ast }$ the alphabetical order. If 
$w$ is a word such that whenever $w=uv$ for $u,v\in A^{+}$ one has $u<v$,
then $w$ is called a \emph{Lyndon word}.
\end{definition}

\begin{proposition}
\label{prop_generating_sets}

\begin{enumerate}
\item For the set $\{$words composed by $a,a,b\}$ a generating set is given
by $\{aab\}$.

\item For the set $\{$words composed by $a,a,a,b\}$ a generating set is
given by $\{aaab\}$.

\item For the set $\{$words composed by $a,a,b,b\}$ a generating set is
given by $\{aabb\}$.

\item For the set $\{$words composed by $a,a,b,c\}$ a generating set is
given by $\{aabc,aacb,baac\}$.
\end{enumerate}
\end{proposition}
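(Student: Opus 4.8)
The plan is to prove each of the four statements by the same mechanism: use the known shuffle identity $u \ast v = \sum_{w} \binom{w}{u\ v} w$ to express a suitable shuffle product of shorter words as a sum of all words of the target length composed by the prescribed letters, then solve for the "target" word(s) in terms of the claimed generators modulo the shuffle ideal $R = \sum \mu_{u,v}\, u\ast v$.

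First I would handle (1): the words composed by $a,a,b$ are $aab$, $aba$, $baa$. Compute $a \ast ab$ and $(aa)\ast b$. One finds $a\ast(ab) = 2\,aab + aba$ and $(aa)\ast b = aab + aba + baa$, while $(ab)\ast a$ and similar give further relations. From these linear relations among $\{aab, aba, baa\}$ (three unknowns) one checks the quotient by the span of the shuffle products is one-dimensional and spanned by the image of $aab$; concretely, $aba = a\ast(ab) - 2\,aab$ and $baa = (aa)\ast b - aab - aba = (aa)\ast b - a\ast(ab) + aab$, so every word equals a multiple of $aab$ plus an element of $R$. That gives the generating set $\{aab\}$.

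For (2) and (3) the same recipe applies with length-four words. For (2), the words composed by $a,a,a,b$ are $aaab, aaba, abaa, baaa$; compute $a\ast(aab)$, $(aa)\ast(ab)$, $(aaa)\ast b$, etc., and solve the resulting linear system to show each of the four words is congruent to a scalar multiple of $aaab$ modulo shuffles. For (3), the words composed by $a,a,b,b$ are the six words $aabb, abab, abba, baab, baba, bbaa$; here I would use $(ab)\ast(ab)$, $(aa)\ast(bb)$, $a\ast(abb)$, $(aab)\ast b$, etc. The key computation is that the six-dimensional space, modulo the span of these shuffle products, collapses to the line spanned by $aabb$; one expresses $abab, abba, \dots$ successively in terms of $aabb$ and shuffle products, working from the "most shuffled" identities down.

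For (4), the words composed by $a,a,b,c$ are the twelve distinct arrangements of the multiset $\{a,a,b,c\}$; the claim is a three-dimensional generating set $\{aabc, aacb, baac\}$ (reflecting that with all-distinct letters the quotient is larger). I would use shuffles such as $a\ast(abc)$, $a\ast(acb)$, $(ab)\ast(ac)$, $(aa)\ast(bc)$, $(aab)\ast c$, $(aac)\ast b$, $(abc)\ast a$, and so on, and check by linear algebra over $\mathbb{Q}$ that these twelve words span, modulo the shuffle ideal, a space of dimension exactly three with the three listed words as a basis.

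\textbf{The main obstacle} is purely bookkeeping: each part is a finite linear-algebra computation over $\mathbb{Q}$ in a space of dimension $3$, $4$, $4$, and $12$ respectively, and the only real work is choosing enough shuffle products $u\ast v$ (with $u,v$ nonempty, so that they genuinely lie in the ideal $R$) to cut the quotient down to the claimed dimension, and then inverting the relations to exhibit each word explicitly as $\sum \lambda_j w_j + R$. Part (4) is the most laborious because of the twelve words and the need to verify the quotient does not collapse further than dimension three; I would organize it by first reducing all words with a prefix $a$ using $a\ast(\text{length-}3\text{ word})$, then treating the three words beginning with $b$ or $c$ separately. No conceptual difficulty is expected beyond the classical fact (Radford's theorem) that Lyndon words freely generate the shuffle algebra, which underlies why these particular small generating sets exist; but since the statement only asks for generating sets (not bases, and not minimality), a direct elementary verification suffices and I would not invoke Radford's theorem explicitly.
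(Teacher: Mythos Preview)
Your proposal is correct but takes a genuinely different route from the paper. The paper invokes the general structure theory of the shuffle algebra: every word admits a unique decreasing factorization into Lyndon words $w = l_1^{i_1}\cdots l_k^{i_k}$, and the formula
\[
\frac{1}{i_1!\cdots i_k!}\, l_1^{\ast i_1}\ast\cdots\ast l_k^{\ast i_k} = w + \sum_{u<w}\alpha_u u
\]
(cited from Reutenauer) shows, by downward induction on the lexicographic order, that the Lyndon words composed of a given multiset always form a generating set. The paper then simply lists the Lyndon words in each case ($aab$; $aaab$; $aabb$; $\{aabc,abac,aacb\}$) and, for part~(4), uses one extra shuffle identity to swap $abac$ for $baac$.

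Your approach is to bypass the Lyndon machinery entirely and do the finite linear algebra by hand in each case, exhibiting enough shuffle products $u\ast v$ to solve for every word in terms of the claimed generators. This is perfectly valid and more elementary --- your treatment of case~(1) is already complete and correct --- but it trades conceptual uniformity for computation: case~(4) requires reducing a twelve-dimensional space to three, and you will need to organize roughly nine independent shuffle relations carefully. The paper's argument, by contrast, handles all four cases (and any further ones) with a single structural theorem and essentially no computation beyond identifying which words are Lyndon. Either route is fine here since the cases are small; just be aware that your method does not scale, whereas the paper's does.
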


\begin{proof}
Consider the alphabet $A=\left\{ a,b,c\right\} $. We choose the order $a<b<c$%
. A general theorem states that every word $w$ has a unique decreasing
factorization into Lyndon words, i.e. $w=l_{1}^{i_{1}}\ldots l_{k}^{i_{k}}$
where $l_{1}>\ldots >l_{k}$ are Lyndon words and $i_{1},\ldots ,i_{k}\geq 1$
(see \cite[Theorem 5.1 and Corollary 4.7]{R93}), and the formula%
\begin{equation*}
\frac{1}{i_{1}!\ldots i_{k}!}l_{1}^{\ast i_{1}}\ast \ldots \ast l_{k}^{\ast
i_{k}}=w+\sum_{u<w}\alpha _{u}u
\end{equation*}%
holds, where $\alpha _{u}$ are some natural integers (see again \cite[%
Theorem 6.1]{R93}). By repeatedly applying this formula for the words in the
sum on the right hand side, it follows that a generating set for each of the
sets in $\left( 1\right) $ to $\left( 4\right) $ is given exactly by the
Lyndon words composed by these letters. One can easily show that indeed $aab$%
, $aaab$ and $aabb$ are the only Lyndon words composed by the corresponding
letters. The Lyndon words composed by $a,a,b,c$ are $\left\{
aabc,abac,aacb\right\} $ which therefore is a generating set for $\{$words
composed by $a,a,b,c\}$. From the shuffle identity%
\begin{equation*}
abac=baac+aabc+aacb-b\ast aac
\end{equation*}%
it follows that also $\{aabc,aacb,baac\}$ generates this set.
\end{proof}

\subsection{The connection to iterated integrals}

Let $x=(x^{1},\ldots ,x^{d})\colon \left[ 0,1\right] \rightarrow \mathbb{R}%
^{d}$ be a path of finite variation and fix $s<t\in \left[ 0,1\right] $. For
a word $w=\left( a_{i_{1}}\ldots a_{i_{n}}\right) \in A^{\ast }$, $A=\left\{
1,\ldots ,d\right\} $ we define%
\begin{equation*}
\mathbf{x}^{w}=\left\{ 
\begin{array}{ccc}
\int_{\Delta _{s,t}^{n}}dx^{i_{1}}\ldots dx^{i_{n}} & \text{if} & w\in A^{+}
\\ 
1 & \text{if} & w=e%
\end{array}%
\right. .
\end{equation*}

Let $\left( \mathbb{R}\left\langle A\right\rangle ,+,\ast \right) $ be the
shuffle algebra over the alphabet $A$. We define a map $\Phi \colon \mathbb{R%
}\left\langle A\right\rangle \rightarrow \mathbb{R}$ by $\Phi \left(
w\right) =\mathbf{x}_{s,t}^{w}$ and extend it linearly to polynomials $P\in 
\mathbb{R}\left\langle A\right\rangle $. The key observation is the
following:

\begin{theorem}
\label{isomorphism_shufflealgebra}$\Phi $ is an algebra homomorphism from
the shuffle algebra $\left( \mathbb{R}\left\langle A\right\rangle ,+,\ast
\right) $ to $\left( \mathbb{R},+,\cdot \right) $.
\end{theorem}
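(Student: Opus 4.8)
The plan is to prove that $\Phi$ respects the shuffle product, i.e. that $\Phi(u \ast v) = \Phi(u)\Phi(v) = \mathbf{x}_{s,t}^{u}\,\mathbf{x}_{s,t}^{v}$ for all words $u,v \in A^{\ast}$; linearity and the fact that $\Phi(e)=1$ are immediate from the definition, so the only content is multiplicativity on words. First I would reduce to words, since both sides are bilinear: it suffices to check the identity when $u$ and $v$ are individual words. The heart of the matter is the classical \emph{integration-by-parts} (or Fubini/Chen) identity for iterated Riemann--Stieltjes integrals: for a path $x$ of finite variation, the product of two iterated integrals over the simplex $\Delta_{s,t}^{m}$ and $\Delta_{s,t}^{n}$ decomposes, after splitting the product domain $\Delta_{s,t}^{m}\times\Delta_{s,t}^{n}$ into the regions corresponding to the different interleavings of the two sets of time variables, into a sum of iterated integrals over $\Delta_{s,t}^{m+n}$, one for each way of shuffling the two index strings together.

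The cleanest route is induction on $|u|+|v|$. If either word is empty the claim is trivial. Otherwise write $u = u' a_i$ and $v = v' a_j$ (peeling off the last letters, which matches the "outermost" integration variable). Using the defining recursion $\mathbf{x}_{s,t}^{w a_k} = \int_s^t \mathbf{x}_{s,\tau}^{w}\, dx_\tau^{k}$, I would compute
\begin{equation*}
\mathbf{x}_{s,t}^{u}\,\mathbf{x}_{s,t}^{v} = \left(\int_s^t \mathbf{x}_{s,\tau}^{u'}\,dx_\tau^{i}\right)\left(\int_s^t \mathbf{x}_{s,\sigma}^{v'}\,dx_\sigma^{j}\right),
\end{equation*}
and apply the product rule for finite-variation functions $F_\tau := \mathbf{x}_{s,\tau}^{u}$, $G_\tau := \mathbf{x}_{s,\tau}^{v}$, namely $F_t G_t = \int_s^t F_{\tau^-}\,dG_\tau + \int_s^t G_{\tau^-}\,dF_\tau$ (no quadratic-covariation term, since $x$ has finite variation and the iterated integrals are continuous). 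This gives
\begin{equation*}
\mathbf{x}_{s,t}^{u}\,\mathbf{x}_{s,t}^{v} = \int_s^t \mathbf{x}_{s,\tau}^{u}\,\mathbf{x}_{s,\tau}^{v'}\,dx_\tau^{j} + \int_s^t \mathbf{x}_{s,\tau}^{u'}\,\mathbf{x}_{s,\tau}^{v}\,dx_\tau^{i}.
\end{equation*}
Now apply the induction hypothesis to the integrands $\mathbf{x}_{s,\tau}^{u}\,\mathbf{x}_{s,\tau}^{v'} = \Phi_{s,\tau}(u \ast v')$ and $\mathbf{x}_{s,\tau}^{u'}\,\mathbf{x}_{s,\tau}^{v} = \Phi_{s,\tau}(u' \ast v)$, integrate term by term against $dx^{j}$ resp. $dx^{i}$, and recognise the result as $\Phi_{s,t}$ applied to $(u\ast v')a_j + (u'\ast v)a_i$. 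The proof is completed by invoking the recursive definition of the shuffle product, $u \ast v = (u' a_i)\ast(v' a_j) = \big((u'\ast v)a_i\big) + \big((u\ast v')a_j\big)$, which is exactly this combination; this recursion is elementary from the "count the interleavings" definition of $\binom{w}{u\ \ v}$ by conditioning on which of $u,v$ contributes the last letter of $w$.

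The main obstacle — really the only place where care is needed — is the justification of the integration-by-parts step and the term-by-term integration: one must make sure the integrands $\mathbf{x}_{s,\cdot}^{w}$ are genuinely of finite variation and continuous in the upper limit (so that ordinary Riemann--Stieltjes calculus with no Itô correction applies), and that the recursion for $\ast$ is set up with the last letter rather than the first. Both are routine: continuity and finite variation of $\tau \mapsto \mathbf{x}_{s,\tau}^{w}$ follow by an easy induction on $|w|$ from $x$ having finite variation, and the last-letter recursion for the shuffle is standard (e.g.\ \cite{R93}); one should just flag that it is compatible with the last-letter recursion for iterated integrals used above. Everything else is bookkeeping. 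I would present this as: reduce to words; induct on total length; base case trivial; inductive step via the finite-variation product rule plus the shuffle recursion.
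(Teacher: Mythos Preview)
Your proof is correct and is precisely the classical argument for the shuffle identity of iterated integrals (going back to Ree). The paper itself does not give a proof here but merely cites \cite[Corollary 3.5]{R93}; your induction via the finite-variation product rule together with the last-letter recursion $u\ast v=(u'\ast v)a_i+(u\ast v')a_j$ is exactly the standard proof one finds behind that citation, so there is no substantive difference in approach --- you have simply supplied the details the paper outsources.
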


\begin{proof}
\cite{R93}, Corollary 3.5.
\end{proof}

The next proposition shows that we can restrict ourselves in showing the
desired estimates only for the iterated integrals which generate the others.

\begin{proposition}
\label{proposition_key_shuffle} Let $\left( X,Y\right) =\left(
X^{1},Y^{1},\ldots ,X^{d},Y^{d}\right) $ be a Gaussian process on $\left[ 0,1%
\right] $ with paths of finite variation. Let $A=\left\{ 1,\ldots ,d\right\} 
$ be the alphabet, let $U$ be a set of words of length $n$ and $V=\left\{
w_{1,}\ldots ,w_{k}\right\} $ be a generating set for $U$. Let $\omega $ be
a control, $\rho ,\gamma \geq 1$ constants and $s<t\in \left[ 0,1\right] $.
Assume that there are constants $C=C\left( \left\vert w\right\vert \right) $
such that 
\begin{equation*}
\left\vert \mathbf{X}_{s,t}^{w}\right\vert _{L^{2}}\leq C\left( \left\vert
w\right\vert \right) \omega \left( s,t\right) ^{\frac{\left\vert
w\right\vert }{2\rho }}\quad \text{and\quad }\left\vert \mathbf{Y}%
_{s,t}^{w}\right\vert _{L^{2}}\leq C\left( \left\vert w\right\vert \right)
\omega \left( s,t\right) ^{\frac{\left\vert w\right\vert }{2\rho }}
\end{equation*}%
holds for every word $w\in A^{\ast }$ with $\left\vert w\right\vert \leq n-1$%
. Assume also that for some $\epsilon >0$%
\begin{equation*}
\left\vert \mathbf{X}_{s,t}^{w}-\mathbf{Y}_{s,t}^{w}\right\vert _{L^{2}}\leq
C\left( \left\vert w\right\vert \right) \epsilon \omega \left( s,t\right) ^{%
\frac{1}{2\gamma }}\omega \left( s,t\right) ^{\frac{\left\vert w\right\vert
-1}{2\rho }}
\end{equation*}%
holds for every word $w$ with $\left\vert w\right\vert \leq n-1$ and $w\in V$%
. Then there is a constant $\tilde{C}$ which depends on the constants $C$,
on $n$ and on $d$ such that%
\begin{equation*}
\left\vert \mathbf{X}_{s,t}^{w}-\mathbf{Y}_{s,t}^{w}\right\vert _{L^{2}}\leq 
\tilde{C}\epsilon \omega \left( s,t\right) ^{\frac{1}{2\gamma }}\omega
\left( s,t\right) ^{\frac{n-1}{2\rho }}
\end{equation*}%
holds for every $w\in U$.
\end{proposition}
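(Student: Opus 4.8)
The plan is to fix an arbitrary $w\in U$ and use the generating-set hypothesis to reduce the estimate for $\mathbf{X}_{s,t}^{w}-\mathbf{Y}_{s,t}^{w}$ to the bounds already available on $V$ and on shorter words. By definition of a generating set there are scalars $\lambda_{1},\dots,\lambda_{k}$ and $\mu_{u,v}$ with
\[
w=\sum_{j=1}^{k}\lambda_{j}w_{j}+\sum_{u,v\in A^{+}}\mu_{u,v}\,u\ast v
\]
in $\mathbb{R}\langle A\rangle$. Since $w$ and the $w_{j}$ are homogeneous of degree $n$ and $u\ast v$ is homogeneous of degree $|u|+|v|$, projecting onto the degree-$n$ part lets us assume every occurring pair satisfies $|u|+|v|=n$ with $u,v\in A^{+}$, hence $1\le|u|,|v|\le n-1$. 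Applying the algebra homomorphism $\Phi$ of Theorem~\ref{isomorphism_shufflealgebra} pathwise --- to the finite-variation path $X(\omega)$, and separately to $Y(\omega)$ --- converts this into the almost sure identities
\[
\mathbf{X}_{s,t}^{w}=\sum_{j=1}^{k}\lambda_{j}\mathbf{X}_{s,t}^{w_{j}}+\sum_{u,v}\mu_{u,v}\,\mathbf{X}_{s,t}^{u}\,\mathbf{X}_{s,t}^{v}
\]
and its $Y$-analogue. Subtracting and applying the triangle inequality in $L^{2}$, it remains to bound each $\bigl|\mathbf{X}_{s,t}^{w_{j}}-\mathbf{Y}_{s,t}^{w_{j}}\bigr|_{L^{2}}$ and each product difference $\bigl|\mathbf{X}_{s,t}^{u}\mathbf{X}_{s,t}^{v}-\mathbf{Y}_{s,t}^{u}\mathbf{Y}_{s,t}^{v}\bigr|_{L^{2}}$.

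For the first type, the hypothesis on $V$ gives directly $\bigl|\mathbf{X}_{s,t}^{w_{j}}-\mathbf{Y}_{s,t}^{w_{j}}\bigr|_{L^{2}}\le C(n)\,\epsilon\,\omega(s,t)^{1/(2\gamma)}\omega(s,t)^{(n-1)/(2\rho)}$, which is already of the desired order. For a product difference I would write, suppressing $s,t$,
\[
\mathbf{X}^{u}\mathbf{X}^{v}-\mathbf{Y}^{u}\mathbf{Y}^{v}=(\mathbf{X}^{u}-\mathbf{Y}^{u})\,\mathbf{X}^{v}+\mathbf{Y}^{u}\,(\mathbf{X}^{v}-\mathbf{Y}^{v})
\]
and apply the Cauchy--Schwarz inequality to pass to $L^{4}$ norms of the factors. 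Each iterated integral of word-length $m$ of a Gaussian path lies in the sum of the first $m$ Wiener chaoses, on which all $L^{p}$-norms are equivalent by hypercontractivity; hence $|\,\cdot\,|_{L^{4}}\le c(n)\,|\,\cdot\,|_{L^{2}}$ for every quantity here, and we may return to $L^{2}$. Then the assumed uniform bounds give $|\mathbf{X}^{v}|_{L^{2}}\le C(|v|)\,\omega(s,t)^{|v|/(2\rho)}$ and $|\mathbf{Y}^{u}|_{L^{2}}\le C(|u|)\,\omega(s,t)^{|u|/(2\rho)}$, while the difference bounds for words of length $\le n-1$ give $|\mathbf{X}^{u}-\mathbf{Y}^{u}|_{L^{2}}\le C(|u|)\,\epsilon\,\omega(s,t)^{1/(2\gamma)}\omega(s,t)^{(|u|-1)/(2\rho)}$ and the analogue with $v$.

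It then suffices to add up exponents: each half of a product difference carries a power of $\omega(s,t)$ equal to $\frac{1}{2\gamma}+\frac{|u|-1}{2\rho}+\frac{|v|}{2\rho}=\frac{1}{2\gamma}+\frac{n-1}{2\rho}$ (resp. with $|u|$ and $|v|$ exchanged), exactly matching the single-word terms. Summing the finitely many contributions --- the words $w_{j}$ and the pairs $(u,v)$, together with their coefficients, depend only on $w$, hence only on $n$ and $d$ since $A$ is finite --- yields a constant $\tilde{C}=\tilde{C}(C,n,d)$ and the claimed bound for every $w\in U$. I expect the only genuinely delicate point to be the treatment of the $L^{2}$-norms of the products: the triangle inequality alone is not enough, so one must combine Cauchy--Schwarz with the Wiener-chaos equivalence of $L^{p}$-norms to come back to $L^{2}$ with constants depending only on $n$ (and not on $\omega$, $\epsilon$, or $s$, $t$).
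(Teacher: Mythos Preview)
Your proof is correct and follows essentially the same approach as the paper. The paper packages the argument slightly differently by introducing a doubled alphabet $A\dot{\cup}\bar{A}$ and a single homomorphism $\Psi$ sending unbarred letters to $X$-integrals and barred ones to $Y$-integrals, so that $\Psi(w-\bar{w})=\mathbf{X}^{w}-\mathbf{Y}^{w}$ and $\Psi\bigl((u-\bar{u})\ast v\bigr)=(\mathbf{X}^{u}-\mathbf{Y}^{u})\mathbf{X}^{v}$; but this is only a bookkeeping device for exactly the computation you perform by applying $\Phi$ separately to $X$ and $Y$ and subtracting, and both arguments then invoke the same product decomposition and the equivalence of $L^{p}$-norms in the Wiener chaos.
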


\begin{remark}
We could account for the factor $\omega \left( s,t\right) ^{\frac{1}{2\gamma 
}}$ in $\epsilon $ here but the present form is how we shall use this
proposition later on.
\end{remark}

\begin{proof}
Consider a copy $\bar{A}$ of $A$. If $a\in A$, we denote by $\bar{a}$ the
corresponding letter in $\bar{A}$. If $w=a_{i_{1}}\ldots a_{i_{n}}\in
A^{\ast }$, we define $\bar{w}=\bar{a}_{i_{1}}\ldots \bar{a}_{i_{n}}\in
A^{\ast }$ and in the same way we define $\bar{P}\in \mathbb{R}\left\langle 
\bar{A}\right\rangle $ for $P\in \mathbb{R}\left\langle A\right\rangle $.
Now we consider $\mathbb{R}\left\langle A\dot{\cup}\bar{A}\right\rangle $
equipped with the usual shuffle product. Define $\Psi \colon \mathbb{R}%
\left\langle A\dot{\cup}\bar{A}\right\rangle \rightarrow \mathbb{R}$ by%
\begin{equation*}
\Psi \left( w\right) =\int_{\Delta _{s,t}^{n}}dZ^{b_{i_{1}}}\ldots
dZ^{b_{i_{n}}}
\end{equation*}%
for a word $w=b_{i_{1}}\ldots b_{i_{n}}$ where%
\begin{equation*}
Z^{b_{j}}=\left\{ 
\begin{array}{ccc}
X^{a_{j}} & \text{for} & b_{j}=a_{j} \\ 
Y^{\bar{a}_{j}} & \text{for} & b_{j}=\bar{a}_{j}%
\end{array}%
\right.
\end{equation*}%
and extend this definition linearly. By Theorem \ref%
{isomorphism_shufflealgebra}, we know that $\Psi $ is an algebra
homomorphism. Take $w\in U$. By assumption, we know that there is a vector $%
\lambda =\left( \lambda _{1},\ldots ,\lambda _{k}\right) $ such that%
\begin{equation*}
w-\bar{w}=\sum_{j=1}^{k}\lambda _{j}\left( w_{j}-\bar{w}_{j}\right) +R-\bar{R%
}
\end{equation*}%
where $R$ is of the form $R=\sum_{u,v\in A^{+},\left\vert u\right\vert
+\left\vert v\right\vert =n}\mu _{u,v}\,u\ast v$ with real numbers $\mu
_{u,v}$. Applying $\Psi $ and taking the $L^{2}$ norm yields%
\begin{eqnarray*}
\left\vert \mathbf{X}_{s,t}^{w}-\mathbf{Y}_{s,t}^{w}\right\vert _{L^{2}}
&\leq &\sum_{l=1}^{k}\left\vert \lambda _{j}\right\vert \left\vert \mathbf{X}%
_{s,t}^{w_{j}}-\mathbf{Y}_{s,t}^{w_{j}}\right\vert _{L^{2}}+\left\vert \Psi
\left( R-\bar{R}\right) \right\vert _{L^{2}} \\
&\leq &c_{1}\epsilon \omega \left( s,t\right) ^{\frac{1}{2\gamma }}\omega
\left( s,t\right) ^{\frac{n-1}{2\rho }}+\left\vert \Psi \left( R-\bar{R}%
\right) \right\vert _{L^{2}}.
\end{eqnarray*}%
Now,%
\begin{equation*}
R-\bar{R}=\sum_{u,v}\mu _{u,v}\left( u\ast v-\bar{u}\ast \bar{v}\right)
=\sum_{u,v}\mu _{u,v}\left( u-\bar{u}\right) \ast v+\mu _{u,v}\bar{u}\ast
\left( v-\bar{v}\right) .
\end{equation*}%
Applying $\Psi $ and taking the $L^{2}$ norm gives then%
\begin{eqnarray*}
\left\vert \Psi \left( R-\bar{R}\right) \right\vert _{L^{2}} &\leq
&\sum_{u,v}\left\vert \mu _{u,v}\right\vert \left\vert \left( \mathbf{X}%
_{s,t}^{u}-\mathbf{Y}_{s,t}^{u}\right) \mathbf{X}_{s,t}^{v}\right\vert
_{L^{2}}+\left\vert \mu _{u,v}\right\vert \left\vert \mathbf{Y}%
_{s,t}^{u}\left( \mathbf{X}_{s,t}^{v}-\mathbf{Y}_{s,t}^{v}\right)
\right\vert _{L^{2}} \\
&\leq &\sum_{u,v}c_{2}\left( \left\vert \mathbf{X}_{s,t}^{u}-\mathbf{Y}%
_{s,t}^{u}\right\vert _{L^{2}}\left\vert \mathbf{X}_{s,t}^{v}\right\vert
_{L^{2}}+\left\vert \mathbf{Y}_{s,t}^{u}\right\vert _{L^{2}}\left\vert 
\mathbf{X}_{s,t}^{v}-\mathbf{Y}_{s,t}^{v}\right\vert _{L^{2}}\right) \\
&\leq &\sum_{u,v}c_{3}\epsilon \omega \left( s,t\right) ^{\frac{1}{2\gamma }%
}\omega \left( s,t\right) ^{\frac{\left\vert v\right\vert +\left\vert
u\right\vert -1}{2\rho }} \\
&\leq &c_{4}\epsilon \omega \left( s,t\right) ^{\frac{1}{2\gamma }}\omega
\left( s,t\right) ^{\frac{n-1}{2\rho }}
\end{eqnarray*}%
where we used equivalence of $L^{q}$-norms in the Wiener Chaos (cf. \cite[%
Proposition 15.19 and Theorem D.8]{FV10}). Putting all together shows the
assertion.
\end{proof}

\section{Multidimensional Young-integration and grid-controls\label%
{section_multidim_young}}

Let $f\colon \left[ 0,1\right] ^{n}\rightarrow \mathbb{R}$ be a continuous
function. If $s_{1}<t_{1},\ldots ,s_{n}<t_{n}$ and $u_{1},\ldots ,u_{n}$ are
elements in $\left[ 0,1\right] $, we make the following recursive definition:%
\begin{eqnarray*}
f\left( 
\begin{array}{c}
s_{1},t_{1} \\ 
u_{2} \\ 
\vdots \\ 
u_{n}%
\end{array}%
\right) &:&=f\left( 
\begin{array}{c}
t_{1} \\ 
u_{2} \\ 
\vdots \\ 
u_{n}%
\end{array}%
\right) -f\left( 
\begin{array}{c}
s_{1} \\ 
u_{2} \\ 
\vdots \\ 
u_{n}%
\end{array}%
\right) \quad \text{and} \\
f\left( 
\begin{array}{c}
s_{1},t_{1} \\ 
\vdots \\ 
s_{k-1},t_{k-1} \\ 
s_{k},t_{k} \\ 
u_{k+1} \\ 
\vdots \\ 
u_{n}%
\end{array}%
\right) &:&=f\left( 
\begin{array}{c}
s_{1},t_{1} \\ 
\vdots \\ 
s_{k-1},t_{k-1} \\ 
t_{k} \\ 
u_{k+1} \\ 
\vdots \\ 
u_{n}%
\end{array}%
\right) -f\left( 
\begin{array}{c}
s_{1},t_{1} \\ 
\vdots \\ 
s_{k-1},t_{k-1} \\ 
s_{k} \\ 
u_{k+1} \\ 
\vdots \\ 
u_{n}%
\end{array}%
\right) .
\end{eqnarray*}%
We will also use the simpler notation%
\begin{equation*}
f\left( R\right) =f\left( 
\begin{array}{c}
s_{1},t_{1} \\ 
\vdots \\ 
s_{n},t_{n}%
\end{array}%
\right)
\end{equation*}%
for the rectangle $R=\left[ s_{1},t_{1}\right] \times \ldots \times \left[
s_{n},t_{n}\right] \subset \left[ 0,1\right] ^{n}$. Note that for $n=2$ this
is consistent with our initial definition of $f\left( 
\begin{array}{c}
s_{1},t_{1} \\ 
s_{2},t_{2}%
\end{array}%
\right) $. If $f,g\colon \left[ 0,1\right] ^{n}\rightarrow \mathbb{R}$ are
continuous functions, the $n$-dimensional Young-integral is defined by%
\begin{eqnarray*}
&&\int_{\left[ s_{1},t_{1}\right] \times \ldots \times \left[ s_{n},t_{n}%
\right] }f\left( x_{1},\ldots ,x_{n}\right) \,dg\left( x_{1},\ldots
,x_{n}\right) \\
&:&=\lim_{\left\vert D_{1}\right\vert ,\ldots ,\left\vert D_{n}\right\vert
\rightarrow 0}\sum_{\substack{ \left( t_{i_{1}}^{1}\right) \subset D_{1}  \\ %
\vdots  \\ \left( t_{i_{n}}^{n}\right) \subset D_{n}}}f\left(
t_{i_{1}}^{1},\ldots ,t_{i_{n}}^{n}\right) g\left( 
\begin{array}{c}
t_{i_{1}}^{1},t_{i_{1}+1}^{1} \\ 
\vdots \\ 
t_{i_{n}}^{n},t_{i_{n}+1}^{n}%
\end{array}%
\right)
\end{eqnarray*}%
if this limit exists. Take $p\geq 1$. The $n$-dimensional $p$-variation of $%
f $ is defined by%
\begin{equation*}
V_{p}\left( f,\left[ s_{1},t_{1}\right] \times \ldots \times \left[
s_{n},t_{n}\right] \right) =\left( \sup_{\substack{ D_{1}\subset \left[
s_{1},t_{1}\right]  \\ \vdots  \\ D_{n}\subset \left[ s_{n},t_{n}\right] }}%
\sum_{\substack{ \left( t_{i_{1}}^{1}\right) \subset D_{1}  \\ \vdots  \\ %
\left( t_{i_{n}}^{n}\right) \subset D_{n}}}\left\vert f\left( 
\begin{array}{c}
t_{i_{1}}^{1},t_{i_{1}+1}^{1} \\ 
\vdots \\ 
t_{i_{n}}^{n},t_{i_{n}+1}^{n}%
\end{array}%
\right) \right\vert ^{p}\right) ^{1/p}
\end{equation*}%
and if $V_{p}\left( f,\left[ 0,1\right] ^{n}\right) <\infty $ we say that $f$
has finite ($n$-dimensional) $p$-variation. The fundamental theorem is the
following:

\begin{theorem}
Assume that $f$ has finite $p$-variation and $g$ finite $q$-variation where $%
\frac{1}{p}+\frac{1}{q}>1$. Then the joint Young-integral below exists and
there is a constant $C=C\left( p,q\right) $ such that%
\begin{eqnarray*}
&&\left\vert \int_{\left[ s_{1},t_{1}\right] \times \ldots \times \left[
s_{n},t_{n}\right] }f\left( 
\begin{array}{c}
s_{1},u_{1} \\ 
\vdots \\ 
s_{n},u_{n}%
\end{array}%
\right) \,dg\left( u_{1},\ldots ,u_{n}\right) \right\vert \\
&\leq &CV_{p}\left( f,\left[ s_{1},t_{1}\right] \times \ldots \times \left[
s_{n},t_{n}\right] \right) V_{q}\left( g,\left[ s_{1},t_{1}\right] \times
\ldots \times \left[ s_{n},t_{n}\right] \right) .
\end{eqnarray*}
\end{theorem}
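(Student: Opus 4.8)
The plan is to prove both existence of the integral and the bound by induction on the dimension $n$. The base case $n=1$ is the classical one-dimensional Young--Lo\`{e}ve inequality: when $\tfrac1p+\tfrac1q>1$ the Riemann sums $\sum f(\tau_i)\,g_{\tau_i,\tau_{i+1}}$ converge and $\bigl|\int_s^t f\,dg-f(s)\,g_{s,t}\bigr|\le C(p,q)\,\left|f\right|_{p\text{-var};[s,t]}\left|g\right|_{q\text{-var};[s,t]}$. It is cleaner to establish the inequality first with the grid $p$-variation $V_p$ replaced by the (larger) \emph{controlled} $p$-variation $\left|\,\cdot\,\right|_{p\text{-var}}$ of the preceding definition, and only at the very end pass back to $V_p$. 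Throughout I write $R=[s_1,t_1]\times R'$ with $R'=[s_2,t_2]\times\cdots\times[s_n,t_n]$, and note that the integrand in the statement vanishes whenever some $u_i=s_i$, so the trivial partition contributes $0$.

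\emph{Inductive step.} Assume the claim in dimension $n-1$. For $s_1\le a<b\le t_1$ put $\psi_a(x_2,\dots,x_n):=f(a,x_2,\dots,x_n)-f(s_1,x_2,\dots,x_n)$ and $g^{[a,b]}(x_2,\dots,x_n):=g(b,x_2,\dots,x_n)-g(a,x_2,\dots,x_n)$. One checks directly that the rectangular increment of $\psi_a$ from $(s_2,\dots,s_n)$ to $(u_2,\dots,u_n)$ equals $f\left(\begin{smallmatrix}s_1,a\\ s_2,u_2\\ \vdots\\ s_n,u_n\end{smallmatrix}\right)$, that rectangular increments of $g^{[a,b]}$ over a sub-rectangle $R''\subset R'$ equal those of $g$ over $[a,b]\times R''$, and hence $\left|\psi_a\right|_{p\text{-var};R'}\le\left|f\right|_{p\text{-var};[s_1,a]\times R'}$ and $\left|g^{[a,b]}\right|_{q\text{-var};R'}\le\left|g\right|_{q\text{-var};[a,b]\times R'}$. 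Thus, for $a=t^1_{i_1}$, $b=t^1_{i_1+1}$, the inner sum over $i_2,\dots,i_n$ of the $n$-dimensional Riemann sum is exactly an $(n-1)$-dimensional Young--Riemann sum of $\psi_a$ (in rectangular-increment form) against $g^{[a,b]}$; by the induction hypothesis it converges, as $|D_2|,\dots,|D_n|\to0$, to $\Gamma([a,b]):=\int_{R'}\psi_a\,dg^{[a,b]}$ with $\bigl|\Gamma([a,b])\bigr|\le C\,\left|f\right|_{p\text{-var};R}\,\omega_g([a,b])^{1/q}$, where $\omega_g([a,b]):=\left|g\right|^q_{q\text{-var};[a,b]\times R'}$ and likewise $\omega_f([a,b]):=\left|f\right|^p_{p\text{-var};[a,b]\times R'}$. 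Writing $\psi_a=\psi_b-\chi_{a,b}$ with $\chi_{a,b}(x_\bullet)=f(b,x_\bullet)-f(a,x_\bullet)$, and using $g^{[a,c]}=g^{[a,b]}+g^{[b,c]}$ together with linearity of the $(n-1)$-dimensional integral in its integrator, one gets the exact identity
\[
\Gamma([a,c])-\Gamma([a,b])-\Gamma([b,c])=-\int_{R'}\chi_{a,b}\,dg^{[b,c]},
\]
whose right-hand side is, again by the induction hypothesis, at most $C\,\omega_f([a,b])^{1/p}\,\omega_g([b,c])^{1/q}$. Since $[a,b]\mapsto\omega_f([a,b])$, $\omega_g([a,b])$ are $1D$ controls (restrictions of $n$-dimensional controls dominating $\left|f\right|^p_{p\text{-var};\cdot}$, $\left|g\right|^q_{q\text{-var};\cdot}$, by the $n$-dimensional analogue of Theorem \ref{theorem_comp_contr_p_var}(3)) and $\tfrac1p+\tfrac1q>1$, the one-dimensional sewing lemma applies to the almost-additive interval functional $\Gamma$: the limit over refining $D_1$ of $\sum_{i_1}\Gamma([t^1_{i_1},t^1_{i_1+1}])$ exists and differs from $\Gamma([s_1,t_1])$ by at most $C\,(\omega_f([s_1,t_1])+\omega_g([s_1,t_1]))^{1/p+1/q}$; a homogeneity rescaling in $f$ and $g$ converts this, together with the bound on $\Gamma([s_1,t_1])$, into a bound $C(p,q,n)\,\left|f\right|_{p\text{-var};R}\,\left|g\right|_{q\text{-var};R}$ for the (iterated) integral.

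\emph{Where the real work is.} Two issues remain. First, the statement is phrased with the grid $p$-variation $V_p$, but the slice functionals $[a,b]\mapsto V_p(f,[a,b]\times R')^p$ built from it are not superadditive; that is precisely why the induction must run through the controlled $p$-variation. To recover the statement as written, one fixes $p'=p+\varepsilon$, $q'=q+\varepsilon$ small enough that $\tfrac1{p'}+\tfrac1{q'}>1$ still holds (possible since the inequality is strict), runs the induction for the pair $(p',q')$, and then invokes $\left|f\right|_{p'\text{-var};R}\le C\,V_p(f,R)$ and $\left|g\right|_{q'\text{-var};R}\le C\,V_q(g,R)$. So the argument genuinely rests on the $n$-dimensional analogues of Theorem \ref{theorem_comp_contr_p_var} --- the existence of a superadditive control for the controlled $p$-variation and the grid/controlled comparison --- and establishing (or citing) these in $\mathbb{R}^n$ is, I expect, the main obstacle. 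Second, the induction produces the \emph{iterated} limit (first in $u_2,\dots,u_n$, then in $u_1$), whereas the integral is defined as the \emph{joint} limit over all meshes simultaneously; closing this gap requires bounding $\bigl|S(D_1,\dots,D_n)-\sum_{i_1}\Gamma([t^1_{i_1},t^1_{i_1+1}])\bigr|$ uniformly in $D_1$, which one obtains by summing the quantitative $(n-1)$-dimensional sewing estimate over the blocks $[t^1_{i_1},t^1_{i_1+1}]\times R'$ and invoking continuity of $\omega_f,\omega_g$ (so their oscillation over the $(D_2,\dots,D_n)$-grid tends to $0$). The remaining steps --- superadditivity of the slice controls, linearity and continuity of the lower-dimensional integral, the homogeneity rescaling, and tracking that the constant depends only on $p,q,n$ --- are routine.
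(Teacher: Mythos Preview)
The paper does not actually prove this theorem: its entire proof is the citation ``\cite{T02}, Theorem 1.2 (c).'' So there is nothing to compare against within the paper itself; you have supplied a self-contained argument where the authors simply defer to Towghi.

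Your induction-plus-sewing approach is sound. The almost-additivity identity for $\Gamma$ is correct, the bound $C\,\omega_f([a,b])^{1/p}\omega_g([b,c])^{1/q}\le C'(\omega_f+\omega_g)([a,c])^{1/p+1/q}$ gives the exponent $>1$ needed for sewing, and the homogeneity rescaling recovers the product form of the estimate. You also correctly isolate the two genuine obligations: the $n$-dimensional analogue of Theorem~\ref{theorem_comp_contr_p_var} (superadditivity of $R\mapsto|f|_{p\text{-var};R}^p$ and the $(p+\varepsilon)$-comparison with $V_p$), and the upgrade from iterated to joint limits. Both are available---the first is the content of \cite{FV11} in higher dimension, and Towghi's original argument in \cite{T02} handles the joint limit directly by a multidimensional point-removal scheme \`{a} la Young, working with grid partitions throughout and thereby sidestepping the controlled-variation machinery entirely. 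Your route is thus genuinely different from Towghi's (more in the spirit of the modern sewing lemma) but equally valid; its cost is the dependence on the $n$-dimensional controlled $p$-variation theory, its benefit is a cleaner recursive structure. One small remark: your induction naturally produces $C=C(p,q,n)$, whereas the statement writes $C(p,q)$; in context $n$ is fixed and this is harmless, but it is worth noting.
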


\begin{proof}
\cite{T02}, Theorem 1.2 (c).
\end{proof}

We will mainly consider the case $n=2$, but we will also need $n=3$ and $4$
later on. In particular, the discussion of level $n=4$ will require us to
work with $4D$ grid control functions which we now introduce. With no extra
complication we make the following general definition.

\begin{definition}[$n$-dimensional grid control]
A map $\tilde{\omega}\colon \underbrace{\Delta \times \ldots \times \Delta }%
_{\text{n-times}}\rightarrow \mathbb{R}^{+}$ is called a $n$-$D$\emph{\
grid-control} if it is continuous and partially super-additive, i.e. for all 
$\left( s_{1},t_{1}\right) ,\ldots ,\left( s_{n},t_{n}\right) \in \Delta $
and $s_{i}<u_{i}<t_{i}$ we have%
\begin{eqnarray*}
&&\tilde{\omega}\left( \left[ s_{1},t_{1}\right] \times \ldots \times \left[
s_{i},u_{i}\right] \times \ldots \times \left[ s_{n},t_{n}\right] \right) +%
\tilde{\omega}\left( \left[ s_{1},t_{1}\right] \times \ldots \times \left[
u_{i},t_{i}\right] \times \ldots \times \left[ s_{n},t_{n}\right] \right) \\
&\leq &\tilde{\omega}\left( \left[ s_{1},t_{1}\right] \times \ldots \times %
\left[ s_{i},t_{i}\right] \times \ldots \times \left[ s_{n},t_{n}\right]
\right)
\end{eqnarray*}%
for every $i=1,\ldots ,n$. $\tilde{\omega}$ is called symmetric if%
\begin{equation*}
\tilde{\omega}\left( \left[ s_{1},t_{1}\right] \times \ldots \times \left[
s_{n},t_{n}\right] \right) =\tilde{\omega}\left( \left[ s_{\sigma \left(
1\right) },t_{\sigma \left( 1\right) }\right] \times \ldots \times \left[
s_{\sigma \left( n\right) },t_{\sigma \left( n\right) }\right] \right)
\end{equation*}%
holds for every $\sigma \in S_{n}$.
\end{definition}

The point of this definition is that $\left\vert f\left( A\right)
\right\vert ^{p}\leq \tilde{\omega}\left( A\right) $ for every rectangle $%
A\subset \left[ 0,1\right] ^{n}$ implies that $V_{p}\left( f,R\right)
^{p}\leq \tilde{\omega}\left( R\right) $ for every rectangle $R\subset \left[
0,1\right] ^{n}$. Note that a 2D control in the sense of Definition \ref%
{def_2D_controls} is automatically a $2D$ grid-control. The following
immediate properties will be used in Section \ref{section_n=4} with $m=n=2$.

\begin{lemma}
\begin{enumerate}
\item The restriction of a $\left( m+n\right) $-dimensional grid-control to $%
m$ arguments is a $m$-dimensional grid-control.

\item The product of a $m$- and a $n$-dimensional grid-control is a $(m+n)$%
-dimensional grid-control.
\end{enumerate}
\end{lemma}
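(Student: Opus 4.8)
The plan is to verify the two defining properties of a grid-control — continuity and partial super-additivity — directly in each case. Both are essentially bookkeeping; the one point that genuinely matters is that grid-controls take values in $\mathbb{R}^{+}$, since this is what lets one multiply a super-additivity inequality by a ``frozen'' factor without reversing it.

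\textit{Part (1).} Let $\tilde{\omega}\colon\Delta^{m+n}\to\mathbb{R}^{+}$ be an $(m+n)$-dimensional grid-control. I would fix arbitrary rectangles $B_{1},\ldots,B_{n}\subset[0,1]^{2}$ and define $\omega\colon\Delta^{m}\to\mathbb{R}^{+}$ by $\omega(A_{1},\ldots,A_{m}):=\tilde{\omega}(A_{1},\ldots,A_{m},B_{1},\ldots,B_{n})$. Continuity of $\omega$ is immediate, as $\omega$ is the composition of $\tilde{\omega}$ with the continuous inclusion that appends $B_{1},\ldots,B_{n}$. For partial super-additivity, fix $i\in\{1,\ldots,m\}$, rectangles $A_{j}=[s_{j},t_{j}]$ for $j=1,\ldots,m$, and a point $s_{i}<u_{i}<t_{i}$; writing $A_{i}'=[s_{i},u_{i}]$ and $A_{i}''=[u_{i},t_{i}]$, the inequality
\[
\omega(A_{1},\ldots,A_{i}',\ldots,A_{m})+\omega(A_{1},\ldots,A_{i}'',\ldots,A_{m})\le\omega(A_{1},\ldots,A_{i},\ldots,A_{m})
\]
is \emph{exactly} the partial super-additivity of $\tilde{\omega}$ in its $i$-th argument, read off with the last $n$ slots set to $B_{1},\ldots,B_{n}$. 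Since one may relabel arguments (invoking symmetry of $\tilde{\omega}$ if one insists that it be the last $n$ arguments that are frozen), freezing any $n$ of the $m+n$ arguments yields an $m$-dimensional grid-control. In particular, with $m=n=2$ this is the statement needed in Section~\ref{section_n=4}.

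\textit{Part (2).} Given grid-controls $\omega_{1}\colon\Delta^{m}\to\mathbb{R}^{+}$ and $\omega_{2}\colon\Delta^{n}\to\mathbb{R}^{+}$, set
\[
\omega(A_{1},\ldots,A_{m+n}):=\omega_{1}(A_{1},\ldots,A_{m})\,\omega_{2}(A_{m+1},\ldots,A_{m+n}).
\]
As a product of continuous $\mathbb{R}^{+}$-valued maps, $\omega$ is continuous and $\mathbb{R}^{+}$-valued. For partial super-additivity, fix $i\in\{1,\ldots,m+n\}$ and split $A_{i}$ into $A_{i}'$, $A_{i}''$ as above. If $i\le m$, then
\begin{align*}
\omega(\ldots,A_{i}',\ldots)+\omega(\ldots,A_{i}'',\ldots)
&=\bigl(\omega_{1}(\ldots,A_{i}',\ldots)+\omega_{1}(\ldots,A_{i}'',\ldots)\bigr)\,\omega_{2}(A_{m+1},\ldots,A_{m+n})\\
&\le\omega_{1}(\ldots,A_{i},\ldots)\,\omega_{2}(A_{m+1},\ldots,A_{m+n})=\omega(\ldots,A_{i},\ldots),
\end{align*}
using partial super-additivity of $\omega_{1}$ in its $i$-th argument together with $\omega_{2}\ge 0$. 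The case $i>m$ is identical with the roles of $\omega_{1}$ and $\omega_{2}$ interchanged, using $\omega_{1}\ge 0$. Hence $\omega$ is an $(m+n)$-dimensional grid-control. (If $\omega_{1},\omega_{2}$ are both symmetric then $\omega$ is invariant under $S_{m}\times S_{n}\subset S_{m+n}$; full $S_{m+n}$-symmetry is neither claimed nor needed.)

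I do not expect a real obstacle: the lemma is a routine structural fact whose entire content is the two short computations above. The only thing one must not overlook is the sign — multiplying the super-additivity inequality for $\omega_{1}$ (resp.\ $\omega_{2}$) by the complementary factor is legitimate precisely because grid-controls are non-negative, and likewise the combinatorial identity for $1D$ controls recalled earlier ($\left(s,t\right)\mapsto\omega([s,t]^{2})$ being a $1D$ control) is a special instance of Part (1).
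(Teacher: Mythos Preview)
Your proof is correct; the paper states this lemma as an ``immediate'' property and gives no proof, and your direct verification of continuity and partial super-additivity is exactly the routine check the authors have in mind. One minor slip: the frozen arguments $B_1,\ldots,B_n$ should be intervals in $[0,1]$ (equivalently, elements of $\Delta$), not rectangles in $[0,1]^2$, but this does not affect the argument.
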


\subsection{Iterated $2D$-integrals}

In the $1$-dimensional case, the classical Young-theory allows to define
iterated integrals of functions with finite $p$-variation where $p<2$.
There, the superadditivity of $\left( s,t\right) \mapsto \left\vert \cdot
\right\vert _{p-var;\left[ s,t\right] }^{p}$ played an essential role. We
will see that Theorem \ref{theorem_comp_contr_p_var} can be used to define
and estimate iterated $2D$-integrals. This will play an important role in
Section \ref{section_main_estimates} when we estimate the $L^{2}$-norm of
iterated integrals of Gaussian processes.

\begin{lemma}
\label{lemma_kernel_iter_2D}Let $f,g\colon \left[ 0,1\right] ^{2}\rightarrow 
\mathbb{R}$ be continuous where $f$ has finite $p$-variation and $g$ finite
controlled $q$-variation with $p^{-1}+q^{-1}>1$. Let $\left( s,t\right) \in
\Delta $ and assume that $f\left( s,\cdot \right) =f\left( \cdot ,s\right)
=0 $. Define $\Phi \colon \left[ s,t\right] ^{2}\rightarrow \mathbb{R}$ by%
\begin{equation*}
\Phi \left( u,v\right) =\int_{\left[ s,u\right] \times \left[ s,v\right]
}f\,dg.
\end{equation*}%
Then there is a constant $C=C\left( p,q\right) $ such that%
\begin{equation*}
V_{q-var}\left( \Phi ;\left[ s,t\right] ^{2}\right) \leq C\left( p,q\right)
V_{p-var}\left( f;\left[ s,t\right] ^{2}\right) \left\vert g\right\vert
_{q-var;\left[ s,t\right] ^{2}}.
\end{equation*}
\end{lemma}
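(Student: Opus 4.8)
The plan is to establish, for \emph{every} rectangle $R\subseteq[s,t]^2$, the pointwise bound $|\Phi(R)|\le C(p,q)\,V_{p-var}(f;[s,t]^2)\,|g|_{q-var;R}$, and then to sum the $q$-th powers over an arbitrary (grid) partition of $[s,t]^2$, using that $R\mapsto|g|_{q-var;R}^q$ is a $2D$ control, hence superadditive, by Theorem~\ref{theorem_comp_contr_p_var}(3). Since $f(s,\cdot)=f(\cdot,s)=0$, one has $f(u_1,u_2)=f([s,u_1]\times[s,u_2])$, so each integral $\int_{[s,u]\times[s,v]}f\,dg$ is of the form to which the multidimensional Young inequality applies and $\Phi$ is well defined; moreover, by additivity of the Young integral over rectangles, $\Phi(R)=\int_R f\,dg$ for every $R=[a_1,b_1]\times[a_2,b_2]\subseteq[s,t]^2$.

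Fix such an $R$, with lower corner $(a_1,a_2)$. Splitting $[s,u_i]=[s,a_i]\cup[a_i,u_i]$ and using additivity of rectangular increments of $f$,
\begin{equation*}
f(u_1,u_2)=f([s,a_1]\times[s,a_2])+f([a_1,u_1]\times[s,a_2])+f([s,a_1]\times[a_2,u_2])+f([a_1,u_1]\times[a_2,u_2]).
\end{equation*}
Integrating against $dg$ over $R$ and using linearity of the Riemann sums decomposes $\Phi(R)$ into four pieces: (i) the constant term integrates to $f(a_1,a_2)\,g(R)$; (ii) the term depending on $u_1$ only, after telescoping the Riemann sums in the $u_2$-variable, becomes the one-dimensional Young integral $\int_{a_1}^{b_1}h\,dG$ with $h(u_1)=f([a_1,u_1]\times[s,a_2])$ and $G(u_1)=g([a_1,u_1]\times[a_2,b_2])$; (iii) symmetrically, a one-dimensional Young integral $\int_{a_2}^{b_2}\tilde h\,d\tilde G$; (iv) the term $\int_R f([a_1,u_1]\times[a_2,u_2])\,dg$, whose integrand is the rectangular increment of $f$ \emph{based at the corner of $R$}, so that the multidimensional Young inequality applies on $R$ itself.

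Each of these four pieces is bounded by $C(p,q)\,V_{p-var}(f;[s,t]^2)\,|g|_{q-var;R}$. For (i): $|f(a_1,a_2)|=|f([s,a_1]\times[s,a_2])|\le V_{p-var}(f;[s,t]^2)$ and $|g(R)|\le|g|_{q-var;R}$. For (iv): the Young inequality gives a bound $C(p,q)\,V_{p-var}(f;R)\,V_{q-var}(g;R)$, and $V_{q-var}(g;R)\le|g|_{q-var;R}$, $V_{p-var}(f;R)\le V_{p-var}(f;[s,t]^2)$. For (ii) --- and here the hypothesis $f(\cdot,s)=0$ is used decisively --- $h$ vanishes at $a_1$ with increments $h(u)-h(u')=f([u',u]\times[s,a_2])$, so its one-dimensional $p$-variation is bounded by $V_{p-var}(f;[a_1,b_1]\times[s,a_2])\le V_{p-var}(f;[s,t]^2)$, whereas a generic slice $f(\cdot,a_2)$ would not be controlled by the $2D$ $p$-variation at all; similarly $G$ has increments $g([u',u]\times[a_2,b_2])$ whose $q$-th powers sum over a partition of $R$, giving $|G|_{q-var;[a_1,b_1]}^q\le|g|_{q-var;R}^q$. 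The one-dimensional Young estimate (with $h(a_1)=0$) then yields the bound, and (iii) is symmetric. Adding the four contributions gives the claimed pointwise bound, and then
\begin{equation*}
\sum_k|\Phi(R_k)|^q\le C(p,q)^q\,V_{p-var}(f;[s,t]^2)^q\sum_k|g|_{q-var;R_k}^q\le C(p,q)^q\,V_{p-var}(f;[s,t]^2)^q\,|g|_{q-var;[s,t]^2}^q
\end{equation*}
for any partition $\{R_k\}$ of $[s,t]^2$, by superadditivity; taking $q$-th roots and the supremum over partitions proves the lemma.

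The step I expect to require the most care is the telescoping reduction in (ii) and (iii): checking that the two-dimensional Young integral of a function of a single variable collapses to a genuine one-dimensional Young integral against the corresponding "slice path" $G$ (resp. $\tilde G$), and, relatedly, arranging the base points so that the multidimensional Young inequality applies to (iv) on $R$ itself rather than on a larger rectangle. Everything else is routine variation bookkeeping, the hypothesis $f(s,\cdot)=f(\cdot,s)=0$ entering precisely to dominate the corner value in (i) and the edge functions $h,\tilde h$ in (ii)--(iii) by $V_{p-var}(f;[s,t]^2)$.
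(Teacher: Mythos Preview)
Your proof is correct and follows essentially the same route as the paper: decompose $f$ on each sub-rectangle $R$ into a corner value, two edge terms depending on one variable only, and the rectangular increment based at the lower corner of $R$; bound these four pieces respectively by a trivial estimate, two one-dimensional Young estimates, and the two-dimensional Young estimate, each time pulling out the factor $V_{p}(f;[s,t]^{2})$ thanks to $f(s,\cdot)=f(\cdot,s)=0$; and then sum $q$-th powers using superadditivity of $|g|_{q\text{-}var}^{q}$. The only cosmetic difference is that you write the decomposition from the start in rectangular-increment form (using $f(u_1,u_2)=f([s,u_1]\times[s,u_2])$), whereas the paper writes $f(u,v)=f\!\left(\begin{smallmatrix}t_i,u\\ \tilde t_j,v\end{smallmatrix}\right)+f(t_i,v)+f(u,\tilde t_j)-f(t_i,\tilde t_j)$ and then separately invokes the vanishing at $s$ to bound $\sup_u|f(u,\cdot)|_{p\text{-}var}$, $\sup_v|f(\cdot,v)|_{p\text{-}var}$ and $|f|_\infty$ by $V_p(f;[s,t]^2)$.
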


\begin{proof}
\begin{enumerate}
\item Let $t_{i}<t_{i+1}$ and $\tilde{t}_{j}<\tilde{t}_{j+1}$. Then,%
\begin{equation*}
\Phi 
\begin{pmatrix}
t_{i},t_{i+1} \\ 
\tilde{t}_{j},\tilde{t}_{j+1}%
\end{pmatrix}%
=\int_{\left[ t_{i},t_{i+1}\right] \times \left[ \tilde{t}_{j},\tilde{t}%
_{j+1}\right] }f\,dg.
\end{equation*}%
Now let $t_{i}<u<t_{i+1}$ and $\tilde{t}_{j}<v<\tilde{t}_{j+1}$. Then one has%
\begin{equation*}
f%
\begin{pmatrix}
t_{i},u \\ 
\tilde{t}_{j},v%
\end{pmatrix}%
=f\left( u,v\right) -f\left( t_{i},v\right) -f\left( u,\tilde{t}_{j}\right)
+f\left( t_{i},\tilde{t}_{j}\right) .
\end{equation*}%
Therefore,%
\begin{eqnarray*}
\left\vert \Phi 
\begin{pmatrix}
t_{i},t_{i+1} \\ 
\tilde{t}_{j},\tilde{t}_{j+1}%
\end{pmatrix}%
\right\vert &\leq &\left\vert \int_{\left[ t_{i},t_{i+1}\right] \times \left[
\tilde{t}_{j},\tilde{t}_{j+1}\right] }f%
\begin{pmatrix}
t_{i},u \\ 
\tilde{t}_{j},v%
\end{pmatrix}%
\,dg\left( u,v\right) \right\vert +\left\vert \int_{\left[ t_{i},t_{i+1}%
\right] \times \left[ \tilde{t}_{j},\tilde{t}_{j+1}\right] }f\left(
t_{i},v\right) \,dg\left( u,v\right) \right\vert \\
&&+\left\vert \int_{\left[ t_{i},t_{i+1}\right] \times \left[ \tilde{t}_{j},%
\tilde{t}_{j+1}\right] }f\left( u,\tilde{t}_{j}\right) \,dg\left( u,v\right)
\right\vert +\left\vert \int_{\left[ t_{i},t_{i+1}\right] \times \left[ 
\tilde{t}_{j},\tilde{t}_{j+1}\right] }f\left( t_{i},\tilde{t}_{j}\right)
\,dg\left( u,v\right) \right\vert
\end{eqnarray*}%
For the first integral we use Young $2D$-estimates to see that%
\begin{eqnarray*}
&&\left\vert \int_{\left[ t_{i},t_{i+1}\right] \times \left[ \tilde{t}_{j},%
\tilde{t}_{j+1}\right] }f%
\begin{pmatrix}
t_{i},u \\ 
\tilde{t}_{j},v%
\end{pmatrix}%
\,dg\left( u,v\right) \right\vert \\
&\leq &c_{1}\left( p,q\right) V_{p}\left( f,\left[ t_{i},t_{i+1}\right]
\times \left[ \tilde{t}_{j},\tilde{t}_{j+1}\right] \right) V_{q}\left( g,%
\left[ t_{i},t_{i+1}\right] \times \left[ \tilde{t}_{j},\tilde{t}_{j+1}%
\right] \right) \\
&\leq &c_{1}\left( p,q\right) V_{p}\left( f,\left[ s,t\right] ^{2}\right)
\left\vert g\right\vert _{q-var;\left[ t_{i},t_{i+1}\right] \times \left[ 
\tilde{t}_{j},\tilde{t}_{j+1}\right] }
\end{eqnarray*}%
For the second, one has by a Young $1D$-estimate%
\begin{eqnarray*}
\left\vert \int_{\left[ t_{i},t_{i+1}\right] \times \left[ \tilde{t}_{j},%
\tilde{t}_{j+1}\right] }f\left( t_{i},v\right) \,dg\left( u,v\right)
\right\vert &=&\left\vert \int_{\left[ \tilde{t}_{j},\tilde{t}_{j+1}\right]
}f\left( t_{i},v\right) \,d\left( g\left( t_{i+1},v\right) -g\left(
t_{i},v\right) \right) \right\vert \\
&\leq &c_{2}\sup_{u\in \left[ s,t\right] }\left\vert f\left( u,\cdot \right)
\right\vert _{p-var;\left[ s,t\right] }\left\vert g\right\vert _{q-var;\left[
t_{i},t_{i+1}\right] \times \left[ \tilde{t}_{j},\tilde{t}_{j+1}\right] }.
\end{eqnarray*}%
Similarly,%
\begin{equation*}
\left\vert \int_{\left[ t_{i},t_{i+1}\right] \times \left[ \tilde{t}_{j},%
\tilde{t}_{j+1}\right] }f\left( u,\tilde{t}_{j}\right) \,dg\left( u,v\right)
\right\vert \leq c_{2}\sup_{v\in \left[ s,t\right] }\left\vert f\left( \cdot
,v\right) \right\vert _{p-var;\left[ s,t\right] }\left\vert g\right\vert
_{q-var;\left[ t_{i},t_{i+1}\right] \times \left[ \tilde{t}_{j},\tilde{t}%
_{j+1}\right] }.
\end{equation*}%
Finally,%
\begin{equation*}
\left\vert \int_{\left[ t_{i},t_{i+1}\right] \times \left[ \tilde{t}_{j},%
\tilde{t}_{j+1}\right] }f\left( t_{i},\tilde{t}_{j}\right) \,dg\left(
u,v\right) \right\vert =\left\vert f\left( t_{i},\tilde{t}_{j}\right)
\right\vert \left\vert g%
\begin{pmatrix}
t_{i},t_{i+1} \\ 
\tilde{t}_{j},\tilde{t}_{j+1}%
\end{pmatrix}%
\right\vert \leq \left\vert f\right\vert _{\infty ;\left[ s,t\right]
}\left\vert g\right\vert _{q-var;\left[ t_{i},t_{i+1}\right] \times \left[ 
\tilde{t}_{j},\tilde{t}_{j+1}\right] }.
\end{equation*}%
Putting all together, we get%
\begin{eqnarray*}
&&\left\vert \Phi 
\begin{pmatrix}
t_{i},t_{i+1} \\ 
\tilde{t}_{j},\tilde{t}_{j+1}%
\end{pmatrix}%
\right\vert ^{q} \\
&\leq &c_{3}\left( V_{p}\left( f,\left[ s,t\right] ^{2}\right) +\sup_{u\in %
\left[ s,t\right] }\left\vert f\left( u,\cdot \right) \right\vert _{p-var;%
\left[ s,t\right] }+\sup_{v\in \left[ s,t\right] }\left\vert f\left( \cdot
,v\right) \right\vert _{p-var;\left[ s,t\right] }+\left\vert f\right\vert
_{\infty ;\left[ s,t\right] }\right) ^{q} \\
&&\times \left\vert g\right\vert _{q-var;\left[ t_{i},t_{i+1}\right] \times %
\left[ \tilde{t}_{j},\tilde{t}_{j+1}\right] }^{q}.
\end{eqnarray*}%
Take a partition $D\subset \left[ s,t\right] $ and $u\in \left[ s,t\right] $%
. Then%
\begin{equation*}
\sum_{t_{i}\in D}\left\vert f\left( u,t_{i+1}\right) -f\left( u,t_{i}\right)
\right\vert ^{p}=\sum_{t_{i}\in D}\left\vert f%
\begin{pmatrix}
s,u \\ 
t_{i},t_{i+1}%
\end{pmatrix}%
\right\vert ^{p}\leq V_{p}\left( f,\left[ s,t\right] ^{2}\right) ^{p}
\end{equation*}%
and hence%
\begin{equation*}
\sup_{u\in \left[ s,t\right] }\left\vert f\left( u,\cdot \right) \right\vert
_{p-var;\left[ s,t\right] }\leq V_{p}\left( f,\left[ s,t\right] ^{2}\right) .
\end{equation*}%
The same way one obtains%
\begin{equation*}
\sup_{v\in \left[ s,t\right] }\left\vert f\left( \cdot ,v\right) \right\vert
_{p-var;\left[ s,t\right] }\leq V_{p}\left( f,\left[ s,t\right] ^{2}\right) .
\end{equation*}%
Finally, for $u,v\in \left[ s,t\right] $,%
\begin{equation*}
\left\vert f\left( u,v\right) \right\vert =\left\vert f%
\begin{pmatrix}
s,u \\ 
s,v%
\end{pmatrix}%
\right\vert \leq V_{p}\left( f,\left[ s,t\right] ^{2}\right)
\end{equation*}%
and therefore $\left\vert f\right\vert _{\infty ;\left[ s,t\right] }\leq
V_{p}\left( f,\left[ s,t\right] ^{2}\right) $. Putting everything together,
we end up with%
\begin{equation*}
\left\vert \Phi 
\begin{pmatrix}
t_{i},t_{i+1} \\ 
\tilde{t}_{j},\tilde{t}_{j+1}%
\end{pmatrix}%
\right\vert ^{q}\leq c_{4}V_{p}\left( f,\left[ s,t\right] ^{2}\right)
^{q}\left\vert g\right\vert _{q-var;\left[ t_{i},t_{i+1}\right] \times \left[
\tilde{t}_{j},\tilde{t}_{j+1}\right] }^{q}.
\end{equation*}%
Hence for every partition $D,\tilde{D}\subset \left[ s,t\right] $ one gets,
using superadditivity of $\left\vert g\right\vert _{q-var}^{q}$,%
\begin{eqnarray*}
\sum_{t_{i}\in D,\tilde{t}_{j}\in \tilde{D}}\left\vert \Phi 
\begin{pmatrix}
t_{i},t_{i+1} \\ 
\tilde{t}_{j},\tilde{t}_{j+1}%
\end{pmatrix}%
\right\vert ^{q} &\leq &c_{4}V_{p}\left( f,\left[ s,t\right] ^{2}\right)
^{q}\sum_{t_{i}\in D,\tilde{t}_{j}\in \tilde{D}}\left\vert g\right\vert
_{q-var;\left[ t_{i},t_{i+1}\right] \times \left[ \tilde{t}_{j},\tilde{t}%
_{j+1}\right] }^{q} \\
&\leq &c_{4}V_{p}\left( f,\left[ s,t\right] ^{2}\right) ^{q}\left\vert
g\right\vert _{q-var;\left[ s,t\right] ^{2}}^{q}.
\end{eqnarray*}%
Passing to the supremum over all partitions shows the assertion.
\end{enumerate}
\end{proof}

This lemma allows us to define iterated $2D$-integrals. Let $f,g_{1},\ldots
,g_{n}\colon \left[ 0,1\right] ^{2}\rightarrow \mathbb{R}$. An iterated $2D$%
-integral is given by $\int_{\Delta _{s,t}^{1}\times \Delta _{s^{\prime
},t^{\prime }}^{1}}f\,dg_{1}=\int_{\left[ s,t\right] \times \left[ s^{\prime
},t^{\prime }\right] }f\left( u,v\right) \,dg_{1}\left( u,v\right) $ for $%
n=1 $ and recursively defined by 
\begin{equation*}
\int_{\Delta _{s,t}^{n}\times \Delta _{s^{\prime },t^{\prime
}}^{n}}f\,dg_{1}\ldots dg_{n}:=\int_{\left[ s,t\right] \times \left[
s^{\prime },t^{\prime }\right] }\left( \int_{\Delta _{s,u}^{n-1}\times
\Delta _{s^{\prime },v}^{n-1}}f\,dg_{1}\ldots dg_{n-1}\right) \,dg_{n}\left(
u,v\right)
\end{equation*}%
for $n\geq 2$.

\begin{proposition}
\label{prop_rho_var_iter_2D}Let $f,g_{1},g_{2},\ldots \colon \left[ 0,1%
\right] ^{2}\rightarrow \mathbb{R}$ and $p,q_{1},q_{2},\ldots $ be real
numbers such that $p^{-1}+q_{1}^{-1}>1$ and $q_{i}^{-1}+q_{i+1}^{-1}>1$ for
every $i\geq 1$. Assume that $f$ has finite $p$-variation and $g_{i}$ has
finite $q_{i}$-variation for $i=1,2,\ldots $ and that for $\left( s,t\right)
\in \Delta $ we have $f\left( s,\cdot \right) =f\left( \cdot ,s\right) =0$.
Then for every $n\in \mathbb{N}$ there is a constant $C=C\left(
p,q_{1},\ldots ,q_{n}\right) $ such that%
\begin{equation*}
\left\vert \int_{\Delta _{s,t}^{n}\times \Delta _{s,t}^{n}}f\,dg_{1}\ldots
dg_{n}\right\vert \leq CV_{p}\left( f,\left[ s,t\right] ^{2}\right)
V_{q_{1}}\left( g_{1},\left[ s,t\right] ^{2}\right) \ldots V_{q_{n}}\left(
g_{n},\left[ s,t\right] ^{2}\right) .
\end{equation*}
\end{proposition}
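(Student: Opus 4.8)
The plan is to induct on $n$, stripping off one integrator at a time and feeding the partially integrated kernel back into Lemma~\ref{lemma_kernel_iter_2D}. Set $\Phi_0:=f$ and, recursively, $\Phi_k(u,v):=\int_{[s,u]\times[s,v]}\Phi_{k-1}\,dg_k$ for $k=1,\dots,n$. Unwinding the recursive definition of the iterated $2D$-integral one checks that $\Phi_k(u,v)=\int_{\Delta_{s,u}^{k}\times\Delta_{s,v}^{k}}f\,dg_1\ldots dg_k$; in particular the quantity to be estimated is $\Phi_n(t,t)=\int_{[s,t]^2}\Phi_{n-1}\,dg_n$. Every $\Phi_k$ with $k\ge 1$ vanishes as soon as one of its arguments equals $s$, since then the rectangle of integration degenerates, so $\Phi_{k-1}$ is an admissible integrand for Lemma~\ref{lemma_kernel_iter_2D} at each step (and $\Phi_0=f$ satisfies $f(s,\cdot)=f(\cdot,s)=0$ by hypothesis).

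The one delicate point is that Lemma~\ref{lemma_kernel_iter_2D} asks the \emph{integrator} to have finite \emph{controlled} $q$-variation, whereas we only assume finite ($2D$, grid) $q_i$-variation for each $g_i$. This is bridged by Theorem~\ref{theorem_comp_contr_p_var}(2): since the hypotheses $1/p+1/q_1>1$ and $1/q_i+1/q_{i+1}>1$ are strict and, for fixed $n$, only finitely many of them occur, we may fix $\epsilon>0$ so small that, writing $q_i':=q_i+\epsilon$ for $i=1,\dots,n-1$ and $q_0':=p$, one still has $1/q_{k-1}'+1/q_k'>1$ for $k=1,\dots,n-1$ as well as $1/q_{n-1}'+1/q_n>1$; and then each $g_i$ has finite controlled $q_i'$-variation with $|g_i|_{q_i'\text{-var};R}\le C(q_i,\epsilon)\,V_{q_i}(g_i,R)$ for every rectangle $R\subset[0,1]^2$.

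Now the induction: I claim that for $k=0,\dots,n-1$ the function $\Phi_k$ has finite grid $q_k'$-variation on $[s,t]^2$ with $V_{q_k'}(\Phi_k,[s,t]^2)\le C_k\,V_p(f,[s,t]^2)\prod_{j=1}^{k}V_{q_j}(g_j,[s,t]^2)$, where $C_0=1$. The case $k=0$ is the hypothesis on $f$. For the step $k-1\rightsquigarrow k$, apply Lemma~\ref{lemma_kernel_iter_2D} with integrand $\Phi_{k-1}$ (finite grid $q_{k-1}'$-variation) and integrator $g_k$ (finite controlled $q_k'$-variation), which is legitimate since $1/q_{k-1}'+1/q_k'>1$:
\[
V_{q_k'}(\Phi_k,[s,t]^2)\le C\,V_{q_{k-1}'}(\Phi_{k-1},[s,t]^2)\,|g_k|_{q_k'\text{-var};[s,t]^2}\le C\,V_{q_{k-1}'}(\Phi_{k-1},[s,t]^2)\,V_{q_k}(g_k,[s,t]^2),
\]
and the induction hypothesis closes the estimate. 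Finally, applying the two-dimensional Young estimate of Section~\ref{section_multidim_young} to $\Phi_n(t,t)=\int_{[s,t]^2}\Phi_{n-1}\,dg_n$ — valid because $1/q_{n-1}'+1/q_n>1$ — produces the last factor $V_{q_n}(g_n,[s,t]^2)$ and the asserted bound. Since $\epsilon$ was chosen in terms of $p,q_1,\dots,q_n$ and every constant arises from one use of Lemma~\ref{lemma_kernel_iter_2D}, of Theorem~\ref{theorem_comp_contr_p_var}(2), or of the Young estimate, the resulting $C$ depends only on $p,q_1,\dots,q_n$.

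I expect the only real obstacle to be bookkeeping rather than substance: one must keep straight that at each stage it is the current integrand that carries grid-type variation (which is exactly the output format of Lemma~\ref{lemma_kernel_iter_2D}) while the next integrator must be upgraded to controlled variation, and one must check that a single $\epsilon$ simultaneously preserves all the exponent inequalities. No estimate beyond Lemma~\ref{lemma_kernel_iter_2D} and the two-dimensional Young inequality is needed.
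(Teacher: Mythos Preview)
Your proof is correct and follows essentially the same route as the paper: an induction built on Lemma~\ref{lemma_kernel_iter_2D}, with the grid-versus-controlled variation mismatch handled by slightly inflating the exponents via Theorem~\ref{theorem_comp_contr_p_var}(2). The only cosmetic difference is that the paper runs the induction through level $n$ (proving a $V_{q_n'}$-bound on $\Phi_n$ and reading off the value), whereas you stop the induction at $n-1$ and close with a direct $2D$ Young estimate; both are equivalent.
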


\begin{proof}
Define $\Phi ^{\left( n\right) }\left( u,v\right) =\int_{\Delta
_{s,u}^{n}\times \Delta _{s,v}^{n}}f\,dg_{1}\ldots dg_{n}$. We will show a
stronger result; namely that for every $n\in \mathbb{N}$ and $q_{n}^{\prime
}>q_{n}$ there is a constant $C=C\left( p,q_{1},\ldots ,q_{n},q_{n}^{\prime
}\right) $ such that%
\begin{equation*}
V_{q_{n}^{\prime }}\left( \Phi ^{\left( n\right) },\left[ s,t\right]
^{2}\right) \leq CV_{p}\left( f,\left[ s,t\right] ^{2}\right)
V_{q_{1}}\left( g_{1},\left[ s,t\right] ^{2}\right) \ldots V_{q_{n}}\left(
g_{n},\left[ s,t\right] ^{2}\right) .
\end{equation*}%
To do so, let $\tilde{q}_{1},\tilde{q}_{2},\ldots $be a sequence of real
numbers such that $\tilde{q}_{j}>q_{j}$ and $\frac{1}{\tilde{q}_{j-1}}+\frac{%
1}{\tilde{q}_{j}}>1$ for every $j=1,2,\ldots $ where we set $\tilde{q}_{0}=p$%
. We make an induction over $n$. For $n=1$, we have $\tilde{q}_{1}>q_{1}$
and $\frac{1}{p}+\frac{1}{\tilde{q}_{1}}>1$, hence from Theorem \ref%
{theorem_comp_contr_p_var} we know that $g_{1}$ has finite controlled $%
\tilde{q}_{1}$-variation and Lemma \ref{lemma_kernel_iter_2D} gives us%
\begin{equation*}
V_{\tilde{q}_{1}}\left( \Phi ^{\left( 1\right) };\left[ s,t\right]
^{2}\right) \leq c_{1}V_{p}\left( f;\left[ s,t\right] ^{2}\right) \left\vert
g_{1}\right\vert _{\tilde{q}_{1};\left[ s,t\right] ^{2}}\leq
c_{2}V_{p}\left( f;\left[ s,t\right] ^{2}\right) V_{q_{1}}\left( g_{1};\left[
s,t\right] ^{2}\right) .
\end{equation*}%
W.l.o.g, we may assume that $q_{1}^{\prime }>\tilde{q}_{1}>q_{1}$, otherwise
we choose $\tilde{q}_{1}$ smaller in the beginning. From $V_{q_{1}^{\prime
}}\left( \Phi ^{\left( 1\right) };\left[ s,t\right] ^{2}\right) \leq V_{%
\tilde{q}_{1}}\left( \Phi ^{\left( 1\right) };\left[ s,t\right] ^{2}\right) $
the assertion follows for $n=1$. Now take $n\in \mathbb{N}$. Note that%
\begin{equation*}
\Phi ^{\left( n\right) }\left( u,v\right) =\int_{\left[ s,u\right] \times %
\left[ s,v\right] }\Phi ^{\left( n-1\right) }\,dg_{n}
\end{equation*}%
and clearly $\Phi ^{\left( n-1\right) }\left( s,\cdot \right) =\Phi ^{\left(
n-1\right) }\left( \cdot ,s\right) =0$. We can use Lemma \ref%
{lemma_kernel_iter_2D} again to see that%
\begin{eqnarray*}
V_{\tilde{q}_{n}}\left( \Phi ^{\left( n\right) },\left[ s,t\right]
^{2}\right) &\leq &c_{3}V_{\tilde{q}_{n-1}}\left( \Phi ^{\left( n-1\right) };%
\left[ s,t\right] ^{2}\right) \left\vert g_{n}\right\vert _{\tilde{q}%
_{n}-var;\left[ s,t\right] ^{2}} \\
&\leq &c_{4}V_{\tilde{q}_{n-1}}\left( \Phi ^{\left( n-1\right) };\left[ s,t%
\right] ^{2}\right) V_{q_{n}}\left( g_{n};\left[ s,t\right] ^{2}\right) .
\end{eqnarray*}%
Using our induction hypothesis shows the result for $\tilde{q}_{n}$. By
choosing $\tilde{q}_{n}$ smaller in the beginning if necessary, we may
assume that $q_{n}^{\prime }>\tilde{q}_{n}$ and the assertion follows.
\end{proof}

\section{The main estimates\label{section_main_estimates}}

In the following section, $\left( X,Y\right) =\left( X^{1},Y^{1},\ldots
,X^{d},Y^{d}\right) $ will always denote a centred continuous Gaussian
process where $\left( X^{i},Y^{i}\right) $ and $\left( X^{j},Y^{j}\right) $
are independent for $i\neq j$. We will also assume that the $\rho $%
-variation of $R_{\left( X,Y\right) }$ is finite for a $\rho <2$ and
controlled by a symmetric $2D$-control $\omega $ (this in particular implies
that the $\rho $-variation of $R_{X},R_{Y}$ and $R_{X-Y}$ is controlled by $%
\omega $, see \cite[Section 15.3.2]{FV10}). Let $\gamma >\rho $ such that $%
\frac{1}{\rho }+\frac{1}{\gamma }>1$. The aim of this section is to show
that for every $n\in \mathbb{N}$ there are constants $C\left( n\right) $
such that$\footnote{%
We prefer to write it in this notation instead of writing $\omega \left( %
\left[ s,t\right] ^{2}\right) ^{\frac{1}{2\gamma }+\frac{n-1}{2\rho }}$ to
emphasize the different roles of the two terms. The first term will play no
particular role and just comes from interpolation whereas the second one
will be crucial when doing the induction step from lower to higher levels in
Proposition \ref{prop_key_estimate_higher_levels}.}$%
\begin{equation}
\left\vert \mathbf{X}_{s,t}^{n}-\mathbf{Y}_{s,t}^{n}\right\vert
_{L^{2}\left( \left( \mathbb{R}^{d}\right) ^{\otimes n}\right) }\leq C\left(
n\right) \epsilon \omega \left( \left[ s,t\right] ^{2}\right) ^{\frac{1}{%
2\gamma }}\omega \left( \left[ s,t\right] ^{2}\right) ^{\frac{n-1}{2\rho }%
}\quad \text{for every }s<t  \label{eqn_key_estimate}
\end{equation}%
where $\epsilon ^{2}=V_{\infty }\left( R_{X-Y},\left[ s,t\right] ^{2}\right)
^{1-\rho /\gamma }$(see Definition \ref{def_V_infty} below for the exact
definition of $V_{\infty }$). Equivalently, we might show $\left( \ref%
{eqn_key_estimate}\right) $ coordinate-wise, i.e. proving that the same
estimate holds for $\left\vert \mathbf{X}^{w}-\mathbf{Y}^{w}\right\vert
_{L^{2}\left( \mathbb{R}\right) }$ for every word $w$ formed by the alphabet 
$A=\left\{ 1,\ldots ,d\right\} $. In some special cases, i.e. if a word $w$
has a very simple structure, we can do this directly using multidimensional
Young integration. This is done in Subsection \ref{subsection_special_cases}%
. Subsection \ref{subsection_lower_levels} shows $\left( \ref%
{eqn_key_estimate}\right) $ for $n=1,2,3,4$ coordinate-wise, using the
shuffle algebra structure for iterated integrals and multidimensional Young
integration. In Subsection \ref{subsection_higher_levels}, we show $\left( %
\ref{eqn_key_estimate}\right) $ coordinate-free for all $n>4$, using an
induction argument very similar to the one Lyon's used for proving the
Extension Theorem (cf. \cite{L98}).

We start with giving a $2$-dimensional analogue for the one-dimensional
interpolation inequality.

\begin{definition}
\label{def_V_infty}If $f\colon \left[ 0,1\right] ^{2}\rightarrow B$ is a
continuous function in a Banach space and $\left( s,t\right) \times \left(
u,v\right) \in \Delta \times \Delta $ we set%
\begin{equation*}
V_{\infty }\left( f,\left[ s,t\right] \times \left[ u,v\right] \right)
=\sup_{A\subset \left[ s,t\right] \times \left[ u,v\right] }\left\vert
f\left( A\right) \right\vert .
\end{equation*}
\end{definition}

\begin{lemma}
\label{lemma_2D_interpolation}For $\gamma >\rho \geq 1$ we have the
interpolation inequality%
\begin{equation*}
V_{\gamma -var}\left( f,\left[ s,t\right] \times \left[ u,v\right] \right)
\leq V_{\infty }\left( f,\left[ s,t\right] \times \left[ u,v\right] \right)
^{1-\rho /\gamma }V_{\rho -var}\left( f,\left[ s,t\right] \times \left[ u,v%
\right] \right) ^{\rho /\gamma }
\end{equation*}%
for all $\left( s,t\right) ,\left( u,v\right) \in \Delta $.
\end{lemma}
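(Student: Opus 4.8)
The plan is to reproduce the standard one–dimensional interpolation inequality at the level of a single grid partition of the rectangle $R:=[s,t]\times[u,v]$, the only new ingredient being that every sub-rectangle $[t_i,t_{i+1}]\times[\tilde t_j,\tilde t_{j+1}]$ of $R$ has its rectangular $f$-increment bounded by $V_\infty(f,R)$ straight from Definition \ref{def_V_infty}.

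First I would fix grid-like partitions $D\subset[s,t]$ and $\tilde D\subset[u,v]$. Since $\gamma>\rho\ge 1$, so that $\gamma-\rho\ge 0$, I split the exponent as $\gamma=(\gamma-\rho)+\rho$ in each cell and bound the ``$\gamma-\rho$'' factor by $V_\infty(f,R)^{\gamma-\rho}$, which is independent of the partition and can therefore be pulled outside the sum:
$$\sum_{\substack{t_i\in D\\ \tilde t_j\in\tilde D}}\left\vert f\begin{pmatrix} t_i,t_{i+1}\\ \tilde t_j,\tilde t_{j+1}\end{pmatrix}\right\vert^{\gamma}\le V_\infty(f,R)^{\gamma-\rho}\sum_{\substack{t_i\in D\\ \tilde t_j\in\tilde D}}\left\vert f\begin{pmatrix} t_i,t_{i+1}\\ \tilde t_j,\tilde t_{j+1}\end{pmatrix}\right\vert^{\rho}\le V_\infty(f,R)^{\gamma-\rho}\,V_{\rho-var}(f,R)^{\rho},$$
where the last inequality is just the definition of $V_{\rho-var}$ (Definition \ref{def_2D_grid_variation}). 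Taking the supremum over all such $D,\tilde D$ and then raising to the power $1/\gamma$ gives
$$V_{\gamma-var}(f,R)\le V_\infty(f,R)^{(\gamma-\rho)/\gamma}\,V_{\rho-var}(f,R)^{\rho/\gamma}=V_\infty(f,R)^{1-\rho/\gamma}\,V_{\rho-var}(f,R)^{\rho/\gamma},$$
which is the asserted inequality.

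I do not expect a genuine obstacle here: the whole argument is a single application of the elementary bound $\sum a_i^{\gamma}\le(\sup_i a_i)^{\gamma-\rho}\sum a_i^{\rho}$ for nonnegative reals, carried out uniformly in the partition, and both sides of the claimed inequality are indexed by the same family of grid-like partitions of $R$, so no compatibility issue arises. The only points worth stating explicitly are that $\gamma-\rho\ge0$ (needed to pull $V_\infty$ out of the sum) and that each cell of a grid partition is a sub-rectangle of $R$ (needed to dominate its increment by $V_\infty(f,R)$). The same computation works verbatim for $n$-dimensional rectangular increments in the sense of Section \ref{section_multidim_young}, should it be needed later.
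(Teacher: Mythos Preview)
Your argument is correct and is exactly the standard $1D$-interpolation proof carried over to grid partitions of a rectangle, which is precisely what the paper invokes (``Exactly as $1D$-interpolation, see \cite[Proposition 5.5]{FV10}''). There is nothing to add.
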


\begin{proof}
Exactly as $1D$-interpolation, see \cite[Proposition 5.5]{FV10}.
\end{proof}

\subsection{Some special cases\label{subsection_special_cases}}

If $Z\colon \left[ 0,1\right] \rightarrow \mathbb{R}$ is a process with
smooth sample paths, we will use the notation%
\begin{equation*}
\mathbf{Z}_{s,t}^{\left( n\right) }=\int_{\Delta _{s,t}^{n}}dZ\ldots dZ
\end{equation*}%
for $s<t$.

\begin{lemma}
\label{lemma_rho_var_same_letters} Let $X\colon \left[ 0,1\right]
\rightarrow \mathbb{R}$ be a centred Gaussian process with continuous paths
of finite variation and assume that the $\rho $-variation of the covariance $%
R_{X}$ is controlled by a $2D$-control $\omega $. For fixed $s<t$, define%
\begin{equation*}
f\left( u,v\right) =E\left( \mathbf{X}_{s,u}^{\left( n\right) }\mathbf{X}%
_{s,v}^{\left( n\right) }\right) .
\end{equation*}%
Then there is a constant $C=C\left( \rho ,n\right) $ such that%
\begin{equation*}
V_{\rho }\left( f,\left[ s,t\right] ^{2}\right) \leq C\omega \left( \left[
s,t\right] ^{2}\right) ^{\frac{n}{\rho }}.
\end{equation*}
\end{lemma}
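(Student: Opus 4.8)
The plan is to prove the estimate directly, by expanding the rectangular increments of $f$ with Wick's (Isserlis') theorem. Since $X$ is scalar here, the iterated integral degenerates to a power, $\mathbf{X}^{(n)}_{s,u}=\int_{\Delta^n_{s,u}}dX\cdots dX=\tfrac1{n!}X_{s,u}^{\,n}$ with $X_{s,u}=X_u-X_s$. Writing $X_{s,u_2}=X_{s,u_1}+X_{u_1,u_2}$ and expanding binomially (the $l=0$ term cancelling),
\[
\mathbf{X}^{(n)}_{s,u_2}-\mathbf{X}^{(n)}_{s,u_1}=\frac1{n!}\sum_{l=1}^{n}\binom nl X_{s,u_1}^{\,n-l}X_{u_1,u_2}^{\,l},
\]
and likewise for $v$. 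Hence, for a rectangle $A=[u_1,u_2]\times[v_1,v_2]$,
\[
f(A)=E\Big[\big(\mathbf{X}^{(n)}_{s,u_2}-\mathbf{X}^{(n)}_{s,u_1}\big)\big(\mathbf{X}^{(n)}_{s,v_2}-\mathbf{X}^{(n)}_{s,v_1}\big)\Big]=\frac1{(n!)^2}\sum_{l,l'=1}^{n}\binom nl\binom n{l'}E\big[X_{s,u_1}^{\,n-l}X_{u_1,u_2}^{\,l}X_{s,v_1}^{\,n-l'}X_{v_1,v_2}^{\,l'}\big].
\]

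Each expectation is that of a product of $2n$ jointly centred Gaussians, so by Wick's theorem it is a sum, over the finitely many perfect matchings of these $2n$ factors (with combinatorial weights $\le C(n)$), of products of $n$ covariances. Every such covariance has the form $E[X_{p_1,q_1}X_{p_2,q_2}]=R_X\!\left(\begin{smallmatrix}p_1,p_2\\ q_1,q_2\end{smallmatrix}\right)$ with the two intervals drawn from $\{[s,u_1],[u_1,u_2],[s,v_1],[v_1,v_2]\}$, and the hypothesis on $R_X$ together with monotonicity of $\omega$ under rectangle inclusion bounds it by $\omega([s,t]^2)^{1/\rho}$ if it involves neither $X_{u_1,u_2}$ nor $X_{v_1,v_2}$ ("global"); by $\omega([u_1,u_2]^2)^{1/\rho}$ or the strip quantity $\omega([u_1,u_2]\times[s,t])^{1/\rho}$ if it involves $X_{u_1,u_2}$ but not $X_{v_1,v_2}$ ("$u$-cell", resp. "$u$-strip"); symmetrically for "$v$-cell"/"$v$-strip"; and by $\omega(A)^{1/\rho}$ if it involves both ("mixed"). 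The key point is that $l\ge1$ and $l'\ge1$: in every matching at least one covariance involves a copy of $X_{u_1,u_2}$ and at least one involves a copy of $X_{v_1,v_2}$. Writing $n=\alpha+\gamma+\beta+\delta+\varepsilon+\zeta$ for the numbers of covariances of $u$-cell, $u$-strip, $v$-cell, $v$-strip, mixed and global type, we therefore have $\alpha+\gamma+\varepsilon\ge1$ and $\beta+\delta+\varepsilon\ge1$, and the matching's contribution to $|f(A)|$ is bounded by
\[
C\,\omega([s,t]^2)^{\zeta/\rho}\,\omega([u_1,u_2]^2)^{\alpha/\rho}\omega([u_1,u_2]\times[s,t])^{\gamma/\rho}\,\omega([v_1,v_2]^2)^{\beta/\rho}\omega([s,t]\times[v_1,v_2])^{\delta/\rho}\,\omega(A)^{\varepsilon/\rho}.
\]

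It remains to raise $|f(A)|$ to the $\rho$-th power and sum over the cells $A=[u_i,u_{i+1}]\times[v_j,v_{j+1}]$ of an arbitrary grid partition of $[s,t]^2$, treating one matching at a time. Using $\omega([u_i,u_{i+1}]^2)\le\omega([u_i,u_{i+1}]\times[s,t])$, $\omega(A)\le\min\{\omega([u_i,u_{i+1}]\times[s,t]),\omega([s,t]\times[v_j,v_{j+1}])\}$, the elementary bound $x^{m}\le\omega([s,t]^2)^{m-1}x$ for $0\le x\le\omega([s,t]^2)$ and integers $m\ge1$, and super-additivity of the $2D$ control in the forms $\sum_i\omega([u_i,u_{i+1}]\times[s,t])\le\omega([s,t]^2)$, $\sum_j\omega([s,t]\times[v_j,v_{j+1}])\le\omega([s,t]^2)$, $\sum_{i,j}\omega(A)\le\omega([s,t]^2)$, one gets: when $\varepsilon=0$ the $u$- and $v$-sums separate and, since then $\alpha+\gamma\ge1$ and $\beta+\delta\ge1$, they are $\le\omega([s,t]^2)^{\alpha+\gamma}$ and $\le\omega([s,t]^2)^{\beta+\delta}$ respectively; when $\varepsilon\ge1$ one factors out $\omega([s,t]^2)^{\varepsilon-1}$ and bounds every remaining cell/strip factor by $\omega([s,t]^2)$ except one copy of $\omega(A)$, summed via $\sum_{i,j}\omega(A)\le\omega([s,t]^2)$. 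Either way the grid-sum for that matching is $\le C\,\omega([s,t]^2)^{\alpha+\beta+\gamma+\delta+\varepsilon+\zeta}=C\,\omega([s,t]^2)^{n}$. Summing over the finitely many matchings and over $(l,l')$ yields $\sum_{i,j}|f(A)|^{\rho}\le C\,\omega([s,t]^2)^{n}$ for every grid partition, i.e. $V_\rho(f,[s,t]^2)\le C\,\omega([s,t]^2)^{n/\rho}$.

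The only genuinely delicate point — and the reason a soft argument does not work — is that the constraint $l,l'\ge1$ must be used to guarantee that every Wick matching carries at least one "small" factor in the $u$-variable and one in the $v$-variable, appearing to the \emph{full} power of $\omega$ of some rectangle (not the half-power $\omega([u_1,u_2]^2)^{1/(2\rho)}$ that a naïve $L^2$ Cauchy--Schwarz split of $f(A)$ would produce, whose $\rho$-th power fails to be summable over a grid). One therefore cannot Cauchy--Schwarz the two increments apart: the cross-covariances $E[X_{u_1,u_2}X_{v_1,v_2}]=R_X\!\left(\begin{smallmatrix}u_1,u_2\\ v_1,v_2\end{smallmatrix}\right)$ and the strip-covariances must be kept intact, after which super-additivity of $\omega$ closes the estimate. (An equivalent route, closer in spirit to the rest of Section \ref{section_main_estimates}, is to induct on $n$ using $\mathbf{X}^{(n)}_{s,u}=\int_s^u\mathbf{X}^{(n-1)}_{s,r}\,dX_r$ and to bound the resulting $2D$ Young integrals with Lemma \ref{lemma_kernel_iter_2D}; the bookkeeping is the same.)
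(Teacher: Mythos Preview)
Your proof is correct and follows essentially the same strategy as the paper: expand $\mathbf{X}^{(n)}_{s,u}=\tfrac1{n!}X_{s,u}^{\,n}$, write the rectangular increment of $f$ as a moment of a product of $2n$ Gaussians, apply Wick's formula, classify the pairings according to which factors carry the small increments, and sum over grid partitions using super-additivity of $\omega$.

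The one noteworthy difference is your choice of algebraic factorisation. You use the binomial expansion $X_{s,u_2}^{\,n}-X_{s,u_1}^{\,n}=\sum_{l\ge1}\binom nl X_{s,u_1}^{\,n-l}X_{u_1,u_2}^{\,l}$, which may produce several copies of the increment $X_{u_1,u_2}$ and therefore forces you into a six-type taxonomy ($u$-cell, $u$-strip, $v$-cell, $v$-strip, mixed, global) with the case split $\varepsilon=0$ versus $\varepsilon\ge1$. The paper instead uses $b^{n}-a^{n}=(b-a)\sum_{k=0}^{n-1}a^{k}b^{n-1-k}$, which yields \emph{exactly one} copy of each increment $X_{t_i,t_{i+1}}$ and $X_{\tilde t_j,\tilde t_{j+1}}$; the Wick case analysis then collapses to just two possibilities (the two distinguished increments pair with each other, or each pairs with one of the ``large'' factors $X_{s,\cdot}$), making the summation a couple of lines. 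Both routes are valid; the paper's is a little cleaner, while yours makes the role of the constraint $l,l'\ge1$ more explicit, as your closing remark correctly emphasises.
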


\begin{proof}
Let $t_{i}<t_{i+1}$, $\tilde{t}_{j}<\tilde{t}_{j+1}$. Then%
\begin{equation*}
f%
\begin{pmatrix}
t_{i},t_{i+1} \\ 
\tilde{t}_{j},\tilde{t}_{j+1}%
\end{pmatrix}%
=E\left( \left( \mathbf{X}_{s,t_{i+1}}^{\left( n\right) }-\mathbf{X}%
_{s,t_{i}}^{\left( n\right) }\right) \left( \mathbf{X}_{s,\tilde{t}%
_{j+1}}^{\left( n\right) }-\mathbf{X}_{s,\tilde{t}_{j}}^{\left( n\right)
}\right) \right) .
\end{equation*}%
We know that $\mathbf{X}^{\left( n\right) }=\frac{\left( X\right) ^{n}}{n!}$%
. From the identity%
\begin{equation*}
b^{n}-a^{n}=\left( b-a\right) \left( a^{n-1}+a^{n-2}b+\ldots +\ldots
ab^{n-2}+b^{n-1}\right)
\end{equation*}%
we deduce that%
\begin{equation*}
f%
\begin{pmatrix}
t_{i},t_{i+1} \\ 
\tilde{t}_{j},\tilde{t}_{j+1}%
\end{pmatrix}%
=\frac{1}{\left( n!\right) ^{2}}\sum_{k,l=0}^{n-1}E\left(
X_{t_{i},t_{i+1}}X_{\tilde{t}_{j},\tilde{t}_{j+1}}\left(
X_{s,t_{i+1}}\right) ^{n-1-k}\left( X_{s,t_{i}}\right) ^{k}\left( X_{s,%
\tilde{t}_{j+1}}\right) ^{n-1-l}\left( X_{s,\tilde{t}_{j}}\right)
^{l}\right) .
\end{equation*}%
We want to apply Wick's formula now (cf. \cite[Theorem 1.28]{J97}). If $Z,%
\tilde{Z}\in \left\{ X_{s,t_{i+1}},X_{s,t_{i}},X_{s,\tilde{t}_{j+1}},X_{s,%
\tilde{t}_{j}}\right\} $ we know that%
\begin{eqnarray*}
\left\vert E\left( X_{t_{i},t_{i+1}}Z\right) \right\vert ^{\rho } &\leq
&\omega \left( \left[ t_{i},t_{i+1}\right] \times \left[ s,t\right] \right)
\\
\left\vert E\left( X_{t_{i},t_{i+1}}X_{\tilde{t}_{j},\tilde{t}_{j+1}}\right)
\right\vert ^{\rho } &\leq &\omega \left( \left[ t_{i},t_{i+1}\right] \times %
\left[ \tilde{t}_{j},\tilde{t}_{j+1}\right] \right) \\
\left\vert E\left( Z\tilde{Z}\right) \right\vert ^{\rho } &\leq &\omega
\left( \left[ s,t\right] ^{2}\right)
\end{eqnarray*}%
and the same holds for $X_{\tilde{t}_{j},\tilde{t}_{j+1}}$. Now take two
partitions $D,\tilde{D}\in \left[ 0,1\right] $. Then, by Wick's formula and
the estimates above,%
\begin{eqnarray*}
\sum_{t_{i}\in D,\tilde{t}_{j}\in \tilde{D}}\left\vert f%
\begin{pmatrix}
t_{i},t_{i+1} \\ 
\tilde{t}_{j},\tilde{t}_{j+1}%
\end{pmatrix}%
\right\vert ^{\rho } &\leq &c_{1}\left( \rho ,n\right) \omega \left( \left[
s,t\right] ^{2}\right) ^{n-2}\sum_{t_{i}\in D,\tilde{t}_{j}\in \tilde{D}%
}\omega \left( \left[ t_{i},t_{i+1}\right] \times \left[ s,t\right] \right)
\omega \left( \left[ \tilde{t}_{j},\tilde{t}_{j+1}\right] \times \left[ s,t%
\right] \right) \\
&&+c_{2}\left( \rho ,n\right) \omega \left( \left[ s,t\right] ^{2}\right)
^{n-1}\sum_{t_{i}\in D,\tilde{t}_{j}\in \tilde{D}}\omega \left( \left[
t_{i},t_{i+1}\right] \times \left[ \tilde{t}_{j},\tilde{t}_{j+1}\right]
\right) \\
&\leq &c_{3}\omega \left( \left[ s,t\right] ^{2}\right) ^{n}.
\end{eqnarray*}
\end{proof}

\begin{lemma}
\label{lemma_diff_allthesame}Let $\left( X,Y\right) $ be a centred Gaussian
process in $\mathbb{R}^{2}$ with continuous paths of finite variation.
Assume that the $\rho $-variation of $R_{\left( X,Y\right) }$ is controlled
by a $2D$-control $\omega $ for $\rho <2$ and take $\gamma >\rho $. Then for
every $n\in \mathbb{N}$ there is a constant $C=C\left( n\right) $ such that%
\begin{equation*}
\left\vert \mathbf{X}_{s,t}^{\left( n\right) }-\mathbf{Y}_{s,t}^{\left(
n\right) }\right\vert _{L^{2}}\leq C\left( n\right) \epsilon \omega \left( %
\left[ s,t\right] ^{2}\right) ^{\frac{1}{2\gamma }}\omega \left( \left[ s,t%
\right] ^{2}\right) ^{\frac{n-1}{2\rho }}
\end{equation*}%
for any $s<t$ where $\epsilon ^{2}=V_{\infty }\left( R_{X-Y},\left[ s,t%
\right] ^{2}\right) ^{1-\rho /\gamma }$.
\end{lemma}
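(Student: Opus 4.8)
The plan is to mimic the proof of Lemma~\ref{lemma_rho_var_same_letters}, estimating a difference of $n$-th powers in place of a single $n$-th power, and then to extract the small factor $\epsilon$ by interpolation. Since $X$ and $Y$ are real-valued, $\mathbf{X}^{(n)}_{s,t}=\frac{1}{n!}(X_{s,t})^{n}$ and $\mathbf{Y}^{(n)}_{s,t}=\frac{1}{n!}(Y_{s,t})^{n}$, so the elementary identity $b^{n}-a^{n}=(b-a)\sum_{k=0}^{n-1}a^{k}b^{n-1-k}$ gives
\[
\mathbf{X}^{(n)}_{s,t}-\mathbf{Y}^{(n)}_{s,t}=\frac{1}{n!}\,(X-Y)_{s,t}\sum_{k=0}^{n-1}(X_{s,t})^{n-1-k}(Y_{s,t})^{k}.
\]
First I would square this and take expectations, obtaining a sum of at most $n^{2}$ terms, each of the form $E\big[W^{2}Z_{1}\cdots Z_{2n-2}\big]$ with $W:=(X-Y)_{s,t}$ and each $Z_{j}\in\{X_{s,t},Y_{s,t}\}$, i.e.\ an expectation of a product of $2n$ jointly centred Gaussian random variables, to which Wick's formula (cf.\ \cite[Theorem~1.28]{J97}) applies, expressing it as a sum over perfect matchings of products of $n$ pairwise covariances.

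Next I would record the covariance bounds needed to control these matchings. Because the $\rho$-variation of $R_{(X,Y)}$ is controlled by $\omega$ (hence so are those of $R_{X},R_{Y},R_{X-Y}$, as recalled at the start of this section, cf.\ \cite[Section~15.3.2]{FV10}), we have $E[X_{s,t}^{2}],\,E[Y_{s,t}^{2}]\le\omega([s,t]^{2})^{1/\rho}$, and by Cauchy--Schwarz also $|E[X_{s,t}Y_{s,t}]|\le\omega([s,t]^{2})^{1/\rho}$. For the covariances that must carry the factor $\epsilon$, the interpolation Lemma~\ref{lemma_2D_interpolation} (equivalently, writing the exponent $1$ as $(1-\rho/\gamma)+\rho/\gamma$ and bounding $R_{X-Y}([s,t]^2)^{1-\rho/\gamma}$ by $V_{\infty}(R_{X-Y},[s,t]^{2})^{1-\rho/\gamma}$ and $R_{X-Y}([s,t]^2)^{\rho/\gamma}$ by $\omega([s,t]^{2})^{1/\gamma}$) gives
\[
R_{X-Y}\big([s,t]^{2}\big)\le V_{\infty}\big(R_{X-Y},[s,t]^{2}\big)^{1-\rho/\gamma}\,\omega\big([s,t]^{2}\big)^{1/\gamma}=\epsilon^{2}\,\omega\big([s,t]^{2}\big)^{1/\gamma},
\]
and then, again by Cauchy--Schwarz, $|E[(X-Y)_{s,t}Z]|\le\epsilon\,\omega([s,t]^{2})^{1/(2\gamma)}\omega([s,t]^{2})^{1/(2\rho)}$ for $Z\in\{X_{s,t},Y_{s,t}\}$.

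Then I would run the case distinction in Wick's formula according to how the two copies of $W$ are matched. If they are paired with each other, the matching contributes $E[W^{2}]$ times $n-1$ covariances drawn from $\{X_{s,t},Y_{s,t}\}$, hence is $\le\epsilon^{2}\omega([s,t]^{2})^{1/\gamma}\cdot\omega([s,t]^{2})^{(n-1)/\rho}$. If the two $W$'s are paired with distinct $Z,\tilde{Z}\in\{X_{s,t},Y_{s,t}\}$, the matching contributes $E[WZ]E[W\tilde{Z}]$ times $n-2$ further such covariances, hence is $\le\big(\epsilon\,\omega([s,t]^{2})^{1/(2\gamma)}\omega([s,t]^{2})^{1/(2\rho)}\big)^{2}\omega([s,t]^{2})^{(n-2)/\rho}$, which again equals $\epsilon^{2}\omega([s,t]^{2})^{1/\gamma}\omega([s,t]^{2})^{(n-1)/\rho}$ up to a constant. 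Summing the finitely many matchings and the $n^{2}$ terms, and taking square roots, yields the claim with $C(n)$ absorbing the combinatorial constant.

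This is essentially bookkeeping, so I do not expect a genuine obstacle; the one point that needs care is the interpolation step, which must produce exactly the factor $\epsilon=V_{\infty}(R_{X-Y},[s,t]^{2})^{(1-\rho/\gamma)/2}$ with the stated exponents. There one uses both the $\rho$-variation control of $R_{X-Y}$ (for the bound $R_{X-Y}([s,t]^{2})\le\omega([s,t]^{2})^{1/\rho}$) and the definition of $V_{\infty}$ as a supremum over sub-rectangles, so that $R_{X-Y}([s,t]^{2})\le V_{\infty}(R_{X-Y},[s,t]^{2})$ holds trivially; keeping these consistent with the Wick combinatorics is the only thing to watch.
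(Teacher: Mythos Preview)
Your argument is correct, but it takes a different route from the paper's. The paper proceeds by induction on $n$: for $n=1$ it uses the same interpolation bound $E[(X-Y)_{s,t}^{2}]\le\epsilon^{2}\omega([s,t]^{2})^{1/\gamma}$ you record, and for $n\ge2$ it writes $\mathbf{X}^{(n)}_{s,t}-\mathbf{Y}^{(n)}_{s,t}=\tfrac{1}{n}\big(X_{s,t}\mathbf{X}^{(n-1)}_{s,t}-Y_{s,t}\mathbf{Y}^{(n-1)}_{s,t}\big)$, telescopes this into $(X-Y)_{s,t}\mathbf{X}^{(n-1)}_{s,t}+Y_{s,t}(\mathbf{X}^{(n-1)}_{s,t}-\mathbf{Y}^{(n-1)}_{s,t})$, and bounds each product in $L^{2}$ via equivalence of $L^{q}$-norms in the (fixed, finite) Wiener chaos together with the induction hypothesis and Proposition~\ref{prop_moments_lp}. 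Your approach instead unpacks everything into a single Gaussian moment of order $2n$ and runs Wick's formula directly, with the two-case analysis on how the two $(X-Y)_{s,t}$ factors are paired. Both yield the same estimate; the paper's inductive argument is shorter and reuses the chaos hypercontractivity that is needed elsewhere in the paper anyway, whereas your direct Wick computation is more self-contained (it does not invoke equivalence of $L^{q}$-norms) and, as you note, parallels the combinatorics already carried out in Lemma~\ref{lemma_rho_var_same_letters}.
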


\begin{proof}
By induction. For $n=1$ we simply have from Lemma \ref%
{lemma_2D_interpolation}%
\begin{eqnarray*}
\left\vert X_{s,t}-Y_{s,t}\right\vert _{L^{2}}^{2} &=&E\left[ \left(
X_{s,t}-Y_{s,t}\right) \left( X_{s,t}-Y_{s,t}\right) \right] \leq V_{\gamma
-var}\left( R_{X-Y},\left[ s,t\right] ^{2}\right) \\
&\leq &\epsilon ^{2}V_{\rho -var}\left( R_{X-Y},\left[ s,t\right]
^{2}\right) ^{\rho /\gamma }\leq \epsilon ^{2}\omega \left( \left[ s,t\right]
^{2}\right) ^{\frac{1}{\gamma }}
\end{eqnarray*}%
For $n\in \mathbb{N}$ we use the identity%
\begin{equation*}
\mathbf{X}_{s,t}^{\left( n\right) }-\mathbf{Y}_{s,t}^{\left( n\right) }=%
\frac{1}{n}\left( X_{s,t}\mathbf{X}_{s,t}^{\left( n-1\right) }-Y_{s,t}%
\mathbf{Y}_{s,t}^{\left( n-1\right) }\right)
\end{equation*}%
and hence%
\begin{eqnarray*}
\left\vert \mathbf{X}_{s,t}^{\left( n\right) }-\mathbf{Y}_{s,t}^{\left(
n\right) }\right\vert _{L^{2}} &\leq &c_{1}\left( \left\vert
X_{s,t}-Y_{s,t}\right\vert _{L^{2}}\left\vert \mathbf{X}_{s,t}^{\left(
n-1\right) }\right\vert _{L^{2}}+\left\vert \mathbf{X}_{s,t}^{\left(
n-1\right) }-\mathbf{Y}_{s,t}^{\left( n-1\right) }\right\vert
_{L^{2}}\left\vert Y_{s,t}\right\vert _{L^{2}}\right) \\
&\leq &c_{2}\epsilon \omega \left( \left[ s,t\right] ^{2}\right) ^{\frac{1}{%
2\gamma }}\omega \left( \left[ s,t\right] ^{2}\right) ^{\frac{n-1}{2\rho }}.
\end{eqnarray*}
\end{proof}

Assume that $\left( Z^{1},Z^{2}\right) $ is a centred, continuous Gaussian
process in $\mathbb{R}^{2}$ with smooth sample paths and that both
components are independent. Then (at least formally, cf. \cite{FV10AIHP}),%
\begin{eqnarray}
\left\vert \int_{0}^{1}Z_{0,u}^{1}\,dZ_{u}^{2}\right\vert _{L^{2}}^{2} &=&E 
\left[ \left( \int_{0}^{1}Z_{0,u}^{1}\,dZ_{u}^{2}\right) ^{2}\right] =E\left[
\int_{\left[ 0,1\right] ^{2}}Z_{0,u}^{1}Z_{0,v}^{1}\,dZ^{2}\,dZ_{v}^{2}%
\right]  \label{eqn_L2_norm_into_integral1} \\
&=&\int_{\left[ 0,1\right] ^{2}}E\left[ Z_{0,u}^{1}Z_{0,v}^{1}\right] \,dE%
\left[ Z_{u}^{2}Z_{v}^{2}\right] =\int_{\left[ 0,1\right] ^{2}}R_{Z^{1}}%
\begin{pmatrix}
0 & \cdot \\ 
0 & \cdot%
\end{pmatrix}%
\,dR_{Z^{2}}  \label{eqn_L2_norm_into_integral2}
\end{eqnarray}%
where the integrals in the second row are $2D$ Young-integrals (to make this
rigorous, one uses that the integrals are a.s. limits of Riemann sums and
that a.s. convergence implies convergence in $L^{1}$ in the (inhomogeneous)
Wiener chaos). These kinds of computations together with our estimates for $%
2D$ Young-integrals will be heavily used from now on.

\begin{lemma}
\label{lemma_diff_alldifferent}Let $\left( X,Y\right) =\left(
X^{1},Y^{1},\ldots ,X^{d},Y^{d}\right) $ be a centred Gaussian process with
continuous paths of finite variation where $\left( X^{i},Y^{i}\right) \ $and 
$\left( X^{j},Y^{j}\right) $ are independent for $i\neq j$. Assume that the $%
\rho $-variation of $R_{\left( X,Y\right) }$ is controlled by a $2D$-control 
$\omega $ for $\rho <2$. Let $w$ be a word of the form $w=$ $i_{1}\cdots
i_{n}$ where $i_{1},\ldots ,i_{n}\in \left\{ 1,\ldots ,d\right\} $ are all
distinct. Take $\gamma >\rho $ such that $\frac{1}{\rho }+\frac{1}{\gamma }%
>1 $. Then there is a constant $C=C\left( \rho ,\gamma ,n\right) $ such that%
\begin{equation*}
\left\vert \mathbf{X}_{s,t}^{w}-\mathbf{Y}_{s,t}^{w}\right\vert _{L^{2}}\leq
C\left( n\right) \epsilon \omega \left( \left[ s,t\right] ^{2}\right) ^{%
\frac{1}{2\gamma }}\omega \left( \left[ s,t\right] ^{2}\right) ^{\frac{n-1}{%
2\rho }}
\end{equation*}%
for any $s<t$ where $\epsilon ^{2}=V_{\infty }\left( R_{X-Y},\left[ s,t%
\right] ^{2}\right) ^{1-\rho /\gamma }$.
\end{lemma}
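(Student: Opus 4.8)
The plan is to compute the $L^{2}$-norm of the iterated integral by a Fubini/Wick argument — exploiting that distinct letters yield independent components — thereby reducing everything to the bound for iterated $2D$ Young integrals in Proposition \ref{prop_rho_var_iter_2D}, and to treat the difference by a telescoping in which exactly one factor is a covariance of $X^{i}-Y^{i}$, to which the interpolation Lemma \ref{lemma_2D_interpolation} is applied.

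First, by multilinearity of the iterated Riemann--Stieltjes integral in its integrands,
\[
\mathbf{X}_{s,t}^{w}-\mathbf{Y}_{s,t}^{w}=\sum_{m=1}^{n}\int_{\Delta_{s,t}^{n}}dY^{i_{1}}\cdots dY^{i_{m-1}}\,d\!\left(X^{i_{m}}-Y^{i_{m}}\right)dX^{i_{m+1}}\cdots dX^{i_{n}}=:\sum_{m=1}^{n}I_{m},
\]
so it suffices to bound each $\left\vert I_{m}\right\vert _{L^{2}}$ by $C(n)\,\epsilon\,\omega\!\left(\left[s,t\right]^{2}\right)^{\frac{1}{2\gamma}+\frac{n-1}{2\rho}}$. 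Fix $m$, square $I_{m}$ and take expectations: all mixed terms vanish because $i_{1},\dots,i_{n}$ are distinct, so $\left(X^{i_{k}},Y^{i_{k}}\right)$ is independent of $\left(X^{i_{l}},Y^{i_{l}}\right)$ for $k\neq l$. Pairing each component with itself (Wick's formula) and passing the expectation through the Riemann sums exactly as in \eqref{eqn_L2_norm_into_integral1}--\eqref{eqn_L2_norm_into_integral2} shows that $\left\vert I_{m}\right\vert _{L^{2}}^{2}$ equals an iterated $2D$ Young integral of the form treated in Proposition \ref{prop_rho_var_iter_2D}, with kernel $f(u,v)=E\!\left[Z_{s,u}^{i_{1}}Z_{s,v}^{i_{1}}\right]$, where $Z^{i_{1}}=X^{i_{1}}$ if $m\geq 2$ and $Z^{i_{1}}=X^{i_{1}}-Y^{i_{1}}$ if $m=1$ (note $f(s,\cdot)=f(\cdot,s)=0$ and that the rectangular increments of $f$ coincide with those of $R_{Z^{i_{1}}}$), and with the remaining $n-1$ covariances $R_{Y^{i_{2}}},\dots$, with $R_{X^{i_{m}}-Y^{i_{m}}}$ in position $m-1$, $\dots,R_{X^{i_{n}}}$ as integrators.

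Now apply Proposition \ref{prop_rho_var_iter_2D}, assigning the H\"older exponent $\gamma$ to the single factor equal to $R_{X^{i_{m}}-Y^{i_{m}}}$ (the kernel itself when $m=1$) and the exponent $\rho$ to all the other $n-1$ factors. Every adjacency condition required by that proposition is then either $\tfrac{1}{\rho}+\tfrac{1}{\rho}>1$ (valid since $\rho<2$) or $\tfrac{1}{\rho}+\tfrac{1}{\gamma}>1$ (valid by hypothesis), so
\[
\left\vert I_{m}\right\vert _{L^{2}}^{2}\leq C\,V_{\gamma}\!\left(R_{X^{i_{m}}-Y^{i_{m}}},\left[s,t\right]^{2}\right)\prod_{k\neq m}V_{\rho}\!\left(R_{(\cdot)^{i_{k}}},\left[s,t\right]^{2}\right).
\]
Since $\omega$ controls the $\rho$-variation of $R_{X}$, $R_{Y}$ and $R_{X-Y}$ (hence of each of their coordinates), $V_{\rho}\!\left(R_{(\cdot)^{i_{k}}},\left[s,t\right]^{2}\right)\leq\omega\!\left(\left[s,t\right]^{2}\right)^{1/\rho}$, and by Lemma \ref{lemma_2D_interpolation} together with $V_{\infty}\!\left(R_{X^{i_{m}}-Y^{i_{m}}},\left[s,t\right]^{2}\right)\leq V_{\infty}\!\left(R_{X-Y},\left[s,t\right]^{2}\right)$,
\[
V_{\gamma}\!\left(R_{X^{i_{m}}-Y^{i_{m}}},\left[s,t\right]^{2}\right)\leq V_{\infty}\!\left(R_{X-Y},\left[s,t\right]^{2}\right)^{1-\rho/\gamma}\omega\!\left(\left[s,t\right]^{2}\right)^{1/\gamma}=\epsilon^{2}\,\omega\!\left(\left[s,t\right]^{2}\right)^{1/\gamma}.
\]
Thus $\left\vert I_{m}\right\vert _{L^{2}}^{2}\leq C\,\epsilon^{2}\,\omega\!\left(\left[s,t\right]^{2}\right)^{\frac{1}{\gamma}+\frac{n-1}{\rho}}$; taking square roots and summing over $m=1,\dots,n$ gives the assertion.

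The main obstacle will be making rigorous the identity expressing $\left\vert I_{m}\right\vert _{L^{2}}^{2}$ as a deterministic iterated $2D$ Young integral: one must verify that the iterated integral defining $I_{m}$ is an a.s.\ limit of Riemann sums which, by equivalence of $L^{q}$-norms in the fixed inhomogeneous Wiener chaos, also converges in $L^{1}$, so that expectation and limit commute and the Gaussian product formula applies termwise. Everything after that is bookkeeping — identifying the innermost letter as the kernel, checking that $f$ vanishes on the boundary, and verifying that exactly one factor carries the exponent $\gamma$ while all adjacency inequalities of Proposition \ref{prop_rho_var_iter_2D} survive for the mixed exponent sequence.
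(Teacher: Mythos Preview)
Your proof is correct and follows essentially the same route as the paper's: the telescoping decomposition into the $I_m$, the passage from $\left\vert I_m\right\vert_{L^2}^2$ to an iterated $2D$ Young integral via independence and Wiener-chaos $L^q$-equivalence, and the application of Proposition~\ref{prop_rho_var_iter_2D} with Lemma~\ref{lemma_2D_interpolation} on the single $R_{X^{i_m}-Y^{i_m}}$ factor are exactly what the paper does. (Minor slip: for $m\geq 2$ the innermost kernel comes from $Y^{i_1}$, not $X^{i_1}$, but this is immaterial since $\omega$ controls the $\rho$-variation of both $R_X$ and $R_Y$.)
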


\begin{proof}
By the triangle inequality,%
\begin{eqnarray*}
\left\vert \mathbf{X}_{s,t}^{w}-\mathbf{Y}_{s,t}^{w}\right\vert _{L^{2}}
&=&\left\vert \int_{\Delta _{s,t}^{n}}\,dX^{i_{1}}\ldots
dX^{i_{n}}-\int_{\Delta _{s,t}^{n}}\,dY^{i_{1}}\ldots dY^{i_{n}}\right\vert
_{L^{2}} \\
&\leq &\sum_{k=1}^{n}\left\vert \int_{\Delta _{s,t}^{n}}\,dY^{i_{1}}\ldots
dY^{i_{k-1}}\,d\left( X^{i_{k}}-Y^{i_{k}}\right) \,dX^{i_{k+1}}\ldots
dX^{i_{n}}\right\vert _{L^{2}}.
\end{eqnarray*}%
From independence, Proposition \ref{prop_rho_var_iter_2D} and Lemma \ref%
{lemma_2D_interpolation} 
\begin{eqnarray*}
&&\left\vert \int_{\Delta _{s,t}^{n}}\,dY^{i_{1}}\ldots
dY^{i_{k-1}}\,d\left( X^{i_{k}}-Y^{i_{k}}\right) \,dX^{i_{k+1}}\ldots
dX^{i_{n}}\right\vert _{L^{2}}^{2} \\
&=&\int_{\Delta _{s,t}^{n}\times \Delta _{s,t}^{n}}\,dR_{Y^{i_{1}}}\ldots
dR_{Y^{i_{k-1}}}\,dR_{X^{i_{k}}-Y^{i_{k}}}\,dR_{X^{i_{k+1}}}\ldots
dR_{X^{i_{n}}} \\
&\leq &c_{1}V_{\rho }\left( R_{Y^{i_{1}}},\left[ s,t\right] ^{2}\right)
\ldots V_{\rho }\left( R_{Y^{i_{k-1}}},\left[ s,t\right] ^{2}\right)
V_{\gamma }\left( R_{X^{i_{k}}-Y^{i_{k}}},\left[ s,t\right] ^{2}\right) \\
&&\times V_{\rho }\left( R_{X^{i_{k+1}}},\left[ s,t\right] ^{2}\right)
\ldots V_{\rho }\left( R_{X^{i_{n}}},\left[ s,t\right] ^{2}\right) \\
&\leq &c_{1}V_{\gamma }\left( R_{X-Y},\left[ s,t\right] ^{2}\right) \omega
\left( \left[ s,t\right] ^{2}\right) ^{\frac{n-1}{\rho }}\leq c_{1}\epsilon
^{2}\omega \left( \left[ s,t\right] ^{2}\right) ^{\frac{1}{\gamma }}\omega
\left( \left[ s,t\right] ^{2}\right) ^{\frac{n-1}{\rho }}.
\end{eqnarray*}%
The first inequality above is an immediate generalization of the
calculations made in $\left( \ref{eqn_L2_norm_into_integral1}\right) $ and $%
\left( \ref{eqn_L2_norm_into_integral2}\right) $. Note that the respective
random terms are not only pairwise but mutually independent here since we
are dealing with a Gaussian process $\left( X,Y\right) $. Interchanging the
limits is allowed since convergence in probability implies convergence in $%
L^{p}$, any $p>0$, in the Wiener chaos.
\end{proof}

\subsection{Lower levels\label{subsection_lower_levels}}

\subsubsection{$n=1,2$}

\begin{proposition}
\label{prop_main_estimates_n1_n2}Let $\left( X,Y\right) $, $\omega $, $\rho $
and $\gamma $ as in Lemma \ref{lemma_diff_alldifferent}. Then there are
constants $C\left( 1\right) ,C\left( 2\right) $ which depend on $\rho $ and $%
\gamma $ such that%
\begin{equation*}
\left\vert \mathbf{X}_{s,t}^{n}-\mathbf{Y}_{s,t}^{n}\right\vert _{L^{2}}\leq
C\left( n\right) \epsilon \omega \left( \left[ s,t\right] ^{2}\right) ^{%
\frac{1}{2\gamma }}\omega \left( \left[ s,t\right] ^{2}\right) ^{\frac{n-1}{%
2\rho }}
\end{equation*}%
holds for $n=1,2$ and every $\left( s,t\right) \in \Delta $ where $\epsilon
^{2}=V_{\infty }\left( R_{X-Y},\left[ s,t\right] ^{2}\right) ^{1-\rho
/\gamma }$.
\end{proposition}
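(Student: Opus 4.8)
The plan is to prove the estimate coordinate-wise and then sum: for every word $w$ over the alphabet $A=\{1,\dots,d\}$ with $|w|=n\in\{1,2\}$ we show
$|\mathbf{X}_{s,t}^{w}-\mathbf{Y}_{s,t}^{w}|_{L^{2}}\leq c\,\epsilon\,\omega([s,t]^{2})^{1/(2\gamma)}\,\omega([s,t]^{2})^{(n-1)/(2\rho)}$,
and then, since $\mathbf{X}_{s,t}^{n}-\mathbf{Y}_{s,t}^{n}=\sum_{|w|=n}(\mathbf{X}_{s,t}^{w}-\mathbf{Y}_{s,t}^{w})\,e_{i_{1}}\otimes\cdots\otimes e_{i_{n}}$, the triangle inequality together with equivalence of norms on the finite-dimensional space $(\mathbb{R}^{d})^{\otimes n}$ yields the full-tensor bound with a constant depending additionally only on $d$ and $n$ (hence, since $n\in\{1,2\}$, on nothing new). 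The observation making this level elementary is that no shuffle or generating-set machinery is needed: a length-$1$ word is a single letter, and a length-$2$ word $w=ij$ is either of ``all-equal'' type ($i=j$) or of ``all-distinct'' type ($i\neq j$), and both types are already covered by the lemmas of Subsection~\ref{subsection_special_cases}.

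For $n=1$ the word is a single letter $i$, so $\mathbf{X}_{s,t}^{w}=X_{s,t}^{i}$ and $\mathbf{Y}_{s,t}^{w}=Y_{s,t}^{i}$; applying Lemma~\ref{lemma_diff_allthesame} to the scalar Gaussian process $(X^{i},Y^{i})$ with $n=1$ gives $|X_{s,t}^{i}-Y_{s,t}^{i}|_{L^{2}}\leq C(1)\,\epsilon\,\omega([s,t]^{2})^{1/(2\gamma)}$, which is the claim since $(n-1)/(2\rho)=0$. Concretely, this is just the interpolation estimate of Lemma~\ref{lemma_2D_interpolation} applied to $R_{X^{i}-Y^{i}}$, using $V_{\rho-var}(R_{X^{i}-Y^{i}},[s,t]^{2})\leq\omega([s,t]^{2})^{1/\rho}$ and $V_{\infty}(R_{X^{i}-Y^{i}},[s,t]^{2})\leq V_{\infty}(R_{X-Y},[s,t]^{2})$.

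For $n=2$ write $w=ij$. If $i=j$, then $\mathbf{X}_{s,t}^{ii}=\int_{\Delta_{s,t}^{2}}dX^{i}\,dX^{i}$ and likewise for $Y$, so Lemma~\ref{lemma_diff_allthesame} applied to $(X^{i},Y^{i})$ with $n=2$ gives the desired bound; note that $R_{(X^{i},Y^{i})}$ is a diagonal block of $R_{(X,Y)}$, hence its $\rho$-variation is still controlled by $\omega$. If $i\neq j$, then $w=ij$ has all distinct letters and Lemma~\ref{lemma_diff_alldifferent} with $n=2$ gives the desired bound. Summing over the at most $d^{2}$ words of length $2$ completes the proof for $n=2$, and the analogous summation over the $d$ words of length $1$ completes the case $n=1$.

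There is no substantive obstacle at this level, since the length restriction reduces everything to Lemmas~\ref{lemma_diff_allthesame} and~\ref{lemma_diff_alldifferent}; the only points to check are that the ``all-equal'' and ``all-distinct'' cases exhaust all words of length $\leq 2$, and that restricting $(X,Y)$ to a sub-collection of its coordinate pairs preserves the standing hypotheses on the covariance — both immediate from the block structure of $R_{(X,Y)}$. The genuine work begins at levels $n=3,4$, where one must combine Proposition~\ref{proposition_key_shuffle} with the generating sets of Proposition~\ref{prop_generating_sets}.
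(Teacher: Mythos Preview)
Your proof is correct and follows exactly the paper's approach: the paper's one-line proof simply states that ``the coordinate-wise estimates are just special cases of Lemma~\ref{lemma_diff_allthesame} and Lemma~\ref{lemma_diff_alldifferent},'' and you have unpacked this into the explicit case distinction (single letter for $n=1$; $w=ii$ versus $w=ij$ with $i\neq j$ for $n=2$). Your additional remarks about summing over coordinates and the block structure of $R_{(X,Y)}$ are accurate but routine.
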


\begin{proof}
The coordinate-wise estimates are just special cases of Lemma \ref%
{lemma_diff_allthesame} and Lemma \ref{lemma_diff_alldifferent}.
\end{proof}

\subsubsection{$n=3$}

\begin{proposition}
\label{prop_main_estimates_n3}Let $\left( X,Y\right) $, $\omega $, $\rho $
and $\gamma $ as in Lemma \ref{lemma_diff_alldifferent}. Then there is a
constant $C\left( 3\right) $ which depends on $\rho $ and $\gamma $ such that%
\begin{equation*}
\left\vert \mathbf{X}_{s,t}^{3}-\mathbf{Y}_{s,t}^{3}\right\vert _{L^{2}}\leq
C\left( 3\right) \epsilon \omega \left( \left[ s,t\right] ^{2}\right) ^{%
\frac{1}{2\gamma }}\omega \left( \left[ s,t\right] ^{2}\right) ^{\frac{2}{%
2\rho }}
\end{equation*}%
holds for every $\left( s,t\right) \in \Delta $ where $\epsilon
^{2}=V_{\infty }\left( R_{X-Y},\left[ s,t\right] ^{2}\right) ^{1-\rho
/\gamma }$.
\end{proposition}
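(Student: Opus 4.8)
The plan is to use the shuffle reduction of Section~\ref{section_it_int_and_shuffle} to bring everything down to a single genuinely new coordinate-wise estimate. I would work coordinate-wise and invoke Proposition~\ref{proposition_key_shuffle} at level $n=3$: its length-$\le 2$ hypotheses are supplied by Proposition~\ref{prop_moments_lp} (for the $L^2$-bounds on $\mathbf{X}^w,\mathbf{Y}^w$) and by Proposition~\ref{prop_main_estimates_n1_n2} (for the difference bounds, with the same $\epsilon^2=V_\infty(R_{X-Y},[s,t]^2)^{1-\rho/\gamma}$). Up to relabelling, a word of length $3$ over $A=\{1,\dots ,d\}$ either uses a single letter ($aaa$), or is composed by two letters with multiplicities $(2,1)$ -- for which $\{aab\}$ is a generating set by Proposition~\ref{prop_generating_sets}(1) -- or uses three distinct letters. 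Lemma~\ref{lemma_diff_allthesame} settles the first case and Lemma~\ref{lemma_diff_alldifferent} the third, so by Proposition~\ref{proposition_key_shuffle} it will remain only to prove, for $a\ne b$,
\[
\left\vert\mathbf{X}_{s,t}^{aab}-\mathbf{Y}_{s,t}^{aab}\right\vert_{L^2}\le C\,\epsilon\,\omega\!\left(\left[s,t\right]^2\right)^{\frac{1}{2\gamma}}\omega\!\left(\left[s,t\right]^2\right)^{\frac{2}{2\rho}}.
\]

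Since $s$ is a fixed lower endpoint, $\mathbf{X}_{s,t}^{aab}=\int_s^t\mathbf{X}_{s,u}^{aa}\,dX_u^b$ with $\mathbf{X}_{s,u}^{aa}=\tfrac12(X_{s,u}^a)^2$, and likewise for $Y$. Telescoping,
\[
\mathbf{X}_{s,t}^{aab}-\mathbf{Y}_{s,t}^{aab}=\int_s^t\!\left(\mathbf{X}_{s,u}^{aa}-\mathbf{Y}_{s,u}^{aa}\right)dX_u^b+\int_s^t\mathbf{Y}_{s,u}^{aa}\,d\!\left(X^b-Y^b\right)_u=:I_1+I_2.
\]
As $(X^b,Y^b)$ is independent of $(X^a,Y^a)$, taking $L^2$-norms and passing the expectation inside (a.s.\ convergence of the Riemann sums gives $L^1$-convergence in the inhomogeneous Wiener chaos, exactly as in the computation $\left(\ref{eqn_L2_norm_into_integral1}\right)$--$\left(\ref{eqn_L2_norm_into_integral2}\right)$) will yield the $2D$ Young integrals
\[
\left\vert I_1\right\vert_{L^2}^2=\int_{\left[s,t\right]^2}h\,dR_{X^b},\qquad\left\vert I_2\right\vert_{L^2}^2=\int_{\left[s,t\right]^2}g\,dR_{X^b-Y^b},
\]
where $g(u,v)=E[\mathbf{Y}_{s,u}^{aa}\mathbf{Y}_{s,v}^{aa}]$ and $h(u,v)=E[(\mathbf{X}_{s,u}^{aa}-\mathbf{Y}_{s,u}^{aa})(\mathbf{X}_{s,v}^{aa}-\mathbf{Y}_{s,v}^{aa})]$; both kernels vanish whenever $u=s$ or $v=s$.

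I would then estimate the two integrals via the multidimensional Young inequality. For $I_2$: Lemma~\ref{lemma_rho_var_same_letters} (with $n=2$, applied to $Y^a$) gives $V_\rho(g,[s,t]^2)\le C\,\omega([s,t]^2)^{2/\rho}$, while $R_{X^b-Y^b}$ has $\rho$-variation controlled by $\omega$, so the interpolation Lemma~\ref{lemma_2D_interpolation} gives $V_\gamma(R_{X^b-Y^b},[s,t]^2)\le\epsilon^2\,\omega([s,t]^2)^{1/\gamma}$; as $\tfrac1\rho+\tfrac1\gamma>1$ this bounds $\left\vert I_2\right\vert_{L^2}^2$ by $C\epsilon^2\omega([s,t]^2)^{1/\gamma}\omega([s,t]^2)^{2/\rho}$. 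For $I_1$ I would establish the difference-analogue of Lemma~\ref{lemma_rho_var_same_letters}, namely $V_\gamma(h,[s,t]^2)\le C\epsilon^2\omega([s,t]^2)^{1/\gamma}\omega([s,t]^2)^{1/\rho}$: writing $\mathbf{X}_{s,u}^{aa}-\mathbf{Y}_{s,u}^{aa}=\tfrac12(X^a-Y^a)_{s,u}(X^a+Y^a)_{s,u}$ and expanding the rectangular increments of $h$ by Wick's formula, each resulting term is a product of two covariances, one of which carries a small-interval increment of $R_{X^a-Y^a}$ -- whose $\gamma$-variation contribution is handled through Lemma~\ref{lemma_2D_interpolation} by $\epsilon^2\omega^{1/\gamma}$ -- while the other is controlled in $\rho$-variation by $\omega^{1/\rho}$ exactly as in the proof of Lemma~\ref{lemma_rho_var_same_letters}. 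Combined with $V_\rho(R_{X^b},[s,t]^2)\le\omega([s,t]^2)^{1/\rho}$ and $\tfrac1\gamma+\tfrac1\rho>1$, this gives $\left\vert I_1\right\vert_{L^2}^2\le C\epsilon^2\omega([s,t]^2)^{1/\gamma}\omega([s,t]^2)^{2/\rho}$; here, where needed, one uses the $\epsilon$-room in the variation exponents provided by Theorem~\ref{theorem_comp_contr_p_var}, shrinking $\gamma$ slightly at the outset while keeping $\tfrac1\rho+\tfrac1\gamma>1$. Adding the two contributions and taking square roots gives the claim.

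The hard part will be the variation bound for the difference kernel $h$: the Wick expansion of its rectangular increments must be organised so that precisely one interpolation factor $\epsilon^2\omega^{1/\gamma}$ is extracted while the remaining covariance factors still deliver the full power $\omega^{1/\rho}$, and so that the cross-pairings -- a short increment of $R_{X^a-Y^a}$ paired against a long one -- remain summable to the $\gamma$-power over arbitrary grid partitions. An alternative that would bypass $h$ altogether is to telescope directly in the triple integral $\mathbf{X}_{s,t}^{aab}=\int_{\Delta_{s,t}^3}dX^adX^adX^b$ and expand $\left\vert\cdot\right\vert_{L^2}^2$ by Wick into a sum of six-fold Young integrals over $\Delta_{s,t}^3\times\Delta_{s,t}^3$, mirroring the proof of Lemma~\ref{lemma_diff_alldifferent}; Proposition~\ref{prop_rho_var_iter_2D} then applies to each once one covariance factor is peeled off as the integrand, so that the hypothesis $f(s,\cdot)=f(\cdot,s)=0$ is met.
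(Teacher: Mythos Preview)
Your proposal is correct and follows essentially the same route as the paper: the shuffle reduction to $\{iii,ijk,iij\}$, the treatment of $iii$ and $ijk$ by Lemmas~\ref{lemma_diff_allthesame} and~\ref{lemma_diff_alldifferent}, the telescoping of $\mathbf{X}^{iij}-\mathbf{Y}^{iij}$ into $I_1+I_2$, and the identification of the $\gamma$-variation bound on the difference kernel $h$ as the crux all match the paper's argument. The paper isolates your ``hard part'' as a separate lemma (Lemma~\ref{lemma_rhovar_diff_n2}), and in proving it first establishes an auxiliary ``half'' estimate (Lemma~\ref{lemma_half_estimates}) on $V_\gamma$ of $(u,v)\mapsto E[(X-Y)_{s,u}X_{s,v}]$, which is precisely the tool needed to control the cross-pairings you flag; your sketch of the Wick expansion is accurate but you will find that organising those cross terms cleanly benefits from stating this half-estimate explicitly.
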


\begin{proof}
We have to show the estimate for $\mathbf{X}^{i,j,k}-\mathbf{Y}^{i,j,k}$
where $i,j,k\in \left\{ 1,\ldots ,d\right\} $. From Proposition \ref%
{proposition_key_shuffle} and \ref{prop_generating_sets} it follows that it
is enough to show the estimate for $\mathbf{X}^{w}-\mathbf{Y}^{w}$ where%
\begin{equation*}
w\in \left\{ iii,ijk,iij:i,j,k\in \left\{ 1,\ldots ,d\right\} ~\text{distinct%
}\right\} \text{.}
\end{equation*}%
The cases $w=iii$ and $w=ijk$ are special cases of Lemma \ref%
{lemma_diff_allthesame} and Lemma \ref{lemma_diff_alldifferent}. The rest of
this section is devoted to show the estimate for $w=iij$.
\end{proof}

\begin{lemma}
\label{lemma_half_estimates} Let $\left( X,Y\right) \colon \left[ 0,1\right]
\rightarrow \mathbb{R}^{2}$ be a centred Gaussian process and consider%
\begin{equation*}
f\left( u,v\right) =E\left( \left( X_{u}-Y_{u}\right) X_{v}\right) .
\end{equation*}%
Assume that the $\rho $-variation of $R_{\left( X,Y\right) }$ is controlled
by a $2D$-control $\omega $ where $\rho \geq 1$. Let $s<t$ and consider a
rectangle $\left[ \sigma ,\tau \right] \times \left[ \sigma ^{\prime },\tau
^{\prime }\right] \subset \left[ s,t\right] ^{2}$. Let $\gamma >\rho $. Then%
\begin{equation*}
V_{\gamma -var}\left( f,\left[ \sigma ,\tau \right] \times \left[ \sigma
^{\prime },\tau ^{\prime }\right] \right) \leq \epsilon \omega \left( \left[
s,t\right] ^{2}\right) ^{1/2\left( 1/\rho -1/\gamma \right) }\omega \left( %
\left[ \sigma ,\tau \right] \times \left[ \sigma ^{\prime },\tau ^{\prime }%
\right] \right) ^{1/\gamma }
\end{equation*}%
where $\epsilon ^{2}=V_{\infty }\left( R_{X-Y},\left[ s,t\right] ^{2}\right)
^{1-\rho /\gamma }$.
\end{lemma}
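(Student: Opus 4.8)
The form of the statement — with $\epsilon^2 = V_\infty(R_{X-Y},[s,t]^2)^{1-\rho/\gamma}$ and the extra factor $\omega([s,t]^2)^{\frac{1}{2}(1/\rho-1/\gamma)}$ — points straight at the two-dimensional interpolation inequality of Lemma \ref{lemma_2D_interpolation}. So the plan is: apply that inequality to $f$ on the rectangle $R := [\sigma,\tau]\times[\sigma',\tau']$, obtaining
\begin{equation*}
V_{\gamma -var}(f,R) \le V_\infty(f,R)^{1-\rho/\gamma}\,V_{\rho -var}(f,R)^{\rho/\gamma},
\end{equation*}
and then estimate the two factors by hand. Everything substantive is already packaged in Lemma \ref{lemma_2D_interpolation} and in the control hypothesis, so the argument is short.

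For the $V_{\rho -var}$ factor I would use that $f(u,v) = E(X_uX_v) - E(Y_uX_v)$ is a fixed linear combination of the four scalar entries of the matrix covariance $R_{(X,Y)}$. Since the $\rho$-variation of $R_{(X,Y)}$ is controlled by $\omega$, every such entry (the non-symmetric term $E(Y_\cdot X_\cdot)$ included) satisfies $|g(A)|^\rho \le \omega(A)$ for all rectangles $A$, whence $|f(A)| \le c\,\omega(A)^{1/\rho}$ for an absolute constant $c$ — this is the same elementary observation invoked in the excerpt when passing from $R_{(X,Y)}$ to $R_{X-Y}$, cf.\ \cite[Section 15.3.2]{FV10}. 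Summing $\rho$-th powers over any grid partition of $R$ and using super-additivity of $\omega$ gives $V_{\rho -var}(f,R) \le c\,\omega(R)^{1/\rho}$.

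For the $V_\infty$ factor I would use Cauchy--Schwarz. For a sub-rectangle $A = [a,b]\times[a',b'] \subset R$ a direct expansion gives $f(A) = E\big((X_{a,b}-Y_{a,b})X_{a',b'}\big)$, hence
\begin{equation*}
|f(A)| \le \| X_{a,b}-Y_{a,b}\|_{L^2}\,\|X_{a',b'}\|_{L^2} = R_{X-Y}([a,b]^2)^{1/2}\,R_X([a',b']^2)^{1/2}.
\end{equation*}
Because $[a,b]^2$ and $[a',b']^2$ are sub-rectangles of $[s,t]^2$, the first factor is at most $V_\infty(R_{X-Y},[s,t]^2)^{1/2}$ by Definition \ref{def_V_infty}, and the second is at most $\omega([a',b']^2)^{1/(2\rho)} \le \omega([s,t]^2)^{1/(2\rho)}$ by the control of $R_X$ and monotonicity of $\omega$. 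Taking the supremum over $A \subset R$ yields $V_\infty(f,R) \le V_\infty(R_{X-Y},[s,t]^2)^{1/2}\,\omega([s,t]^2)^{1/(2\rho)}$.

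It then remains to substitute both bounds into the interpolation inequality and tidy up the exponents: $(1-\rho/\gamma)/2$ is exactly the power of $V_\infty(R_{X-Y},[s,t]^2)$ sitting inside $\epsilon$, and $(1-\rho/\gamma)/(2\rho) = \frac{1}{2}(1/\rho - 1/\gamma)$, so the product reduces to $\epsilon\,\omega([s,t]^2)^{\frac{1}{2}(1/\rho-1/\gamma)}\,\omega(R)^{1/\gamma}$, up to the constant $c^{\rho/\gamma}$ coming from the $V_{\rho -var}$ estimate (equal to $1$ under the natural normalisation of $\omega$, and in any case absorbable as in the neighbouring lemmas). I do not see a genuine analytic obstacle here; the only points requiring attention are the exponent bookkeeping and the (standard) fact that a linear combination of entries of a covariance with controlled $2D$ $\rho$-variation is again controlled by the same $\omega$, so that the cross term $E(Y_\cdot X_\cdot)$ causes no trouble in the $V_{\rho -var}$ estimate.
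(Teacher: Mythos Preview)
Your proof is correct and follows essentially the same route as the paper's. The only cosmetic difference is that you package the interpolation step as a direct application of Lemma~\ref{lemma_2D_interpolation}, whereas the paper writes out that same $\sum|f|^{\gamma}\le(\sup|f|)^{\gamma-\rho}\sum|f|^{\rho}$ trick by hand on the partition sums; the Cauchy--Schwarz bound for the $V_\infty$ factor and the use of the control $\omega$ for the $\rho$-variation factor are identical in both arguments.
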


\begin{proof}
Let $u<v$ and $u^{\prime }<v^{\prime }\in \left[ s,t\right] $. Then%
\begin{eqnarray*}
\left\vert E\left( \left( X_{u,v}-Y_{u,v}\right) X_{u^{\prime },v^{\prime
}}\right) \right\vert &\leq &\left\vert X_{u,v}-Y_{u,v}\right\vert
_{L^{2}}\left\vert X_{u^{\prime },v^{\prime }}\right\vert _{L^{2}} \\
&\leq &V_{\infty }\left( R_{X-Y},\left[ s,t\right] ^{2}\right) ^{1/2}V_{\rho
-var}\left( R_{\left( X,Y\right) },\left[ s,t\right] ^{2}\right) ^{1/2}
\end{eqnarray*}

and hence%
\begin{equation*}
\sup_{u<v,u^{\prime }<v^{\prime }}\left\vert E\left( \left(
X_{u,v}-Y_{u,v}\right) X_{u^{\prime },v^{\prime }}\right) \right\vert \leq
V_{\infty }\left( R_{X-Y},\left[ s,t\right] ^{2}\right) ^{1/2}\omega \left( %
\left[ s,t\right] ^{2}\right) ^{\frac{1}{2\rho }}.
\end{equation*}%
Now take a partition $D$ of $\left[ \sigma ,\tau \right] $ and a partition $%
\tilde{D}$ of $\left[ \sigma ^{\prime },\tau ^{\prime }\right] $. Then%
\begin{eqnarray*}
&&\sum_{t_{i}\in D,\tilde{t}_{j}\in \tilde{D}}\left\vert E\left( \left(
X_{t_{i},t_{i+1}}-Y_{t_{i},t_{i+1}}\right) X_{\tilde{t}_{j},\tilde{t}%
_{j+1}}\right) \right\vert ^{\gamma } \\
&\leq &\sup_{u<v,u^{\prime }<v^{\prime }}\left\vert E\left( \left(
X_{u,v}-Y_{u,v}\right) X_{u^{\prime },v^{\prime }}\right) \right\vert
^{\gamma -\rho }\sum_{t_{i}\in D,\tilde{t}_{j}\in \tilde{D}}\left\vert
E\left( \left( X_{t_{i},t_{i+1}}-Y_{t_{i},t_{i+1}}\right) X_{\tilde{t}_{j},%
\tilde{t}_{j+1}}\right) \right\vert ^{\rho } \\
&\leq &V_{\infty }\left( R_{X-Y},\left[ s,t\right] ^{2}\right) ^{1/2\left(
\gamma -\rho \right) }\omega \left( \left[ s,t\right] ^{2}\right)
^{1/2\left( \gamma /\rho -1\right) }\omega \left( \left[ \sigma ,\tau \right]
\times \left[ \sigma ^{\prime },\tau ^{\prime }\right] \right)
\end{eqnarray*}%
and taking the supremum over all partitions shows the result.
\end{proof}

\begin{lemma}
\label{lemma_rhovar_diff_n2}Let $\left( X,Y\right) \colon \left[ 0,1\right]
\rightarrow \mathbb{R}^{2}$ be a centred Gaussian process with continuous
paths of finite variation. Assume that the $\rho $-variation of $R_{\left(
X,Y\right) }$ is controlled by a $2D$-control $\omega $ where $\rho \geq 1$.
Consider the function%
\begin{equation*}
g\left( u,v\right) =E\left[ \left( \mathbf{X}_{s,u}^{\left( 2\right) }-%
\mathbf{Y}_{s,u}^{\left( 2\right) }\right) \left( \mathbf{X}_{s,v}^{\left(
2\right) }-\mathbf{Y}_{s,v}^{\left( 2\right) }\right) \right] .
\end{equation*}%
Then for every $\gamma >\rho $ there is a constant $C=C\left( \rho ,\gamma
\right) $ such that%
\begin{equation*}
V_{\gamma -var}\left( g,\left[ s,t\right] ^{2}\right) \leq C\epsilon
^{2}\omega \left( \left[ s,t\right] ^{2}\right) ^{1/\gamma +1/\rho }
\end{equation*}%
holds for every $\left( s,t\right) \in \Delta $ where $\epsilon
^{2}=V_{\infty }\left( R_{X-Y},\left[ s,t\right] ^{2}\right) ^{1-\rho
/\gamma }$.
\end{lemma}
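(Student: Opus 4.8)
The plan is to establish, for every pair of partitions $D=\{s=u_0<\dots<u_M=t\}$ and $\tilde{D}=\{s=\tilde{t}_0<\dots<\tilde{t}_N=t\}$, the bound
\[
\sum_{i,j}\Bigl|g\begin{pmatrix}u_i,u_{i+1}\\\tilde{t}_{j},\tilde{t}_{j+1}\end{pmatrix}\Bigr|^{\gamma}\le C\,\epsilon^{2\gamma}\,\omega\bigl([s,t]^2\bigr)^{1+\gamma/\rho},
\]
after which taking $\gamma$-th roots and the supremum over $D,\tilde{D}$ gives the assertion. The first ingredient is algebraic: since $X$ has finite variation, $\mathbf{X}_{s,u}^{(2)}=\tfrac12X_{s,u}^2$, so writing $\Xi_{s,u}:=\mathbf{X}_{s,u}^{(2)}-\mathbf{Y}_{s,u}^{(2)}$ and $Z:=X-Y$, $S:=X+Y$ we get $\Xi_{s,u}=\tfrac12Z_{s,u}S_{s,u}$, and, expanding $Z_{s,u_{i+1}}=Z_{s,u_i}+Z_{u_i,u_{i+1}}$ and likewise for $S$,
\[
\Xi_{s,u_{i+1}}-\Xi_{s,u_i}=\tfrac12\bigl(Z_{s,u_i}S_{u_i,u_{i+1}}+Z_{u_i,u_{i+1}}S_{s,u_i}+Z_{u_i,u_{i+1}}S_{u_i,u_{i+1}}\bigr),
\]
a sum of three products of two Gaussian increments, in each of which at least one factor lives over the short interval $[u_i,u_{i+1}]$; likewise for the $\tilde{D}$-increments.

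Next I would insert this into $g\bigl(\begin{smallmatrix}u_i,u_{i+1}\\\tilde{t}_{j},\tilde{t}_{j+1}\end{smallmatrix}\bigr)=E\bigl[(\Xi_{s,u_{i+1}}-\Xi_{s,u_i})(\Xi_{s,\tilde{t}_{j+1}}-\Xi_{s,\tilde{t}_{j}})\bigr]$ and apply Wick's formula to each of the resulting (at most nine) expectations of products of four centred jointly Gaussian increments. This expresses $g\bigl(\begin{smallmatrix}u_i,u_{i+1}\\\tilde{t}_{j},\tilde{t}_{j+1}\end{smallmatrix}\bigr)$ as a finite sum, with a number of terms not depending on the partitions, of products of two covariances; in each such product the four arguments are exactly $Z_a,S_b,Z_c,S_d$ for some intervals $a,b\in D$ with $[u_i,u_{i+1}]\in\{a,b\}$ and $c,d\in\tilde{D}$ with $[\tilde{t}_{j},\tilde{t}_{j+1}]\in\{c,d\}$, so each factor is a rectangular increment of one of the covariances $R_Z$, $R_S$, $R_{Z,S}$ over a rectangle $R\subset[s,t]^2$ of the shape (interval of $D$)$\times$(interval of $\tilde{D}$); crucially, the two factors together always involve two $Z$-increments and two $S$-increments.

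The quantitative input is elementary interpolation (in the spirit of Lemma \ref{lemma_2D_interpolation}): since $\omega$ controls the $\rho$-variation of $R_Z$, $R_S$ and $R_{Z,S}$ (each a linear combination of the entries of $R_{(X,Y)}$), for any rectangle $R\subset[s,t]^2$ and any of these covariances $R_W$ one has $|R_W(R)|^{\gamma}=|R_W(R)|^{\gamma-\rho}|R_W(R)|^{\rho}\le C\,V_\infty(R_W,[s,t]^2)^{\gamma-\rho}\,\omega(R)$, hence $|R_W(R)|\le C\,V_\infty(R_W,[s,t]^2)^{1-\rho/\gamma}\,\omega(R)^{1/\gamma}$. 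Here $V_\infty(R_Z,[s,t]^2)^{1-\rho/\gamma}=\epsilon^2$; also $V_\infty(R_S,[s,t]^2)\le C\,\omega([s,t]^2)^{1/\rho}$, so $V_\infty(R_S,[s,t]^2)^{1-\rho/\gamma}\le C\,\omega([s,t]^2)^{1/\rho-1/\gamma}$; and by Cauchy--Schwarz $V_\infty(R_{Z,S},[s,t]^2)\le V_\infty(R_Z,[s,t]^2)^{1/2}V_\infty(R_S,[s,t]^2)^{1/2}$, whence $V_\infty(R_{Z,S},[s,t]^2)^{1-\rho/\gamma}\le C\,\epsilon\,\omega([s,t]^2)^{(1/\rho-1/\gamma)/2}$. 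Multiplying the bounds for the two factors of a Wick term and using that together they carry two $Z$'s and two $S$'s, the combined $\epsilon$-power is exactly $\epsilon^{2}$ and the combined spare $\omega([s,t]^2)$-power is exactly $1/\rho-1/\gamma$; thus every term of the expansion of $g\bigl(\begin{smallmatrix}u_i,u_{i+1}\\\tilde{t}_{j},\tilde{t}_{j+1}\end{smallmatrix}\bigr)$ is at most $C\,\epsilon^{2}\,\omega([s,t]^2)^{1/\rho-1/\gamma}\,\omega(R_1)^{1/\gamma}\,\omega(R_2)^{1/\gamma}$ for two rectangles $R_1,R_2\subset[s,t]^2$ of the above shape.

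Finally I would raise this to the power $\gamma$ and sum over $i$ and $j$: bounding each non-short side of $R_1,R_2$ by $[s,t]$ and using super-additivity both of the one-dimensional control $(s,t)\mapsto\omega([s,t]^2)$ and of the two-dimensional control $\omega$ on $[s,t]^2$, one obtains $\sum_{i,j}\omega(R_1^{(i,j)})\,\omega(R_2^{(i,j)})\le\omega([s,t]^2)^2$. Since $\gamma>\rho$ the exponent $\gamma/\rho-1$ is non-negative, so each term contributes at most $C\,\epsilon^{2\gamma}\,\omega([s,t]^2)^{\gamma/\rho-1}\,\omega([s,t]^2)^2=C\,\epsilon^{2\gamma}\,\omega([s,t]^2)^{1+\gamma/\rho}$, and summing over the fixed finite number of terms gives the displayed inequality, with $C=C(\rho,\gamma)$. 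I expect the only genuine difficulty — and it is purely bookkeeping — to be exactly this last summation: one must run through the few shapes of $(R_1,R_2)$ produced by the Wick expansion and check in each that replacing the non-short sides by $[s,t]$ and then summing the remaining grid sides by super-additivity really yields the factor $\omega([s,t]^2)^2$. Everything else is a mechanical combination of Wick's formula with the interpolation inequality and the fact that $\omega$ simultaneously controls $R_Z$, $R_S$ and $R_{Z,S}$.
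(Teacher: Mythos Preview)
Your proposal is correct and follows essentially the same route as the paper: expand the increment of $\mathbf{X}^{(2)}-\mathbf{Y}^{(2)}$ algebraically, apply Wick's formula to the resulting fourth-moment expectations, bound each covariance factor by interpolation between $V_\infty$ and $\omega^{1/\rho}$, and sum via super-additivity. Your substitution $Z=X-Y$, $S=X+Y$ is a mild repackaging of the paper's decomposition (which keeps $X$, $Y$, $X-Y$ separately and invokes Lemma~\ref{lemma_half_estimates} for the cross covariances); your version makes the bookkeeping of the $\epsilon^{2}$ factor cleaner since each product automatically carries exactly two $Z$'s.

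One small imprecision worth flagging: not every Wick factor lives over a rectangle of the shape (interval of $D$)$\times$(interval of $\tilde D$). For the pairing $E[Z_aS_b]\,E[Z_cS_d]$ (both pairings within one side) the rectangles are $a\times b\subset D\times D$ and $c\times d\subset\tilde D\times\tilde D$. Your summation argument still goes through in that case---the sum factorises as $\bigl(\sum_i\omega(R_1^{(i)})\bigr)\bigl(\sum_j\omega(R_2^{(j)})\bigr)$ and each factor is $\le\omega([s,t]^2)$ after enlarging the long side---so the conclusion is unaffected, but the sentence describing the rectangle shapes should be relaxed accordingly.
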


\begin{proof}
Let $u<v$ and $u^{\prime }<v^{\prime }$. Then%
\begin{eqnarray*}
g%
\begin{pmatrix}
u,v \\ 
u^{\prime },v^{\prime }%
\end{pmatrix}
&=&E\left[ \left( \left( \mathbf{X}_{s,v}^{\left( 2\right) }-\mathbf{X}%
_{s,u}^{\left( 2\right) }\right) -\left( \mathbf{Y}_{s,v}^{\left( 2\right) }-%
\mathbf{Y}_{s,u}^{\left( 2\right) }\right) \right) \left( \left( \mathbf{X}%
_{s,v^{\prime }}^{\left( 2\right) }-\mathbf{X}_{s,u^{\prime }}^{\left(
2\right) }\right) -\left( \mathbf{Y}_{s,v^{\prime }}^{\left( 2\right) }-%
\mathbf{Y}_{s,u^{\prime }}^{\left( 2\right) }\right) \right) \right] \\
&=&\frac{1}{2^{2}}E\left[ \left( \left( X_{s,v}^{2}-X_{s,u}^{2}\right)
-\left( Y_{s,v}^{2}-Y_{s,u}^{2}\right) \right) \left( \left( X_{s,v^{\prime
}}^{2}-X_{s,u^{\prime }}^{2}\right) -\left( Y_{s,v^{\prime
}}^{2}-Y_{s,u^{\prime }}^{2}\right) \right) \right] .
\end{eqnarray*}%
Now,%
\begin{eqnarray*}
\left( X_{s,v}^{2}-X_{s,u}^{2}\right) -\left( Y_{s,v}^{2}-Y_{s,u}^{2}\right)
&=&X_{u,v}\left( X_{s,u}+X_{s,v}\right) -Y_{u,v}\left( Y_{s,u}+Y_{s,v}\right)
\\
&=&X_{u,v}\left( X_{s,u}-Y_{s,u}\right) +\left( X_{u,v}-Y_{u,v}\right)
Y_{s,u} \\
&&+X_{u,v}\left( X_{s,v}-Y_{s,v}\right) +\left( X_{u,v}-Y_{u,v}\right)
Y_{s,v}.
\end{eqnarray*}%
The same way one gets%
\begin{eqnarray*}
\left( X_{s,v^{\prime }}^{2}-X_{s,u^{\prime }}^{2}\right) -\left(
Y_{s,v^{\prime }}^{2}-Y_{s,u^{\prime }}^{2}\right) &=&X_{u^{\prime
},v^{\prime }}\left( X_{s,u^{\prime }}-Y_{s,u^{\prime }}\right) +\left(
X_{u^{\prime },v^{\prime }}-Y_{u^{\prime },v^{\prime }}\right)
Y_{s,u^{\prime }} \\
&&+X_{u^{\prime },v^{\prime }}\left( X_{s,v^{\prime }}-Y_{s,v^{\prime
}}\right) +\left( X_{u^{\prime },v^{\prime }}-Y_{u^{\prime },v^{\prime
}}\right) Y_{s,v^{\prime }}.
\end{eqnarray*}%
Now we expand the product of both sums and take expectation. For the first
term we obtain, using the Wick formula and Lemma \ref{lemma_half_estimates},%
\begin{eqnarray*}
&&\left\vert E\left( X_{u,v}\left( X_{s,u}-Y_{s,u}\right) X_{u^{\prime
},v^{\prime }}\left( X_{s,u^{\prime }}-Y_{s,u^{\prime }}\right) \right)
\right\vert \\
&\leq &\left\vert E\left( X_{u,v}X_{u^{\prime },v^{\prime }}\right) E\left[
\left( X_{s,u}-Y_{s,u}\right) \left( X_{s,u^{\prime }}-Y_{s,u^{\prime
}}\right) \right] \right\vert \\
&&+\left\vert E\left[ X_{u,v}\left( X_{s,u^{\prime }}-Y_{s,u^{\prime
}}\right) \right] E\left[ X_{u^{\prime },v^{\prime }}\left(
X_{s,u}-Y_{s,u}\right) \right] \right\vert \\
&&+\left\vert E\left[ X_{u^{\prime },v^{\prime }}\left( X_{s,u^{\prime
}}-Y_{s,u^{\prime }}\right) \right] E\left[ X_{u,v}\left(
X_{s,u}-Y_{s,u}\right) \right] \right\vert \\
&\leq &V_{\rho -var}\left( R_{\left( X,Y\right) },\left[ u,v\right] \times %
\left[ u^{\prime },v^{\prime }\right] \right) V_{\gamma -var}\left( R_{X-Y},%
\left[ s,t\right] ^{2}\right) \\
&&+2V_{\gamma -var}\left( R_{\left( X,X-Y\right) },\left[ u,v\right] \times %
\left[ s,t\right] \right) V_{\gamma -var}\left( R_{\left( X,X-Y\right) },%
\left[ u^{\prime },v^{\prime }\right] \times \left[ s,t\right] \right) \\
&\leq &\epsilon ^{2}\omega \left( \left[ u,v\right] \times \left[ u^{\prime
},v^{\prime }\right] \right) ^{1/\rho }\omega \left( \left[ s,t\right]
^{2}\right) ^{1/\gamma } \\
&&+2\epsilon ^{2}\omega \left( \left[ s,t\right] ^{2}\right) ^{1/\rho
-1/\gamma }\omega \left( \left[ u,v\right] \times \left[ s,t\right] \right)
^{1/\gamma }\omega \left( \left[ u^{\prime },v^{\prime }\right] \times \left[
s,t\right] \right) ^{1/\gamma }.
\end{eqnarray*}%
Now take two partitions $D,\tilde{D}$ of $\left[ s,t\right] $. With our
calculations above,%
\begin{eqnarray*}
&&\sum_{t_{i}\in D,\tilde{t}_{j}\in \tilde{D}}\left\vert E\left(
X_{t_{i},t_{i+1}}\left( X_{s,t_{i}}-Y_{s,t_{i}}\right) X_{\tilde{t}_{j},%
\tilde{t}_{j+1}}\left( X_{s,\tilde{t}_{j}}-Y_{s,\tilde{t}_{j}}\right)
\right) \right\vert ^{\gamma } \\
&\leq &c_{1}\epsilon ^{2\gamma }\omega \left( \left[ s,t\right] ^{2}\right)
\sum_{t_{i}\in D,\tilde{t}_{j}\in \tilde{D}}\omega \left( \left[
t_{i},t_{i+1}\right] \times \left[ \tilde{t}_{j},\tilde{t}_{j+1}\right]
\right) ^{\gamma /\rho } \\
&&+c_{2}\epsilon ^{2\gamma }\omega \left( \left[ s,t\right] ^{2}\right)
^{\gamma /\rho -1}\sum_{t_{i}\in D,\tilde{t}_{j}\in \tilde{D}}\omega \left( %
\left[ t_{i},t_{i+1}\right] \times \left[ s,t\right] \right) \omega \left( %
\left[ \tilde{t}_{j},\tilde{t}_{j+1}\right] \times \left[ s,t\right] \right)
\\
&\leq &c_{3}\epsilon ^{2\gamma }\left( \omega \left( \left[ s,t\right]
^{2}\right) \omega \left( \left[ s,t\right] ^{2}\right) ^{\gamma /\rho
}+\omega \left( \left[ s,t\right] ^{2}\right) ^{\gamma /\rho -1}\omega
\left( \left[ s,t\right] ^{2}\right) ^{2}\right) .
\end{eqnarray*}%
The other terms are treated exactly the same way. Taking the supremum over
all partitions shows the result.
\end{proof}

The next corollary completes the proof of Proposition \ref%
{prop_main_estimates_n3}.

\begin{corollary}
Let $\left( X,Y\right) $, $\omega $, $\rho $ and $\gamma $ as in Lemma \ref%
{lemma_diff_alldifferent}. Then there is a constant $C=C\left( \rho ,\gamma
\right) $ such that%
\begin{equation*}
\left\vert \mathbf{X}_{s,t}^{i,i,j}-\mathbf{Y}_{s,t}^{i,i,j}\right\vert
_{L^{2}}\leq C\epsilon \omega \left( \left[ s,t\right] ^{2}\right) ^{\frac{1%
}{2\gamma }}\omega \left( \left[ s,t\right] ^{2}\right) ^{\frac{2}{2\rho }}
\end{equation*}%
holds for every $\left( s,t\right) \in \Delta $ and $i\neq j$ where $%
\epsilon ^{2}=V_{\infty }\left( R_{X-Y},\left[ s,t\right] ^{2}\right)
^{1-\rho /\gamma }$.
\end{corollary}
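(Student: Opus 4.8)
The plan is to bypass the shuffle machinery for the last remaining word $w=iij$ by writing $\mathbf{X}^{i,i,j}_{s,t}$ as a single $2D$ Young integral and then expanding the $L^2$-norm exactly as in the computation \eqref{eqn_L2_norm_into_integral1}--\eqref{eqn_L2_norm_into_integral2}. Put $\mathbf{X}^{(2),i}_{s,u}:=\int_{\Delta^2_{s,u}}dX^i\,dX^i=\tfrac12(X^i_{s,u})^2$, and similarly $\mathbf{Y}^{(2),i}$. Since $X^i$ and $X^j$ are independent for $i\neq j$ and the paths have finite variation, Fubini on the simplex gives $\mathbf{X}^{i,i,j}_{s,t}=\int_s^t\mathbf{X}^{(2),i}_{s,u}\,dX^j_u$ (an a.s.\ limit of Riemann sums), and likewise for $Y$. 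Adding and subtracting $\int_s^t\mathbf{Y}^{(2),i}_{s,u}\,dX^j_u$ then splits the quantity of interest into
\[
\mathbf{X}^{i,i,j}_{s,t}-\mathbf{Y}^{i,i,j}_{s,t}=\int_s^t\bigl(\mathbf{X}^{(2),i}_{s,u}-\mathbf{Y}^{(2),i}_{s,u}\bigr)\,dX^j_u+\int_s^t\mathbf{Y}^{(2),i}_{s,u}\,d(X^j-Y^j)_u=:A+B.
\]

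Next I would square each term, take expectation, and use that the family $(X^i,Y^i)$ is independent of $(X^j,Y^j)$, together with $L^1$-convergence of the Riemann sums in the inhomogeneous Wiener chaos (exactly the mechanism invoked right after \eqref{eqn_L2_norm_into_integral2} and in the proof of Lemma~\ref{lemma_diff_alldifferent}), to pull the expectation through the limits. This gives
\[
|A|_{L^2}^2=\int_{[s,t]^2}g(u,v)\,dR_{X^j}(u,v),\qquad |B|_{L^2}^2=\int_{[s,t]^2}h(u,v)\,dR_{X^j-Y^j}(u,v),
\]
with $g(u,v)=E\bigl[(\mathbf{X}^{(2),i}_{s,u}-\mathbf{Y}^{(2),i}_{s,u})(\mathbf{X}^{(2),i}_{s,v}-\mathbf{Y}^{(2),i}_{s,v})\bigr]$ and $h(u,v)=E\bigl[\mathbf{Y}^{(2),i}_{s,u}\mathbf{Y}^{(2),i}_{s,v}\bigr]$. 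Both $g$ and $h$ vanish whenever $u=s$ or $v=s$, so the $2D$ Young estimate of Section~\ref{section_multidim_young} applies directly, the integrand already being in the required pinned form.

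It then remains to feed in lemmas already proved. For $A$: $g$ is precisely the function of Lemma~\ref{lemma_rhovar_diff_n2} applied to the pair $(X^i,Y^i)$, hence $V_{\gamma-var}(g,[s,t]^2)\le C\epsilon^2\,\omega([s,t]^2)^{1/\gamma+1/\rho}$; combined with $V_{\rho-var}(R_{X^j},[s,t]^2)\le\omega([s,t]^2)^{1/\rho}$ and $1/\gamma+1/\rho>1$, the Young estimate yields $|A|_{L^2}^2\le C\epsilon^2\,\omega([s,t]^2)^{1/\gamma+2/\rho}$. For $B$: $h$ is the function of Lemma~\ref{lemma_rho_var_same_letters} with $n=2$, so $V_{\rho-var}(h,[s,t]^2)\le C\,\omega([s,t]^2)^{2/\rho}$, while the $2D$ interpolation inequality Lemma~\ref{lemma_2D_interpolation} gives
\[
V_{\gamma-var}(R_{X^j-Y^j},[s,t]^2)\le V_\infty(R_{X^j-Y^j},[s,t]^2)^{1-\rho/\gamma}\,\omega([s,t]^2)^{1/\gamma}\le\epsilon^2\,\omega([s,t]^2)^{1/\gamma},
\]
and pairing these (again $1/\rho+1/\gamma>1$) yields $|B|_{L^2}^2\le C\epsilon^2\,\omega([s,t]^2)^{2/\rho+1/\gamma}$. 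Since $\tfrac1\gamma+\tfrac2\rho=2\bigl(\tfrac1{2\gamma}+\tfrac1\rho\bigr)$, taking square roots and adding $|A|_{L^2}+|B|_{L^2}$ gives the asserted bound $C\epsilon\,\omega([s,t]^2)^{\frac1{2\gamma}}\omega([s,t]^2)^{\frac2{2\rho}}$.

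The one genuinely delicate step is the first move of the second paragraph: justifying that $E$ commutes with the iterated Riemann-sum limits so that the random double integral becomes the deterministic $2D$ Young integral of the covariances. This is handled exactly as in the paragraph preceding Lemma~\ref{lemma_diff_alldifferent} --- independence of the two coordinate families makes the expectation of the product factor, and a.s.\ convergence upgrades to $L^1$-convergence in the Wiener chaos --- so no new difficulty arises; everything else is routine exponent bookkeeping with the control $\omega$.
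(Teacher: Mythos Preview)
Your proof is correct and follows essentially the same route as the paper: split $\mathbf{X}^{i,i,j}_{s,t}-\mathbf{Y}^{i,i,j}_{s,t}$ into the two integrals $A$ and $B$, move the expectation inside via independence and Wiener-chaos convergence, then estimate the resulting $2D$ Young integrals using Lemma~\ref{lemma_rhovar_diff_n2} for $A$ and Lemma~\ref{lemma_rho_var_same_letters} together with the interpolation Lemma~\ref{lemma_2D_interpolation} for $B$. The only cosmetic difference is that the paper's displayed decomposition integrates the difference $\mathbf{X}^{i,i}-\mathbf{Y}^{i,i}$ against $dY^j$ rather than $dX^j$, which is immaterial since both covariances are controlled by $\omega$.
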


\begin{proof}
From the triangle inequality,%
\begin{equation*}
\left\vert \mathbf{X}_{s,t}^{i,i,j}-\mathbf{Y}_{s,t}^{i,i,j}\right\vert
_{L^{2}}\leq \left\vert \int_{\left[ s,t\right] }\left( \mathbf{X}%
_{s,u}^{i,i}-\mathbf{Y}_{s,u}^{i,i}\right) \,dY_{u}^{j}\right\vert
_{L^{2}}+\left\vert \int_{\left[ s,t\right] }\mathbf{Y}_{s,u}^{i,i}\,d\left(
X^{j}-Y^{j}\right) _{u}\right\vert _{L^{2}}.
\end{equation*}%
For the first integral, we use independence to move the expectation inside
the integral as seen in the proof of Lemma \ref{lemma_diff_alldifferent},
then we use $2D$ Young integration and Lemma \ref{lemma_rhovar_diff_n2} to
obtain the desired estimate. The second integral is estimated in the same
way using Lemma \ref{lemma_rho_var_same_letters}.
\end{proof}

\subsubsection{$n=4\label{section_n=4}$}

\begin{proposition}
\label{prop_main_estimates_n4}Let $\left( X,Y\right) $, $\omega $, $\rho $
and $\gamma $ as in Lemma \ref{lemma_diff_alldifferent}. Then there is a
constant $C\left( 4\right) $ which depends on $\rho $ and $\gamma $ such that%
\begin{equation*}
\left\vert \mathbf{X}_{s,t}^{4}-\mathbf{Y}_{s,t}^{4}\right\vert _{L^{2}}\leq
C\left( 4\right) \epsilon \omega \left( \left[ s,t\right] ^{2}\right) ^{%
\frac{1}{2\gamma }}\omega \left( \left[ s,t\right] ^{2}\right) ^{\frac{3}{%
2\rho }}
\end{equation*}%
holds for every $\left( s,t\right) \in \Delta $ where $\epsilon
^{2}=V_{\infty }\left( R_{X-Y},\left[ s,t\right] ^{2}\right) ^{1-\rho
/\gamma }$.
\end{proposition}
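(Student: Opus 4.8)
The plan is to mirror, one level up, the proof of Proposition~\ref{prop_main_estimates_n3}. The first step is a reduction via the shuffle algebra: by Proposition~\ref{proposition_key_shuffle} applied with $n=4$ together with Proposition~\ref{prop_generating_sets}, it suffices to prove the coordinate-wise estimate (\ref{eqn_key_estimate}) for words $w$ lying in a generating set of each multiplicity class of length $4$. Writing $i,j,k,l$ for pairwise distinct indices in $\left\{1,\dots,d\right\}$, it is therefore enough to treat
\begin{equation*}
w\in\left\{\,iiii,\ ijkl,\ iiij,\ iijj,\ iijk,\ iikj,\ jiik\,\right\}.
\end{equation*}
The words $iiii$ and $ijkl$ are immediate from Lemma~\ref{lemma_diff_allthesame} and Lemma~\ref{lemma_diff_alldifferent}, so all the content is in the remaining five words.

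For each remaining word I would, exactly as in the treatment of the word $iij$ in Subsection~\ref{subsection_lower_levels}, peel off the outermost integrator(s) and telescope $\mathbf{X}^w_{s,t}-\mathbf{Y}^w_{s,t}$ into a finite sum of iterated $1D$-integrals in each of which precisely one driving path is a difference $X^m-Y^m$ and the others are $X$'s or $Y$'s. Using independence of distinct coordinates and the passage from an $L^2$-norm to a $2D$ Young integral of covariances (the argument behind (\ref{eqn_L2_norm_into_integral1})--(\ref{eqn_L2_norm_into_integral2})), the $L^2$-norm of each summand becomes a multidimensional Young integral against covariance integrators, with the blocks of repeated letters contributing kernels of the type $E[\mathbf{X}^{(m)}_{s,\cdot}\mathbf{X}^{(m)}_{s,\cdot}]$, $E[(X_\cdot-Y_\cdot)X_\cdot]$ and $E[(\mathbf{X}^{(2)}_{s,\cdot}-\mathbf{Y}^{(2)}_{s,\cdot})(\mathbf{X}^{(2)}_{s,\cdot}-\mathbf{Y}^{(2)}_{s,\cdot})]$, plus a cubic variant $E[\mathbf{X}^{(3)}_{s,\cdot}\mathbf{X}^{(3)}_{s,\cdot}]$ and its difference analogue needed for $iiij$. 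The first three kernels are controlled by Lemmas~\ref{lemma_rho_var_same_letters}, \ref{lemma_half_estimates}, \ref{lemma_rhovar_diff_n2}; the cubic kernel is exactly the $n=3$ case of Lemma~\ref{lemma_rho_var_same_letters}, and the $\gamma$-variation of its difference follows from the same Wick-formula computation as in Lemma~\ref{lemma_rhovar_diff_n2}. Feeding these variation bounds into Proposition~\ref{prop_rho_var_iter_2D} (for $iiij$, $iijk$, $iikj$, which reduce to iterated $2D$-integrals of such kernels) and using the interpolation Lemma~\ref{lemma_2D_interpolation} to extract the factor $\epsilon\,\omega([s,t]^2)^{1/(2\gamma)}$ yields (\ref{eqn_key_estimate}) with $n=4$ for those words.

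The genuinely new step, and the reason the $4D$ grid-control machinery of Section~\ref{section_multidim_young} was set up, is the word $iijj$ (and, in the same spirit, $jiik$): here a double block of a repeated letter does not sit at the left end, so after the $L^2$-to-integral computation the two independent single-process parts each leave behind a full $2D$ covariance kernel --- e.g.\ $E[\mathbf{X}^{ii}_{a,b}\mathbf{X}^{ii}_{c,d}]$ on a general rectangle --- and one is left with a genuine $4D$ Young integral rather than an iterated $2D$ one. I would handle this by: (i) proving, in the style of Lemmas~\ref{lemma_rho_var_same_letters} and \ref{lemma_rhovar_diff_n2}, that this $4D$ kernel (resp.\ its difference under $X\leftrightarrow Y$) has its $\rho$-variation (resp.\ $\gamma$-variation) controlled by the appropriate power of $\omega$, with the difference carrying a factor $\epsilon\,\omega([s,t]^2)^{1/\gamma}$ produced via Lemma~\ref{lemma_2D_interpolation}; (ii) using that the product of two $2D$ grid-controls is a $4D$ grid-control (the lemma following the definition of $n$-dimensional grid-control) to build an admissible integrator; and (iii) applying the $n=4$ case of the multidimensional Young inequality of Section~\ref{section_multidim_young}. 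Summing the finitely many telescoped terms then yields the bound $C(4)\,\epsilon\,\omega([s,t]^2)^{1/(2\gamma)}\omega([s,t]^2)^{3/(2\rho)}$.

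The main obstacle I expect is the bookkeeping in step (i) for $iijj$ and $jiik$: one must expand $E[\mathbf{X}^w_{s,t}\,\mathbf{X}^w_{s,t}]$ (and the corresponding difference) by Wick's theorem into its pairing patterns and verify that every pattern is $4D$-Young-integrable against a product grid-control and has the correct homogeneity $\omega^{1/\gamma}\,\omega^{3/\rho}$. The delicate point, precisely as in Lemmas~\ref{lemma_half_estimates} and \ref{lemma_rhovar_diff_n2}, is to spend a factor $V_\infty(R_{X-Y},[s,t]^2)$ on the ``cross'' contractions so that the remaining $\rho$-variation sums remain summable and recombine to the right power of $\omega$ via superadditivity. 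Once this is organized, the words $iiij$, $iijk$, $iikj$ are strictly easier special cases of the same scheme, and the proposition follows.
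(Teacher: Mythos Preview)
Your reduction via the shuffle algebra and the treatment of $iiii$, $ijkl$, $iiij$, $iijk$ match the paper exactly (note that $iikj$ is the same pattern as $iijk$ after relabelling $j\leftrightarrow k$, so listing it separately is redundant). The gap is in your handling of $iijj$ and $jiik$. You correctly anticipate that $4D$ Young integration is needed, but the step you gloss over --- how one passes from the iterated integral to a $4D$ Young integral over a \emph{rectangle} --- is the heart of the matter and is not automatic. After the $L^2$-to-integral computation one is integrating over a product of simplices $\Delta^2_{s,t}\times\Delta^2_{s,t}$, not over $[s,t]^4$, and the indicator of a simplex has no useful variation. The paper resolves this with a Fubini-type identity,
\[
\int_{\Delta^2_{s,t}} f(u_1)\,dg_{u_1}\,dg_{u_2}=\tfrac{1}{2}\int_{[s,t]^2} f(u_1\wedge u_2)\,d\bigl(g_{u_1}g_{u_2}\bigr),
\]
together with Lemma~\ref{lemma_rho_var_invar_symm}, which shows that the $2D$ $p$-variation of $f$ equals the $4D$ $p$-variation of $\bar f(u_1,u_2,v_1,v_2)=f(u_1\wedge u_2,v_1\wedge v_2)$. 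Only after this rewriting does the $4D$ Young estimate apply, with kernel $E[\mathbf{X}^{ii}_{s,u_1\wedge u_2}\mathbf{X}^{ii}_{s,v_1\wedge v_2}]$ (not the four-parameter object $E[\mathbf{X}^{ii}_{a,b}\mathbf{X}^{ii}_{c,d}]$ you describe) and integrator $E[X^j_{u_1}X^j_{u_2}X^j_{v_1}X^j_{v_2}]$; the variation bounds for both are collected in Lemma~\ref{lemma_rho_var_4D_estimates}.

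For $jiik$ there is a further layer you do not address: after symmetrizing the inner $ii$-block one still has to integrate against $dX^k$ on the outside, so one needs to control the $2D$ $q$-variation of $(w,w')\mapsto\int_{[s,w]^2\times[s,w']^2}\bar f\,dg$ in terms of a symmetric $4D$ grid-control on $g$. This is Lemma~\ref{lemma_ext_young_rho_var} in the paper and requires its own decomposition of $\overline{[s,v]^2\setminus[s,u]^2}$ into three rectangles, followed by separate $4D$, $3D$ and $2D$ Young estimates on the resulting nine pieces. The Wick bookkeeping you flag as the main obstacle is, by contrast, routine once these structural rewritings are in place.
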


\begin{proof}
From Proposition \ref{proposition_key_shuffle} and \ref{prop_generating_sets}
one sees that it is enough to show the estimate for $\mathbf{X}^{w}-\mathbf{Y%
}^{w}$ where%
\begin{equation*}
w\in \left\{ iiii,ijkl,iijj,iiij,iijk,jiik:i,j,k,l\in \left\{ 1,\ldots
,d\right\} ~\text{distinct}\right\} .
\end{equation*}%
The cases $w=iiii$ and $w=ijkl$ are special cases of Lemma \ref%
{lemma_diff_allthesame} and Lemma \ref{lemma_diff_alldifferent}. Hence it
remains to show the estimate for%
\begin{equation*}
w\in \left\{ iijj,iiij,iijk,jiik:i,j,k\in \left\{ 1,\ldots ,d\right\} ~\text{%
pairwise distinct}\right\} \text{.}
\end{equation*}%
This is the content of the remaining section.
\end{proof}

\begin{lemma}
Let $\left( X,Y\right) $, $\omega $, $\rho $ and $\gamma $ as in Lemma \ref%
{lemma_diff_alldifferent}. Then there is a constant $C=C\left( \rho ,\gamma
\right) $ such that%
\begin{equation*}
\left\vert \mathbf{X}_{s,t}^{i,i,j,k}-\mathbf{Y}_{s,t}^{i,i,j,k}\right\vert
_{L^{2}}\leq C\epsilon \omega \left( \left[ s,t\right] ^{2}\right) ^{\frac{1%
}{2\gamma }}\omega \left( \left[ s,t\right] ^{2}\right) ^{\frac{3}{2\rho }}
\end{equation*}%
holds for every $\left( s,t\right) \in \Delta $ where $i,j,k$ are distinct
and $\epsilon ^{2}=V_{\infty }\left( R_{X-Y},\left[ s,t\right] ^{2}\right)
^{1-\rho /\gamma }$.
\end{lemma}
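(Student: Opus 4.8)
The plan is to treat the word $w=i,i,j,k$ exactly as the length-three word $i,i,j$ was handled (Lemma \ref{lemma_rhovar_diff_n2} and the Corollary after it): peel off the last letter and reduce to $2D$-Young integrals whose integrands are covariances built from lower-level iterated integrals. Since $i,j,k$ are pairwise distinct, $X^{k}$ and $X^{k}-Y^{k}$ are independent of every process built from $X^{i},Y^{i},X^{j},Y^{j}$. Using $\mathbf{X}_{s,t}^{i,i,j,k}=\int_{s}^{t}\mathbf{X}_{s,u}^{i,i,j}\,dX_{u}^{k}$ and the triangle inequality,
\begin{equation*}
\left\vert \mathbf{X}_{s,t}^{i,i,j,k}-\mathbf{Y}_{s,t}^{i,i,j,k}\right\vert _{L^{2}}\leq \left\vert \int_{s}^{t}\left( \mathbf{X}_{s,u}^{i,i,j}-\mathbf{Y}_{s,u}^{i,i,j}\right) \,dX_{u}^{k}\right\vert _{L^{2}}+\left\vert \int_{s}^{t}\mathbf{Y}_{s,u}^{i,i,j}\,d\left( X^{k}-Y^{k}\right) _{u}\right\vert _{L^{2}}.
\end{equation*}
Moving the expectation inside the integral exactly as in $\left( \ref{eqn_L2_norm_into_integral1}\right) $--$\left( \ref{eqn_L2_norm_into_integral2}\right) $ (justified because the Riemann sums converge in the Wiener chaos), the squares of the two terms equal the $2D$-Young integrals $\int_{\left[ s,t\right] ^{2}}g'\,dR_{X^{k}}$ and $\int_{\left[ s,t\right] ^{2}}h\,dR_{X^{k}-Y^{k}}$, where $g'\left( u,v\right) =E[(\mathbf{X}_{s,u}^{i,i,j}-\mathbf{Y}_{s,u}^{i,i,j})(\mathbf{X}_{s,v}^{i,i,j}-\mathbf{Y}_{s,v}^{i,i,j})]$ and $h\left( u,v\right) =E[\mathbf{Y}_{s,u}^{i,i,j}\mathbf{Y}_{s,v}^{i,i,j}]$; both vanish whenever a second argument equals $s$, since $\mathbf{X}_{s,s}^{i,i,j}=0$.

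Granting the two variation bounds
\begin{equation*}
V_{\gamma -var}\left( g',\left[ s,t\right] ^{2}\right) \leq C\epsilon ^{2}\omega \left( \left[ s,t\right] ^{2}\right) ^{\frac{1}{\gamma }+\frac{2}{\rho }}\qquad \text{and}\qquad V_{\rho -var}\left( h,\left[ s,t\right] ^{2}\right) \leq C\omega \left( \left[ s,t\right] ^{2}\right) ^{\frac{3}{\rho }},
\end{equation*}
the proof closes immediately: $\omega $ controls the $\rho $-variation of $R_{X^{k}}$, and interpolation (Lemma \ref{lemma_2D_interpolation}) gives $V_{\gamma -var}(R_{X^{k}-Y^{k}},[s,t]^{2})\leq \epsilon ^{2}\omega ([s,t]^{2})^{1/\gamma }$; since $\tfrac{1}{\rho }+\tfrac{1}{\gamma }>1$, Lemma \ref{lemma_kernel_iter_2D} (equivalently, the multidimensional Young estimate of Section \ref{section_multidim_young}) bounds both integrals by $C\epsilon ^{2}\omega ([s,t]^{2})^{\frac{1}{\gamma }+\frac{3}{\rho }}$, and taking square roots yields $\left( \ref{eqn_key_estimate}\right) $ for $w=i,i,j,k$. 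One has to spend a little room passing between grid- and controlled $p$-variation via Theorem \ref{theorem_comp_contr_p_var}, but this is harmless: $\tfrac{1}{\rho }+\tfrac{1}{\gamma }>1$ is strict, so $\gamma $ may be replaced by a slightly larger exponent throughout.

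The real work is the two variation bounds. For $h$ it is short: since $i\neq j$, write $\mathbf{Y}_{s,u}^{i,i,j}=\int_{s}^{u}\mathbf{Y}_{s,r}^{i,i}\,dY_{r}^{j}$, so by independence $h\left( u,v\right) =\int_{[s,u]\times [s,v]}E[\mathbf{Y}_{s,r}^{i,i}\mathbf{Y}_{s,r'}^{i,i}]\,dR_{Y^{j}}(r,r')$; by Lemma \ref{lemma_rho_var_same_letters} (applied to $Y^{i}$ with $n=2$) the integrand has $\rho $-variation $\leq C\omega ([s,t]^{2})^{2/\rho }$, and since $\rho <2$, Lemma \ref{lemma_kernel_iter_2D} gives $V_{\rho -var}(h,[s,t]^{2})\leq C\omega ^{2/\rho }\omega ^{1/\rho }=C\omega ^{3/\rho }$. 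For $g'$ I would decompose $\mathbf{X}_{s,u}^{i,i,j}-\mathbf{Y}_{s,u}^{i,i,j}=\int_{s}^{u}(\mathbf{X}_{s,r}^{i,i}-\mathbf{Y}_{s,r}^{i,i})\,dX_{r}^{j}+\int_{s}^{u}\mathbf{Y}_{s,r}^{i,i}\,d(X^{j}-Y^{j})_{r}=:A_{u}+B_{u}$ and expand $g'=E[A_{u}A_{v}]+E[A_{u}B_{v}]+E[B_{u}A_{v}]+E[B_{u}B_{v}]$. By independence of the $i$- and $j$-indexed blocks, each summand is a single $2D$-integral over $[s,u]\times [s,v]$: $E[A_{u}A_{v}]=\int E[(\mathbf{X}_{s,r}^{i,i}-\mathbf{Y}_{s,r}^{i,i})(\mathbf{X}_{s,r'}^{i,i}-\mathbf{Y}_{s,r'}^{i,i})]\,dR_{X^{j}}$, whose integrand is precisely the function controlled in Lemma \ref{lemma_rhovar_diff_n2}; $E[B_{u}B_{v}]=\int E[\mathbf{Y}_{s,r}^{i,i}\mathbf{Y}_{s,r'}^{i,i}]\,dR_{X^{j}-Y^{j}}$; and the two cross terms equal $\int E[(\mathbf{X}_{s,r}^{i,i}-\mathbf{Y}_{s,r}^{i,i})\mathbf{Y}_{s,r'}^{i,i}]\,dR_{X^{j},X^{j}-Y^{j}}$, where $R_{X^{j},X^{j}-Y^{j}}(r,r')=E[X_{r}^{j}(X^{j}-Y^{j})_{r'}]$. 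Applying Lemma \ref{lemma_kernel_iter_2D} to each (for each summand picking a $\rho $- or $\gamma $-variation bound on integrand and integrator whose reciprocals sum to more than $1$, always possible since $\tfrac{1}{\rho }+\tfrac{1}{\gamma }>1$) and feeding in Lemma \ref{lemma_rhovar_diff_n2}, Lemma \ref{lemma_rho_var_same_letters}, Lemma \ref{lemma_half_estimates}, Proposition \ref{prop_moments_lp} and the interpolation Lemma \ref{lemma_2D_interpolation}, each summand acquires $\gamma $-variation on $[s,t]^{2}$ at most $C\epsilon ^{2}\omega ([s,t]^{2})^{\frac{1}{\gamma }+\frac{2}{\rho }}$; summing the four gives the bound on $g'$.

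The hard part will be the cross terms in the $g'$-estimate: they force one to prove Wick-formula bounds for the ``half-differenced'' objects $E[(\mathbf{X}_{s,\cdot }^{i,i}-\mathbf{Y}_{s,\cdot }^{i,i})\mathbf{Y}_{s,\cdot }^{i,i}]$ and $R_{X^{j},X^{j}-Y^{j}}$ --- analogues of Lemma \ref{lemma_rhovar_diff_n2} and Lemma \ref{lemma_half_estimates} one level higher --- while tracking exactly where a single versus a double power of $\epsilon $ is produced, so that after multiplying by the companion factor one lands on the precise power $\epsilon ^{2}$. Everything else is mechanical: the $2D$-Young machinery of Section \ref{section_multidim_young} applied on top of the estimates already obtained for $n=1,2,3$.
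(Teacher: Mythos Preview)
Your decomposition peels off only the last letter $k$ and then tries to control the $\gamma$-variation of $g'(u,v)=E[(\mathbf{X}_{s,u}^{i,i,j}-\mathbf{Y}_{s,u}^{i,i,j})(\mathbf{X}_{s,v}^{i,i,j}-\mathbf{Y}_{s,v}^{i,i,j})]$. The $E[A\,A]$ and $E[B\,B]$ pieces are fine, and so is the estimate for $h$. The genuine obstacle is the cross term $E[A_{u}B_{v}]=\int_{[s,u]\times[s,v]}\phi\,d\psi$ with $\phi(r,r')=E[(\mathbf{X}_{s,r}^{i,i}-\mathbf{Y}_{s,r}^{i,i})\mathbf{Y}_{s,r'}^{i,i}]$ and $\psi(r,r')=E[X_{r}^{j}(X^{j}-Y^{j})_{r'}]$. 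To land on $\epsilon^{2}$ you need an $\epsilon$ from \emph{each} factor in Lemma~\ref{lemma_kernel_iter_2D}, but the only way the paper knows to extract an $\epsilon$ from a half-differenced covariance is by interpolation (Lemma~\ref{lemma_half_estimates}, Lemma~\ref{lemma_rhovar_diff_n2}), which necessarily trades $\rho$-variation for $\gamma$-variation. So you would be pairing a $\gamma$-variation integrand with a $\gamma$-variation integrator, and the Young condition then reads $2/\gamma>1$, i.e.\ $\gamma<2$ --- which is \emph{not} implied by $\tfrac{1}{\rho}+\tfrac{1}{\gamma}>1$ (think $\rho=1$, $\gamma$ large). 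Your sentence ``always possible since $\tfrac{1}{\rho}+\tfrac{1}{\gamma}>1$'' is therefore not justified for the cross terms: the choice $\rho,\gamma$ only works if one of the two factors carries $\rho$-variation \emph{without} an $\epsilon$, and then you are left with a single $\epsilon$ overall.

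The paper avoids this completely by peeling off \emph{two} letters at once: it writes $\mathbf{X}_{s,t}^{i,i,j,k}=\int_{\{s<u<v<t\}}\mathbf{X}_{s,u}^{i,i}\,dX_{u}^{j}\,dX_{v}^{k}$, telescopes into three terms, and then squares and pulls the expectation inside to obtain iterated $2D$-integrals of the form $\int_{\Delta_{s,t}^{2}\times\Delta_{s,t}^{2}}f\,dg_{1}\,dg_{2}$. These are handled in one stroke by Proposition~\ref{prop_rho_var_iter_2D}, with $f$ equal to either $E[(\mathbf{X}_{s,\cdot}^{i,i}-\mathbf{Y}_{s,\cdot}^{i,i})(\mathbf{X}_{s,\cdot}^{i,i}-\mathbf{Y}_{s,\cdot}^{i,i})]$ (Lemma~\ref{lemma_rhovar_diff_n2}) or $E[\mathbf{Y}_{s,\cdot}^{i,i}\mathbf{Y}_{s,\cdot}^{i,i}]$ (Lemma~\ref{lemma_rho_var_same_letters}), and $g_{1},g_{2}$ among $R_{X^{j}},R_{Y^{j}},R_{X^{j}-Y^{j}},R_{X^{k}},R_{X^{k}-Y^{k}}$. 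In each of the three terms exactly one object carries the difference $X-Y$, so the single $\gamma$-variation factor already delivers $\epsilon^{2}$, and the remaining factors are in $\rho$-variation --- the Young conditions are then $\tfrac{1}{\gamma}+\tfrac{1}{\rho}>1$ and $\tfrac{2}{\rho}>1$, both of which hold. No half-differenced level-$2$ covariances ever appear, so the cross-term difficulty simply does not arise.
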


\begin{proof}
From the triangle inequality,%
\begin{eqnarray*}
&&\left\vert \mathbf{X}_{s,t}^{i,i,j,k}-\mathbf{Y}_{s,t}^{i,i,j,k}\right%
\vert _{L^{2}} \\
&=&\left\vert \int_{\left\{ s<u<v<t\right\} }\mathbf{X}_{s,u}^{i,i}%
\,dX_{u}^{j}\,dX_{v}^{k}-\int_{\left\{ s<u<v<t\right\} }\mathbf{Y}%
_{s,u}^{i,i}\,dY_{u}^{j}\,dY_{v}^{k}\right\vert _{L^{2}} \\
&\leq &\left\vert \int_{\left\{ s<u<v<t\right\} }\left( \mathbf{X}%
_{s,u}^{i,i}-\mathbf{Y}_{s,u}^{i,i}\right)
\,dX_{u}^{j}\,dX_{v}^{k}\right\vert _{L^{2}}+\left\vert \int_{\left\{
s<u<v<t\right\} }\mathbf{Y}_{s,u}^{i,i}\,d\left( X^{j}-Y^{j}\right)
_{u}\,dX_{v}^{k}\right\vert _{L^{2}} \\
&&+\left\vert \int_{\left\{ s<u<v<t\right\} }\mathbf{Y}_{s,u}^{i,i}%
\,dY_{u}^{j}\,d\left( X^{k}-Y^{k}\right) _{v}\right\vert _{L^{2}}.
\end{eqnarray*}%
For the first integral, we use Proposition \ref{prop_rho_var_iter_2D} and
Lemma \ref{lemma_rhovar_diff_n2} to obtain%
\begin{eqnarray*}
\left\vert \int_{\left\{ s<u<v<t\right\} }\left( \mathbf{X}_{s,u}^{i,i}-%
\mathbf{Y}_{s,u}^{i,i}\right) \,dX_{u}^{j}\,dX_{v}^{k}\right\vert
_{L^{2}}^{2} &=&\int_{\Delta _{s,t}^{2}\times \Delta _{s,t}^{2}}E\left[
\left( \mathbf{X}_{s,\cdot }^{i,i}-\mathbf{Y}_{s,\cdot }^{i,i}\right) \left( 
\mathbf{X}_{s,\cdot }^{i,i}-\mathbf{Y}_{s,\cdot }^{i,i}\right) \right]
\,dR_{X^{j}}\,dR_{X^{k}} \\
&\leq &c_{1}\epsilon ^{2}\omega \left( \left[ s,t\right] ^{2}\right)
^{1/\gamma +1/\rho }\omega \left( \left[ s,t\right] ^{2}\right) ^{2/\rho }.
\end{eqnarray*}%
For the other two integrals we also use Proposition \ref%
{prop_rho_var_iter_2D} together with Lemma \ref{lemma_rho_var_same_letters}
to obtain the same estimate.
\end{proof}

\begin{lemma}
\label{lemma_rhovar_diff_n3}Let $\left( X,Y\right) \colon \left[ 0,1\right]
\rightarrow \mathbb{R}^{2}$ be a centred Gaussian process with continuous
paths of finite variation. Assume that the $\rho $-variation of $R_{\left(
X,Y\right) }$ is controlled by a $2D$-control $\omega $ where $\rho \geq 1$.
Consider the function%
\begin{equation*}
g\left( u,v\right) =E\left[ \left( \mathbf{X}_{s,u}^{\left( 3\right) }-%
\mathbf{Y}_{s,u}^{\left( 3\right) }\right) \left( \mathbf{X}_{s,v}^{\left(
3\right) }-\mathbf{Y}_{s,v}^{\left( 3\right) }\right) \right] .
\end{equation*}%
Then for every $\gamma >\rho $ there is a constant $C=C\left( \rho ,\gamma
\right) $ such that%
\begin{equation*}
V_{\gamma -var}\left( g,\left[ s,t\right] ^{2}\right) \leq C\epsilon
^{2}\omega \left( \left[ s,t\right] ^{2}\right) ^{1/\gamma +2/\rho }
\end{equation*}%
holds for every $\left( s,t\right) \in \Delta $ where $\epsilon
^{2}=V_{\infty }\left( R_{X-Y},\left[ s,t\right] ^{2}\right) ^{\left( 1-\rho
/\gamma \right) }$.
\end{lemma}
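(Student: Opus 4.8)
The plan is to run, one level higher, the argument from the proof of Lemma~\ref{lemma_rhovar_diff_n2}. Since $(X,Y)$ has finite variation we have $\mathbf{X}_{s,u}^{(3)}=X_{s,u}^{3}/3!$, so with $Z_{u}:=\mathbf{X}_{s,u}^{(3)}-\mathbf{Y}_{s,u}^{(3)}$ and $u<v$, $u'<v'$ in $[s,t]$,
\begin{equation*}
g\begin{pmatrix} u,v \\ u',v' \end{pmatrix}=E\big[(Z_{v}-Z_{u})(Z_{v'}-Z_{u'})\big]=\tfrac{1}{36}\,E\big[D_{u,v}\,D_{u',v'}\big],\qquad D_{u,v}:=(X_{s,v}^{3}-X_{s,u}^{3})-(Y_{s,v}^{3}-Y_{s,u}^{3}).
\end{equation*}
First I would expand $D_{u,v}$ by telescoping. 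Writing $a=X_{s,u}$, $b=X_{s,v}$, $\alpha=Y_{s,u}$, $\beta=Y_{s,v}$, one has $D_{u,v}=X_{u,v}\,(P-\tilde P)+(X-Y)_{u,v}\,\tilde P$ with $P=a^{2}+ab+b^{2}$, $\tilde P=\alpha^{2}+\alpha\beta+\beta^{2}$, and $P-\tilde P=(a-\alpha)(a+\alpha)+a(b-\beta)+\beta(a-\alpha)+(b-\beta)(b+\beta)$. Expanding everything, $D_{u,v}$ is a finite linear combination of products of three increments of $(X,Y)$ in which exactly one factor is of \emph{difference type} (one of $(X-Y)_{s,u}$, $(X-Y)_{s,v}$, $(X-Y)_{u,v}$) and exactly one factor is supported on the short interval $[u,v]$ (one of $X_{u,v}$, $Y_{u,v}$, $(X-Y)_{u,v}$, possibly the same factor as the difference one), the remaining factors being increments of $X$ or $Y$ over $[s,u]$ or $[s,v]\subset[s,t]$. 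The same holds for $D_{u',v'}$.

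Next I would multiply the two expansions and apply Wick's formula (\cite[Theorem~1.28]{J97}), so that $E[D_{u,v}D_{u',v'}]$ becomes a finite sum, over the perfect matchings of six centred jointly Gaussian random variables, of products of three covariances. In every such term exactly two of the six factors are of difference type, one from $D_{u,v}$ and one from $D_{u',v'}$. If they are matched to one another, they produce a single covariance --- a rectangular increment of $R_{X-Y}$, which by Lemma~\ref{lemma_2D_interpolation} (used just as in the base case $n=1$ of the proof of Lemma~\ref{lemma_diff_allthesame}) is at most $\epsilon^{2}$ times a suitable power of $\omega$; if instead each is matched with a non-difference factor, they produce two covariances of the form $E[X_{\bullet}(X-Y)_{\bullet}]$, each handled by Lemma~\ref{lemma_half_estimates}, again with combined weight $\epsilon^{2}$. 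Among the remaining covariances, the one carrying the $[u,v]$-supported factor is bounded (using that $\omega$ controls the $\rho$-variation of $R_{(X,Y)}$) by a power of $\omega([u,v]\times[s,t])$, or by a power of $\omega([u,v]\times[u',v'])$ if the $[u,v]$- and $[u',v']$-supported factors happen to be matched to each other; likewise for the $[u',v']$-supported factor; and every covariance not yet accounted for is bounded by $\omega([s,t]^{2})^{1/\rho}$. The outcome, exactly as in the proof of Lemma~\ref{lemma_rhovar_diff_n2} but with one additional factor $\omega([s,t]^{2})^{1/\rho}$ --- the only novelty compared with the level-$2$ computation, and the source of the exponent $2/\rho$ rather than $1/\rho$ --- is that $\big|g\binom{u,v}{u',v'}\big|$ is dominated by $C\epsilon^{2}$ times a product of powers of $\omega([s,t]^{2})$ and of localizing factors, of total $\omega$-homogeneity $\tfrac{1}{\gamma}+\tfrac{2}{\rho}$, containing one factor of the form $\omega([u,v]\times\cdot)$ and one of the form $\omega([u',v']\times\cdot)$ (or a single $\omega([u,v]\times[u',v'])$ in place of both).

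Finally, given partitions $D,\tilde D$ of $[s,t]$, I would raise this bound for $g\binom{t_{i},t_{i+1}}{\tilde t_{j},\tilde t_{j+1}}$ to the power $\gamma$ and sum over $t_{i}\in D$, $\tilde t_{j}\in\tilde D$, invoking superadditivity of $\omega$ and $\gamma/\rho\ge1$: $\sum_{i,j}\omega([t_{i},t_{i+1}]\times[\tilde t_{j},\tilde t_{j+1}])^{\gamma/\rho}\le\omega([s,t]^{2})^{\gamma/\rho}$, while $\sum_{i}\omega([t_{i},t_{i+1}]\times[s,t])\le\omega([s,t]^{2})$ and the same in $j$. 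In every case this collapses to $\sum_{i,j}\big|g\binom{t_{i},t_{i+1}}{\tilde t_{j},\tilde t_{j+1}}\big|^{\gamma}\le C\epsilon^{2\gamma}\omega([s,t]^{2})^{1+2\gamma/\rho}$; taking the supremum over partitions and the $\gamma$-th root yields $V_{\gamma-var}(g,[s,t]^{2})\le C\epsilon^{2}\omega([s,t]^{2})^{1/\gamma+2/\rho}$. The one genuinely laborious point is the middle step --- verifying, over all Wick pairings and all monomials of $D_{u,v}$, that the two difference factors always yield the $\epsilon^{2}$ weight and that the two short-interval factors always yield the localizing $\omega$-factors needed for the double sum to telescope --- and this is the step I expect to be the main obstacle, though it is completely parallel to the $n=2$ case already treated.
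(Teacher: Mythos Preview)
Your proposal is correct and follows precisely the route the paper indicates: the paper's proof is just ``Similar to the one of Lemma~\ref{lemma_rhovar_diff_n2} applying again Wick's formula,'' and your telescoping of $D_{u,v}$, Wick expansion of the resulting six-fold Gaussian products, bookkeeping of the two difference factors (via Lemma~\ref{lemma_half_estimates} or interpolation) and the two short-interval factors (for localisation), followed by summation using superadditivity of $\omega$, is exactly that argument carried out one level higher. The caveat you flag---checking all pairings, including the case where $(X-Y)_{u,v}$ is simultaneously the difference and the short-interval factor---is indeed the only laborious point, and your description of how each case closes (total $\omega$-homogeneity $\tfrac{1}{\gamma}+\tfrac{2}{\rho}$ with summable localising factors) is accurate.
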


\begin{proof}
Similar to the one of Lemma \ref{lemma_rhovar_diff_n2} applying again Wick's
formula.
\end{proof}

\begin{corollary}
Let $\left( X,Y\right) $, $\omega $, $\rho $ and $\gamma $ as in Lemma \ref%
{lemma_diff_alldifferent}. Then there is a constant $C=C\left( \rho ,\gamma
\right) $ such that%
\begin{equation*}
\left\vert \mathbf{X}_{s,t}^{i,i,i,j}-\mathbf{Y}_{s,t}^{i,i,i,j}\right\vert
_{L^{2}}\leq C\epsilon \omega \left( \left[ s,t\right] ^{2}\right) ^{\frac{1%
}{2\gamma }}\omega \left( \left[ s,t\right] ^{2}\right) ^{\frac{3}{2\rho }}
\end{equation*}%
holds for every $\left( s,t\right) \in \Delta $ and $i\neq j$ where $%
\epsilon ^{2}=V_{\infty }\left( R_{X-Y},\left[ s,t\right] ^{2}\right)
^{\left( 1-\rho /\gamma \right) }$.
\end{corollary}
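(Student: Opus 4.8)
The plan is to follow the same route as the corollary that completed the $n=3$ case (the estimate for $\mathbf{X}^{i,i,j}$), now peeling off the outermost integration variable. Writing $\mathbf{X}_{s,t}^{i,i,i,j}=\int_{s<u<t}\mathbf{X}_{s,u}^{i,i,i}\,dX_{u}^{j}$ and the analogous identity for $Y$, the triangle inequality gives
\begin{equation*}
\left\vert \mathbf{X}_{s,t}^{i,i,i,j}-\mathbf{Y}_{s,t}^{i,i,i,j}\right\vert _{L^{2}}\leq \left\vert \int_{\left[ s,t\right] }\left( \mathbf{X}_{s,u}^{i,i,i}-\mathbf{Y}_{s,u}^{i,i,i}\right) \,dX_{u}^{j}\right\vert _{L^{2}}+\left\vert \int_{\left[ s,t\right] }\mathbf{Y}_{s,u}^{i,i,i}\,d\left( X^{j}-Y^{j}\right) _{u}\right\vert _{L^{2}}.
\end{equation*}
Since $i\neq j$, the pair $\left( X^{i},Y^{i}\right) $ is independent of $\left( X^{j},Y^{j}\right) $, so --- exactly as in the proof of Lemma \ref{lemma_diff_alldifferent}, moving the expectation inside the integral (the interchange of limit and expectation being justified by $L^{p}$-convergence in the Wiener chaos) --- the squares of the two terms become the $2D$ Young integrals $\int_{\left[ s,t\right] ^{2}}g\,dR_{X^{j}}$ and $\int_{\left[ s,t\right] ^{2}}f\,dR_{X^{j}-Y^{j}}$, where $g\left( u,v\right) =E\left[ \left( \mathbf{X}_{s,u}^{\left( 3\right) }-\mathbf{Y}_{s,u}^{\left( 3\right) }\right) \left( \mathbf{X}_{s,v}^{\left( 3\right) }-\mathbf{Y}_{s,v}^{\left( 3\right) }\right) \right] $ and $f\left( u,v\right) =E\left[ \mathbf{Y}_{s,u}^{\left( 3\right) }\mathbf{Y}_{s,v}^{\left( 3\right) }\right] $ (here one uses that $\mathbf{X}_{s,u}^{i,i,i}$ is precisely $\left( \mathbf{X}^{i}\right) _{s,u}^{\left( 3\right) }$ for the scalar process $X^{i}$, and likewise for $Y^{i}$). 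Both integrands vanish when $u=s$ or $v=s$, so Proposition \ref{prop_rho_var_iter_2D} (with $n=1$; note $\frac{1}{\gamma}+\frac{1}{\rho}>1$) is applicable in each case.

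It then remains to insert the variation bounds. For the first integral, Lemma \ref{lemma_rhovar_diff_n3} gives $V_{\gamma -var}\left( g,\left[ s,t\right] ^{2}\right) \leq C\epsilon ^{2}\omega \left( \left[ s,t\right] ^{2}\right) ^{1/\gamma +2/\rho }$, while $V_{\rho -var}\left( R_{X^{j}},\left[ s,t\right] ^{2}\right) \leq \omega \left( \left[ s,t\right] ^{2}\right) ^{1/\rho }$; hence, taking the integrand to have finite $\gamma$-variation and the integrator finite $\rho$-variation, this integral is bounded by $C\epsilon ^{2}\omega \left( \left[ s,t\right] ^{2}\right) ^{1/\gamma +3/\rho }$. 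For the second integral, Lemma \ref{lemma_rho_var_same_letters} applied to the scalar process $Y^{i}$ with $n=3$ gives $V_{\rho -var}\left( f,\left[ s,t\right] ^{2}\right) \leq C\omega \left( \left[ s,t\right] ^{2}\right) ^{3/\rho }$, and Lemma \ref{lemma_2D_interpolation} together with $\epsilon ^{2}=V_{\infty }\left( R_{X-Y},\left[ s,t\right] ^{2}\right) ^{1-\rho /\gamma }$ gives $V_{\gamma -var}\left( R_{X^{j}-Y^{j}},\left[ s,t\right] ^{2}\right) \leq \epsilon ^{2}\omega \left( \left[ s,t\right] ^{2}\right) ^{1/\gamma }$; hence this integral is also bounded by $C\epsilon ^{2}\omega \left( \left[ s,t\right] ^{2}\right) ^{3/\rho +1/\gamma }$. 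Taking square roots and adding the two contributions yields the asserted bound $C\epsilon \,\omega \left( \left[ s,t\right] ^{2}\right) ^{\frac{1}{2\gamma }}\omega \left( \left[ s,t\right] ^{2}\right) ^{\frac{3}{2\rho }}$.

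Essentially all of this is bookkeeping built on earlier results. The only points requiring attention are the independence/Fubini step (already justified in the proof of Lemma \ref{lemma_diff_alldifferent}) and the identification of $g$ and $f$ with the functions estimated in Lemmas \ref{lemma_rhovar_diff_n3} and \ref{lemma_rho_var_same_letters}. The genuine work --- the Wick-formula estimate for the $\gamma$-variation of $g$ --- was carried out in Lemma \ref{lemma_rhovar_diff_n3}, so no new obstacle arises here; this corollary is a direct analogue, one level up, of the corollary closing the $n=3$ case.
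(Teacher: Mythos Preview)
Your proof is correct and follows essentially the same route as the paper: the same telescoping split into two integrals, independence to move expectation inside, then 2D Young estimates via Lemma \ref{lemma_rhovar_diff_n3} for the first term and Lemma \ref{lemma_rho_var_same_letters} for the second. You spell out a few steps (the vanishing at $u=s$ or $v=s$, the explicit invocation of Proposition \ref{prop_rho_var_iter_2D} and Lemma \ref{lemma_2D_interpolation}) that the paper leaves implicit, but the argument is the same.
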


\begin{proof}
The triangle inequality gives%
\begin{eqnarray*}
\left\vert \mathbf{X}_{s,t}^{i,i,i,j}-\mathbf{Y}_{s,t}^{i,i,i,j}\right\vert
_{L^{2}} &=&\left\vert \int_{\left[ s,t\right] }\mathbf{X}%
_{s,u}^{i,i,i}\,dX_{u}^{j}-\int_{\left[ s,t\right] }\mathbf{Y}%
_{s,u}^{i,i,i}\,dY_{u}^{j}\right\vert \\
&\leq &\left\vert \int_{\left[ s,t\right] }\left( \mathbf{X}_{s,u}^{i,i,i}-%
\mathbf{Y}_{s,u}^{i,i,i}\right) \,dX_{u}^{j}\right\vert _{L^{2}}+\left\vert
\int_{\left[ s,t\right] }\mathbf{Y}_{s,u}^{i,i,i}\,d\left(
X^{j}-Y^{j}\right) _{u}\right\vert _{L^{2}}.
\end{eqnarray*}%
For the first integral, we move the expectation inside the integral, use $2D$
Young integration and Lemma \ref{lemma_rhovar_diff_n3} to conclude the
estimate. The second integral is estimated the same way applying Lemma \ref%
{lemma_rho_var_same_letters}.
\end{proof}

It remains to show the estimates for $\mathbf{X}^{w}-\mathbf{Y}^{w}$ where $%
w\in \left\{ iijj,jiik\right\} $. We need to be a bit careful here for the
following reason: It is clear that $\mathbf{X}_{0,1}^{i,i,j}=\int_{\left[ 0,1%
\right] }\mathbf{X}_{u}^{i,i}\,dX_{u}^{j}$. One might expect that also $%
\mathbf{X}_{0,1}^{j,i,i}=\int_{\left[ 0,1\right] }X_{u}^{j}\,d\mathbf{X}%
_{u}^{i,i}$ holds, but this is not true in general. Indeed, just take $%
f\left( u\right) =g\left( u\right) =u$. Then%
\begin{equation*}
\int_{0}^{1}f\left( u\right) \,d\left( \int_{0}^{u}g\left( v\right)
\,dg\left( v\right) \right) =\frac{1}{2}\int_{0}^{1}u\,d\left( u^{2}\right)
=\int_{0}^{1}u^{2}\,du=\frac{1}{3}
\end{equation*}%
but%
\begin{equation*}
\int_{\Delta _{0,1}^{2}}f\left( u\right) \,dg\left( u\right) \,dg\left(
v\right) =\int_{\Delta _{0,1}^{3}}du_{1}\,du_{2}\,du_{3}=\frac{1}{6}.
\end{equation*}%
One the other hand, if $g$ is smooth, we can use Fubini to see that%
\begin{eqnarray*}
\int_{\Delta _{0,1}^{2}}f\left( u\right) \,dg\left( u\right) \,dg\left(
v\right) &=&\int_{\left[ 0,1\right] ^{2}}f\left( u\right) g^{\prime }\left(
u\right) g^{\prime }\left( v\right) 1_{\left\{ u<v\right\} }\,du\,dv \\
&=&\frac{1}{2}\int_{\left[ 0,1\right] ^{2}}f\left( u\right) g^{\prime
}\left( u\right) g^{\prime }\left( v\right) 1_{\left\{ u<v\right\} }\,du\,dv
\\
&&+\frac{1}{2}\int_{\left[ 0,1\right] ^{2}}f\left( v\right) g^{\prime
}\left( v\right) g^{\prime }\left( u\right) 1_{\left\{ v<u\right\} }\,du\,dv
\\
&=&\frac{1}{2}\int_{\left[ 0,1\right] ^{2}}\left( f\left( u\right)
1_{\left\{ u<v\right\} }+f\left( v\right) 1_{\left\{ v<u\right\} }\right)
g^{\prime }\left( u\right) g^{\prime }\left( v\right) \,du\,dv \\
&=&\frac{1}{2}\int_{\left[ 0,1\right] ^{2}}f\left( u\wedge v\right)
g^{\prime }\left( u\right) g^{\prime }\left( v\right) \,du\,dv \\
&=&\frac{1}{2}\int_{\left[ 0,1\right] ^{2}}f\left( u\wedge v\right)
\,d\left( g\left( u\right) g\left( v\right) \right)
\end{eqnarray*}%
where the last integral is a $2D$ Young integral. Hence we have seen that an
iterated $1D$-integral can be transformed into a usual $2D$-integral. We
will use this trick for the remaining estimates.

\begin{lemma}
\label{lemma_rho_var_invar_symm}Let $f\colon \left[ 0,1\right]
^{2}\rightarrow \mathbb{R}$ be a continuous function. Set%
\begin{equation*}
\bar{f}\left( u_{1},u_{2},v_{1},v_{2}\right) =f\left( u_{1}\wedge
u_{2},v_{1}\wedge v_{2}\right) .
\end{equation*}

\begin{enumerate}
\item Let $u_{1}<\tilde{u}_{1},u_{2}<\tilde{u}_{2},v_{1}<\tilde{v}_{1},v_{2}<%
\tilde{v}_{2}$ be all in $\left[ 0,1\right] $. Then%
\begin{equation*}
\bar{f}\left( 
\begin{array}{c}
u_{1},\tilde{u}_{1} \\ 
u_{2},\tilde{u}_{2} \\ 
v_{1},\tilde{v}_{1} \\ 
v_{2},\tilde{v}_{2}%
\end{array}%
\right) =f%
\begin{pmatrix}
u,\tilde{u} \\ 
v,\tilde{v}%
\end{pmatrix}%
\end{equation*}%
where we set%
\begin{eqnarray*}
\left[ u,\tilde{u}\right] &=&\left\{ 
\begin{array}{ccc}
\left[ u_{1},\tilde{u}_{1}\right] \cap \left[ u_{2},\tilde{u}_{2}\right] & 
\text{if} & \left[ u_{1},\tilde{u}_{1}\right] \cap \left[ u_{2},\tilde{u}_{2}%
\right] \neq \emptyset \\ 
\left[ 0,0\right] & \text{if} & \left[ u_{1},\tilde{u}_{1}\right] \cap \left[
u_{2},\tilde{u}_{2}\right] =\emptyset%
\end{array}%
\right. . \\
\left[ v,\tilde{v}\right] &=&\left\{ 
\begin{array}{ccc}
\left[ v_{1},\tilde{v}_{1}\right] \cap \left[ v_{2},\tilde{v}_{2}\right] & 
\text{if} & \left[ v_{1},\tilde{v}_{1}\right] \cap \left[ v_{2},\tilde{v}_{2}%
\right] \neq \emptyset \\ 
\left[ 0,0\right] & \text{if} & \left[ v_{1},\tilde{v}_{1}\right] \cap \left[
v_{2},\tilde{v}_{2}\right] =\emptyset%
\end{array}%
\right.
\end{eqnarray*}

\item For $s<t$, $\sigma <t$ and $p\geq 1$ we have%
\begin{equation*}
V_{p}\left( f,\left[ s,t\right] \times \left[ \sigma ,\tau \right] \right)
=V_{p}\left( \bar{f},\left[ s,t\right] ^{2}\times \left[ \sigma ,\tau \right]
^{2}\right) .
\end{equation*}
\end{enumerate}
\end{lemma}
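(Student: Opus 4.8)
The plan is to prove the two parts in order, since part (2) will follow from part (1) by a direct comparison of the two variation sums. For part (1), I would fix the eight points $u_1<\tilde u_1$, $u_2<\tilde u_2$, $v_1<\tilde v_1$, $v_2<\tilde v_2$ and simply expand the $4D$ rectangular increment $\bar f\left(\begin{smallmatrix} u_1,\tilde u_1 \\ u_2,\tilde u_2 \\ v_1,\tilde v_1 \\ v_2,\tilde v_2\end{smallmatrix}\right)$ according to the recursive definition of higher-dimensional increments from Section~\ref{section_multidim_young}. This expresses it as an alternating sum of $16$ values of $\bar f$, i.e.\ of $f(a\wedge b, c\wedge d)$ with $a\in\{u_1,\tilde u_1\}$, $b\in\{u_2,\tilde u_2\}$, $c\in\{v_1,\tilde v_1\}$, $d\in\{v_2,\tilde v_2\}$. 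The key combinatorial observation is that $\min$ is itself "additive under the rectangular-increment operation" in the weak sense needed here: grouping the $16$ terms into the first two coordinate slots, the alternating sum over $a,b$ of $f(a\wedge b,\cdot)$ collapses to $f(\alpha,\cdot)-f(\beta,\cdot)$ where $[\beta,\alpha]=[u_1,\tilde u_1]\cap[u_2,\tilde u_2]$ (interpreted as the degenerate $[0,0]$ when the intersection is empty), because
\begin{equation*}
f(\tilde u_1\wedge\tilde u_2,\cdot)-f(\tilde u_1\wedge u_2,\cdot)-f(u_1\wedge\tilde u_2,\cdot)+f(u_1\wedge u_2,\cdot)
\end{equation*}
is exactly $f$ evaluated at the two endpoints of $[u_1,\tilde u_1]\cap[u_2,\tilde u_2]$ with the correct signs, as one checks by case analysis on the relative order of the four points. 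Doing the same in the $v$-slots then yields $f\left(\begin{smallmatrix}u,\tilde u\\ v,\tilde v\end{smallmatrix}\right)$, which is the claim.

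The main obstacle is precisely this case analysis: the four points $u_1,\tilde u_1,u_2,\tilde u_2$ (with only the constraints $u_1<\tilde u_1$ and $u_2<\tilde u_2$) admit several interleavings, and one must verify that in every case the alternating sum of the four $\min$-values picks out the endpoints of the intersection interval (and vanishes, giving $f(0,\cdot)-f(0,\cdot)$-type cancellation, when the intervals are disjoint). I would handle this cleanly by noting the pointwise identity $a\wedge b = a + b - a\vee b$ is not quite enough, and instead argue directly: WLOG $u_1\le u_2$; then either $\tilde u_1\le u_2$ (disjoint case, all four $\min$'s equal $u_1$ or $u_2$ in a way that cancels) or $u_2<\tilde u_1$, in which case $\min$ with $\tilde u_1\wedge\tilde u_2 = \min(\tilde u_1,\tilde u_2)$ and $u_1\wedge u_2 = u_1$, $u_1\wedge\tilde u_2 = u_1$, $\tilde u_1\wedge u_2 = u_2$, so the sum is $\min(\tilde u_1,\tilde u_2)-u_2 = $ the right endpoint minus left endpoint of $[u_2,\min(\tilde u_1,\tilde u_2)] = [u_1,\tilde u_1]\cap[u_2,\tilde u_2]$, as desired. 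The symmetric sub-case $u_2\le u_1$ is identical.

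For part (2), given part (1) I would set up the natural bijection between grid partitions. A partition of $[s,t]^2\times[\sigma,\tau]^2$ into axis-parallel rectangles, restricted to grid partitions, is given by partitions $D_1,D_2$ of $[s,t]$ and $\tilde D_1,\tilde D_2$ of $[\sigma,\tau]$; the corresponding $p$-variation sum of $\bar f$ is, by part (1), $\sum |f\left(\begin{smallmatrix}I_{D_1,D_2}\\ J_{\tilde D_1,\tilde D_2}\end{smallmatrix}\right)|^p$ where $I,J$ range over the cells of the common refinements $D_1\vee D_2$ and $\tilde D_1\vee\tilde D_2$ intersected appropriately — but many of these cells are degenerate (when the two subintervals are disjoint) and contribute $0$. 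Hence every such sum is bounded above by the $p$-variation sum of $f$ over the grid partition $(D_1\vee D_2, \tilde D_1\vee\tilde D_2)$ of $[s,t]\times[\sigma,\tau]$, giving $V_p(\bar f,\cdot)\le V_p(f,\cdot)$. Conversely, any grid partition $(D,\tilde D)$ of $[s,t]\times[\sigma,\tau]$ is realized by taking $D_1=D_2=D$ and $\tilde D_1=\tilde D_2=\tilde D$, in which case the degenerate cells drop out and the $\bar f$-sum equals the $f$-sum exactly; this gives $V_p(f,\cdot)\le V_p(\bar f,\cdot)$. Combining the two inequalities yields the equality. No step here is hard; the only thing to be careful about is bookkeeping the degenerate cells, which is why having part (1) in the sharp "$=f$ on the intersection rectangle" form is essential rather than just an inequality.
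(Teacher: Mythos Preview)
Your proposal is correct and follows essentially the same approach as the paper. For part~(1) you expand the $4D$ increment, collapse the first two slots via a case distinction on the relative order of $u_1,\tilde u_1,u_2,\tilde u_2$ (the paper simply writes ``by a case distinction'' where you spell out the WLOG argument), and then repeat in the $v$-slots; for part~(2) you use exactly the paper's two-sided argument, namely the diagonal choice $D_1=D_2=D$, $\tilde D_1=\tilde D_2=\tilde D$ for $V_p(f)\le V_p(\bar f)$ and the common refinement $D=D_1\cup D_2$, $\tilde D=\tilde D_1\cup\tilde D_2$ for the reverse inequality. One small sharpening: in the latter direction the $4D$ sum is in fact \emph{equal} to the $2D$ sum over the common refinement (each cell of $D_1\vee D_2$ arises as $I_1\cap I_2$ for a unique pair), not merely bounded above by it, though the inequality is all you need.
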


\begin{proof}
\begin{enumerate}
\item By definition of the higher dimensional increments,%
\begin{eqnarray*}
\bar{f}\left( 
\begin{array}{c}
u_{1},\tilde{u}_{1} \\ 
u_{2},\tilde{u}_{2} \\ 
v_{1} \\ 
v_{2}%
\end{array}%
\right) &=&\bar{f}\left( 
\begin{array}{c}
\tilde{u}_{1} \\ 
\tilde{u}_{2} \\ 
v_{1} \\ 
v_{2}%
\end{array}%
\right) -\bar{f}\left( 
\begin{array}{c}
\tilde{u}_{1} \\ 
u_{2} \\ 
v_{1} \\ 
v_{2}%
\end{array}%
\right) -\bar{f}\left( 
\begin{array}{c}
u_{1} \\ 
\tilde{u}_{2} \\ 
v_{1} \\ 
v_{2}%
\end{array}%
\right) +\bar{f}\left( 
\begin{array}{c}
u_{1} \\ 
u_{2} \\ 
v_{1} \\ 
v_{2}%
\end{array}%
\right) \\
&=&f\left( \tilde{u}_{1}\wedge \tilde{u}_{2},v_{1}\wedge v_{2}\right)
-f\left( \tilde{u}_{1}\wedge u_{2},v_{1}\wedge v_{2}\right) \\
&&-f\left( u_{1}\wedge \tilde{u}_{2},v_{1}\wedge v_{2}\right) +f\left(
u_{1}\wedge u_{2},v_{1}\wedge v_{2}\right) .
\end{eqnarray*}%
By a case distinction, one sees that this is equal to $f\left( \tilde{u}%
,v_{1}\wedge v_{2}\right) -f\left( u,v_{1}\wedge v_{2}\right) $. One goes on
with%
\begin{eqnarray*}
\bar{f}\left( 
\begin{array}{c}
u_{1},\tilde{u}_{1} \\ 
u_{2},\tilde{u}_{2} \\ 
v_{1},\tilde{v}_{1} \\ 
v_{2},\tilde{v}_{2}%
\end{array}%
\right) &=&\bar{f}\left( 
\begin{array}{c}
u_{1},\tilde{u}_{1} \\ 
u_{2},\tilde{u}_{2} \\ 
\tilde{v}_{1} \\ 
\tilde{v}_{2}%
\end{array}%
\right) -\bar{f}\left( 
\begin{array}{c}
u_{1},\tilde{u}_{1} \\ 
u_{2},\tilde{u}_{2} \\ 
\tilde{v}_{1} \\ 
v_{2}%
\end{array}%
\right) -\bar{f}\left( 
\begin{array}{c}
u_{1},\tilde{u}_{1} \\ 
u_{2},\tilde{u}_{2} \\ 
v_{1} \\ 
\tilde{v}_{2}%
\end{array}%
\right) +\bar{f}\left( 
\begin{array}{c}
u_{1},\tilde{u}_{1} \\ 
u_{2},\tilde{u}_{2} \\ 
v_{1} \\ 
v_{2}%
\end{array}%
\right) \\
&=&h\left( \tilde{v}_{1}\wedge \tilde{v}_{2}\right) -h\left( \tilde{v}%
_{1}\wedge v_{2}\right) -h\left( v_{1}\wedge \tilde{v}_{2}\right) +h\left(
v_{1}\wedge v_{2}\right) \\
&=&h\left( \tilde{v}\right) -h\left( v\right)
\end{eqnarray*}%
where $h\left( \cdot \right) =f\left( \tilde{u},\cdot \right) -f\left(
u,\cdot \right) .$Hence%
\begin{equation*}
h\left( \tilde{v}\right) -h\left( v\right) =f\left( \tilde{u},\tilde{v}%
\right) -f\left( u,\tilde{v}\right) -f\left( \tilde{u},v\right) +f\left(
u,v\right) =f\left( 
\begin{array}{c}
u,\tilde{u} \\ 
v,\tilde{v}%
\end{array}%
\right) .
\end{equation*}

\item Let $D$ be a partition of $\left[ s,t\right] $ and $\tilde{D}$ a
partition of $\left[ \sigma ,\tau \right] $. Then by 1,%
\begin{equation*}
\sum_{t_{i}\in D,\tilde{t}\in \tilde{D}}\left\vert f\left( 
\begin{array}{c}
t_{i},t_{i+1} \\ 
\tilde{t}_{j},\tilde{t}_{j+1}%
\end{array}%
\right) \right\vert ^{p}=\sum_{t_{i}\in D,\tilde{t}\in \tilde{D}}\left\vert 
\bar{f}\left( 
\begin{array}{c}
t_{i},t_{i+1} \\ 
t_{i},t_{i+1} \\ 
\tilde{t}_{j},\tilde{t}_{j+1} \\ 
\tilde{t}_{j},\tilde{t}_{j+1}%
\end{array}%
\right) \right\vert ^{p}\leq \left( V_{p}\left( \bar{f},\left[ s,t\right]
^{2}\times \left[ \sigma ,\tau \right] ^{2}\right) \right) ^{p},
\end{equation*}%
hence $V_{p}\left( f,\left[ s,t\right] \times \left[ \sigma ,\tau \right]
\right) \leq V_{p}\left( \bar{f},\left[ s,t\right] ^{2}\times \left[ \sigma
,\tau \right] ^{2}\right) $. Now let $D_{1},D_{2}$ be partitions of $\left[
s,t\right] $ and $\tilde{D}_{1},\tilde{D}_{2}$ be partitions of $\left[
\sigma ,\tau \right] $. Set $D=D_{1}\cup D_{2}$, $\tilde{D}=\tilde{D}%
_{1}\cup \tilde{D}_{2}$. Then $D$ is a partition of $\left[ s,t\right] $ and 
$\tilde{D}$ a partition of $\left[ \sigma ,\tau \right] $ (see Figure 1
below). 
%TCIMACRO{%
%\TeXButton{TeX field}{\begin{figure}[H]
%\begin{center}
%\includegraphics[trim = 0mm 30mm 0mm 10mm, clip, scale=0.9]{Z:/Pictures/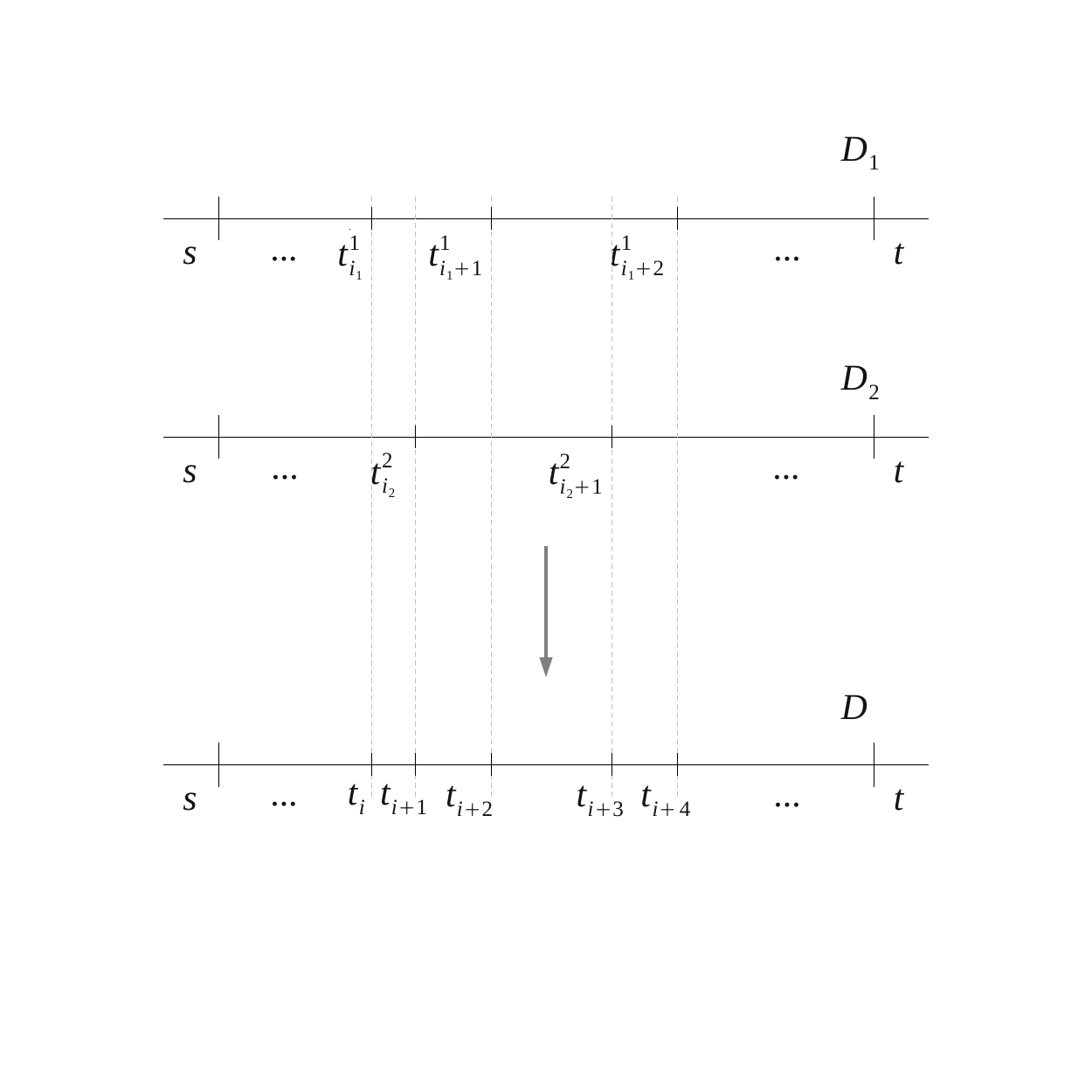}
%\caption{}
%\end{center}
%\end{figure}} }%
%BeginExpansion
\begin{figure}[H]
\begin{center}
\includegraphics[trim = 0mm 30mm 0mm 10mm, clip, scale=0.9]{rates_gaussian_pic_01.pdf}
\caption{}
\end{center}
\end{figure}
%EndExpansion
By (1),%
\begin{equation*}
\sum_{\substack{ t_{i_{1}}^{1}\in D_{1},t_{i_{2}}^{2}\in D_{2}  \\ \tilde{t}%
_{j_{1}}^{1}\in \tilde{D}_{1},\tilde{t}_{j_{2}}^{2}\in \tilde{D}_{2}}}%
\left\vert f\left( 
\begin{array}{c}
t_{i_{1}}^{1},t_{i_{1}+1}^{1} \\ 
t_{i_{2}}^{2},t_{i_{2}+1}^{2} \\ 
\tilde{t}_{j_{1}}^{1},\tilde{t}_{j_{1}+1}^{1} \\ 
\tilde{t}_{j_{2}}^{2},\tilde{t}_{j_{2}+1}^{2}%
\end{array}%
\right) \right\vert ^{p}=\sum_{t_{i}\in D,\tilde{t}\in \tilde{D}}\left\vert
f\left( 
\begin{array}{c}
t_{i},t_{i+1} \\ 
\tilde{t}_{j},\tilde{t}_{j+1}%
\end{array}%
\right) \right\vert ^{p}\leq \left( V_{p}\left( f,\left[ s,t\right] \times %
\left[ \sigma ,\tau \right] \right) \right) ^{p}
\end{equation*}%
and we also get $V_{p}\left( \bar{f},\left[ s,t\right] ^{2}\times \left[
\sigma ,\tau \right] ^{2}\right) \leq V_{p}\left( f,\left[ s,t\right] \times %
\left[ \sigma ,\tau \right] \right) $.
\end{enumerate}
\end{proof}

\begin{lemma}
\label{lemma_rho_var_4D_estimates}Let $\left( X,Y\right) \colon \left[ 0,1%
\right] \rightarrow \mathbb{R}^{2}$ be a centred Gaussian process with
continuous paths of finite variation and assume that $\omega $ is a
symmetric control which controls the $\rho $-variation of $R_{\left(
X,Y\right) }$ where $\rho \geq 1$. Take $\left( s,t\right) \in \Delta $, $%
\gamma >\rho $ and set $\epsilon ^{2}=V_{\infty }\left( R_{X-Y},\left[ s,t%
\right] ^{2}\right) ^{1-\rho /\gamma }$.

\begin{enumerate}
\item Set $f\left( u_{1},u_{2},v_{1},v_{2}\right) =E\left[
X_{u_{1}}X_{u_{2}}X_{v_{1}}X_{v_{2}}\right] $. Then there is a constant $%
C_{1}=C_{1}\left( \rho \right) $ and a symmetric $4D$ grid-control $\tilde{%
\omega}_{1}$ which controls the $\rho $-variation of $f$ and%
\begin{equation*}
V_{\rho }\left( f,\left[ s,t\right] ^{4}\right) \leq \tilde{\omega}%
_{1}\left( \left[ s,t\right] ^{4}\right) ^{1/\rho }=C_{1}\omega \left( \left[
s,t\right] ^{2}\right) ^{\frac{2}{\rho }}.
\end{equation*}

\item Set $\tilde{f}\left( u_{1},u_{2},v_{1},v_{2}\right) =E\left[ \mathbf{X}%
_{s,u_{1}\wedge u_{2}}^{\left( 2\right) }\mathbf{X}_{s,v_{1}\wedge
v_{2}}^{\left( 2\right) }\right] $. Then there is a constant $%
C_{2}=C_{2}\left( \rho \right) $ such that%
\begin{equation*}
V_{\rho }\left( \tilde{f},\left[ s,t\right] ^{4}\right) \leq C_{2}\omega
\left( \left[ s,t\right] ^{2}\right) ^{\frac{2}{\rho }}.
\end{equation*}

\item Set%
\begin{equation*}
g\left( u_{1},u_{2},v_{1},v_{2}\right) =E\left[ \left(
X_{u_{1}}X_{u_{2}}-Y_{u_{1}}Y_{u_{2}}\right) \left(
X_{v_{1}}X_{v_{2}}-Y_{v_{1}}Y_{v_{2}}\right) \right] .
\end{equation*}%
Then there is a constant $C_{3}=C_{3}\left( \rho ,\gamma \right) $ and a
symmetric $4D$ grid-control $\tilde{\omega}_{2}$ which controls the $\gamma $%
-variation of $g$ and%
\begin{equation*}
V_{\gamma }\left( g,\left[ s,t\right] ^{4}\right) \leq \tilde{\omega}%
_{2}\left( \left[ s,t\right] ^{4}\right) ^{1/\gamma }=C_{3}\epsilon
^{2}\omega \left( \left[ s,t\right] ^{2}\right) ^{1/\gamma +1/\rho }.
\end{equation*}

\item Set%
\begin{equation*}
\tilde{g}\left( u_{1},u_{2},v_{1},v_{2}\right) =E\left[ \left( \mathbf{X}%
^{\left( 2\right) }-\mathbf{Y}^{\left( 2\right) }\right) _{s,u_{1}\wedge
u_{2}}\left( \mathbf{X}^{\left( 2\right) }-\mathbf{Y}^{\left( 2\right)
}\right) _{s,v_{1}\wedge v_{2}}\right] .
\end{equation*}%
Then there is a constant $C_{4}=C_{4}\left( \rho ,\gamma \right) $ such that%
\begin{equation*}
V_{\gamma }\left( \tilde{g},\left[ s,t\right] ^{4}\right) \leq C_{4}\epsilon
^{2}\omega \left( \left[ s,t\right] ^{2}\right) ^{1/\gamma +1/\rho }.
\end{equation*}
\end{enumerate}
\end{lemma}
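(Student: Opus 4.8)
The plan is to reduce (2) and (4) to results already proven, and to obtain (1) and (3) by combining Wick's formula with the interpolation inequality (Lemma \ref{lemma_2D_interpolation}) and the grid-control calculus: restrictions and products of grid-controls are grid-controls, and $\omega^{\theta}$ is again a control for $\theta\geq 1$ (since $\sum_i a_i\leq b$ forces $\sum_i a_i^{\theta}\leq(\sum_i a_i)^{\theta}\leq b^{\theta}$).

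For (1): taking the $4D$ rectangular increment of $f$ and using linearity of expectation, $f$ evaluated on a rectangle $A=[u_1,\tilde u_1]\times[u_2,\tilde u_2]\times[v_1,\tilde v_1]\times[v_2,\tilde v_2]$ equals $E[X_{u_1,\tilde u_1}X_{u_2,\tilde u_2}X_{v_1,\tilde v_1}X_{v_2,\tilde v_2}]$, which by Wick's formula is the sum over the three pairings of $\{1,2,3,4\}$ of products of two rectangular increments of $R_X$. Since $\omega$ controls the $\rho$-variation of $R_X$, each such product is at most $\omega(R')^{1/\rho}\omega(R'')^{1/\rho}$, where $R',R''$ are the two sub-rectangles of $A$ spanned by the paired coordinates. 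Hence $|f(A)|^{\rho}\leq \tilde\omega_1(A)$, where $\tilde\omega_1$ is the (suitably normalised) sum over the three pairings of $\omega(R')\,\omega(R'')$; each summand is a product of a $2D$-control in two coordinates with a $2D$-control in the complementary two, hence a $4D$ grid-control, the sum is a $4D$ grid-control, and since $S_4$ permutes the three pairings it is symmetric. Evaluating at $A=[s,t]^4$ and using symmetry of $\omega$ gives $\tilde\omega_1([s,t]^4)=C_1\omega([s,t]^2)^{2}$, and the defining property of grid-controls gives $V_{\rho}(f,[s,t]^4)\leq\tilde\omega_1([s,t]^4)^{1/\rho}=C_1^{1/\rho}\omega([s,t]^2)^{2/\rho}$.

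For (2) and (4): with $h(u,v)=E[\mathbf{X}_{s,u}^{(2)}\mathbf{X}_{s,v}^{(2)}]$ one has $\tilde f=\bar h$ in the notation of Lemma \ref{lemma_rho_var_invar_symm}, so part (2) of that lemma (with $[\sigma,\tau]=[s,t]$) gives $V_{\rho}(\tilde f,[s,t]^4)=V_{\rho}(h,[s,t]^2)$; but $h$ is precisely the function estimated in Lemma \ref{lemma_rho_var_same_letters} with $n=2$, yielding the bound $C_2\omega([s,t]^2)^{2/\rho}$. Likewise $\tilde g=\bar g_0$ for $g_0(u,v)=E[(\mathbf{X}_{s,u}^{(2)}-\mathbf{Y}_{s,u}^{(2)})(\mathbf{X}_{s,v}^{(2)}-\mathbf{Y}_{s,v}^{(2)})]$, so Lemma \ref{lemma_rho_var_invar_symm} gives $V_{\gamma}(\tilde g,[s,t]^4)=V_{\gamma}(g_0,[s,t]^2)$, and $g_0$ is the function estimated in Lemma \ref{lemma_rhovar_diff_n2}, which provides $C_4\epsilon^2\omega([s,t]^2)^{1/\gamma+1/\rho}$.

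For (3), the main work: the $4D$ increment of $g$ equals $E[(X_{u_1,\tilde u_1}X_{u_2,\tilde u_2}-Y_{u_1,\tilde u_1}Y_{u_2,\tilde u_2})(X_{v_1,\tilde v_1}X_{v_2,\tilde v_2}-Y_{v_1,\tilde v_1}Y_{v_2,\tilde v_2})]$. Splitting each difference by $X_aX_b-Y_aY_b=X_a(X-Y)_b+(X-Y)_aY_b$ produces four terms, each a $4$th Gaussian moment of $(X,Y)$-increments containing exactly two $(X-Y)$-factors; applying Wick's formula to each gives products of two pairwise covariances over rectangles $R',R''$ built from disjoint pairs of the four coordinates. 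These covariances are of three types, bounded respectively by $|E[Z_aZ_b]|\leq\omega(R')^{1/\rho}$ for $Z\in\{X,Y\}$; by $|E[(X-Y)_aZ_b]|\leq\epsilon\,\omega([s,t]^2)^{\frac12(1/\rho-1/\gamma)}\omega(R')^{1/\gamma}$ via Lemma \ref{lemma_half_estimates} (and its $X\leftrightarrow Y$ analogue); and by $|E[(X-Y)_a(X-Y)_b]|\leq\epsilon^2\omega(R')^{1/\gamma}$ via Lemma \ref{lemma_2D_interpolation} together with the $\rho$-variation control of $R_{X-Y}$. Since each term carries exactly two $(X-Y)$-factors, each Wick pairing is either "pure $\times$ $R_{X-Y}$" or "mixed $\times$ mixed"; in both cases raising the bound to the power $\gamma$ yields a fixed power of $\omega([s,t]^2)$ times $\omega(R')^{a}\omega(R'')^{b}$ with $a,b\geq 1$ and $R',R''$ in complementary coordinate pairs — hence a $4D$ grid-control. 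Summing the finitely many terms and symmetrising over $S_4$ defines $\tilde\omega_2$, and the defining property of grid-controls gives $V_{\gamma}(g,[s,t]^4)\leq\tilde\omega_2([s,t]^4)^{1/\gamma}$; a direct check shows that at the full square both pairing types contribute $\epsilon^{2\gamma}\omega([s,t]^2)^{1+\gamma/\rho}$, so $\tilde\omega_2([s,t]^4)^{1/\gamma}=C_3\epsilon^2\omega([s,t]^2)^{1/\gamma+1/\rho}$. The main obstacle is exactly this bookkeeping in (3): verifying that in every term the two surviving covariances live on rectangles spanned by disjoint coordinate pairs (so the $\gamma$-th power is genuinely a product of two $2D$-controls, hence a $4D$ grid-control) and that the exponents of $\omega([s,t]^2)$ produced by the "pure $\times$ $R_{X-Y}$" and "mixed $\times$ mixed" pairings coincide.
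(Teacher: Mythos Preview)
Your proposal is correct and follows essentially the same route as the paper: parts (2) and (4) are reduced via Lemma~\ref{lemma_rho_var_invar_symm} to Lemmas~\ref{lemma_rho_var_same_letters} and~\ref{lemma_rhovar_diff_n2} respectively, while (1) and (3) are handled by computing the $4D$ rectangular increment, applying Wick's formula, and bounding the resulting pairings using $\omega$ (and, in (3), Lemma~\ref{lemma_half_estimates} and the interpolation Lemma~\ref{lemma_2D_interpolation}) to produce a symmetric $4D$ grid-control. The only cosmetic difference is that in (3) the paper absorbs the excess exponent by writing $\omega(R')^{\gamma/\rho}\leq\omega([s,t]^2)^{\gamma/\rho-1}\omega(R')$, whereas you keep $\omega(R')^{\gamma/\rho}$ and invoke that $\omega^{\theta}$ is a control for $\theta\geq 1$; both lead to the same $\tilde\omega_2$ up to constants.
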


\begin{proof}
\begin{enumerate}
\item Let $u_{1}<\tilde{u}_{1}$, $u_{2}<\tilde{u}_{2}$, $v_{1}<\tilde{v}_{1}$%
, $v_{2}<\tilde{v}_{2}$. By the Wick-formula,%
\begin{eqnarray*}
&&\left\vert E\left[ X_{u_{1},\tilde{u}_{1}}X_{u_{2},\tilde{u}_{2}}X_{v_{1},%
\tilde{v}_{1}}X_{v_{2},\tilde{v}_{2}}\right] \right\vert ^{\rho } \\
&\leq &3^{\rho -1}\left\vert E\left[ X_{u_{1},\tilde{u}_{1}}X_{u_{2},\tilde{u%
}_{2}}\right] E\left[ X_{v_{1},\tilde{v}_{1}}X_{v_{2},\tilde{v}_{2}}\right]
\right\vert ^{\rho }+3^{\rho -1}\left\vert E\left[ X_{u_{1},\tilde{u}%
_{1}}X_{v_{1},\tilde{v}_{1}}\right] E\left[ X_{u_{2},\tilde{u}_{2}}X_{v_{2},%
\tilde{v}_{2}}\right] \right\vert ^{\rho } \\
&&+3^{\rho -1}\left\vert E\left[ X_{u_{1},\tilde{u}_{1}}X_{v_{2},\tilde{v}%
_{2}}\right] E\left[ X_{u_{2},\tilde{u}_{2}}X_{v_{1},\tilde{v}_{1}}\right]
\right\vert ^{\rho } \\
&\leq &3^{\rho -1}\omega \left( \left[ u_{1},\tilde{u}_{1}\right] \times %
\left[ u_{2},\tilde{u}_{2}\right] \right) \omega \left( \left[ v_{1},\tilde{v%
}_{1}\right] \times \left[ v_{2},\tilde{v}_{2}\right] \right) \\
&&+3^{\rho -1}\omega \left( \left[ u_{1},\tilde{u}_{1}\right] \times \left[
v_{1},\tilde{v}_{1}\right] \right) \omega \left( \left[ u_{2},\tilde{u}_{2}%
\right] \times \left[ v_{2},\tilde{v}_{2}\right] \right) \\
&&+3^{\rho -1}\omega \left( \left[ u_{1},\tilde{u}_{1}\right] \times \left[
v_{2},\tilde{v}_{2}\right] \right) \omega \left( \left[ u_{2},\tilde{u}_{2}%
\right] \times \left[ v_{1},\tilde{v}_{1}\right] \right) \\
&=&:\tilde{\omega}_{1}\left( \left[ u_{1},\tilde{u}_{1}\right] \times \left[
u_{2},\tilde{u}_{2}\right] \times \left[ v_{1},\tilde{v}_{1}\right] \times %
\left[ v_{2},\tilde{v}_{2}\right] \right) .
\end{eqnarray*}%
It is easy to see that $\tilde{\omega}_{1}$ is a symmetric grid-control and
that it fulfils the stated property.

\item A direct consequence of Lemma \ref{lemma_rho_var_same_letters} and
Lemma \ref{lemma_rho_var_invar_symm}.

\item We have%
\begin{equation*}
X_{u_{1}}X_{u_{2}}-Y_{u_{1}}Y_{u_{2}}=\left( X_{u_{1}}-Y_{u_{1}}\right)
X_{u_{2}}+Y_{u_{1}}\left( X_{u_{2}}-Y_{u_{2}}\right) .
\end{equation*}%
Hence for $u_{1}<\tilde{u}_{1},u_{2}<\tilde{u}_{2},v_{1}<\tilde{v}_{1},v_{2}<%
\tilde{v}_{2}$,%
\begin{eqnarray*}
\tilde{f}\left( 
\begin{array}{c}
u_{1},\tilde{u}_{1} \\ 
u_{2},\tilde{u}_{2} \\ 
v_{1},\tilde{v}_{1} \\ 
v_{2},\tilde{v}_{2}%
\end{array}%
\right) &=&E\left[ \left( X-Y\right) _{u_{1},\tilde{u}_{1}}X_{u_{2},\tilde{u}%
_{2}}\left( X-Y\right) _{v_{1},\tilde{v}_{1}}X_{v_{2},\tilde{v}_{2}}\right]
\\
&&+E\left[ Y_{u_{1},\tilde{u}_{1}}\left( X-Y\right) _{u_{2},\tilde{u}%
_{2}}\left( X-Y\right) _{v_{1},\tilde{v}_{1}}X_{v_{2},\tilde{v}_{2}}\right]
\\
&&+E\left[ \left( X-Y\right) _{u_{1},\tilde{u}_{1}}X_{u_{2},\tilde{u}%
_{2}}Y_{v_{1},\tilde{v}_{1}}\left( X-Y\right) _{v_{2},\tilde{v}_{2}}\right]
\\
&&+E\left[ Y_{u_{1},\tilde{u}_{1}}\left( X-Y\right) _{u_{2},\tilde{u}%
_{2}}Y_{v_{1},\tilde{v}_{1}}\left( X-Y\right) _{v_{2},\tilde{v}_{2}}\right]
\end{eqnarray*}%
For the first term we have, using Lemma \ref{lemma_half_estimates}, 
\begin{eqnarray*}
&&\left\vert E\left[ \left( X-Y\right) _{u_{1},\tilde{u}_{1}}X_{u_{2},\tilde{%
u}_{2}}\left( X-Y\right) _{v_{1},\tilde{v}_{1}}X_{v_{2},\tilde{v}_{2}}\right]
\right\vert ^{\gamma } \\
&\leq &3^{\gamma -1}\left\vert E\left[ \left( X-Y\right) _{u_{1},\tilde{u}%
_{1}}X_{u_{2},\tilde{u}_{2}}\right] \right\vert ^{\gamma }\left\vert E\left[
\left( X-Y\right) _{v_{1},\tilde{v}_{1}}X_{v_{2},\tilde{v}_{2}}\right]
\right\vert ^{\gamma } \\
&&+3^{\gamma -1}\left\vert E\left[ \left( X-Y\right) _{u_{1},\tilde{u}%
_{1}}\left( X-Y\right) _{v_{1},\tilde{v}_{1}}\right] \right\vert ^{\gamma
}\left\vert E\left[ X_{u_{2},\tilde{u}_{2}}X_{v_{2},\tilde{v}_{2}}\right]
\right\vert ^{\gamma } \\
&&+3^{\gamma -1}\left\vert E\left[ \left( X-Y\right) _{u_{1},\tilde{u}%
_{1}}X_{v_{2},\tilde{v}_{2}}\right] \right\vert ^{\gamma }\left\vert E\left[
X_{u_{2},\tilde{u}_{2}}\left( X-Y\right) _{v_{1},\tilde{v}_{1}}\right]
\right\vert ^{\gamma } \\
&\leq &3^{\gamma -1}\epsilon ^{2\gamma }\omega \left( \left[ s,t\right]
^{2}\right) ^{\frac{\gamma }{\rho }-1}\omega \left( \left[ u_{1},\tilde{u}%
_{1}\right] \times \left[ u_{2},\tilde{u}_{2}\right] \right) \omega \left( %
\left[ v_{1},\tilde{v}_{1}\right] \times \left[ v_{2},\tilde{v}_{2}\right]
\right) \\
&&+3^{\gamma -1}\epsilon ^{2\gamma }\omega \left( \left[ u_{1},\tilde{u}_{1}%
\right] \times \left[ v_{1},\tilde{v}_{1}\right] \right) \omega \left( \left[
u_{2},\tilde{u}_{2}\right] \times \left[ v_{2},\tilde{v}_{2}\right] \right)
^{\frac{\gamma }{\rho }} \\
&&+3^{\gamma -1}\epsilon ^{2\gamma }\omega \left( \left[ s,t\right]
^{2}\right) ^{\frac{\gamma }{\rho }-1}\omega \left( \left[ u_{1},\tilde{u}%
_{1}\right] \times \left[ v_{2},\tilde{v}_{2}\right] \right) \omega \left( %
\left[ u_{2},\tilde{u}_{2}\right] \times \left[ v_{1},\tilde{v}_{1}\right]
\right) \\
&\leq &3^{\gamma -1}\epsilon ^{2\gamma }\omega \left( \left[ s,t\right]
^{2}\right) ^{\frac{\gamma }{\rho }-1}\left( \omega \left( \left[ u_{1},%
\tilde{u}_{1}\right] \times \left[ u_{2},\tilde{u}_{2}\right] \right) \omega
\left( \left[ v_{1},\tilde{v}_{1}\right] \times \left[ v_{2},\tilde{v}_{2}%
\right] \right) \right. \\
&&+\omega \left( \left[ u_{1},\tilde{u}_{1}\right] \times \left[ v_{1},%
\tilde{v}_{1}\right] \right) \omega \left( \left[ u_{2},\tilde{u}_{2}\right]
\times \left[ v_{2},\tilde{v}_{2}\right] \right) \\
&&\left. +\omega \left( \left[ u_{1},\tilde{u}_{1}\right] \times \left[
v_{2},\tilde{v}_{2}\right] \right) \omega \left( \left[ u_{2},\tilde{u}_{2}%
\right] \times \left[ v_{1},\tilde{v}_{1}\right] \right) \right) \\
&=&:\tilde{\omega}\left( \left[ u_{1},\tilde{u}_{1}\right] \times \left[
u_{2},\tilde{u}_{2}\right] \times \left[ v_{1},\tilde{v}_{1}\right] \times %
\left[ v_{2},\tilde{v}_{2}\right] \right) .
\end{eqnarray*}%
$\tilde{\omega}$ is a symmetric grid-control and fulfils the stated
property. The other terms are treated in the same way.

\item Follows from Lemma \ref{lemma_rhovar_diff_n2} and Lemma \ref%
{lemma_rho_var_invar_symm}.
\end{enumerate}
\end{proof}

\begin{corollary}
Let $\left( X,Y\right) $, $\omega $, $\rho $ and $\gamma $ as in Lemma \ref%
{lemma_diff_alldifferent}. Then there is a constant $C=C\left( \rho ,\gamma
\right) $ such that%
\begin{equation*}
\left\vert \mathbf{X}_{s,t}^{i,i,j,j}-\mathbf{Y}_{s,t}^{i,i,j,j}\right\vert
_{L^{2}}\leq C\epsilon \omega \left( \left[ s,t\right] ^{2}\right) ^{\frac{1%
}{2\gamma }}\omega \left( \left[ s,t\right] ^{2}\right) ^{\frac{3}{2\rho }}
\end{equation*}%
holds for every $\left( s,t\right) \in \Delta $ and $i\neq j$ where $%
\epsilon ^{2}=V_{\infty }\left( R_{X-Y},\left[ s,t\right] ^{2}\right)
^{1-\rho /\gamma }$.
\end{corollary}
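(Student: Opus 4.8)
The plan is to combine the Fubini-type identity established just above with the $4D$ Young estimates of Lemma~\ref{lemma_rho_var_4D_estimates}. Fix $i\neq j$. First I would observe that $\mathbf{X}_{s,t}^{i,i,j,j}=\int_{\{s<u<v<t\}}\mathbf{X}_{s,u}^{i,i}\,dX_{u}^{j}\,dX_{v}^{j}$ and apply the identity
\begin{equation*}
\int_{\Delta_{s,t}^{2}}\varphi(u)\,dg(u)\,dg(v)=\tfrac12\int_{[s,t]^{2}}\varphi(u\wedge v)\,d\bigl(g(u)g(v)\bigr),
\end{equation*}
valid here since $X^{j}$ has finite variation, with $\varphi(u)=\mathbf{X}_{s,u}^{i,i}$ and $g=X^{j}$. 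This rewrites $\mathbf{X}_{s,t}^{i,i,j,j}=\tfrac12\int_{[s,t]^{2}}\mathbf{X}_{s,u\wedge v}^{i,i}\,d(X_{u}^{j}X_{v}^{j})$ as a $2D$ Young integral, and likewise for $Y$.

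Next I would subtract and split by the triangle inequality,
\begin{equation*}
\mathbf{X}_{s,t}^{i,i,j,j}-\mathbf{Y}_{s,t}^{i,i,j,j}=\tfrac12\int_{[s,t]^{2}}\bigl(\mathbf{X}_{s,u\wedge v}^{i,i}-\mathbf{Y}_{s,u\wedge v}^{i,i}\bigr)\,d(X_{u}^{j}X_{v}^{j})+\tfrac12\int_{[s,t]^{2}}\mathbf{Y}_{s,u\wedge v}^{i,i}\,d\bigl(X_{u}^{j}X_{v}^{j}-Y_{u}^{j}Y_{v}^{j}\bigr),
\end{equation*}
take the $L^{2}$ norm of each summand and square it. Since $(X^{i},Y^{i})$ and $(X^{j},Y^{j})$ are independent, the expectation can be moved inside exactly as in the proof of Lemma~\ref{lemma_diff_alldifferent}: the $2D$ Young integrals are a.s.\ limits of Riemann sums, a.s.\ convergence yields $L^{1}$ convergence in the fixed (inhomogeneous) Wiener chaos, and the expectation of each Riemann sum factorises. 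Passing to the limit and relabelling the four integration variables, the squared norm of the first summand becomes $\tfrac14\int_{[s,t]^{4}}\tilde g\,df$ and that of the second becomes $\tfrac14\int_{[s,t]^{4}}\tilde f\,dg$, where $f,\tilde f,g,\tilde g$ are exactly the kernels of Lemma~\ref{lemma_rho_var_4D_estimates}.

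Then I would apply the multidimensional Young inequality of Section~\ref{section_multidim_young} with $n=4$; it applies because $\tilde g,g$ have finite $\gamma$-variation, $f,\tilde f$ have finite $\rho$-variation, $\tfrac1\rho+\tfrac1\gamma>1$ by hypothesis, and the integrands vanish on the initial slices (here $\mathbf{X}_{s,s}^{i,i}=\mathbf{Y}_{s,s}^{i,i}=0$). Using parts (4) and (1) of Lemma~\ref{lemma_rho_var_4D_estimates} for the first summand and parts (2) and (3) for the second, both quantities are bounded by
\begin{equation*}
C\,\epsilon^{2}\,\omega([s,t]^{2})^{1/\gamma+1/\rho}\,\omega([s,t]^{2})^{2/\rho}=C\,\epsilon^{2}\,\omega([s,t]^{2})^{1/\gamma+3/\rho}.
\end{equation*}
Taking square roots and adding the two contributions gives the asserted bound $\bigl|\mathbf{X}_{s,t}^{i,i,j,j}-\mathbf{Y}_{s,t}^{i,i,j,j}\bigr|_{L^{2}}\le C\,\epsilon\,\omega([s,t]^{2})^{\frac1{2\gamma}}\omega([s,t]^{2})^{\frac{3}{2\rho}}$.

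I expect the middle step to be the main obstacle: identifying the $L^{2}$-norm of the random $2D$ Young integral with a deterministic $4D$ Young integral of the covariance kernels. The $u\wedge v$ structure of the integrand forces one to integrate against, and control the variation of, genuinely $4$-dimensional objects; this is precisely what Lemma~\ref{lemma_rho_var_invar_symm} (reducing $4D$ grid variation with the $\wedge$-structure to ordinary $2D$ variation) and Lemma~\ref{lemma_rho_var_4D_estimates} (the resulting variation bounds) are built to supply, so once these are in hand the rest is bookkeeping. Checking the interchange of limit and expectation, the vanishing of the integrands on degenerate slices, and the exponent inequality $1/\gamma+1/\rho>1$ are all routine.
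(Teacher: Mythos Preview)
Your proposal is correct and follows essentially the same route as the paper: rewrite $\mathbf{X}_{s,t}^{i,i,j,j}$ via the Fubini trick as a $2D$ integral against $d(X_{u_1}^{j}X_{u_2}^{j})$, split the difference into the same two pieces, pass the expectation inside to obtain $4D$ Young integrals, and bound each using exactly the parts (1),(4) and (2),(3) of Lemma~\ref{lemma_rho_var_4D_estimates} you name. The only cosmetic difference is that the paper does not track the $\tfrac14$ factor and absorbs it into the constant.
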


\begin{proof}
As seen before, we can use Fubini to obtain%
\begin{equation*}
\mathbf{X}_{s,t}^{i,i,j,j}=\int_{\Delta _{s,t}^{2}}\mathbf{X}%
_{s,u_{1}}^{i,i}\,dX_{u_{1}}^{j}\,dX_{u_{2}}^{j}=\frac{1}{2}\int_{\left[ s,t%
\right] ^{2}}\mathbf{X}_{s,u_{1}\wedge u_{2}}^{i,i}\,d\left(
X_{u_{1}}^{j}X_{u_{2}}^{j}\right)
\end{equation*}%
and hence%
\begin{eqnarray*}
\left\vert \mathbf{X}_{s,t}^{i,i,j,j}-\mathbf{Y}_{s,t}^{i,i,j,j}\right\vert
_{L^{2}} &\leq &\frac{1}{2}\left\vert \int_{\left[ s,t\right] ^{2}}\left( 
\mathbf{X}_{s,u_{1}\wedge u_{2}}^{i,i}-\mathbf{Y}_{s,u_{1}\wedge
u_{2}}^{i,i}\right) \,d\left( X_{u_{1}}^{j}X_{u_{2}}^{j}\right) \right\vert
_{L^{2}} \\
&&+\frac{1}{2}\left\vert \int_{\left[ s,t\right] ^{2}}\mathbf{Y}%
_{s,u_{1}\wedge u_{2}}^{i,i}\,d\left(
X_{u_{1}}^{j}X_{u_{2}}^{j}-Y_{u_{1}}^{j}Y_{u_{2}}^{j}\right) \right\vert
_{L^{2}}.
\end{eqnarray*}%
We use a Young $4D$-estimate and the estimates of Lemma \ref%
{lemma_rho_var_4D_estimates} to see that%
\begin{eqnarray*}
&&\left\vert \int_{\left[ s,t\right] ^{2}}\left( \mathbf{X}_{s,u_{1}\wedge
u_{2}}^{i,i}-\mathbf{Y}_{s,u_{1}\wedge u_{2}}^{i,i}\right) \,d\left(
X_{u_{1}}^{j}X_{u_{2}}^{j}\right) \right\vert _{L^{2}}^{2} \\
&=&\int_{\left[ s,t\right] ^{4}}E\left[ \left( \mathbf{X}_{s,u_{1}\wedge
u_{2}}^{i,i}-\mathbf{Y}_{s,u_{1}\wedge u_{2}}^{i,i}\right) \left( \mathbf{X}%
_{s,v_{1}\wedge v_{2}}^{i,i}-\mathbf{Y}_{s,v_{1}\wedge v_{2}}^{i,i}\right) %
\right] \,dE\left[ X_{u_{1}}^{j}X_{u_{2}}^{j}X_{v_{1}}^{j}X_{v_{2}}^{j}%
\right] \\
&\leq &c_{1}\epsilon ^{2}\omega \left( \left[ s,t\right] ^{2}\right)
^{1/\gamma }\omega \left( \left[ s,t\right] ^{2}\right) ^{3/\rho }.
\end{eqnarray*}%
The second term is estimated in the same way using again Lemma \ref%
{lemma_rho_var_4D_estimates}.
\end{proof}

\begin{lemma}
\label{lemma_ext_young_rho_var}Let $f\colon \lbrack 0,1]^{2}\rightarrow 
\mathbb{R}$ and $g\colon \lbrack 0,1]^{2}\times \lbrack 0,1]^{2}\rightarrow 
\mathbb{R}$ be continuous where $g$ is symmetric in the first and the last
two variables. Let $\left( s,t\right) \in \Delta $ and assume that $f\left(
s,\cdot \right) =f(\cdot ,s)=0$. Assume also that $f$ has finite $p$%
-variation and that the $q$-variation of $g$ is controlled by a symmetric $%
4D $ grid-control $\tilde{\omega}$ where $\frac{1}{p}+\frac{1}{q}>1$. Define%
\begin{equation*}
\Psi \left( u,v\right) =\int_{[s,u]^{2}\times \lbrack s,v]^{2}}f(u_{1}\wedge
u_{2},v_{1}\wedge v_{2})\,dg\left( u_{1},u_{2};v_{1},v_{2}\right)
\end{equation*}%
Then there is a constant $C=C\left( p,q\right) $ such that%
\begin{equation*}
V_{q}\left( \Psi ;\left[ s,t\right] ^{2}\right) \leq CV_{p}\left( f;\left[
s,t\right] ^{2}\right) \tilde{\omega}\left( \left[ s,t\right] ^{4}\right)
^{1/q}.
\end{equation*}
\end{lemma}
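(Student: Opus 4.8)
The plan is to run the argument of Lemma~\ref{lemma_kernel_iter_2D}, but in four dimensions, using Lemma~\ref{lemma_rho_var_invar_symm} to keep everything anchored to the two–dimensional function $f$. Write $\bar f(u_1,u_2,v_1,v_2)=f(u_1\wedge u_2,v_1\wedge v_2)$, so that $\Psi(u,v)=\int_{[s,u]^2\times[s,v]^2}\bar f\,dg$ is a genuine $4D$ Young integral; note $\bar f$ vanishes on each of the faces $\{u_1=s\}$, $\{u_2=s\}$, $\{v_1=s\}$, $\{v_2=s\}$ because $f(s,\cdot)=f(\cdot,s)=0$. Fix a grid partition $D\times\tilde D$ of $[s,t]^2$. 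It suffices to bound, for each cell, the increment $\Psi\begin{pmatrix}t_i,t_{i+1}\\ \tilde t_j,\tilde t_{j+1}\end{pmatrix}$ by a sum of boundedly many (independent of the partition) terms of the form $c\,V_p(f;[s,t]^2)\,\tilde\omega(R)^{1/q}$, where each $R\subset[s,t]^4$ has sides chosen among $[t_i,t_{i+1}]$, $[\tilde t_j,\tilde t_{j+1}]$, $[s,t_i]$, $[s,\tilde t_j]$ in such a way that $\sum_{i,j}\tilde\omega(R)\le\tilde\omega([s,t]^4)$ (using that $\tilde\omega$ is non-decreasing and partially super-additive in each coordinate); since $x\mapsto x^q$ is convex, raising to the $q$-th power, summing over $i,j$, and passing to the supremum over partitions then yields the claim.

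Next I would expand the rectangular increment. Since $[s,t_{i+1}]^2\setminus[s,t_i]^2$ splits into the essentially disjoint rectangles $[t_i,t_{i+1}]^2$, $[t_i,t_{i+1}]\times[s,t_i]$, $[s,t_i]\times[t_i,t_{i+1}]$, and likewise in the $v$-block,
\begin{equation*}
\Psi\begin{pmatrix}t_i,t_{i+1}\\ \tilde t_j,\tilde t_{j+1}\end{pmatrix}=\sum_A\sum_B\int_{A\times B}\bar f\,dg,
\end{equation*}
a sum of nine $4D$ Young integrals, with $A$ ranging over $\{[t_i,t_{i+1}]^2,\ [t_i,t_{i+1}]\times[s,t_i],\ [s,t_i]\times[t_i,t_{i+1}]\}$ and $B$ analogously. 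On the eight "degenerate" summands the minimum collapses: on $[t_i,t_{i+1}]\times[s,t_i]$ one has $u_1\wedge u_2=u_2$, on $[s,t_i]\times[t_i,t_{i+1}]$ one has $u_1\wedge u_2=u_1$, and similarly for $v$; integrating out the now-inert coordinate(s) turns such a summand into a $3D$ or $2D$ Young integral whose integrand is $f$ composed with a min of fewer arguments and whose integrator is a coordinate-increment of $g$ over a rectangle whose "collapsed" side is the large interval $[s,t_i]$ or $[s,\tilde t_j]$.

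Then I would estimate each of the nine summands separately, in the style of Lemma~\ref{lemma_kernel_iter_2D}: anchor the integrand at the lower corner of its integration rectangle (for the full $4D$ piece this yields $2^4$ terms — the total $4D$ increment plus face-anchored pieces, the min-structure collapsing the latter further), and bound each term by the multidimensional Young inequality of Section~\ref{section_multidim_young}. Here Lemma~\ref{lemma_rho_var_invar_symm}(2) (and its evident $3D$ and $2D$ analogues, proved by the same $D_1\cup D_2$ argument) identifies the $p$-variation of $\bar f$ and of its min-collapsed restrictions with that of $f$, hence with a quantity $\le V_p(f;[s,t]^2)$; the restriction property of grid-controls ensures that every coordinate-increment of $g$ occurring as an integrator has $q$-variation controlled by $\tilde\omega$ evaluated on the corresponding rectangle (with the collapsed side enlarged to $[s,t]$); and $f(s,\cdot)=f(\cdot,s)=0$ bounds all sup-norms of $f$ and of its partial restrictions by $V_p(f;[s,t]^2)$, exactly as in Lemma~\ref{lemma_kernel_iter_2D}. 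Raising to the $q$-th power and summing over $i,j$, the large intervals $[s,t_i]\subset[s,t]$ and $[s,\tilde t_j]\subset[s,t]$ always sit in a coordinate over which $\tilde\omega$ is being summed, so partial super-additivity collapses every such sum to $\tilde\omega([s,t]^4)$, giving $V_q(\Psi;[s,t]^2)^q\le C\,V_p(f;[s,t]^2)^q\,\tilde\omega([s,t]^4)$.

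The conceptual content is nothing beyond Lemma~\ref{lemma_kernel_iter_2D}, Lemma~\ref{lemma_rho_var_invar_symm}, and the stability of grid-controls under restriction and products. The real work — and the main obstacle — is the bookkeeping: keeping track of the nine product rectangles, of which coordinate each minimum collapses and of the resulting lower-dimensional Young estimates, of the face-anchored sub-decompositions, and of checking in every case that the large interval $[s,t_i]$ (resp.\ $[s,\tilde t_j]$) occupies a coordinate over which the final sum is taken, so that the grid-control estimates close.
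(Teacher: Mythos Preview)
Your proposal is correct and takes essentially the same approach as the paper: the same nine-piece decomposition of the increment via $\overline{[s,v]^2\setminus[s,u]^2}=[u,v]^2\cup([s,u]\times[u,v])\cup([u,v]\times[s,u])$ in each block, the same reduction of the min-structured integrand to $V_p(f;[s,t]^2)$ via Lemma~\ref{lemma_rho_var_invar_symm} (and its lower-dimensional analogues), and the same multidimensional Young estimates against the grid-control $\tilde\omega$. The paper additionally uses the assumed symmetry of $g$ and $\bar f$ to collapse the nine pieces to four (with multiplicities $1,2,2,4$) and packages the final summation by observing that $([u,v],[u',v'])\mapsto\tilde\omega([u,v]\times[u',v']\times[s,t]^2)$ is itself a $2D$ grid-control, but these are cosmetic simplifications of your explicit term-by-term summing via partial super-additivity.
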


\begin{proof}
Set%
\begin{equation*}
\tilde{f}\left( u_{1},u_{2},v_{1},v_{2}\right) =f(u_{1}\wedge
u_{2},v_{1}\wedge v_{2}).
\end{equation*}%
Let $u<v$ and $u^{\prime }<v^{\prime }$. Note that%
\begin{eqnarray*}
&&1_{[s,v]^{2}\times \lbrack s,v^{\prime }]^{2}}-1_{[s,u]^{2}\times \lbrack
s,v^{\prime }]^{2}}-1_{[s,v]^{2}\times \lbrack s,u^{\prime
}]^{2}}+1_{[s,u]^{2}\times \lbrack s,u^{\prime }]^{2}} \\
&=&1_{\left( [s,v]^{2}\setminus \lbrack s,u]^{2}\right) \times \lbrack
s,v^{\prime }]^{2}}-1_{\left( [s,v]^{2}\setminus \lbrack s,u]^{2}\right)
\times \lbrack s,u^{\prime }]^{2}} \\
&=&1_{\left( [s,v]^{2}\setminus \lbrack s,u]^{2}\right) \times \left(
\lbrack s,v^{\prime }]^{2}\setminus \lbrack s,u^{\prime }]^{2}\right) }
\end{eqnarray*}%
If we take out the square $\left[ s,u\right] ^{2}$ of the larger square $%
\left[ s,v\right] ^{2}$, what is left is the union of three essentially
disjoint squares. More precisely,%
\begin{equation*}
\overline{\left[ s,v\right] ^{2}\setminus \left[ s,u\right] ^{2}}=\left[ u,v%
\right] ^{2}\cup \left( \left[ s,u\right] \times \left[ u,v\right] \right)
\cup \left( \left[ u,v\right] \times \left[ s,u\right] \right) .
\end{equation*}%
The same holds for $u^{\prime }$ and $v^{\prime }$. Hence,%
\begin{eqnarray*}
&&\left( \overline{[s,v]^{2}\setminus \lbrack s,u]^{2}}\right) \times \left( 
\overline{[s,v^{\prime }]^{2}\setminus \lbrack s,u^{\prime }]^{2}}\right) \\
&=&\left( [u,v]^{2}\cup \left( \lbrack s,u]\times \lbrack u,v]\right) \cup
\left( \lbrack u,v]\times \lbrack s,u]\right) \right) \\
&&\times \left( \lbrack u^{\prime },v^{\prime }]^{2}\cup \left( \lbrack
s,u^{\prime }]\times \lbrack u^{\prime },v^{\prime }]\right) \cup \left(
\lbrack u^{\prime },v^{\prime }]\times \lbrack s,u^{\prime }]\right) \right)
\\
&=&\left( [u,v]^{2}\times \lbrack u^{\prime },v^{\prime }]^{2}\right) \cup
\left( \lbrack u,v]^{2}\times \lbrack s,u^{\prime }]\times \lbrack u^{\prime
},v^{\prime }]\right) \cup \left( \lbrack u,v]^{2}\times \lbrack u^{\prime
},v^{\prime }]\times \lbrack s,u^{\prime }]\right) \\
&&\cup \left( \lbrack s,u]\times \lbrack u,v]\times \lbrack u^{\prime
},v^{\prime }]^{2}\right) \cup \left( \lbrack s,u]\times \lbrack u,v]\times
\lbrack s,u^{\prime }]\times \lbrack u^{\prime },v^{\prime }]\right) \\
&&\cup \left( \lbrack s,u]\times \lbrack u,v]\times \lbrack u^{\prime
},v^{\prime }]\times \lbrack s,u^{\prime }]\right) \\
&&\cup \left( \lbrack u,v]\times \lbrack s,u]\times \lbrack u^{\prime
},v^{\prime }]^{2}\right) \cup \left( \lbrack u,v]\times \lbrack s,u]\times
\lbrack s,u^{\prime }]\times \lbrack u^{\prime },v^{\prime }]\right) \\
&&\cup \left( \lbrack u,v]\times \lbrack s,u]\times \lbrack u^{\prime
},v^{\prime }]\times \lbrack s,u^{\prime }]\right)
\end{eqnarray*}%
and all these are unions of essentially disjoint sets. Using continuity and
the symmetry of $\tilde{f}$ and $g$ we have then%
\begin{eqnarray*}
\Psi 
\begin{pmatrix}
u,v \\ 
u^{\prime },v^{\prime }%
\end{pmatrix}
&=&\int_{\left( [s,v]^{2}\setminus \lbrack s,u]^{2}\right) \times \left(
\lbrack s,v^{\prime }]^{2}\setminus \lbrack s,u^{\prime }]^{2}\right) }%
\tilde{f}\,dg \\
&=&\int_{[u,v]^{2}\times \lbrack u^{\prime },v^{\prime }]^{2}}\tilde{f}%
\,dg+2\int_{[u,v]^{2}\times \lbrack s,u^{\prime }]\times \lbrack u^{\prime
},v^{\prime }]}\tilde{f}\,dg \\
&&+2\int_{[s,u]\times \lbrack u,v]\times \lbrack u^{\prime },v^{\prime
}]^{2}}\tilde{f}\,dg+4\int_{[s,u]\times \lbrack u,v]\times \lbrack
s,u^{\prime }]\times \lbrack u^{\prime },v^{\prime }]}\tilde{f}\,dg.
\end{eqnarray*}%
For the first integral we use Young $4D$-estimates. Since $\tilde{f}\left(
s,\cdot ,\cdot ,\cdot \right) =\ldots =\tilde{f}\left( \cdot ,\cdot ,\cdot
,s\right) =0$, we can proceed as in the proof of Lemma \ref%
{lemma_kernel_iter_2D} and use Lemma \ref{lemma_rho_var_invar_symm} to see
that%
\begin{eqnarray*}
\left\vert \int_{\lbrack u,v]^{2}\times \lbrack u^{\prime },v^{\prime }]^{2}}%
\tilde{f}\,dg\right\vert &\leq &c_{1}V_{p}\left( f,\left[ s,t\right]
^{2}\right) V_{q}\left( g,[u,v]^{2}\times \lbrack u^{\prime },v^{\prime
}]^{2}\right) \\
&\leq &c_{1}V_{p}\left( f,\left[ s,t\right] ^{2}\right) \tilde{\omega}\left(
[u,v]^{2}\times \lbrack u^{\prime },v^{\prime }]^{2}\right) ^{1/q}
\end{eqnarray*}%
For the second integral, we have%
\begin{eqnarray*}
&&\int_{[u,v]^{2}\times \lbrack s,u^{\prime }]\times \lbrack u^{\prime
},v^{\prime }]}\tilde{f}\,dg \\
&=&\int_{[u,v]^{2}\times \lbrack s,u^{\prime }]\times \lbrack u^{\prime
},v^{\prime }]}f(u_{1}\wedge u_{2},v_{1}\wedge v_{2})\,dg\left(
u_{1},u_{2};v_{1},v_{2}\right) \\
&=&\int_{[u,v]^{2}\times \lbrack s,u^{\prime }]}f(u_{1}\wedge
u_{2},v_{1})\,d \left[ g\left( u_{1},u_{2};v_{1},v^{\prime }\right) -g\left(
u_{1},u_{2};v_{1},u^{\prime }\right) \right]
\end{eqnarray*}%
We now use a Young $3D$-estimate to see that%
\begin{eqnarray*}
\left\vert \int_{\lbrack u,v]^{2}\times \lbrack s,u^{\prime }]\times \lbrack
u^{\prime },v^{\prime }]}\tilde{f}\,dg\right\vert &\leq &c_{2}V_{p}\left(
f\left( \cdot \wedge \cdot ,\cdot \right) ,\left[ s,t\right] ^{3}\right) \\
&&\times V_{q}\left( g\left( \cdot ,\cdot ;\cdot ,v^{\prime }\right)
-g\left( \cdot ,\cdot ;\cdot ,u^{\prime }\right) ,\left[ u,v\right]
^{2}\times \left[ s,u^{\prime }\right] \right)
\end{eqnarray*}%
As in Lemma \ref{lemma_rho_var_invar_symm}, one can show that $V_{p}\left(
f\left( \cdot \wedge \cdot ,\cdot \right) ,\left[ s,t\right] ^{3}\right)
=V_{p}\left( f,\left[ s,t\right] ^{2}\right) $. For $g$, we have%
\begin{eqnarray*}
V_{q}\left( g\left( \cdot ,\cdot ;\cdot ,v^{\prime }\right) -g\left( \cdot
,\cdot ;\cdot ,u^{\prime }\right) ,\left[ u,v\right] ^{2}\times \left[
s,u^{\prime }\right] \right) &\leq &V_{q}\left( g,\left[ u,v\right]
^{2}\times \left[ s,u^{\prime }\right] \times \left[ u^{\prime },v^{\prime }%
\right] \right) \\
&\leq &\tilde{\omega}\left( \left[ u,v\right] ^{2}\times \left[ s,t\right]
\times \left[ u^{\prime },v^{\prime }\right] \right) ^{1/q}.
\end{eqnarray*}%
Hence%
\begin{equation*}
\left\vert \int_{\lbrack u,v]^{2}\times \lbrack s,u^{\prime }]\times \lbrack
u^{\prime },v^{\prime }]}\tilde{f}\,dg\right\vert \leq c_{2}V_{p}\left( f,%
\left[ s,t\right] ^{2}\right) \tilde{\omega}\left( \left[ u,v\right]
^{2}\times \left[ s,t\right] \times \left[ u^{\prime },v^{\prime }\right]
\right) ^{1/q}.
\end{equation*}%
Similarly, using Young $3D$ and $2D$ estimates, we get%
\begin{equation*}
\left\vert \int_{\lbrack s,u]\times \lbrack u,v]\times \lbrack u^{\prime
},v^{\prime }]^{2}}\tilde{f}\,dg\right\vert \leq c_{3}V_{p}\left( f,\left[
s,t\right] ^{2}\right) \tilde{\omega}\left( \left[ s,t\right] \times \left[
u,v\right] \times \left[ u^{\prime },v^{\prime }\right] ^{2}\right) ^{1/q}
\end{equation*}%
and%
\begin{equation*}
\left\vert \int_{\lbrack s,u]\times \lbrack u,v]\times \lbrack s,u^{\prime
}]\times \lbrack u^{\prime },v^{\prime }]}\tilde{f}\,dg\right\vert \leq
c_{4}V_{p}\left( f,\left[ s,t\right] ^{2}\right) \tilde{\omega}\left( \left[
s,t\right] \times \left[ u,v\right] \times \lbrack s,t]\times \lbrack
u^{\prime },v^{\prime }]\right) ^{1/q}.
\end{equation*}%
Putting all together, using the symmetry of $\tilde{\omega}$ we have shown
that%
\begin{equation*}
\left\vert \Psi 
\begin{pmatrix}
u,v \\ 
u^{\prime },v^{\prime }%
\end{pmatrix}%
\right\vert ^{q}\leq c_{5}V_{p}\left( f,\left[ s,t\right] ^{2}\right) ^{q}%
\tilde{\omega}\left( \left[ u,v\right] \times \lbrack u^{\prime },v^{\prime
}]\times \left[ s,t\right] ^{2}\right) .
\end{equation*}%
Since $\tilde{\omega}_{2}\left( \left[ u,v\right] \times \lbrack u^{\prime
},v^{\prime }]\right) :=\tilde{\omega}\left( \left[ u,v\right] \times
\lbrack u^{\prime },v^{\prime }]\times \left[ s,t\right] ^{2}\right) $ is a $%
2D$ grid-control this shows the claim.
\end{proof}

We are now able to prove the remaining estimate.

\begin{corollary}
Let $\left( X,Y\right) $, $\omega $, $\rho $ and $\gamma $ as in Lemma \ref%
{lemma_diff_alldifferent}. Then there is a constant $C=C\left( \rho ,\gamma
\right) $ such that%
\begin{equation*}
\left\vert \mathbf{X}_{s,t}^{j,i,i,k}-\mathbf{Y}_{s,t}^{j,i,i,k}\right\vert
_{L^{2}}\leq C\epsilon \omega \left( \left[ s,t\right] ^{2}\right) ^{\frac{1%
}{2\gamma }}\omega \left( \left[ s,t\right] ^{2}\right) ^{\frac{3}{2\rho }}
\end{equation*}%
holds for every $\left( s,t\right) \in \Delta $ and $i,j,k$ pairwise
distinct where $\epsilon ^{2}=V_{\infty }\left( R_{X-Y},\left[ s,t\right]
^{2}\right) ^{1-\rho /\gamma }$.
\end{corollary}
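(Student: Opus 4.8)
The plan is to treat the surviving case $w=jiik$ directly, by the same mechanism as in the preceding corollaries but now driven by Lemma \ref{lemma_ext_young_rho_var}. As recorded in the proof of Proposition \ref{prop_main_estimates_n4}, Propositions \ref{proposition_key_shuffle} and \ref{prop_generating_sets} reduce the claim to the single word $w=jiik$, so it suffices to bound $\left\vert \mathbf{X}_{s,t}^{j,i,i,k}-\mathbf{Y}_{s,t}^{j,i,i,k}\right\vert _{L^{2}}$. First I would use Fubini twice (as in the discussion preceding Lemma \ref{lemma_rho_var_invar_symm}): symmetrising the middle $i,i$-block turns $\mathbf{X}_{s,t}^{j,i,i,k}$ into $\frac12\int_{\{s<u<v<t\}}\big(\int_{[u,v]^{2}}dX_{a_{1}}^{i}\,dX_{a_{2}}^{i}\big)\,dX_{u}^{j}\,dX_{v}^{k}$, and then integrating out the innermost $X^{j}$-variable (whose range, in terms of the $i$-variables, is $[s,a_{1}\wedge a_{2}]$) gives
\begin{equation*}
\mathbf{X}_{s,t}^{j,i,i,k}=\frac12\int_{s}^{t}\Theta^{X}(v)\,dX_{v}^{k},\qquad \Theta^{X}(v)=\int_{[s,v]^{2}}X_{s,a_{1}\wedge a_{2}}^{j}\,dX_{a_{1}}^{i}\,dX_{a_{2}}^{i}.
\end{equation*}
The point of this rewriting is that $\Theta^{X}$ is a $2D$ Young integral with a $\wedge$-kernel that vanishes on the axes, which is exactly the shape fed into Lemma \ref{lemma_ext_young_rho_var}.

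Next I would split $\mathbf{X}_{s,t}^{j,i,i,k}-\mathbf{Y}_{s,t}^{j,i,i,k}$ by the triangle inequality into the three ``one difference, rest unchanged'' pieces — the difference sitting in $X^{j}-Y^{j}$, in the $X^{i}$-block, or in $X^{k}-Y^{k}$ — and in each piece pass to $L^{2}$-norms, moving the expectation inside the integrals by independence of the $X^{i},X^{j},X^{k}$-components (justified, as in Lemma \ref{lemma_diff_alldifferent}, by a.s.-to-$L^{1}$ convergence in the Wiener chaos). In every piece the inner expectation $E[(\Theta^{X}(v)-\cdots)(\Theta^{X}(v^{\prime})-\cdots)]$ is, by independence, of the form $\int_{[s,v]^{2}\times[s,v^{\prime}]^{2}}f(a_{1}\wedge a_{2},b_{1}\wedge b_{2})\,dg(a_{1},a_{2};b_{1},b_{2})$, i.e. $\Psi(v,v^{\prime})$ in the notation of Lemma \ref{lemma_ext_young_rho_var}, where $f$ is a (difference of) increment-covariance $(x,y)\mapsto E[X_{s,x}^{j}X_{s,y}^{j}]$, which vanishes for $x=s$ or $y=s$, and $g$ is a (difference of) the $4D$ covariance $E[X_{a_{1}}^{i}X_{a_{2}}^{i}X_{b_{1}}^{i}X_{b_{2}}^{i}]$, symmetric in the first and last pairs. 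Feeding in the variation bounds already proved — $V_{\rho}$/$V_{\gamma}$ of $f$ via the interpolation Lemma \ref{lemma_2D_interpolation}, and $V_{\rho}$/$V_{\gamma}$ of $g$ via Lemma \ref{lemma_rho_var_4D_estimates}(1)--(4) and Lemma \ref{lemma_half_estimates} — Lemma \ref{lemma_ext_young_rho_var} yields, for the ``difference'' $\Psi$'s, $V_{\gamma-var}(\Psi;[s,t]^{2})\leq C\epsilon^{2}\omega([s,t]^{2})^{1/\gamma+2/\rho}$ together with $\Psi(s,\cdot)=\Psi(\cdot,s)=0$; a final $2D$ Young estimate for the outermost $dR_{X^{k}}$-integration (using $1/\gamma+1/\rho>1$) then gives $\frac14\int_{[s,t]^{2}}\Psi\,dR_{X^{k}}\leq C\epsilon^{2}\omega([s,t]^{2})^{1/\gamma}\omega([s,t]^{2})^{3/\rho}$, which is the square of the asserted bound.

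The main obstacle is structural rather than computational: because the two outermost integrations are against the distinct, independent components $X^{j}$ and $X^{k}$, the symmetrisation that worked for $\mathbf{X}^{i,i,j,j}$ is unavailable and the $i,i$-block appears with a moving base point $\mathbf{X}^{i,i}_{u,v}$; getting past this is precisely the role of the double Fubini above and of Lemma \ref{lemma_ext_young_rho_var}, whose proof absorbs the restricted domain $\{s<u<v<t\}$ into the nested rectangles $[s,u]^{2}\times[s,v]^{2}$ and the associated nine-rectangle decomposition. The delicate bookkeeping that remains is to check that after the Wick expansion every pairing involving $R_{X-Y}$ carries exactly one factor of $\epsilon$ and the correct total power of $\omega$ — which is what Lemma \ref{lemma_rho_var_4D_estimates}(3)--(4) and Lemma \ref{lemma_half_estimates} supply — and that each four-variable covariance occurring as $g$ may be symmetrised in its two pairs without changing the value of the integral, so that Lemma \ref{lemma_ext_young_rho_var} applies verbatim.
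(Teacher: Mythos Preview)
Your proposal is correct and follows essentially the same route as the paper: the Fubini/symmetrisation rewriting $\mathbf{X}^{j,i,i,k}_{s,t}=\tfrac12\int_s^t\Theta^X(v)\,dX^k_v$ with $\Theta^X(v)=\int_{[s,v]^2}X^j_{s,a_1\wedge a_2}\,d(X^i_{a_1}X^i_{a_2})$, the three-term telescoping split, Lemma~\ref{lemma_ext_young_rho_var} (fed by the grid-controls of Lemma~\ref{lemma_rho_var_4D_estimates}) to control the $2D$ variation of each $E[\Theta(\cdot)\Theta(\cdot)]$, and a final $2D$ Young bound in the outer $k$-variable. One small imprecision worth tightening: in the third piece (difference in $X^k-Y^k$) the inner kernel $E[\Theta^Y(\cdot)\Theta^Y(\cdot)]$ carries no $\epsilon$ and you need its finite $\rho$-variation (Lemma~\ref{lemma_ext_young_rho_var} with $p=q=\rho$, using $\rho<2$), the $\epsilon$ then entering via the outer integration against $R_{X^k-Y^k}$ with $V_\gamma\leq\epsilon^2\omega^{1/\gamma}$---your sketch (``for the `difference' $\Psi$'s \ldots\ outermost $dR_{X^k}$-integration'') elides this case, which the paper treats as its $\Psi_3$.
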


\begin{proof}
From%
\begin{equation*}
\int_{\Delta _{s,w}^{2}}X_{s,u_{1}}^{j}\,dX_{u_{1}}^{i}\,dX_{u_{2}}^{i}=%
\frac{1}{2}\int_{\left[ s,w\right] ^{2}}X_{s,u_{1}\wedge u_{2}}^{j}\,d\left(
X_{u_{1}}^{i}X_{u_{2}}^{i}\right)
\end{equation*}%
we see that%
\begin{equation*}
\mathbf{X}_{s,t}^{j,i,i,k}=\frac{1}{2}\int_{s}^{t}\left( \int_{\left[ s,w%
\right] ^{2}}X_{s,u_{1}\wedge u_{2}}^{j}\,d\left(
X_{u_{1}}^{i}X_{u_{2}}^{i}\right) \right) \,dX_{w}^{k}.
\end{equation*}%
Hence%
\begin{eqnarray*}
&&\left\vert \mathbf{X}_{s,t}^{j,i,i,k}-\mathbf{Y}_{s,t}^{j,i,i,k}\right%
\vert _{L^{2}} \\
&\leq &\frac{1}{2}\left\vert \int_{s}^{t}\Psi _{1}\left( w\right)
\,dX_{w}^{k}\right\vert _{L^{2}}+\frac{1}{2}\left\vert \int_{s}^{t}\Psi
_{2}\left( w\right) \,dX_{w}^{k}\right\vert _{L^{2}}+\frac{1}{2}\left\vert
\int_{s}^{t}\Psi _{3}\left( w\right) \,d\left( X^{k}-Y^{k}\right)
_{w}\right\vert _{L^{2}}
\end{eqnarray*}%
where%
\begin{eqnarray*}
\Psi _{1}\left( w\right) &=&\int_{\left[ s,w\right] ^{2}}\left(
X_{s,u_{1}\wedge u_{2}}^{j}-Y_{s,u_{1}\wedge u_{2}}^{j}\right) \,d\left(
X_{u_{1}}^{i}X_{u_{2}}^{i}\right) \\
\Psi _{2}\left( w\right) &=&\int_{\left[ s,w\right] ^{2}}Y_{s,u_{1}\wedge
u_{2}}^{j}\,d\left(
X_{u_{1}}^{i}X_{u_{2}}^{i}-Y_{u_{1}}^{i}Y_{u_{2}}^{i}\right) \\
\Psi _{3}\left( w\right) &=&\int_{\left[ s,w\right] ^{2}}Y_{s,u_{1}\wedge
u_{2}}^{j}\,d\left( Y_{u_{1}}^{i}Y_{u_{2}}^{i}\right) .
\end{eqnarray*}%
We start with the first integral. From independence and Young $2D$-estimates,%
\begin{eqnarray*}
\left\vert \int_{s}^{t}\Psi _{1}\left( w\right) \,dX_{w}^{k}\right\vert
_{L^{2}}^{2} &=&\int_{\left[ s,t\right] ^{2}}E\left[ \Psi _{1}\left(
w_{1}\right) \Psi _{1}\left( w_{2}\right) \right] \,dE\left[
X_{w_{1}}^{k}X_{w_{2}}^{k}\right] \\
&\leq &c_{1}V_{\rho }\left( E\left[ \Psi _{1}\left( \cdot \right) \Psi
_{1}\left( \cdot \right) \right] ,\left[ s,t\right] ^{2}\right) V_{\rho
}\left( R_{X^{k}}\left[ s,t\right] ^{2}\right) .
\end{eqnarray*}%
Now,%
\begin{eqnarray*}
&&E\left[ \Psi _{1}\left( w_{1}\right) \Psi _{1}\left( w_{2}\right) \right]
\\
&=&\int_{\left[ s,w_{1}\right] ^{2}\times \left[ s,w_{2}\right] ^{2}}E\left[
\left( X_{s,u_{1}\wedge u_{2}}^{j}-Y_{s,u_{1}\wedge u_{2}}^{j}\right)
\,\left( X_{s,v_{1}\wedge v_{2}}^{j}-Y_{s,v_{1}\wedge v_{2}}^{j}\right) %
\right] dE\left[ X_{u_{1}}^{i}X_{u_{2}}^{i}X_{v_{1}}^{i}X_{v_{2}}^{i}\right]
.
\end{eqnarray*}%
In Lemma \ref{lemma_rho_var_4D_estimates} we have seen that the $\rho $%
-variation of $E\left[ X_{\cdot }^{i}X_{\cdot }^{i}X_{\cdot }^{i}X_{\cdot
}^{i}\right] $ is controlled by a symmetric grid-control $\tilde{\omega}_{1}$%
. Hence we can apply Lemma \ref{lemma_ext_young_rho_var} to conclude that%
\begin{eqnarray*}
V_{\rho }\left( E\left[ \Psi _{1}\left( \cdot \right) \Psi _{1}\left( \cdot
\right) \right] ,\left[ s,t\right] ^{2}\right) &\leq &c_{2}V_{\gamma }\left(
R_{X-Y};\left[ s,t\right] ^{2}\right) \tilde{\omega}_{1}\left( \left[ s,t%
\right] ^{4}\right) ^{1/\rho } \\
&\leq &c_{3}\epsilon ^{2}\omega \left( \left[ s,t\right] ^{2}\right)
^{1/\gamma }\omega \left( \left[ s,t\right] ^{2}\right) ^{2/\rho }.
\end{eqnarray*}%
Clearly, $V_{\rho }\left( R_{X^{k}}\left[ s,t\right] ^{2}\right) \leq \omega
\left( \left[ s,t\right] ^{2}\right) ^{1/\rho }$ and therefore%
\begin{equation*}
\left\vert \int_{s}^{t}\Psi _{1}\left( w\right) \,dX_{w}^{k}\right\vert
_{L^{2}}^{2}\leq c_{4}\epsilon ^{2}\omega \left( \left[ s,t\right]
^{2}\right) ^{1/\gamma }\omega \left( \left[ s,t\right] ^{2}\right) ^{3/\rho
}.
\end{equation*}%
Now we come to the second integral. From independence,%
\begin{eqnarray*}
\left\vert \int_{s}^{t}\Psi _{2}\left( w\right) \,dX_{w}^{k}\right\vert
_{L^{2}}^{2} &=&\int_{\left[ s,t\right] ^{2}}E\left[ \Psi _{2}\left(
w_{1}\right) \Psi _{2}\left( w_{2}\right) \right] \,dE\left[
X_{w_{1}}^{k}X_{w_{2}}^{k}\right] . \\
&\leq &c_{5}V_{\gamma }\left( E\left[ \Psi _{2}\left( \cdot \right) \Psi
_{2}\left( \cdot \right) \right] ,\left[ s,t\right] ^{2}\right) V_{\rho
}\left( R_{X^{k}}\left[ s,t\right] ^{2}\right) .
\end{eqnarray*}%
Now%
\begin{eqnarray*}
&&E\left[ \Psi _{2}\left( w_{1}\right) \Psi _{2}\left( w_{2}\right) \right]
\\
&=&\int_{\left[ s,w_{1}\right] ^{2}\times \left[ s,w_{2}\right] ^{2}}E\left[
Y_{s,u_{1}\wedge u_{2}}^{j}Y_{s,v_{1}\wedge v_{2}}^{j}\right] \,dE\left[
\left( X_{u_{1}}^{i}X_{u_{2}}^{i}-Y_{u_{1}}^{i}Y_{u_{2}}^{i}\right) \left(
X_{v_{1}}^{i}X_{v_{2}}^{i}-Y_{v_{1}}^{i}Y_{v_{2}}^{i}\right) \right] \\
&=&:\int_{\left[ s,w_{1}\right] ^{2}\times \left[ s,w_{2}\right] ^{2}}E\left[
Y_{s,u_{1}\wedge u_{2}}^{j}Y_{s,v_{1}\wedge v_{2}}^{j}\right] \,dg\left(
u_{1},u_{2},v_{1},v_{2}\right) .
\end{eqnarray*}%
In Lemma \ref{lemma_rho_var_4D_estimates} we have seen that the $4D$ $\gamma 
$-variation of $g$ is controlled by a symmetric $4D$ grid-control $\tilde{%
\omega}_{2}$ where%
\begin{equation*}
\tilde{\omega}_{2}\left( \left[ s,t\right] ^{4}\right) ^{1/\gamma
}=c_{6}\epsilon ^{2}\omega \left( \left[ s,t\right] ^{2}\right) ^{1/\rho
+1/\gamma }.
\end{equation*}%
Hence%
\begin{equation*}
V_{\gamma }\left( E\left[ \Psi _{2}\left( \cdot \right) \Psi _{2}\left(
\cdot \right) \right] ,\left[ s,t\right] ^{2}\right) \leq c_{7}V_{\rho
}\left( R_{Y^{j}};\left[ s,t\right] ^{2}\right) \tilde{\omega}_{2}\left( %
\left[ s,t\right] ^{4}\right) ^{1/\gamma }\leq c_{8}\epsilon ^{2}\omega
\left( \left[ s,t\right] ^{2}\right) ^{2/\rho +1/\gamma }.
\end{equation*}%
This gives us%
\begin{equation*}
\left\vert \int_{s}^{t}\Psi _{2}\left( w\right) \,dX_{w}^{k}\right\vert
_{L^{2}}^{2}\leq c_{9}\epsilon ^{2}\omega \left( \left[ s,t\right]
^{2}\right) ^{1/\gamma }\omega \left( \left[ s,t\right] ^{2}\right) ^{3/\rho
}.
\end{equation*}%
For the third integral we see again that%
\begin{eqnarray*}
\left\vert \int_{s}^{t}\Psi _{3}\left( w\right) \,d\left( X^{k}-Y^{k}\right)
_{w}\right\vert _{L^{2}}^{2} &=&\int_{\left[ s,t\right] ^{2}}E\left[ \Psi
_{3}\left( w_{1}\right) \Psi _{3}\left( w_{2}\right) \right] \,dE\left[
\left( X^{k}-Y^{k}\right) _{w_{1}}\left( X^{k}-Y^{k}\right) _{w_{2}}\right]
\\
&\leq &c_{10}V_{\rho }\left( E\left[ \Psi _{3}\left( \cdot \right) \Psi
_{3}\left( \cdot \right) \right] ,\left[ s,t\right] ^{2}\right) V_{\gamma
}\left( R_{X-Y},\left[ s,t\right] ^{2}\right) .
\end{eqnarray*}%
From%
\begin{equation*}
E\left[ \Psi _{3}\left( w_{1}\right) \Psi _{3}\left( w_{2}\right) \right]
=\int_{\left[ s,w_{1}\right] ^{2}\times \left[ s,w_{2}\right] ^{2}}E\left[
Y_{s,u_{1}\wedge u_{2}}^{j}Y_{s,v_{1}\wedge v_{2}}^{j}\right] \,dE\left[
Y_{u_{1}}^{i}Y_{u_{2}}^{i}Y_{v_{1}}^{i}Y_{v_{2}}^{i}\right]
\end{equation*}%
we see that we can apply Lemma \ref{lemma_ext_young_rho_var} to obtain%
\begin{equation*}
V_{\rho }\left( E\left[ \Psi _{3}\left( \cdot \right) \Psi _{3}\left( \cdot
\right) \right] ,\left[ s,t\right] ^{2}\right) \leq c_{11}V_{\rho }\left(
R_{Y^{j}};\left[ s,t\right] ^{2}\right) \omega \left( \left[ s,t\right]
^{2}\right) ^{2/\rho }\leq c_{11}\omega \left( \left[ s,t\right] ^{2}\right)
^{3/\rho }.
\end{equation*}%
Clearly, $V_{\gamma }\left( R_{X-Y},\left[ s,t\right] ^{2}\right) \leq
\epsilon ^{2}\omega \left( \left[ s,t\right] ^{2}\right) ^{1/\gamma }$ and
hence%
\begin{equation*}
\left\vert \int_{s}^{t}\Psi _{3}\left( w\right) \,d\left( X^{k}-Y^{k}\right)
_{w}\right\vert _{L^{2}}^{2}\leq c_{12}\epsilon ^{2}\omega \left( \left[ s,t%
\right] ^{2}\right) ^{1/\gamma }\omega \left( \left[ s,t\right] ^{2}\right)
^{3/\rho }
\end{equation*}%
which gives the claim.
\end{proof}

\begin{remark}
Even though Proposition \ref{prop_main_estimates_n1_n2}, \ref%
{prop_main_estimates_n3} and \ref{prop_main_estimates_n4} are only
formulated for Gaussian processes with sample paths of finite variation, the
estimate $\left( \ref{eqn_key_estimate}\right) $ is valid also for general
Gaussian rough paths for $n=1,2,3,4$. Indeed, this follows from the fact
that Gaussian rough paths are just defined as $L^{2}$ limits of smooth
paths, cf. \cite{FV10AIHP}.
\end{remark}

\subsection{Higher levels\label{subsection_higher_levels}}

Once we have shown our desired estimates for the first four levels, we can
use induction to obtain also the higher levels. This is done in the next
proposition.

\begin{proposition}
\label{prop_key_estimate_higher_levels}Let $X$ and $Y$ be Gaussian processes
as in Theorem \ref{theorem_main01_intro}. Let $\rho $, $\gamma $ be fixed
and $\omega $ be a control. Assume that there are constants $\tilde{C}=%
\tilde{C}\left( n\right) $ such that%
\begin{equation*}
\left\vert \mathbf{X}_{s,t}^{n}\right\vert _{L^{2}},\left\vert \mathbf{Y}%
_{s,t}^{n}\right\vert _{L^{2}}\leq \tilde{C}\left( n\right) \frac{\omega
\left( s,t\right) ^{\frac{n}{2\rho }}}{\beta \left( \frac{n}{2\rho }\right) !%
}
\end{equation*}%
holds for $n=1,\ldots ,\left[ 2\rho \right] $ and constants $C=C\left(
n\right) $ such that%
\begin{equation*}
\left\vert \mathbf{X}_{s,t}^{n}-\mathbf{Y}_{s,t}^{n}\right\vert _{L^{2}}\leq
C\left( n\right) \epsilon \omega \left( s,t\right) ^{\frac{1}{2\gamma }}%
\frac{\omega \left( s,t\right) ^{\frac{n-1}{2\rho }}}{\beta \left( \frac{n-1%
}{2\rho }\right) !}
\end{equation*}%
holds for $n=1,\ldots ,\left[ 2\rho \right] +1$ and every~$\left( s,t\right)
\in \Delta $. Here, $\epsilon >0$ and $\beta $ is a positive constant such
that%
\begin{equation*}
\beta \geq 4\rho \left( 1+2^{\left( \left[ 2\rho \right] +1\right) /2\rho
}\left( \zeta \left( \frac{\left[ 2\rho \right] +1}{2\rho }\right) -1\right)
\right)
\end{equation*}
where$\ \zeta $ is just the usual Riemann zeta function. Then for every $%
n\in \mathbb{N}$ there is a constant $C=C\left( n\right) $ such that%
\begin{equation*}
\left\vert \mathbf{X}_{s,t}^{n}-\mathbf{Y}_{s,t}^{n}\right\vert _{L^{2}}\leq
C\epsilon \omega \left( s,t\right) ^{\frac{1}{2\gamma }}\frac{\omega \left(
s,t\right) ^{\frac{n-1}{2\rho }}}{\beta \left( \frac{n-1}{2\rho }\right) !}
\end{equation*}%
holds for every~$\left( s,t\right) \in \Delta $.
\end{proposition}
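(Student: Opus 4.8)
The plan is to run an induction on the level $n$, following closely the proof of Lyons' Extension Theorem (\cite{L98}), the only new point being that all estimates live in $L^{2}$ rather than being deterministic. The base cases $n=1,\ldots ,[2\rho ]+1$ are precisely the hypotheses of the proposition; in the situation of interest they are supplied by Propositions \ref{prop_main_estimates_n1_n2}, \ref{prop_main_estimates_n3} and \ref{prop_main_estimates_n4} after absorbing the (bounded, level-dependent) factors $\beta (\tfrac{n-1}{2\rho })!$ and $\beta (\tfrac{n}{2\rho })!$ into $C(n)$, $\tilde C(n)$. For the inductive step I fix $n\geq [2\rho ]+2$ and assume the asserted bound on $|\mathbf{X}^{m}_{s,t}-\mathbf{Y}^{m}_{s,t}|_{L^{2}}$ for every $m<n$. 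I will also use the plain moment bounds $|\mathbf{X}^{m}_{s,t}|_{L^{2}},|\mathbf{Y}^{m}_{s,t}|_{L^{2}}\leq \tilde C(m)\,\omega (s,t)^{m/(2\rho )}/\big(\beta (\tfrac{m}{2\rho })!\big)$ for \emph{all} $m$; for $m\leq [2\rho ]$ this is the hypothesis, and for larger $m$ it follows from Proposition \ref{prop_moments_lp} after rewriting its constant (the factor $\beta (\tfrac{m}{2\rho })!$ being a finite $m$-dependent number).

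Write $\mathbf{Z}^{m}_{s,t}:=\mathbf{X}^{m}_{s,t}-\mathbf{Y}^{m}_{s,t}$. The first step is the algebraic identity coming from multiplicativity of $S_{N}(\mathbf{X})$ and $S_{N}(\mathbf{Y})$: for $s<u<t$,
\begin{equation*}
\mathbf{Z}^{n}_{s,t}-\mathbf{Z}^{n}_{s,u}-\mathbf{Z}^{n}_{u,t}=\sum_{i=1}^{n-1}\Big(\mathbf{Z}^{n-i}_{s,u}\otimes \mathbf{X}^{i}_{u,t}+\mathbf{Y}^{n-i}_{s,u}\otimes \mathbf{Z}^{i}_{u,t}\Big).
\end{equation*}
Taking $L^{2}$-norms, bounding each product by Cauchy--Schwarz and then replacing the resulting $L^{4}$-norms by $L^{2}$-norms via equivalence of $L^{q}$-norms on the (inhomogeneous) Wiener chaos (\cite[Proposition 15.19, Theorem D.8]{FV10}), exactly as in the proof of Proposition \ref{proposition_key_shuffle}, and inserting the induction hypotheses for all factors (which are of level $<n$), one obtains that $\Psi (s,t):=|\mathbf{Z}^{n}_{s,t}|_{L^{2}}$ is almost superadditive,
\begin{equation*}
\Psi (s,t)\leq \Psi (s,u)+\Psi (u,t)+\delta (s,u,t),
\end{equation*}
with the defect $\delta (s,u,t)$ controlled, after collapsing the sum over $i$ by Lyons' neo-classical inequality and using superadditivity of $\omega $, by
\begin{equation*}
\delta (s,u,t)\leq K(n)\,\epsilon \,\omega (s,t)^{\frac{1}{2\gamma }}\,\frac{\omega (s,u)^{\frac{n-1}{2\rho }}+\omega (u,t)^{\frac{n-1}{2\rho }}}{\beta (\tfrac{n-1}{2\rho })!}.
\end{equation*}
Carrying the denominator $\beta (\tfrac{n-1}{2\rho })!$ from the outset is exactly what lets the neo-classical inequality reproduce the \emph{same} denominator up to a constant independent of $n$.

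The second step is the sewing argument. Since $n\geq [2\rho ]+2$ the exponent $\tfrac{n-1}{2\rho }\geq \tfrac{[2\rho ]+1}{2\rho }>1$, so $\delta $ is of strictly higher order than $\Psi $. Given any partition $s=r_{0}<\cdots <r_{M}=t$ one repeatedly discards an interior point $r_{j}$ with $\omega (r_{j-1},r_{j+1})\leq \tfrac{2}{M-1}\omega (s,t)$ (such a point exists by superadditivity), each time paying the error $\delta $ above; summing over the $M-1$ removals produces a numerical series dominated by $1+2^{([2\rho ]+1)/(2\rho )}\big(\zeta (\tfrac{[2\rho ]+1}{2\rho })-1\big)$, the worst case being the smallest admissible exponent $\tfrac{[2\rho ]+1}{2\rho }$. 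Letting the mesh tend to zero, the diagonal contributions $\sum_{j}\Psi (r_{j},r_{j+1})$ vanish (here one uses $\tfrac{n}{2\rho }>1$ together with the moment bounds), and one is left with
\begin{equation*}
\Psi (s,t)\leq C(n)\,\epsilon \,\omega (s,t)^{\frac{1}{2\gamma }}\,\frac{\omega (s,t)^{\frac{n-1}{2\rho }}}{\beta (\tfrac{n-1}{2\rho })!},
\end{equation*}
which is the claim. The hypothesis $\beta \geq 4\rho \big(1+2^{([2\rho ]+1)/(2\rho )}(\zeta (\tfrac{[2\rho ]+1}{2\rho })-1)\big)$ is precisely what makes the combined numerical factor from the neo-classical inequality (the $4\rho $) and from the refinement series above be absorbed, so the induction propagates with a finite new constant $C(n)$.

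The delicate point is not any individual estimate but the uniform bookkeeping: one must verify that the neo-classical collapse of $\sum_{i=1}^{n-1}$, the partition-refinement series, and the numerical constants $4\rho $ and $2^{([2\rho ]+1)/(2\rho )}$ all fit together so that the recursion closes with one fixed $\beta $. The multiplicativity identity, the passage from $L^{4}$ to $L^{2}$ inside a fixed Wiener chaos, and the point-dropping mechanism are all routine once the ledger is set up correctly.
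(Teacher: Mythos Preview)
Your proof is correct and follows essentially the same route as the paper's: induction on $n$, the multiplicativity identity for the difference, hypercontractivity in the Wiener chaos to turn products into products of $L^{2}$-norms, the neo-classical inequality to collapse the sum, and Lyons' point-dropping argument with the $\zeta$-series to close the recursion under the stated condition on $\beta$. The only cosmetic difference is that the paper carries the truncated object $\bar{\mathbf{X}}^{D}=(1,\mathbf{X}^{1},\ldots ,\mathbf{X}^{n},0)$ through the partition so that the top level vanishes at the trivial partition (and then passes to the limit $|D|\to 0$ via a.s.\ $\Rightarrow L^{2}$ in the chaos), whereas you work directly with $\Psi (s,t)=|\mathbf{X}^{n}_{s,t}-\mathbf{Y}^{n}_{s,t}|_{L^{2}}$ and kill the diagonal sum $\sum_{j}\Psi (r_{j},r_{j+1})$ using the a priori moment bounds of Proposition~\ref{prop_moments_lp}; the two organisations are equivalent.
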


\begin{proof}
From Proposition \ref{prop_moments_lp} we know that for every $n\in \mathbb{N%
}$ there are constants $\tilde{C}\left( n\right) $ such that%
\begin{equation*}
\left\vert \mathbf{X}_{s,t}^{n}\right\vert _{L^{2}},\left\vert \mathbf{Y}%
_{s,t}^{n}\right\vert _{L^{2}}\leq \tilde{C}\frac{\omega \left( s,t\right) ^{%
\frac{n}{2\rho }}}{\beta \left( \frac{n}{2\rho }\right) !}
\end{equation*}%
holds for all $s<t$. We will proof the assertion by induction over $n$. The
induction basis is fulfiled by assumption. Suppose that the statement is
true for $k=1,\ldots ,n$ where $n\geq \left[ 2\rho \right] +1$. We will show
the statement for $n+1$. Let $D=\left\{ s=t_{0}<t_{1}<\ldots
<t_{j}=t\right\} $ be any partition of $\left[ s,t\right] $. Set%
\begin{eqnarray*}
\mathbf{\bar{X}}_{s,t} &:&=\left( 1,\mathbf{X}_{s,t}^{1},\ldots ,\mathbf{X}%
_{s,t}^{n},0\right) \in T^{n+1}\left( \mathbb{R}^{d}\right) , \\
\mathbf{\bar{X}}_{s,t}^{D} &:&=\mathbf{\bar{X}}_{s,t_{1}}\otimes \ldots
\otimes \mathbf{\bar{X}}_{t_{j-1},t}
\end{eqnarray*}%
and the same for $\mathbf{Y}$. We know that $\lim_{\left\vert D\right\vert
\rightarrow 0}\mathbf{\bar{X}}_{s,t}^{D}=S_{n+1}\left( \mathbf{X}\right)
_{s,t}$ a.s. and the same holds for $\mathbf{Y}$ (indeed, this is just the
definition of the Lyons lift, cf. \cite[Theorem 2.2.1]{L98}). By
multiplicativity, $\pi _{k}\left( \mathbf{\bar{X}}_{s,t}^{D}\right) =\mathbf{%
X}_{s,t}^{k}$ for $k\leq n$. We will show that for any dissection $D$ we have%
\begin{equation*}
\left\vert \pi _{n+1}\left( \mathbf{\bar{X}}_{s,t}^{D}-\mathbf{\bar{Y}}%
_{s,t}^{D}\right) \right\vert _{L^{2}}\leq C\left( n+1\right) \epsilon
\omega \left( s,t\right) ^{\frac{1}{2\gamma }}\frac{\omega \left( s,t\right)
^{\frac{n}{2\rho }}}{\beta \left( \frac{n}{2\rho }\right) !}.
\end{equation*}%
We use the notation $\left( \mathbf{X}^{D}\right) ^{k}:=\pi _{k}\left( 
\mathbf{\bar{X}}^{D}\right) $. Assume that $j\geq 2$. Let $D^{\prime }$ be
the partition of $\left[ s,t\right] $ obtained by removing a point $t_{i}$
of the dissection $D$ for which%
\begin{equation*}
\omega \left( t_{i-1},t_{i+1}\right) \leq \left\{ 
\begin{array}{ccc}
\frac{2\omega \left( s,t\right) }{j-1} & \text{for} & j\geq 3 \\ 
\omega \left( s,t\right) & \text{for} & j=2%
\end{array}%
\right.
\end{equation*}%
holds (Lemma 2.2.1 in \cite{L98} shows that there is indeed such a point).
By the triangle inequality,%
\begin{equation*}
\left\vert \left( \mathbf{X}^{D}-\mathbf{Y}^{D}\right) ^{n+1}\right\vert
_{L^{2}}\leq \left\vert \left( \mathbf{X}^{D}-\mathbf{X}^{D^{\prime
}}\right) ^{n+1}-\left( \mathbf{Y}^{D}-\mathbf{Y}^{D^{\prime }}\right)
^{n+1}\right\vert _{L^{2}}+\left\vert \left( \mathbf{X}^{D^{\prime }}-%
\mathbf{Y}^{D^{\prime }}\right) ^{n+1}\right\vert _{L^{2}}.
\end{equation*}%
We estimate the first term on the right hand side. As seen in the proof of 
\cite[Theorem 2.2.1]{L98}, $\left( \mathbf{X}_{s,t}^{D}-\mathbf{X}%
_{s,t}^{D^{\prime }}\right) ^{n+1}=\sum_{l=1}^{n}\mathbf{X}%
_{t_{i-1},t_{i}}^{l}\mathbf{X}_{t_{i},t_{i+1}}^{n+1-l}$. Set $\mathbf{R}^{l}=%
\mathbf{Y}^{l}\mathbf{-X}^{l}$. Then%
\begin{eqnarray*}
&&\left( \mathbf{X}_{s,t}^{D}-\mathbf{X}_{s,t}^{D^{\prime }}\right)
^{n+1}-\left( \mathbf{Y}_{s,t}^{D}-\mathbf{Y}_{s,t}^{D^{\prime }}\right)
^{n+1} \\
&=&\sum_{l=1}^{n}\mathbf{X}_{t_{i-1},t_{i}}^{l}\mathbf{X}%
_{t_{i},t_{i+1}}^{n+1-l}-\left( \mathbf{X}_{t_{i-1},t_{i}}^{l}+\mathbf{R}%
_{t_{i-1},t_{i}}^{l}\right) \left( \mathbf{X}_{t_{i},t_{i+1}}^{n+1-l}+%
\mathbf{R}_{t_{i},t_{i+1}}^{n+1-l}\right) \\
&=&\sum_{l=1}^{n}-\mathbf{X}_{t_{i-1},t_{i}}^{l}\mathbf{R}%
_{t_{i},t_{i+1}}^{n+1-l}-\mathbf{R}_{t_{i-1},t_{i}}^{l}\mathbf{Y}%
_{t_{i},t_{i+1}}^{n+1-l}.
\end{eqnarray*}%
By the triangle inequality, equivalence of $L^{q}$-norms in the Wiener
Chaos, our moment estimate for $\mathbf{X}^{k}$ and $\mathbf{Y}^{k}$ and the
induction hypothesis,%
\begin{eqnarray*}
&&\left\vert \left( \mathbf{X}_{s,t}^{D}-\mathbf{X}_{s,t}^{D^{\prime
}}\right) ^{n+1}-\left( \mathbf{Y}_{s,t}^{D}-\mathbf{Y}_{s,t}^{D^{\prime
}}\right) ^{n+1}\right\vert _{L^{2}} \\
&\leq &c_{1}\left( n+1\right) \sum_{l=1}^{n}\left\vert \mathbf{X}%
_{t_{i-1},t_{i}}^{l}\right\vert _{L^{2}}\left\vert \mathbf{R}%
_{t_{i},t_{i+1}}^{n+1-l}\right\vert _{L^{2}}+\left\vert \mathbf{R}%
_{t_{i-1},t_{i}}^{l}\right\vert _{L^{2}}\left\vert \mathbf{Y}%
_{t_{i},t_{i+1}}^{n+1-l}\right\vert _{L^{2}} \\
&\leq &c_{2}\left( n+1\right) \sum_{l=1}^{n}\epsilon \omega \left(
t_{i},t_{i+1}\right) ^{\frac{1}{2\gamma }}\frac{\omega \left(
t_{i-1},t_{i}\right) ^{\frac{l}{2\rho }}}{\beta \left( \frac{l}{2\rho }%
\right) !}\frac{\omega \left( t_{i},t_{i+1}\right) ^{\frac{n-l}{2\rho }}}{%
\beta \left( \frac{n-l}{2\rho }\right) !} \\
&&+\epsilon \omega \left( t_{i-1},t_{i}\right) ^{\frac{1}{2\gamma }}\frac{%
\omega \left( t_{i-1},t_{i}\right) ^{\frac{l-1}{2\rho }}}{\beta \left( \frac{%
l-1}{2\rho }\right) !}\frac{\omega \left( t_{i},t_{i+1}\right) ^{\frac{n+1-l%
}{2\rho }}}{\beta \left( \frac{n+1-l}{2\rho }\right) !} \\
&\leq &2c_{2}\epsilon \omega \left( s,t\right) ^{\frac{1}{2\gamma }%
}\sum_{l=0}^{n}\frac{\omega \left( t_{i-1},t_{i}\right) ^{\frac{l}{2\rho }}}{%
\beta \left( \frac{l}{2\rho }\right) !}\frac{\omega \left(
t_{i},t_{i+1}\right) ^{\frac{n-l}{2\rho }}}{\beta \left( \frac{n-l}{2\rho }%
\right) !} \\
&=&\frac{4\rho }{\beta ^{2}}c_{2}\epsilon \omega \left( s,t\right) ^{\frac{1%
}{2\gamma }}\frac{1}{2\rho }\sum_{l=0}^{n}\frac{\omega \left(
t_{i-1},t_{i}\right) ^{\frac{l}{2\rho }}}{\left( \frac{l}{2\rho }\right) !}%
\frac{\omega \left( t_{i},t_{i+1}\right) ^{\frac{n-l}{2\rho }}}{\left( \frac{%
n-l}{2\rho }\right) !} \\
&\leq &4\rho c_{2}\epsilon \omega \left( s,t\right) ^{\frac{1}{2\gamma }}%
\frac{\omega \left( t_{i-1},t_{i+1}\right) ^{\frac{n}{2\rho }}}{\beta
^{2}\left( \frac{n}{2\rho }\right) !}
\end{eqnarray*}%
where we used the neo-classical inequality (cf. \cite{HH10}) and
superadditivity of the control function. Hence for $j\geq 3$,%
\begin{eqnarray*}
\left\vert \left( \mathbf{X}_{s,t}^{D}-\mathbf{X}_{s,t}^{D^{\prime }}\right)
^{n+1}-\left( \mathbf{Y}_{s,t}^{D}-\mathbf{Y}_{s,t}^{D^{\prime }}\right)
^{n+1}\right\vert _{L^{2}} &\leq &4\rho c_{2}\epsilon \omega \left(
s,t\right) ^{\frac{1}{2\gamma }}\frac{\omega \left( t_{i-1},t_{i+1}\right) ^{%
\frac{n}{2\rho }}}{\beta ^{2}\left( \frac{n}{2\rho }\right) !} \\
&\leq &\left( \frac{2}{j-1}\right) ^{\frac{n}{2\rho }}4\rho c_{2}\epsilon
\omega \left( s,t\right) ^{\frac{1}{2\gamma }}\frac{\omega \left( s,t\right)
^{\frac{n}{2\rho }}}{\beta ^{2}\left( \frac{n}{2\rho }\right) !}.
\end{eqnarray*}%
For $j=2$ we get%
\begin{equation*}
\left\vert \left( \mathbf{X}_{s,t}^{D}-\mathbf{X}_{s,t}^{D^{\prime }}\right)
^{n+1}-\left( \mathbf{Y}_{s,t}^{D}-\mathbf{Y}_{s,t}^{D^{\prime }}\right)
^{n+1}\right\vert _{L^{2}}\leq 4\rho c_{2}\epsilon \omega \left( s,t\right)
^{\frac{1}{2\gamma }}\frac{\omega \left( s,t\right) ^{\frac{n}{2\rho }}}{%
\beta ^{2}\left( \frac{n}{2\rho }\right) !}
\end{equation*}%
but then $D^{\prime }=\left\{ s,t\right\} $ and therefore $\left\vert \left( 
\mathbf{X}_{s,t}^{D^{\prime }}-\mathbf{Y}_{s,t}^{D^{\prime }}\right)
^{n+1}\right\vert _{L^{2}}=0$. Hence by successively dropping points we see
that%
\begin{equation*}
\left\vert \left( \mathbf{X}_{s,t}^{D}-\mathbf{Y}_{s,t}^{D}\right)
^{n+1}\right\vert _{L^{2}}\leq \left( 1+\sum_{j=3}^{\infty }\left( \frac{2}{%
j-1}\right) ^{\frac{n}{2\rho }}\right) 4\rho c_{2}\epsilon \omega \left(
s,t\right) ^{\frac{1}{2\gamma }}\frac{\omega \left( s,t\right) ^{\frac{n}{%
2\rho }}}{\beta ^{2}\left( \frac{n}{2\rho }\right) !}
\end{equation*}%
holds for all partitions $D$. Since $n\geq \left[ 2\rho \right] +1$,%
\begin{equation*}
\sum_{j=3}^{\infty }\left( \frac{2}{j-1}\right) ^{\frac{n}{2\rho }}\leq
\sum_{j=3}^{\infty }\left( \frac{2}{j-1}\right) ^{\frac{\left[ 2\rho \right]
+1}{2\rho }}\leq 2^{\frac{\left[ 2\rho \right] +1}{2\rho }}\left( \zeta
\left( \frac{\left[ 2\rho \right] +1}{2\rho }\right) -1\right)
\end{equation*}%
and thus%
\begin{equation*}
\left\vert \left( \mathbf{X}_{s,t}^{D}-\mathbf{Y}_{s,t}^{D}\right)
^{n+1}\right\vert _{L^{2}}\leq \frac{4\rho \left( 1+2^{\frac{\left[ 2\rho %
\right] +1}{2\rho }}\left( \zeta \left( \frac{\left[ 2\rho \right] +1}{2\rho 
}\right) -1\right) \right) }{\beta }c_{2}\epsilon \omega \left( s,t\right) ^{%
\frac{1}{2\gamma }}\frac{\omega \left( s,t\right) ^{\frac{n}{2\rho }}}{\beta
\left( \frac{n}{2\rho }\right) !}.
\end{equation*}%
By the choice of $\beta $, we get the uniform bound%
\begin{equation*}
\left\vert \left( \mathbf{X}_{s,t}^{D}-\mathbf{Y}_{s,t}^{D}\right)
^{n+1}\right\vert _{L^{2}}\leq c_{2}\epsilon \omega \left( s,t\right) ^{%
\frac{1}{2\gamma }}\frac{\omega \left( s,t\right) ^{\frac{n}{2\rho }}}{\beta
\left( \frac{n}{2\rho }\right) !}
\end{equation*}%
which holds for all partitions $D$. Noting that a.s. convergence implies
convergence in $L^{2}$ in the Wiener chaos, we obtain our claim by sending $%
|D|\rightarrow 0$.
\end{proof}

\begin{corollary}
\label{cor_diff_estimates}Let $\left( X,Y\right) $, $\omega $, $\rho $ and $%
\gamma $ as in Lemma \ref{lemma_diff_alldifferent}. Then for all $n\in 
\mathbb{N}$ there are constants $C=C\left( \rho ,\gamma ,n\right) $ such that%
\begin{equation*}
\left\vert \mathbf{X}_{s,t}^{n}-\mathbf{Y}_{s,t}^{n}\right\vert _{L^{2}}\leq
C\epsilon \omega \left( \lbrack s,t]^{2}\right) ^{\frac{1}{2\gamma }}\omega
\left( \lbrack s,t]^{2}\right) ^{\frac{n-1}{2\rho }}
\end{equation*}%
holds for every $\left( s,t\right) \in \Delta $ where $\epsilon
^{2}=V_{\infty }\left( R_{X-Y},\left[ 0,1\right] ^{2}\right) ^{1-\rho
/\gamma }$.
\end{corollary}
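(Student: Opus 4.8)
The plan is to feed the level-by-level estimates already established into the Lyons-type induction of Proposition~\ref{prop_key_estimate_higher_levels}, so that the argument is essentially one of assembly and normalization. First I would fix a constant $\beta=\beta(\rho)$ large enough that $\beta\geq 4\rho(1+2^{([2\rho]+1)/2\rho}(\zeta(([2\rho]+1)/2\rho)-1))$, so that the hypothesis on $\beta$ in Proposition~\ref{prop_key_estimate_higher_levels} is met, and record that, since $\rho\in[1,2)$, one has $[2\rho]\in\{1,2,3\}$ and hence $[2\rho]+1\le 4$.

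Next I would verify the two families of hypotheses of Proposition~\ref{prop_key_estimate_higher_levels}. The a priori moment bounds on the levels $n=1,\dots,[2\rho]$ come from Proposition~\ref{prop_moments_lp} applied separately to $X$ and to $Y$ (recall that control of the $\rho$-variation of $R_{(X,Y)}$ by $\omega$ entails control of the $\rho$-variation of $R_X$ and $R_Y$ by $\omega$). The difference bounds on the levels $n=1,\dots,[2\rho]+1$, being levels not exceeding $4$, are furnished by Propositions~\ref{prop_main_estimates_n1_n2}, \ref{prop_main_estimates_n3} and \ref{prop_main_estimates_n4}. Two small reconciliations are needed: first, those propositions state their estimates with the local quantity $V_{\infty}(R_{X-Y},[s,t]^2)^{1-\rho/\gamma}$ rather than $\epsilon^2=V_{\infty}(R_{X-Y},[0,1]^2)^{1-\rho/\gamma}$, but since $V_{\infty}$ is monotone under enlargement of the rectangle the local quantity is at most $\epsilon^2$, so the estimates hold a fortiori with $\epsilon$; second, since only the finite initial range $n\le[2\rho]+1\le 4$ is involved, the factorial weights $\beta(\tfrac{n}{2\rho})!$ and $\beta(\tfrac{n-1}{2\rho})!$ appearing in the hypotheses of Proposition~\ref{prop_key_estimate_higher_levels} can be absorbed into the (level-dependent) constants by multiplying through.

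With the hypotheses in place, Proposition~\ref{prop_key_estimate_higher_levels} yields, for every $n\in\mathbb{N}$, a constant $C=C(\rho,\gamma,n)$ with
\begin{equation*}
\left\vert \mathbf{X}_{s,t}^{n}-\mathbf{Y}_{s,t}^{n}\right\vert_{L^{2}}\leq C\epsilon\,\omega([s,t]^2)^{\frac{1}{2\gamma}}\frac{\omega([s,t]^2)^{\frac{n-1}{2\rho}}}{\beta\left(\frac{n-1}{2\rho}\right)!}
\end{equation*}
for all $(s,t)\in\Delta$; discarding the harmless factor $\beta(\tfrac{n-1}{2\rho})!\ge 1$ into the constant gives precisely the claimed inequality. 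The genuinely hard work --- the $L^2$ control of the first four tensor levels via the shuffle-algebra reduction and two-, three- and four-dimensional Young integration --- has already been carried out in the preceding subsections; the only remaining (mild) obstacle is the cosmetic one of casting those estimates into the normalized, factorial-weighted form needed to ignite the induction, together with the monotonicity remark relating the local and global versions of $\epsilon$.
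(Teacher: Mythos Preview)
Your proposal is correct and follows essentially the same approach as the paper: verify the base cases $n\le 4$ via Propositions~\ref{prop_main_estimates_n1_n2}, \ref{prop_main_estimates_n3}, \ref{prop_main_estimates_n4} and the moment bounds of Proposition~\ref{prop_moments_lp}, cast them into the factorial-normalized form, and then invoke Proposition~\ref{prop_key_estimate_higher_levels} using $[2\rho]+1\le 4$. Your explicit remark on the monotonicity of $V_\infty$ (reconciling the local $\epsilon$ in the propositions with the global one in the corollary) is a clarification the paper leaves implicit.
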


\begin{proof}
For $n=1,2,3,4$ this is the content of Proposition \ref%
{prop_main_estimates_n1_n2}, \ref{prop_main_estimates_n3} and \ref%
{prop_main_estimates_n4}. By making the constants larger if necessary, we
also get%
\begin{equation*}
\left\vert \mathbf{X}_{s,t}^{n}-\mathbf{Y}_{s,t}^{n}\right\vert _{L^{2}}\leq
c\left( n\right) \epsilon \omega \left( \lbrack s,t]^{2}\right) ^{\frac{1}{%
2\gamma }}\frac{\omega \left( \lbrack s,t]^{2}\right) ^{\frac{n-1}{2\rho }}}{%
\beta \left( \frac{n-1}{2\rho }\right) !}
\end{equation*}%
with $\beta $ chosen as in Proposition \ref{prop_key_estimate_higher_levels}%
. We have already seen that%
\begin{equation*}
\left\vert \mathbf{X}_{s,t}^{n}\right\vert _{L^{2}},\left\vert \mathbf{Y}%
_{s,t}^{n}\right\vert _{L^{2}}\leq \tilde{c}\left( n\right) \frac{\omega
\left( \lbrack s,t]^{2}\right) ^{\frac{n}{2\rho }}}{\beta \left( \frac{n}{%
2\rho }\right) !}
\end{equation*}%
holds for constants $\tilde{c}\left( n\right) $ where $n=1,2,3$. Since $\rho
<2$, we have $[2\rho ]+1\leq 4$. From Proposition \ref%
{prop_key_estimate_higher_levels} we can conclude that%
\begin{equation*}
\left\vert \mathbf{X}_{s,t}^{n}-\mathbf{Y}_{s,t}^{n}\right\vert _{L^{2}}\leq
c\left( n\right) \epsilon \omega \left( \lbrack s,t]^{2}\right) ^{\frac{1}{%
2\gamma }}\frac{\omega \left( \lbrack s,t]^{2}\right) ^{\frac{n-1}{2\rho }}}{%
\beta \left( \frac{n-1}{2\rho }\right) !}
\end{equation*}%
holds for every $n\in \mathbb{N}$ and constants $c\left( n\right) $. Setting 
$C\left( n\right) =\frac{c\left( n\right) }{\beta \left( \frac{n-1}{2\rho }%
\right) !}$ gives our claim.
\end{proof}

\section{Main result\label{section_main_result}}

Assume that $X$ is a Gaussian process as in Theorem \ref%
{theorem_main01_intro} with paths of finite $p$-variation. Consider a
sequence $\left( \Lambda _{k}\right) _{k\in \mathbb{N}}$ of continuous
operators%
\begin{equation*}
\Lambda _{k}\colon C^{p-var}\left( \left[ 0,1\right] ,\mathbb{R}\right)
\rightarrow C^{1-var}\left( \left[ 0,1\right] ,\mathbb{R}\right) .
\end{equation*}%
If $x=\left( x^{1},\ldots ,x^{d}\right) \in C^{p-var}\left( \left[ 0,1\right]
,\mathbb{R}^{d}\right) $, we will write $\Lambda _{k}\left( x\right) =\left(
\Lambda _{k}\left( x^{1}\right) ,\ldots ,\Lambda _{k}\left( x^{d}\right)
\right) $. Assume that $\Lambda _{k}$ fulfils the following conditions:

\begin{enumerate}
\item $\Lambda _{k}\left( x\right) \rightarrow x$ in the $\left\vert \cdot
\right\vert _{\infty }$-norm if $k\rightarrow \infty $ for every $x\in
C^{p-var}\left( \left[ 0,1\right] ,\mathbb{R}^{d}\right) .$

\item If $R_{X}$ has finite controlled $\rho $-variation, then, for some $%
C=C\left( \rho \right) $,%
\begin{equation*}
\sup_{k,l\in \mathbb{N}}\left\vert R_{\left( \Lambda _{k}\left( X\right)
,\Lambda _{l}\left( X\right) \right) }\right\vert _{\rho -var;\left[ 0,1%
\right] ^{2}}\leq C\left\vert R_{X}\right\vert _{\rho -var;\left[ 0,1\right]
^{2}}.
\end{equation*}
\end{enumerate}

Our main result is the following:

\begin{theorem}
\label{theorem_main01}Let $X$ be a Gaussian process as in Theorem \ref%
{theorem_main01_intro} for $\rho <2$ and $K\geq V_{\rho }\left( R_{X},\left[
0,1\right] ^{2}\right) $. Then there is an enhanced Gaussian process $%
\mathbf{X}$ with sample paths in $C^{0,p-var}\left( \left[ 0,1\right] ,G^{%
\left[ p\right] }\left( \mathbb{R}^{d}\right) \right) $ w.r.t. $\left(
\Lambda _{k}\right) _{k\in \mathbb{N}}$ where $p\in \left( 2\rho ,4\right) $%
, i.e.%
\begin{equation*}
\left\vert \rho _{p-var}\left( S_{\left[ p\right] }\left( \Lambda _{k}\left(
X\right) \right) ,\mathbf{X}\right) \right\vert _{L^{r}}\rightarrow 0
\end{equation*}%
for $k\rightarrow \infty $ and every $r\geq 1$. Moreover, choose $\gamma $
such that $\gamma >\rho $ and $\frac{1}{\gamma }+\frac{1}{\rho }>1$. Then
for $q>2\gamma $ and every $N\in \mathbb{N}$ there is a constant $C=C\left(
q,\rho ,\gamma ,K,N\right) $ such that%
\begin{equation*}
\left\vert \rho _{q-var}\left( S_{N}\left( \Lambda _{k}\left( X\right)
\right) ,S_{N}\left( \mathbf{X}\right) \right) \right\vert _{L^{r}}\leq
Cr^{N/2}\sup_{0\leq t\leq 1}\left\vert \Lambda _{k}\left( X\right)
_{t}-X_{t}\right\vert _{L^{2}\left( \mathbb{R}^{d}\right) }^{1-\frac{\rho }{%
\gamma }}
\end{equation*}%
holds for every $k\in \mathbb{N}$.
\end{theorem}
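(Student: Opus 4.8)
The plan is to realise $\mathbf X$ as an $L^{r}$-limit of the smooth canonical lifts $S_{N}(\Lambda_{k}(X))$ and to extract both assertions from Proposition~\ref{prop_moments_lp}, Corollary~\ref{cor_diff_estimates}, and a Kolmogorov-type variation criterion. Since $\Lambda_{k}$ acts coordinatewise and (as for piecewise linear or mollifier approximations) linearly, the pair $\big(\Lambda_{k}(X),\Lambda_{l}(X)\big)$ is, for each $k,l$, a centred continuous Gaussian process with paths of finite variation whose $d$ components split into the independent pairs required in Section~\ref{section_main_estimates}. By Theorem~\ref{theorem_comp_contr_p_var} --- using part~(1) when $\rho=1$, and for $\rho\in(1,2)$ the replacement of $\rho$ by a slightly larger $\rho'<2$, which still leaves $\gamma>\rho'$ with $1/\rho'+1/\gamma>1$ available because the hypothesis $1/\rho+1/\gamma>1$ is strict --- together with hypothesis~(2) on $\Lambda_{k}$, there is a single symmetric $2D$-control $\omega$, of total mass controlled by a function of $K$, which controls the $\rho$-variation of $R_{(\Lambda_{k}(X),\Lambda_{l}(X))}$ uniformly in $k,l$.

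With this $\omega$, Proposition~\ref{prop_moments_lp} applied to the smooth processes $\Lambda_{k}(X)$ gives, uniformly in $k$, the moment bound $|\pi_{n}(S_{N}(\Lambda_{k}(X))_{s,t})|_{L^{2}}\le C(n)\,\omega([s,t]^{2})^{n/(2\rho)}$ for $n\le N$, and Corollary~\ref{cor_diff_estimates} applied to $\big(\Lambda_{k}(X),\Lambda_{l}(X)\big)$ gives
\begin{equation*}
\big|\pi_{n}\big(S_{N}(\Lambda_{k}(X))_{s,t}\big)-\pi_{n}\big(S_{N}(\Lambda_{l}(X))_{s,t}\big)\big|_{L^{2}}\le C(n)\,\epsilon_{k,l}\,\omega([s,t]^{2})^{\frac{1}{2\gamma}+\frac{n-1}{2\rho}},
\end{equation*}
with $\epsilon_{k,l}^{2}=V_{\infty}\big(R_{\Lambda_{k}(X)-\Lambda_{l}(X)},[0,1]^{2}\big)^{1-\rho/\gamma}$. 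Equivalence of $L^{m}$-norms in the inhomogeneous Wiener chaos of order $n$ upgrades both bounds to $L^{r}$ at the cost of a factor $c\,r^{n/2}\le c\,r^{N/2}$. Finally, Cauchy--Schwarz on rectangular increments gives $V_{\infty}\big(R_{\Lambda_{k}(X)-\Lambda_{l}(X)},[0,1]^{2}\big)\le 4\sup_{t}|\Lambda_{k}(X)_{t}-\Lambda_{l}(X)_{t}|_{L^{2}}^{2}$, and hypothesis~(1), Fernique's theorem and the uniform bounds on the covariances show $\sup_{t}|\Lambda_{k}(X)_{t}-X_{t}|_{L^{2}}\to 0$, so $\epsilon_{k,l}\to0$ as $k,l\to\infty$.

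The decisive step is to convert these pointwise $L^{r}$ increment bounds into an $L^{r}$ bound on $\rho_{q-var}$, i.e.\ to absorb the supremum over partitions that sits inside the $L^{r}$-norm; this is where I expect the genuine work. One invokes the variation form of the Kolmogorov/Besov criterion for (almost-)multiplicative functionals, as in \cite[Ch.~15]{FV10}: if the $n$-th level of a pair of lifts obeys $|\cdot|_{L^{r}}\le A\,\omega([s,t]^{2})^{\alpha_{n}}$ with $\alpha_{n}=\tfrac{1}{2\gamma}+\tfrac{n-1}{2\rho}$ and $\tfrac{q}{n}\alpha_{n}>1$ for $n=1,\dots,N$, then $|\rho_{q-var}(\cdot,\cdot)|_{L^{r}}\le C(q,\rho,\gamma,N)\,A$, the constant depending only on the exponents and the $r$-dependence entering only through the chaos factor. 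Because $\gamma>\rho$ one has $\tfrac{q}{n}\alpha_{n}=\tfrac{q}{2\rho}+\tfrac{q}{n}\big(\tfrac{1}{2\gamma}-\tfrac{1}{2\rho}\big)$, which is increasing in $n$, so its minimum is $\tfrac{q}{2\gamma}$, and this exceeds $1$ exactly under the standing hypothesis $q>2\gamma$. Applying the criterion to the Cauchy sequence $\big(S_{N}(\Lambda_{k}(X))\big)_{k}$ produces a limit $S_{N}(\mathbf X)$ in $L^{r}(\rho_{q-var})$; these limits are consistent across $N$ by continuity of the Lyons lift, and being $\rho_{q-var}$-limits of smooth lifts they define a process $\mathbf X$ with sample paths in $C^{0,p-var}(G^{[p]}(\mathbb R^{d}))$.

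It then remains to let $l\to\infty$ in the difference estimate: $V_{\infty}(R_{\Lambda_{k}(X)-\Lambda_{l}(X)},[0,1]^{2})\le 4\big(\sup_{t}|\Lambda_{k}(X)_{t}-X_{t}|_{L^{2}}+\sup_{t}|\Lambda_{l}(X)_{t}-X_{t}|_{L^{2}}\big)^{2}\to 4\sup_{t}|\Lambda_{k}(X)_{t}-X_{t}|_{L^{2}}^{2}$, which combined with the chaos factor yields
\begin{equation*}
\big|\rho_{q-var}\big(S_{N}(\Lambda_{k}(X)),S_{N}(\mathbf X)\big)\big|_{L^{r}}\le C\,r^{N/2}\sup_{0\le t\le 1}|\Lambda_{k}(X)_{t}-X_{t}|_{L^{2}}^{1-\rho/\gamma},
\end{equation*}
the second assertion. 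The first follows by picking $\gamma>\rho$ close enough to $\rho$ that $2\gamma<p$ and $1/\rho+1/\gamma>1$ (possible since $p>2\rho$ and $2/\rho>1$ for $\rho<2$), then $q\in(2\gamma,p)$ and $N=[p]$: the right-hand side tends to $0$ as $k\to\infty$, and $\rho_{p-var}\le\rho_{q-var}$ transfers the convergence to the $p$-variation rough path metric. Apart from the variation-Kolmogorov passage, the only point needing care is the reconciliation of the grid-variation hypothesis $K\ge V_{\rho}(R_{X})$ with the controlled $\rho$-variation used throughout Section~\ref{section_main_estimates}, which is handled via Theorem~\ref{theorem_comp_contr_p_var}.
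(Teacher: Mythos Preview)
Your proposal is correct and follows the same architecture as the paper's proof: pass from $V_\rho$ to a genuine 2D control via Theorem~\ref{theorem_comp_contr_p_var} (the paper does this by enlarging to $\rho'=(1+\delta)\rho$, $\gamma'=(1+\delta)\gamma$), feed the pair $(\Lambda_k(X),\Lambda_l(X))$ into Proposition~\ref{prop_moments_lp} and Corollary~\ref{cor_diff_estimates}, then invoke the variation--Kolmogorov criterion \cite[Proposition~15.24]{FV10} and let $l\to\infty$. The only differences are cosmetic: before applying the criterion the paper normalises the mixed exponent $\tfrac{1}{2\gamma'}+\tfrac{n-1}{2\rho'}$ to the pure form $\tfrac{n}{2\gamma'}$ via $\omega([s,t]^2)^{(n-1)/2\rho'}\le\omega([0,1]^2)^{(n-1)(1/2\rho'-1/2\gamma')}\omega([s,t]^2)^{(n-1)/2\gamma'}$ rather than checking $q\alpha_n/n>1$ directly, and it cites \cite[Theorem~15.33]{FV10} for the first assertion instead of deducing it from the second as you do.
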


\begin{proof}
The first statement is a fundamental result about Gaussian rough paths, see 
\cite[Theorem 15.33]{FV10}. For the second, take $\delta >0$ and set%
\begin{equation*}
\gamma ^{\prime }=\left( 1+\delta \right) \gamma \quad \text{and\quad }\rho
^{\prime }=\left( 1+\delta \right) \rho .
\end{equation*}%
By choosing $\delta $ smaller if necessary we can assume that $\frac{1}{\rho
^{\prime }}+\frac{1}{\gamma ^{\prime }}>1$ and $q>2\gamma ^{\prime }$. Set%
\begin{equation*}
\omega _{k,l}\left( A\right) =\left\vert R_{\left( \Lambda _{k}\left(
X\right) ,\Lambda _{l}\left( X\right) \right) }\right\vert _{\rho ^{\prime
}-var;A}^{\rho ^{\prime }}
\end{equation*}%
for a rectangle $A\subset \left[ 0,1\right] ^{2}$ and 
\begin{equation*}
\epsilon _{k,l}=V_{\infty }\left( R_{\left( \Lambda _{k}\left( X\right)
-\Lambda _{l}\left( X\right) \right) },\left[ 0,1\right] ^{2}\right) ^{\frac{%
1}{2}-\frac{\rho ^{\prime }}{2\gamma ^{\prime }}}=V_{\infty }\left(
R_{\left( \Lambda _{k}\left( X\right) -\Lambda _{l}\left( X\right) \right) },%
\left[ 0,1\right] ^{2}\right) ^{\frac{1}{2}-\frac{\rho }{2\gamma }}.
\end{equation*}%
From Theorem \ref{theorem_comp_contr_p_var} we know that $\omega _{k,l}$ is
a $2D$ control function which controls the $\rho ^{\prime }$-variation of $%
R_{\left( \Lambda _{k}\left( X\right) ,\Lambda _{l}\left( X\right) \right) }$%
. From Corollary \ref{cor_diff_estimates} we can conclude that there is a
constant $c_{1}$ such that%
\begin{equation*}
\left\vert \pi _{n}\left( S_{N}\left( \Lambda _{k}\left( X\right) \right)
_{s,t}-S_{N}\left( \Lambda _{l}\left( X\right) \right) _{s,t}\right)
\right\vert _{L^{2}}\leq c_{1}\epsilon _{k,l}\omega _{k,l}\left( \left[ s,t%
\right] ^{2}\right) ^{\frac{1}{2\gamma ^{\prime }}}\omega _{k,l}\left( \left[
s,t\right] ^{2}\right) ^{\frac{n-1}{2\rho ^{\prime }}}
\end{equation*}%
holds for every $n=1,\ldots ,N$, $\left( s,t\right) \in \Delta $ and $k,l\in 
\mathbb{N}$. Now,%
\begin{eqnarray*}
\omega _{k,l}\left( \left[ s,t\right] ^{2}\right) ^{\frac{n-1}{2\rho
^{\prime }}} &=&\left( \frac{\omega _{k,l}\left( \left[ s,t\right]
^{2}\right) }{\omega _{k,l}\left( \left[ 0,1\right] ^{2}\right) }\right) ^{%
\frac{n-1}{2\rho ^{\prime }}}\omega _{k,l}\left( \left[ 0,1\right]
^{2}\right) ^{\frac{n-1}{2\rho ^{\prime }}} \\
&\leq &\omega _{k,l}\left( \left[ s,t\right] ^{2}\right) ^{\frac{n-1}{%
2\gamma ^{\prime }}}\omega _{k,l}\left( \left[ 0,1\right] ^{2}\right) ^{%
\frac{n-1}{2\rho ^{\prime }}-\frac{n-1}{2\gamma ^{\prime }}}.
\end{eqnarray*}%
From Theorem \ref{theorem_comp_contr_p_var} and our assumptions on the $%
\Lambda _{k}$ we know that 
\begin{equation*}
\omega _{k,l}\left( \left[ 0,1\right] ^{2}\right) ^{1/\rho ^{\prime }}\leq
c_{2}\left\vert R_{X}\right\vert _{\rho ^{\prime }-var;\left[ 0,1\right]
^{2}}\leq c_{3}V_{\rho }\left( R_{X},\left[ 0,1\right] ^{2}\right) \leq
c_{4}\left( \rho ,\rho ^{\prime },K\right) .
\end{equation*}%
holds uniformly over all $k,l$. Hence%
\begin{equation*}
\left\vert \pi _{n}\left( S_{N}\left( \Lambda _{k}\left( X\right) \right)
_{s,t}-S_{N}\left( \Lambda _{l}\left( X\right) \right) _{s,t}\right)
\right\vert _{L^{2}}\leq c_{5}\epsilon _{k,l}\omega _{k,l}\left( \left[ s,t%
\right] ^{2}\right) ^{\frac{n}{2\gamma ^{\prime }}}.
\end{equation*}%
Proposition \ref{prop_moments_lp} shows with the same argument that%
\begin{equation*}
\left\vert \pi _{n}\left( S_{N}\left( \Lambda _{k}\left( X\right) \right)
_{s,t}\right) \right\vert _{L^{2}}\leq c_{6}\omega _{k,l}\left( \left[ s,t%
\right] ^{2}\right) ^{\frac{n}{2\rho ^{\prime }}}\leq c_{7}\omega
_{k,l}\left( \left[ s,t\right] ^{2}\right) ^{\frac{n}{2\gamma ^{\prime }}}
\end{equation*}%
for every $k\in \mathbb{N}$ and the same holds for $S_{N}\left( \Lambda
_{l}\left( X\right) \right) _{s,t}$. From \cite[Proposition 15.24]{FV10} we
can conclude that there is a constant $c_{8}$ such that%
\begin{equation*}
\left\vert \rho _{q-var}\left( S_{N}\left( \Lambda _{k}\left( X\right)
\right) ,S_{N}\left( \Lambda _{l}\left( X\right) \right) \right) \right\vert
_{L^{r}}\leq c_{8}r^{N/2}\epsilon _{k,l}
\end{equation*}%
holds for all $k,l\in \mathbb{N}$. In particular, we have shown that $\left(
S_{N}\left( \Lambda _{k}\left( X\right) \right) \right) _{k\in \mathbb{N}}$
is a Cauchy sequence in $L^{r\text{ }}$and it is clear that the limit is
given by the Lyons lift $S_{N}\left( \mathbf{X}\right) $ of the enhanced
Gaussian process $\mathbf{X}$. Now fix $k\in \mathbb{N}$. For every $l\in 
\mathbb{N}$,%
\begin{eqnarray*}
\left\vert \rho _{q-var}\left( S_{N}\left( \Lambda _{k}\left( X\right)
\right) ,S_{N}\left( \mathbf{X}\right) \right) \right\vert _{L^{r}} &\leq
&\left\vert \rho _{q-var}\left( S_{N}\left( \Lambda _{k}\left( X\right)
\right) ,S_{N}\left( \Lambda _{l}\left( X\right) \right) \right) \right\vert
_{L^{r}} \\
&&+\left\vert \rho _{q-var}\left( S_{N}\left( \Lambda _{l}\left( X\right)
\right) ,S_{N}\left( \mathbf{X}\right) \right) \right\vert _{L^{r}} \\
&\leq &c_{8}r^{N/2}\epsilon _{k,l}+\left\vert \rho _{q-var}\left(
S_{N}\left( \Lambda _{l}\left( X\right) \right) ,S_{N}\left( \mathbf{X}%
\right) \right) \right\vert _{L^{r}}.
\end{eqnarray*}%
It is easy to see that%
\begin{equation*}
\epsilon _{k,l}\rightarrow V_{\infty }\left( R_{\left( \Lambda _{k}\left(
X\right) -X\right) },\left[ 0,1\right] ^{2}\right) ^{\frac{1}{2}-\frac{\rho 
}{2\gamma }}\quad \text{for }l\rightarrow \infty
\end{equation*}%
and since%
\begin{equation*}
\left\vert \rho _{q-var}\left( S_{N}\left( \Lambda _{l}\left( X\right)
\right) ,S_{N}\left( \mathbf{X}\right) \right) \right\vert
_{L^{r}}\rightarrow 0\quad \text{for }l\rightarrow \infty
\end{equation*}%
we can conclude that%
\begin{equation*}
\left\vert \rho _{q-var}\left( S_{N}\left( \Lambda _{k}\left( X\right)
\right) ,S_{N}\left( \mathbf{X}\right) \right) \right\vert _{L^{r}}\leq
c_{8}r^{N/2}V_{\infty }\left( R_{\left( \Lambda _{k}\left( X\right)
-X\right) },\left[ 0,1\right] ^{2}\right) ^{\frac{1}{2}-\frac{\rho }{2\gamma 
}}
\end{equation*}%
holds for every $k\in \mathbb{N}$. Finally, we have for $\left[ \sigma ,\tau %
\right] \times \left[ \sigma ^{\prime },\tau ^{\prime }\right] \subset \left[
0,1\right] ^{2}$%
\begin{equation*}
\left\vert R_{\left( \Lambda _{k}\left( X\right) -X\right) }\left( 
\begin{array}{c}
\sigma ,\tau \\ 
\sigma ^{\prime },\tau ^{\prime }%
\end{array}%
\right) \right\vert _{\mathbb{R}^{d\times d}}\leq 4\sup_{0\leq s<t\leq
1}\left\vert R_{\left( \Lambda _{k}\left( X\right) -X\right) }\left(
s,t\right) \right\vert _{\mathbb{R}^{d\times d}}
\end{equation*}%
and hence%
\begin{equation*}
V_{\infty }\left( R_{\left( \Lambda _{k}\left( X\right) -X\right) },\left[
0,1\right] ^{2}\right) \leq 4\sup_{0\leq s<t\leq 1}\left\vert R_{\left(
\Lambda _{k}\left( X\right) -X\right) }\left( s,t\right) \right\vert _{%
\mathbb{R}^{d\times d}}.
\end{equation*}%
Furthermore, for any $s<t$,%
\begin{equation*}
\left\vert R_{\left( \Lambda _{k}\left( X\right) -X\right) }\left(
s,t\right) \right\vert _{\mathbb{R}^{d\times d}}\leq \left\vert \Lambda
_{k}\left( X\right) _{s}-X_{s}\right\vert _{L^{2}\left( \mathbb{R}%
^{d}\right) }\left\vert \Lambda _{k}\left( X\right) _{t}-X_{t}\right\vert
_{L^{2}\left( \mathbb{R}^{d}\right) }\leq \sup_{0\leq t\leq 1}\left\vert
\Lambda _{k}\left( X\right) _{t}-X_{t}\right\vert _{L^{2}\left( \mathbb{R}%
^{d}\right) }^{2}
\end{equation*}%
and therefore%
\begin{equation*}
V_{\infty }\left( R_{\left( \Lambda _{k}\left( X\right) -X\right) },\left[
0,1\right] ^{2}\right) ^{\frac{1}{2}-\frac{\rho }{2\gamma }}\leq
c_{9}\sup_{0\leq t\leq 1}\left\vert \Lambda _{k}\left( X\right)
_{t}-X_{t}\right\vert _{L^{2}\left( \mathbb{R}^{d}\right) }^{1-\frac{\rho }{%
\gamma }}
\end{equation*}%
which shows the result.
\end{proof}

The next Theorem gives pathwise convergence rates for the Wong-Zakai error
for suitable approximations of the driving signal.

\begin{theorem}
\label{theorem_as_wong_zakai_rate}Let $X$ be as in Theorem \ref%
{theorem_main01_intro} for $\rho <2$, $K\geq V_{\rho }\left( R_{X},\left[ 0,1%
\right] ^{2}\right) $ and $X^{\left( k\right) }=\Lambda _{k}\left( X\right) $%
. Consider the SDEs%
\begin{eqnarray}
dY_{t} &=&V(Y_{t})\,d\mathbf{X}_{t},\quad Y_{0}\in \mathbb{R}^{n}
\label{eqn_RDE_GP_general} \\
dY_{t}^{\left( k\right) } &=&V(Y_{t}^{\left( k\right) })\,dX_{t}^{\left(
k\right) },\quad Y_{0}^{\left( k\right) }=Y_{0}\in \mathbb{R}^{n}
\label{eqn_RS_general}
\end{eqnarray}%
where $\left\vert V\right\vert _{Lip^{\theta }}\leq \nu <\infty $ for a $%
\theta >2\rho $. Assume that there is a constant $C_{1}$ and a sequence $%
\left( \epsilon _{k}\right) _{k\in \mathbb{N}}\subset \dbigcup\limits_{r\geq
1}l^{r}$ such that%
\begin{equation*}
\sup_{0\leq t\leq 1}\left\vert X_{t}^{\left( k\right) }-X_{t}\right\vert
_{L^{2}}^{2}\leq C_{1}\epsilon _{k}^{1/\rho }~\text{for all }k\in \mathbb{N}.
\end{equation*}%
Choose $\eta ,q$ such that%
\begin{equation*}
0\leq \eta <\min \left\{ \frac{1}{\rho }-\frac{1}{2},\frac{1}{2\rho }-\frac{1%
}{\theta }\right\} \quad \text{and\quad }q\in \left( \frac{2\rho }{1-2\rho
\eta },\theta \right) .
\end{equation*}%
Then both SDEs $\left( \ref{eqn_RDE_GP_general}\right) $ and $\left( \ref%
{eqn_RS_general}\right) $ have unique solutions $Y$ and $Y^{\left( k\right)
} $ and there is a finite random variable $C$ and a null set $M$ such that%
\begin{equation}
\left\vert Y^{\left( k\right) }\left( \omega \right) -Y\left( \omega \right)
\right\vert _{\infty ;\left[ 0,1\right] }\leq \left\vert Y^{\left( k\right)
}\left( \omega \right) -Y\left( \omega \right) \right\vert _{q-var;\left[ 0,1%
\right] }\leq C\left( \omega \right) \epsilon _{k}^{\eta }
\label{eqn_pathwise_wong_zakai}
\end{equation}%
holds for all $k\in \mathbb{N}$ and $\omega \in \Omega \setminus M$. The
random variable $C$ depends on $\rho ,q,\eta ,\nu ,\theta ,K,C_{1}$, the
sequence $\left( \epsilon _{k}\right) _{k\in \mathbb{N}}$ and the driving
process $X$ but not on the equation itself. The same holds for the set $M$.
\end{theorem}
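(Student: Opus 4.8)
The plan is to combine the quantitative $L^{r}$-estimate of Theorem~\ref{theorem_main01} with a Borel--Cantelli argument to obtain an almost sure rate at the level of the driving rough path, and then to transfer this rate to the solutions via local Lipschitz continuity of the It\={o}--Lyons map.

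First I would fix the parameters. Given $\eta ,q$ as in the statement, the hypotheses $\eta <\frac{1}{\rho }-\frac{1}{2}$, $\eta <\frac{1}{2\rho }-\frac{1}{\theta }$ and $q\in \left( \frac{2\rho }{1-2\rho \eta },\theta \right) $ guarantee that one may choose $\gamma >\rho $ with $\frac{1}{\gamma }+\frac{1}{\rho }>1$, $q>2\gamma $ and $\frac{1}{2\rho }-\frac{1}{2\gamma }>\eta $; indeed the admissible range for $\gamma $ is the nonempty interval $\left( \frac{\rho }{1-2\rho \eta },\min \left\{ \frac{q}{2},\frac{\rho }{\rho -1}\right\} \right) $, the last entry being dropped when $\rho =1$. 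Put $N=\left[ q\right] $ and $\delta =\frac{1}{2\rho }-\frac{1}{2\gamma }>\eta $, and fix $p\in \left( 2\rho ,\min \left\{ q,\theta \right\} \right) $. Then $S_{N}\left( \mathbf{X}\right) $ has a.s. finite $q$-variation (since $\mathbf{X}$ has a.s. paths in $C^{0,p-var}$ with $p<q$), and because $\theta >p$ both equations admit unique solutions: $Y^{\left( k\right) }$ because $X^{\left( k\right) }$ has finite variation and $V$ is Lipschitz, and $Y$ by the universal limit theorem for geometric $p$-rough paths with $\mathrm{Lip}^{\theta }$ vector fields.

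Next, Theorem~\ref{theorem_main01}, applied with this $\gamma $, $q$ and $N$, together with the hypothesis $\sup_{t}\left\vert X_{t}^{\left( k\right) }-X_{t}\right\vert _{L^{2}}^{2}\leq C_{1}\epsilon _{k}^{1/\rho }$, gives a constant $C^{\prime }$ with
\begin{equation*}
\left\vert \rho _{q-var}\left( S_{N}\left( X^{\left( k\right) }\right) ,S_{N}\left( \mathbf{X}\right) \right) \right\vert _{L^{r}}\leq C^{\prime }r^{N/2}\epsilon _{k}^{\delta }\qquad \text{for all }r\geq 1,\ k\in \mathbb{N}.
\end{equation*}
A nonnegative random variable $Z$ with $\left\vert Z\right\vert _{L^{r}}\leq ar^{N/2}$ for every $r$ has, by Chebyshev's inequality and optimisation over $r$, a stretched-exponential tail $\mathbb{P}\left( Z>\lambda \right) \leq c_{1}\exp \left( -c_{2}\left( \lambda /a\right) ^{2/N}\right) $ with $c_{1},c_{2}$ depending only on $N$. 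Hence
\begin{equation*}
\mathbb{P}\left( \rho _{q-var}\left( S_{N}\left( X^{\left( k\right) }\right) ,S_{N}\left( \mathbf{X}\right) \right) >\epsilon _{k}^{\eta }\right) \leq c_{1}\exp \left( -c_{3}\epsilon _{k}^{-2\left( \delta -\eta \right) /N}\right) .
\end{equation*}
Since $\delta -\eta >0$ and $\left( \epsilon _{k}\right) \in \ell ^{r}$ for some $r\geq 1$, one checks that $\exp \left( -c_{3}\epsilon _{k}^{-2\left( \delta -\eta \right) /N}\right) \leq c_{4}\epsilon _{k}^{r}$ for all large $k$, so these probabilities are summable. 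By Borel--Cantelli there is a null set $M$ and an a.s. finite random variable $C\left( \omega \right) =\sup_{k}\epsilon _{k}^{-\eta }\rho _{q-var}\left( S_{N}\left( X^{\left( k\right) }\right) ,S_{N}\left( \mathbf{X}\right) \right) \left( \omega \right) $, depending only on $\rho ,q,\eta ,K,C_{1}$, the sequence $\left( \epsilon _{k}\right) $ and $X$.

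Finally I would invoke local Lipschitz continuity of the It\={o}--Lyons map in the $q$-variation rough path metric for $\mathrm{Lip}^{\theta }$ vector fields with $\theta >q$: there is a constant, depending only on $\nu ,\theta ,q$ and on $\max \left\{ \left\Vert S_{N}\left( \mathbf{X}\right) \right\Vert _{q-var},\left\Vert S_{N}\left( X^{\left( k\right) }\right) \right\Vert _{q-var}\right\} $, bounding $\left\vert Y^{\left( k\right) }-Y\right\vert _{q-var;\left[ 0,1\right] }$ by that constant times $\rho _{q-var}\left( S_{N}\left( X^{\left( k\right) }\right) ,S_{N}\left( \mathbf{X}\right) \right) $. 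On $\Omega \setminus M$ the latter is at most $C\left( \omega \right) \epsilon _{k}^{\eta }\rightarrow 0$, so $\sup_{k}\left\Vert S_{N}\left( X^{\left( k\right) }\right) \right\Vert _{q-var}\leq \left\Vert S_{N}\left( \mathbf{X}\right) \right\Vert _{q-var}+C\left( \omega \right) \sup_{k}\epsilon _{k}^{\eta }<\infty $, whence the Lipschitz constant is dominated by a random variable of the type allowed in the statement; combined with $\left\vert Y^{\left( k\right) }-Y\right\vert _{\infty ;\left[ 0,1\right] }\leq \left\vert Y^{\left( k\right) }-Y\right\vert _{q-var;\left[ 0,1\right] }$ (using $Y_{0}^{\left( k\right) }=Y_{0}$) this yields $\left( \ref{eqn_pathwise_wong_zakai}\right) $. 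The main obstacle is the middle step: extracting the right stretched-exponential tail from the $r^{N/2}$-growth of the $L^{r}$-norms and verifying summability using only $\left( \epsilon _{k}\right) \in \bigcup_{r\geq 1}\ell ^{r}$, while keeping every constant independent of $V$ and $Y_{0}$, which is exactly what makes the rate universal.
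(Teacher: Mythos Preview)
Your proof is correct and follows essentially the same strategy as the paper: apply Theorem~\ref{theorem_main01} with a suitable $\gamma$, use Borel--Cantelli to upgrade the $L^{r}$ rate to an a.s.\ rate for $\rho_{q\text{-var}}(S_N(X^{(k)}),S_N(\mathbf{X}))$, and conclude via local Lipschitz continuity of the It\={o}--Lyons map. The only difference is in the Borel--Cantelli step: the paper simply fixes one sufficiently large $r$ (so that $\alpha r$ exceeds the $\ell^{r_0}$-exponent of $(\epsilon_k)$) and applies Markov's inequality once, whereas you first optimise over $r$ to extract a stretched-exponential tail and then dominate that by a polynomial in $\epsilon_k$; both routes yield summability, but the paper's is more direct and avoids the detour through Gaussian-type concentration.
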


\begin{remark}
Note that this means that we have \emph{universal rates}, i.e. the set $M$
and the random variable $C$ are valid for all starting points (and also
vector fields subject to a uniform $Lip^{\theta }$-bound). In particular,
our convergence rates apply to solutions viewed as $C^{l}$-diffeomorphisms
where $l=\left[ \theta -q\right] $, cf. \cite[Theorem 11.12]{FV10} and \cite%
{FR11}.
\end{remark}

\begin{proof}[Proof of Theorem \protect\ref{theorem_as_wong_zakai_rate}]
Note that $\gamma >\rho $ and $\frac{1}{\rho }+\frac{1}{\gamma }>1$ is
equivalent to $0<\frac{1}{2\rho }-\frac{1}{2\gamma }<\frac{1}{\rho }-\frac{1%
}{2}$. Hence there is a $\gamma _{0}>\rho $ such that $\eta =\frac{1}{2\rho }%
-\frac{1}{2\gamma _{0}}$ and $\frac{1}{\rho }+\frac{1}{\gamma _{0}}>1$.
Furthermore, $2\gamma _{0}=\frac{2\rho }{1-2\rho \eta }<q$. Choose $\gamma
_{1}>\gamma _{0}$ such that still $2\gamma _{1}<q$ and $\eta <\frac{1}{2\rho 
}-\frac{1}{2\gamma _{1}}<$ $\frac{1}{\rho }-\frac{1}{2}$ , hence $\frac{1}{%
\rho }+\frac{1}{\gamma _{1}}>1$ hold. Set $\alpha :=$ $\frac{1}{2\rho }-%
\frac{1}{2\gamma _{1}}-\eta >0$. From Theorem \ref{theorem_main01} we know
that for every $r\geq 1$ and $N\in \mathbb{N}$ there is a constant $c_{1}$
such that 
\begin{equation*}
\left\vert \rho _{q-var}\left( S_{N}(X^{\left( k\right) }),S_{N}\left( 
\mathbf{X}\right) \right) \right\vert _{L^{r}}\leq c_{1}r^{N/2}\sup_{0\leq
t\leq 1}\left\vert X_{t}^{\left( k\right) }-X_{t}\right\vert _{L^{2}}^{1-%
\frac{\rho }{\gamma }}\leq c_{2}r^{N/2}\epsilon _{k}^{\frac{1}{2\rho }-\frac{%
1}{2\gamma }}
\end{equation*}%
holds for every $k\in \mathbb{N}$. Hence%
\begin{equation*}
\left\vert \frac{\rho _{q-var}\left( S_{N}(X^{\left( k\right) }),S_{N}\left( 
\mathbf{X}\right) \right) }{\epsilon _{k}^{\eta }}\right\vert _{L^{r}}\leq
c_{2}r^{N/2}\epsilon _{k}^{\alpha }
\end{equation*}%
for every $k\in \mathbb{N}$. From the Markov inequality, for any $\delta >0$,

\begin{equation*}
\sum_{k=1}^{\infty }P\left[ \frac{\rho _{q-var}\left( S_{N}(X^{\left(
k\right) }),S_{N}\left( \mathbf{X}\right) \right) }{\epsilon _{k}^{\eta }}%
\geq \delta \right] \leq \frac{1}{\delta ^{r}}\sum_{k=1}^{\infty }\left\vert 
\frac{\rho _{q-var}\left( S_{N}(X^{\left( k\right) }),S_{N}\left( \mathbf{X}%
\right) \right) }{\epsilon _{k}^{\eta }}\right\vert _{L^{r}}^{r}\leq
c_{3}\sum_{k=1}^{\infty }\epsilon _{k}^{\alpha r}
\end{equation*}%
By assumption, we can choose $r$ large enough such that the series
converges. With Borel-Cantelli we can conclude that%
\begin{equation*}
\frac{\rho _{q-var}\left( S_{N}(X^{\left( k\right) }),S_{N}\left( \mathbf{X}%
\right) \right) }{\epsilon _{k}^{\eta }}\rightarrow 0
\end{equation*}%
outside a null set $M$ for $k\rightarrow \infty $. We set%
\begin{equation*}
C_{2}:=\sup_{k\in \mathbb{N}}\frac{\rho _{q-var}\left( S_{N}(X^{\left(
k\right) }),S_{N}\left( \mathbf{X}\right) \right) }{\epsilon _{k}^{\eta }}%
<\infty \quad \text{a.s.}
\end{equation*}%
Since $C_{2}$ is the supremum of $\mathcal{F}$-measurable random variables
it is itself $\mathcal{F}$-measurable. Now set $N=\left[ q\right] $ which
turns $\rho _{q-var}$ into a rough path metric. Note that since $\theta
>2\rho $, $\left( \ref{eqn_RDE_GP_general}\right) $ and $\left( \ref%
{eqn_RS_general}\right) $ have indeed unique solutions $Y$ and $Y^{\left(
k\right) }$. We substitute the driver $\mathbf{X}$ by $S_{N}(\mathbf{X})$
resp. $X^{\left( k\right) }$ by $S_{N}(X^{\left( k\right) })$ in the above
equations, now considered as RDEs in the $q$-rough paths space. Since $%
\theta >q$, both (RDE-) equations have again unique solutions and it is
clear that they coincide with $Y$ and $Y^{\left( k\right) }$. From%
\begin{equation*}
\rho _{q-var}\left( S_{N}(X^{\left( k\right) }),\mathbf{1}\right) \leq \rho
_{q-var}\left( S_{N}(X^{\left( k\right) }),S_{N}\left( \mathbf{X}\right)
\right) +\rho _{q-var}\left( S_{N}\left( \mathbf{X}\right) ,\mathbf{1}%
\right) \leq C_{1}+\rho _{q-var}\left( S_{N}\left( \mathbf{X}\right) ,%
\mathbf{1}\right)
\end{equation*}%
we see that for every $\omega \in \Omega \setminus M$ the $S_{N}(X^{\left(
k\right) }\left( \omega \right) )$ are uniformly bounded for all $k$ in the
topology given by the metric $\rho _{q-var}$. Thus we can apply local
Lipschitz-continuity of the It\={o}-Lyons map (see \cite[Theorem 10.26]{FV10}%
) to see that there is a random variable $C_{3}$ such that 
\begin{equation*}
\left\vert Y^{\left( k\right) }-Y\right\vert _{q-var;\left[ 0,1\right] }\leq
C_{3}\rho _{q-var}\left( S_{N}(X^{\left( k\right) }),S_{N}\left( \mathbf{X}%
\right) \right) \leq C_{3}\cdot C_{2}\epsilon _{k}^{\eta }
\end{equation*}%
holds for every $k\in \mathbb{N}$ outside $M$. Finally,%
\begin{equation*}
\left\vert Y_{t}^{\left( k\right) }-Y_{t}\right\vert =\left\vert
Y_{0,t}^{\left( k\right) }-Y_{0,t}\right\vert \leq \left\vert Y^{\left(
k\right) }-Y\right\vert _{q-var;\left[ 0,t\right] }\leq \left\vert Y^{\left(
k\right) }-Y\right\vert _{q-var;\left[ 0,1\right] }
\end{equation*}%
is true for all $t\in \left[ 0,1\right] $ and the claim follows.
\end{proof}

\subsection{Mollifier approximations}

Let $\phi $ be a mollifier function with support $\left[ -1,1\right] $, i.e. 
$\phi \in C_{0}^{\infty }\left( \left[ -1,1\right] \right) $ is positive and 
$\left\vert \phi \right\vert _{L^{1}}=1$. If $x\colon \left[ 0,1\right]
\rightarrow \mathbb{R}$ is a continuous path, we denote by $\bar{x}\colon 
\mathbb{R}\rightarrow \mathbb{R}$ its continuous extension to the whole real
line, i.e. 
\begin{equation*}
\bar{x}_{u}=\left\{ 
\begin{array}{ccc}
x_{0} & \text{for} & x\in (-\infty ,0] \\ 
x_{u} & \text{for} & x\in \left[ 0,1\right] \\ 
x_{1} & \text{for} & x\in \lbrack 1,\infty )%
\end{array}%
\right.
\end{equation*}%
For $\epsilon >0$ set 
\begin{eqnarray*}
\phi _{\epsilon }\left( u\right) &:&=\frac{1}{\epsilon }\phi \left(
u/\epsilon \right) \quad \text{and} \\
x_{t}^{\epsilon } &:&=\int_{\mathbb{R}}\phi _{\epsilon }\left( t-u\right) 
\bar{x}_{u}\,du.
\end{eqnarray*}%
Let $\left( \epsilon _{k}\right) _{k\in \mathbb{N}}$ be a sequence of real
numbers such that $\epsilon _{k}\rightarrow 0$ for $k\rightarrow \infty $.
Define%
\begin{equation*}
\Lambda _{k}\left( x\right) :=x^{\epsilon _{k}}.
\end{equation*}%
In \cite{FV10}, Chapter 15.2.3 it is shown that the sequence $\left( \Lambda
_{k}\right) _{k\in \mathbb{N}}$ fulfils the conditions of Theorem \ref%
{theorem_main01}.

\begin{corollary}
Let $X$ be as in Theorem \ref{theorem_main01_intro} and assume that there is
a constant $C$ such that $V_{\rho }\left( R_{X};\left[ s,t\right]
^{2}\right) \leq C\left\vert t-s\right\vert ^{1/\rho }$ holds for all $s<t$.
Choose $\left( \epsilon _{k}\right) _{k\in \mathbb{N}}\in $ $%
\dbigcup\limits_{r\geq 1}l^{r}$ and set $X^{\left( k\right) }=X^{\epsilon
_{k}}$. Then the solutions $Y^{\left( k\right) }$ of the SDE $\left( \ref%
{eqn_RS_general}\right) $ converge pathwise to the solution $Y$ of $\left( %
\ref{eqn_RDE_GP_general}\right) $ in the sense of $\left( \ref%
{eqn_pathwise_wong_zakai}\right) $ with rate $O\left( \epsilon _{k}^{\eta
}\right) $ where $\eta $ is chosen as in Theorem \ref%
{theorem_as_wong_zakai_rate}.
\end{corollary}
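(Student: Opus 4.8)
The plan is to deduce the corollary directly from Theorem~\ref{theorem_as_wong_zakai_rate}, so the task reduces to verifying that the mollifier scheme $\Lambda_k(x) = x^{\epsilon_k}$ meets the hypotheses of that theorem. Two things must be checked: that $(\Lambda_k)_{k\in\mathbb{N}}$ satisfies conditions~(1) and~(2) preceding Theorem~\ref{theorem_main01} (these are used inside the proof of Theorem~\ref{theorem_as_wong_zakai_rate}), and that the $L^2$-approximation rate
\[
\sup_{0\le t\le 1}\left\vert X^{(k)}_t - X_t\right\vert_{L^2}^2 \;\le\; C_1\,\epsilon_k^{1/\rho}
\]
holds for some constant $C_1$. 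The first point is recorded in \cite[Chapter~15.2.3]{FV10}, so the only genuine work is the $L^2$-estimate; the scaling assumption $V_\rho(R_X;[s,t]^2)\le C|t-s|^{1/\rho}$ is precisely what is needed for it, and the endpoint bookkeeping when passing to the extension $\bar X$ is the only place requiring a little care.

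First I would convert the covariance assumption into a modulus of continuity for $X$ in $L^2$. For $u,t\in[0,1]$, writing $a = u\wedge t$ and $b = u\vee t$, the quantity $E[(X_u-X_t)^2] = R_X(b,b) - R_X(b,a) - R_X(a,b) + R_X(a,a)$ is precisely the rectangular increment of $R_X$ over the square $[a,b]^2$ in the sense of Definition~\ref{def_2D_grid_variation}; taking trivial partitions shows it is bounded by $V_\rho(R_X;[a,b]^2)$, hence by $C|t-u|^{1/\rho}$, so that
\[
\left\vert X_u - X_t\right\vert_{L^2} \;\le\; C^{1/2}\,|t-u|^{1/(2\rho)}.
\]
The same bound survives the passage to the extension $\bar X$: if $u\le 0\le t$ then $\bar X_u = X_0$ and $t = |0-t| \le t-u = |u-t|$, and symmetrically for $u\ge 1$, so $\left\vert \bar X_u - X_t\right\vert_{L^2} \le C^{1/2}|t-u|^{1/(2\rho)}$ for all $u\in\mathbb{R}$ and $t\in[0,1]$.

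Next I would push this through the convolution. Since $\int_{\mathbb{R}}\phi_{\epsilon_k} = 1$ and $\phi_{\epsilon_k}$ is a probability density supported in $[-\epsilon_k,\epsilon_k]$, we may write $X^{\epsilon_k}_t - X_t = \int_{\mathbb{R}}\phi_{\epsilon_k}(t-u)\bigl(\bar X_u - X_t\bigr)\,du$, and Minkowski's integral inequality together with the previous step gives
\[
\left\vert X^{\epsilon_k}_t - X_t\right\vert_{L^2}
\;\le\; \int_{\mathbb{R}}\phi_{\epsilon_k}(t-u)\,C^{1/2}|t-u|^{1/(2\rho)}\,du
\;\le\; C^{1/2}\,\epsilon_k^{1/(2\rho)}
\]
uniformly in $t\in[0,1]$, since $|t-u|\le\epsilon_k$ on the support of the integrand. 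Squaring yields the required bound with $C_1 = C$.

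Finally, with $(\epsilon_k)_{k\in\mathbb{N}}\in\bigcup_{r\ge 1}\ell^r$ by hypothesis and the vector field of $(\ref{eqn_RS_general})$ carrying a $\mathrm{Lip}^\theta$-bound with $\theta>2\rho$, all hypotheses of Theorem~\ref{theorem_as_wong_zakai_rate} are in force, so that theorem applies verbatim and gives the pathwise estimate $(\ref{eqn_pathwise_wong_zakai})$ with rate $O(\epsilon_k^\eta)$, with $\eta$ chosen as there. I do not expect any further obstacle beyond the elementary endpoint argument already indicated.
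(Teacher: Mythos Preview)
Your proof is correct and follows essentially the same route as the paper: both reduce to verifying the $L^2$-approximation rate $\sup_t|X^{(k)}_t-X_t|_{L^2}^2\le C_1\epsilon_k^{1/\rho}$ and then invoke Theorem~\ref{theorem_as_wong_zakai_rate}, using the covariance hypothesis to obtain an $L^2$-modulus of continuity $|{\bar X}_u-X_t|_{L^2}\le C^{1/2}|t-u|^{1/(2\rho)}$ and pushing it through the convolution. The only cosmetic difference is that the paper expands the square and applies Fubini plus Cauchy--Schwarz, whereas you use Minkowski's integral inequality directly; both arrive at the same bound.
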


\begin{proof}
It suffices to note that for every $\epsilon >0$, $Z\in \left\{ X^{1},\ldots
,X^{d}\right\} $ and $t\in \left[ 0,1\right] $ we have%
\begin{eqnarray*}
E\left[ \left\vert Z_{t}^{\epsilon }-Z_{t}\right\vert ^{2}\right] &=&E\left[
\left( \int_{\mathbb{R}}\phi _{\epsilon }\left( t-u\right) \left( \bar{Z}%
_{u}-Z_{t}\right) \,du\right) ^{2}\right] \\
&=&E\left[ \left( \int_{\left[ t-\epsilon ,t+\epsilon \right] }\phi
_{\epsilon }\left( t-u\right) \left( \bar{Z}_{u}-Z_{t}\right) \,du\right)
^{2}\right] \\
&=&E\left[ \int_{\left[ t-\epsilon ,t+\epsilon \right] ^{2}}\phi _{\epsilon
}\left( t-u\right) \phi _{\epsilon }\left( t-v\right) \left( \bar{Z}%
_{u}-Z_{t}\right) \left( \bar{Z}_{v}-Z_{t}\right) \,du\,dv\right] \\
&=&\int_{\left[ t-\epsilon ,t+\epsilon \right] ^{2}}\phi _{\epsilon }\left(
t-u\right) \phi _{\epsilon }\left( t-v\right) E\left[ \left( \bar{Z}%
_{u}-Z_{t}\right) \left( \bar{Z}_{v}-Z_{t}\right) \right] \,du\,dv \\
&\leq &\sup_{\substack{ t\in \left[ 0,1\right]  \\ \left\vert
h_{1}\right\vert ,\left\vert h_{2}\right\vert \leq \epsilon }}\left\vert E%
\left[ \left( \bar{Z}_{t+h_{1}}-Z_{t}\right) \left( \bar{Z}%
_{t+h_{2}}-Z_{t}\right) \right] \right\vert \\
&\leq &\sup_{\substack{ t\in \left[ 0,1\right]  \\ \left\vert h\right\vert
\leq \epsilon }}E\left[ \left( \bar{Z}_{t+h}-Z_{t}\right) ^{2}\right] \leq
c_{1}\epsilon ^{1/\rho }
\end{eqnarray*}%
from which follows that $\sup_{0\leq t\leq 1}\left\vert X_{t}^{\epsilon
_{k}}-X_{t}\right\vert _{L^{2}}^{2}\leq c_{1}\epsilon _{k}^{1/\rho }$. We
conclude with Theorem \ref{theorem_as_wong_zakai_rate}.
\end{proof}

\subsection{Piecewise linear approximations}

If $D=\{0=t_{0}<t_{1}<\ldots <t_{\#D-1}=1\}$ is a partition of $[0,1]$ and $%
x\colon \left[ 0,1\right] \rightarrow \mathbb{R}$ a continuous path, we
denote by $x^{D}$ the piecewise linear approximation of $x$ at the points of 
$D$, i.e. $x^{D}$ coincides with $x$ at the points $t_{i}$ and if $t_{i}\leq
t<t_{i+1}$ we have%
\begin{equation*}
\frac{x_{t_{i+1}}^{D}-x_{t}^{D}}{t_{i+1}-t}=\frac{x_{t_{i+1}}-x_{t_{i}}}{%
t_{i+1}-t_{i}}.
\end{equation*}%
Let $\left( D_{k}\right) _{k\in \mathbb{N}}$ be a sequence of partitions of $%
\left[ 0,1\right] $ such that $\left\vert D_{k}\right\vert :=\max_{t_{i}\in
D_{k}}\left\{ \left\vert t_{i+1}-t_{i}\right\vert \right\} \rightarrow 0$
for $k\rightarrow \infty $. If $x\colon \left[ 0,1\right] \rightarrow 
\mathbb{R}$ is continuous, we define%
\begin{equation*}
\Lambda _{k}\left( x\right) :=x^{D_{k}}.
\end{equation*}%
In \cite[Chapter 15.2.3]{FV10} it is shown that $\left( \Lambda _{k}\right)
_{k\in \mathbb{N}}$ fulfils the conditions of Theorem \ref{theorem_main01}.
If $R_{X}$ is the covariance of a Gaussian process, we set%
\begin{equation*}
\left\vert D\right\vert _{R_{X},\rho }=\left( \max_{t_{i}\in D}V_{\rho
}\left( R_{X};\left[ t_{i},t_{i+1}\right] ^{2}\right) \right) ^{\rho }.
\end{equation*}

\begin{corollary}
\label{cor_wong_zakai_piecew_lin}Let $X$ be as in Theorem \ref%
{theorem_main01_intro}. Choose a sequence of partitions $\left( D_{k}\right)
_{k\in \mathbb{N}}$ of the interval $\left[ 0,1\right] $ such that $\left(
\left\vert D_{k}\right\vert _{R_{X},\rho }\right) _{k\in \mathbb{N}}\in $ $%
\dbigcup\limits_{r\geq 1}l^{r}$ and set $X^{\left( k\right) }=X^{D_{k}}$.
Then the solutions $Y^{\left( k\right) }$ of the SDE $\left( \ref%
{eqn_RS_general}\right) $ converge pathwise to the solution $Y$ of $\left( %
\ref{eqn_RDE_GP_general}\right) $ in the sense of $\left( \ref%
{eqn_pathwise_wong_zakai}\right) $ with rate $O\left( \epsilon _{k}^{\eta
}\right) $ where $\left( \epsilon _{k}\right) _{k\in \mathbb{N}}=\left(
\left\vert D_{k}\right\vert _{R_{X},\rho }\right) _{k\in \mathbb{N}}$ and $%
\eta $ is chosen as in Theorem \ref{theorem_as_wong_zakai_rate}.
\end{corollary}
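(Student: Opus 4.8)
This corollary is deduced directly from Theorem~\ref{theorem_as_wong_zakai_rate}, in complete parallel with the mollifier corollary. Two things have to be checked. First, that the reconstruction operators $\Lambda_k(x):=x^{D_k}$ satisfy the two structural conditions stated before Theorem~\ref{theorem_main01}; this is exactly what is recorded in \cite[Chapter~15.2.3]{FV10} (pointwise convergence in $|\cdot|_\infty$ together with the uniform $\rho$-variation bound on the covariances of $\Lambda_k(X)$ and $\Lambda_l(X)$). Second, that
\begin{equation*}
\sup_{0\le t\le 1}\left\vert X^{(k)}_t-X_t\right\vert_{L^2}^2\le C_1\,\epsilon_k^{1/\rho}\qquad\text{with }\epsilon_k=\left\vert D_k\right\vert_{R_X,\rho},
\end{equation*}
so that the hypothesis of Theorem~\ref{theorem_as_wong_zakai_rate} is met with this particular sequence $\left(\epsilon_k\right)$.

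For the second point I would argue by an elementary computation. Fix $k$ and $t\in[0,1]$ and let $t_i\in D_k$ be the left endpoint of the mesh interval containing $t$, i.e. $t_i\le t<t_{i+1}$; put $a=(t-t_i)/(t_{i+1}-t_i)\in[0,1]$. By definition of the piecewise linear interpolation,
\begin{equation*}
X^{D_k}_t-X_t=a\,X_{t_{i+1}}+(1-a)\,X_{t_i}-X_t=a\,X_{t,t_{i+1}}-(1-a)\,X_{t_i,t},
\end{equation*}
hence $\left\vert X^{D_k}_t-X_t\right\vert_{L^2}\le\left\vert X_{t,t_{i+1}}\right\vert_{L^2}+\left\vert X_{t_i,t}\right\vert_{L^2}$. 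For any $[u,v]\subset[t_i,t_{i+1}]$ the quantity $\left\vert X_{u,v}\right\vert_{L^2}^2=E[(X_v-X_u)^2]$ is precisely the rectangular increment of $R_X$ over $[u,v]^2$, which is dominated by $V_\rho\left(R_X,[u,v]^2\right)$ (take the trivial grid partition) and, since $V_\rho$ is non-decreasing in its rectangle, by $V_\rho\left(R_X,[t_i,t_{i+1}]^2\right)$. Combining these and squaring,
\begin{equation*}
\sup_{0\le t\le 1}\left\vert X^{D_k}_t-X_t\right\vert_{L^2}^2\le 4\max_{t_i\in D_k}V_\rho\left(R_X,[t_i,t_{i+1}]^2\right)=4\left\vert D_k\right\vert_{R_X,\rho}^{1/\rho},
\end{equation*}
which is the required bound with $C_1=4$ and $\epsilon_k=\left\vert D_k\right\vert_{R_X,\rho}$.

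With these two facts in hand, Theorem~\ref{theorem_as_wong_zakai_rate} applies: $\left(\epsilon_k\right)_{k\in\mathbb{N}}=\left(\left\vert D_k\right\vert_{R_X,\rho}\right)_{k\in\mathbb{N}}\in\bigcup_{r\ge 1}l^r$ by hypothesis, $V$ is $\mathrm{Lip}^\theta$ with $\theta>2\rho$, and $\eta,q$ are taken as in that theorem; the conclusion is exactly the pathwise estimate $(\ref{eqn_pathwise_wong_zakai})$ with rate $O(\epsilon_k^\eta)$. Nothing here is hard: the whole argument is an assembly of the preceding results, and the only point requiring a little care is the $L^2$-bookkeeping above — the monotonicity of $V_\rho$ in the rectangle and the identification of $\max_{t_i\in D_k}V_\rho(R_X,[t_i,t_{i+1}]^2)$ with $\left\vert D_k\right\vert_{R_X,\rho}^{1/\rho}$ — after which everything reduces to invoking Theorem~\ref{theorem_as_wong_zakai_rate}.
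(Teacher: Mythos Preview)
Your proof is correct and follows essentially the same approach as the paper: both verify the $L^2$ bound $\sup_t |X^{D_k}_t - X_t|_{L^2} \leq 2|D_k|_{R_X,\rho}^{1/(2\rho)}$ via the elementary decomposition of $X^{D_k}_t - X_t$ in terms of increments over the mesh interval, then invoke Theorem~\ref{theorem_as_wong_zakai_rate}. Your algebraic form $aX_{t,t_{i+1}} - (1-a)X_{t_i,t}$ differs cosmetically from the paper's $aZ_{t_i,t_{i+1}} - Z_{t_i,t}$, and you make explicit the monotonicity of $V_\rho$ in its rectangle (which the paper uses silently), but these are minor presentational differences.
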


\begin{proof}
Let $D$ be any partition of $\left[ 0,1\right] $ and $t\in \left[
t_{i},t_{i+1}\right] $ where $t_{i},t_{i+1}\in D$. Take $Z\in \left\{
X^{1},\ldots ,X^{d}\right\} $. Then%
\begin{equation*}
Z_{t}^{D}-Z_{t}=Z_{t_{i},t_{i+1}}\frac{t-t_{i}}{t_{i+1}-t_{i}}-Z_{t_{i},t}.
\end{equation*}%
Therefore%
\begin{equation*}
\left\vert Z_{t}^{D}-Z_{t}\right\vert _{L^{2}}\leq \left\vert
Z_{t_{i},t_{i+1}}\right\vert _{L^{2}}+\left\vert Z_{t_{i},t}\right\vert
_{L^{2}}\leq 2V_{\rho }\left( R_{X};\left[ t_{i},t_{i+1}\right] ^{2}\right)
^{1/2}\leq 2\left\vert D\right\vert _{R_{X},\rho }^{\frac{1}{2\rho }}.
\end{equation*}%
We conclude with Theorem \ref{theorem_as_wong_zakai_rate}.
\end{proof}

\begin{example}
Let $X=B^{H}$ be the fractional Brownian motion with Hurst parameter $H\in
(1/4,1/2]$. Set $\rho =\frac{1}{2H}<2$. Then one can show that $R_{X}$ has
finite $\rho $-variation and $V_{\rho }\left( R_{X};\left[ s,t\right]
^{2}\right) \leq c\left( H\right) \left\vert t-s\right\vert ^{1/\rho }$ for
all~$\left( s,t\right) \in \Delta $ (see \cite{FV11}, Example 1). Assume
that the vector fields in $\left( \ref{eqn_RDE_GP_general}\right) $ are
sufficiently smooth by which we mean that $1/\rho -1/2\leq 1/\left( 2\rho
\right) -1/\theta $, i.e. 
\begin{equation*}
\theta \geq \frac{2\rho }{\rho -1}=\frac{1}{1/2-H}.
\end{equation*}%
Let $\left( D_{k}\right) _{k\in \mathbb{N}}$ be the sequence of uniform
partitions. By Corollary \ref{cor_wong_zakai_piecew_lin}, for every $\eta
<2H-1/2$ there is a random variable $C$ such that%
\begin{equation*}
\left\vert Y^{\left( k\right) }-Y\right\vert _{\infty }\leq C\left( \frac{1}{%
k}\right) ^{\eta }\quad \text{a.s.}
\end{equation*}%
hence we have a Wong-Zakai convergence rate arbitrary close to $2H-1/2$. In
particular, for the Brownian motion, we obtain a rate close to $1/2$, see
also \cite{GS06} and \cite{FR11}. For $H$ $\rightarrow 1/4$, the convergence
rate tends to $0$ which reflects the fact that the L\'{e}vy area indeed
diverges for $H=1/4$, see \cite{CQ02}.
\end{example}

\subsection{The simplified step-$N$ Euler scheme\label%
{subsection_simple_euler}}

Consider again the SDE%
\begin{equation*}
dY_{t}=V(Y_{t})\,dX_{t},\quad Y_{0}\in \mathbb{R}^{n}
\end{equation*}%
interpreted as a pathwise RDE driven by the lift $\mathbf{X}$ of a Gaussian
process $X$ which fulfils the conditions of Theorem \ref%
{theorem_main01_intro}. Let $D$ be a partition of $\left[ 0,1\right] $. We
recall the simplified step-$N$ Euler scheme from the introduction:%
\begin{eqnarray*}
Y_{0}^{\text{sEuler}^{N};D} &=&Y_{0} \\
Y_{t_{j+1}}^{\text{sEuler}^{N};D} &=&Y_{t_{j}}^{\text{sEuler}%
^{N};D}+V_{i}\left( Y_{t_{j}}^{\text{sEuler}^{N};D}\right)
X_{t_{j},t_{j+1}}^{i}+\frac{1}{2}\mathcal{V}_{i_{1}}V_{i_{2}}\left(
Y_{t_{j}}^{\text{sEuler}^{N};D}\right)
X_{t_{j},t_{j+1}}^{i_{1}}X_{t_{j},t_{j+1}}^{i_{2}} \\
&&+\ldots +\frac{1}{N!}\mathcal{V}_{i_{1}}\mathcal{\ldots V}%
_{i_{N-1}}V_{i_{N}}\left( Y_{t_{j}}^{\text{sEuler}^{N};D}\right)
X_{t_{j},t_{j+1}}^{i_{1}}\ldots X_{t_{j},t_{j+1}}^{i_{N}}
\end{eqnarray*}%
where $t_{j}\in D$. In this section, we will investigate the convergence
rate of this scheme. For simplicity, we will assume that%
\begin{equation*}
V_{\rho }\left( R_{X};\left[ s,t\right] ^{2}\right) =O\left( \left\vert
t-s\right\vert ^{1/\rho }\right)
\end{equation*}%
which can always be achieved at the price of a deterministic time-change
based on 
\begin{equation*}
\left[ 0,1\right] \ni t\mapsto \frac{V_{\rho }\left( R_{X};\left[ 0,t\right]
^{2}\right) ^{\rho }}{V_{\rho }\left( R_{X};\left[ 0,1\right] ^{2}\right)
^{\rho }}\in \left[ 0,1\right] .
\end{equation*}%
Set $D_{k}=\left\{ \frac{i}{k}:i=0,\ldots ,k\right\} $.

\begin{corollary}
\label{Cor_rate_simple_euler}Let $p>2\rho $ and assume that $\left\vert
V\right\vert _{Lip^{\theta }}<\infty $ for $\theta >p$. Choose $\eta $ and $%
N $ such that%
\begin{equation*}
\eta <\min \left\{ \frac{1}{\rho }-\frac{1}{2},\frac{1}{2\rho }-\frac{1}{%
\theta }\right\} \quad \text{and\quad }N\leq \left[ \theta \right] .
\end{equation*}%
Then there are random variables $C_{1}$ and $C_{2}$ such that%
\begin{equation*}
\max_{t_{j}\in D_{k}}\left\vert Y_{t_{j}}-Y_{t_{j}}^{\text{sEuler}%
^{N};D_{k}}\right\vert \leq C_{1}\left( \frac{1}{k}\right) ^{\eta
}+C_{2}\left( \frac{1}{k}\right) ^{\frac{N+1}{p}-1}\quad \text{a.s. for all }%
k\in \mathbb{N}\text{.}
\end{equation*}
\end{corollary}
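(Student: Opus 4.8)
The plan is to route the error through the Wong--Zakai solution driven by the piecewise linear interpolation of $X$. Work throughout in the time-changed variable, so that $V_{\rho }\left( R_{X};\left[ s,t\right] ^{2}\right) =O\left( \left\vert t-s\right\vert ^{1/\rho }\right) $, and write $X^{(k)}=X^{D_{k}}$ for the piecewise linear approximation of $X$ at the uniform mesh $D_{k}$ and $Y^{(k)}$ for the (random) solution of the ODE $dY^{(k)}=V(Y^{(k)})\,dX^{(k)}$, $Y_{0}^{(k)}=Y_{0}$. The structural point, already isolated in the Introduction, is that on each mesh interval $\left[ t_{j},t_{j+1}\right] $ the path $X^{(k)}$ is affine, hence its iterated integrals there equal $\tfrac{1}{n!}(X_{t_{j},t_{j+1}})^{\otimes n}$; comparing this with the definition of $Y^{\text{sEuler}^{N};D_{k}}$ shows that the latter is nothing but the (Davie) step-$N$ Euler scheme for the RDE driven by $S_{N}(X^{(k)})$, whose exact solution is $Y^{(k)}$. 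Thus, for $t_{j}\in D_{k}$,
\begin{equation*}
\max_{t_{j}\in D_{k}}\bigl\vert Y_{t_{j}}-Y_{t_{j}}^{\text{sEuler}^{N};D_{k}}\bigr\vert \leq \bigl\vert Y^{(k)}-Y\bigr\vert _{\infty ;\left[ 0,1\right] }+\max_{t_{j}\in D_{k}}\bigl\vert Y_{t_{j}}^{(k)}-Y_{t_{j}}^{\text{sEuler}^{N};D_{k}}\bigr\vert ,
\end{equation*}
and it suffices to bound the two terms separately.

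For the first (Wong--Zakai) term I would quote Corollary \ref{cor_wong_zakai_piecew_lin}. Since $V_{\rho }\left( R_{X};\left[ t_{i},t_{i+1}\right] ^{2}\right) \leq c\,k^{-1/\rho }$ on the uniform mesh, one has $\left\vert D_{k}\right\vert _{R_{X},\rho }=\bigl(\max_{t_{i}\in D_{k}}V_{\rho }\left( R_{X};\left[ t_{i},t_{i+1}\right] ^{2}\right) \bigr)^{\rho }\leq c/k$, so in particular $\left( \left\vert D_{k}\right\vert _{R_{X},\rho }\right) _{k}\in \bigcup_{r\geq 1}l^{r}$. Because $\theta >p>2\rho $ and $\eta $ is admissible in the sense of Theorem \ref{theorem_as_wong_zakai_rate}, Corollary \ref{cor_wong_zakai_piecew_lin} yields a finite random variable $C_{1}$ --- universal, i.e. independent of $V$ and $Y_{0}$ beyond the $\mathrm{Lip}^{\theta }$-bound --- with $\bigl\vert Y^{(k)}-Y\bigr\vert _{\infty ;\left[ 0,1\right] }\leq C_{1}(1/k)^{\eta }$ a.s.

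For the second term I would apply the extension of Davie's step-$N$ Euler estimate (cf. \cite{D07}, \cite{FV10}): for an RDE with $\mathrm{Lip}^{\theta }$ vector fields, $\theta >p$, driven by a $p$-rough path --- here $S_{N}(X^{(k)})$, treated at level $p$ since piecewise linear approximations inherit only the $p$-variation of $X$ uniformly in $k$ --- the step-$N$ Euler scheme with $N\geq \left[ p\right] $ on the mesh of size $1/k$ converges at rate $O\bigl(k^{-((N+1)/p-1)}\bigr)$, with a constant controlled by the $p$-variation norm of the driving lift. To make this uniform in $k$ one needs $\sup_{k}\bigl\|S_{N}(X^{(k)})\bigr\|_{p\text{-}var}<\infty $ a.s.; this follows from Theorem \ref{theorem_main01}, which gives $\rho _{p\text{-}var}\bigl(S_{N}(X^{(k)}),S_{N}(\mathbf{X})\bigr)\to 0$ a.s. (via Borel--Cantelli exactly as in the proof of Theorem \ref{theorem_as_wong_zakai_rate}), so the lifts lie in a fixed ball outside one null set. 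This produces a finite random variable $C_{2}$ with $\max_{t_{j}\in D_{k}}\bigl\vert Y_{t_{j}}^{(k)}-Y_{t_{j}}^{\text{sEuler}^{N};D_{k}}\bigr\vert \leq C_{2}(1/k)^{(N+1)/p-1}$ a.s., and adding the two estimates gives the claim; the two error terms genuinely balance only in the regime $\left[ p\right] \leq N\leq \left[ \theta \right] $.

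The part I expect to need the most care is the interface, not either individual estimate: one must (i) verify that the simplified scheme is \emph{literally} the Davie step-$N$ Euler scheme for $S_{N}(X^{(k)})$, i.e. that restricting $S_{N}(X^{(k)})$ to $\left[ t_{j},t_{j+1}\right] $ reproduces the truncated exponential of $X_{t_{j},t_{j+1}}$, and (ii) arrange that both the Wong--Zakai constant $C_{1}$ and the Davie constant $C_{2}$, together with the exceptional null set, are produced off the \emph{single} universal null set furnished by Theorem \ref{theorem_main01}, using the uniform-in-$k$ rough path bounds established there --- this is what makes the rates "universal" in the sense of the remark following Theorem \ref{theorem_as_wong_zakai_rate}. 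The regularity bookkeeping ($\theta >p$ and $N\geq \left[ p\right] $ for Davie's local estimate versus $\theta >2\rho $ for the Wong--Zakai part, all implied by the hypotheses) also has to be tracked but poses no real difficulty.
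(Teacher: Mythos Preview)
Your proposal is correct and follows essentially the same route as the paper: split via the Wong--Zakai solution $Y^{(k)}$, identify the simplified scheme with the step-$N$ Euler scheme applied to $S_{N}(X^{(k)})$, bound the first term by Corollary~\ref{cor_wong_zakai_piecew_lin} (using $\left\vert D_{k}\right\vert _{R_{X},\rho }=O(1/k)$), and the second by the Davie-type Euler estimate \cite[Theorem~10.30]{FV10}. Your additional care about the uniform-in-$k$ rough path bound for $S_{N}(X^{(k)})$ and the single universal null set is a useful elaboration that the paper leaves implicit in the citation.
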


\begin{proof}
Recall the step-$N$ Euler scheme from the introduction (or cf. \cite[Chapter
10]{FV10}). Set $X^{\left( k\right) }=X^{D_{k}}$ and let $Y^{\left( k\right)
}$ be the solution of the SDE $\left( \ref{eqn_RS_general}\right) $. Then $%
Y_{t_{j}}^{\text{sEuler}^{N};D_{k}}=\left( Y^{\left( k\right) }\right)
_{t_{j}}^{\text{Euler}^{N};D_{k}}$ for every $t_{j}\in D_{k}$ and therefore,
using the triangle inequality,%
\begin{equation*}
\max_{t_{j}\in D_{k}}\left\vert Y_{t_{j}}-Y_{t_{j}}^{\text{sEuler}%
^{N};D_{k}}\right\vert \leq \sup_{t\in \left[ 0,1\right] }\left\vert
Y_{t}-Y_{t}^{\left( k\right) }\right\vert +\max_{t_{j}\in D_{k}}\left\vert
Y_{t_{j}}^{\left( k\right) }-\left( Y^{\left( k\right) }\right) _{t_{j}}^{%
\text{Euler}^{N};D_{k}}\right\vert .
\end{equation*}%
By the choice of $D_{k}$ we have $\left\vert D_{k}\right\vert _{R_{X},\rho
}=O\left( k^{-1}\right) $. Applying Corollary \ref{cor_wong_zakai_piecew_lin}
we obtain for the first term $\left\vert Y-Y^{\left( k\right) }\right\vert
_{\infty }=O\left( k^{-\eta }\right) $. Refering to \cite[Theorem 10.30]%
{FV10} we see that the second term is of order $O\left( k^{-\left( \frac{N+1%
}{p}-1\right) }\right) $.
\end{proof}

\begin{remark}
Assume that the vector fields are sufficiently smooth, i.e. $\theta \geq 
\frac{2\rho }{\rho -1}$. Then we obtain an error of $O\left(
k^{-(2/p-1/2)}\right) +O\left( k^{-\left( \frac{N+1}{p}-1\right) }\right) $,
any $p>2\rho $. That means that in the case $\rho =1$, the step-$2$ scheme
(i.e. the simplified Milstein scheme) gives an optimal convergence rate of
(almost) $1/2$. For $\rho \in (1,2)$, the step-$3$ scheme gives an optimal
rate of (almost) $1/\rho -1/2$. In particular, we see that using higher
order schemes does not improve the convergence rate since in that case, the
Wong-Zakai error persists. In the fractional Brownian motion case, the
simplified Milstein scheme gives an optimal convergence rate of (almost) $%
1/2 $ for the Brownian motion and for $H\in (1/4,1/2)$ the step-$3$ scheme
gives an optimal rate of (almost) $2H-1/2$. This answers a conjecture stated
in \cite{DT}.
\end{remark}

\end{document}